\documentclass[a4paper,11pt,leqno]{article}
\usepackage{amsfonts,amssymb,amsmath,amsthm}
\usepackage{enumerate}
\usepackage{a4wide}
\usepackage{color}
\usepackage{eucal}
\usepackage{tikz-cd}

\hfuzz=5pt\vfuzz=3pt
\addtolength{\parskip}{0.2em}

\newtheorem{theorem}{Theorem}[section]
\newtheorem{proposition}[theorem]{Proposition}
\newtheorem{corollary}[theorem]{Corollary}
\newtheorem{lemma}[theorem]{Lemma}
\theoremstyle{remark}
\theoremstyle{definition}
\newtheorem{remark}[theorem]{Remark}
\newtheorem{example}[theorem]{Example}
\newtheorem{definition}[theorem]{Definition}

\numberwithin{equation}{section}

\newcommand{\R}{{\mathbb R}}

\newcommand{\beq}{\begin{equation}}
\newcommand{\eeq}{\end{equation}}
\newcommand{\red}{\textcolor{red}}
\parindent=0mm

\begin{document}

\title{Criticality theory for Schr\"odinger operators \\ 
        with singular potential 
\thefootnote\relax\footnote{
This paper was partially supported by an Indo-US project under the VIMMS and 
the grant MTM2014-52402-C3-1-P (Spain).
The first author was supported by the Simons Foundation Collaboration Grant for Mathematicians 210368.}
}

\author{
{\sc Marcello Lucia} \\
{\sc\footnotesize  The City University of New York, CSI}\\
{\sc\footnotesize Mathematics Department}\\
{\sc\footnotesize 2800 Victory Boulevard}\\
{\sc\footnotesize Staten Island, New York 10314, USA}\\
{\tt\footnotesize marcello.lucia@math.csi.cuny.edu}
 \and 
 {\sc              S. Prashanth} \\
 {\sc\footnotesize TIFR-Centre For Applicable Mathematics}\\
 {\sc\footnotesize Post Bag No. 6503, Sharada Nagar,}\\
 {\sc \footnotesize GKVK Post Office,}\\
 {\sc\footnotesize  Bangalore 560065, India} \\
 {\tt\footnotesize pras@math.tifrbng.res.in}\\
}

\date{\today}

\maketitle

\begin{abstract}

%
In a seminal work, B. Simon provided a classification of nonnegative Schr\"odinger operators $-\Delta+V$ 
into subcritical and critical operators based on the long-term behaviour of the associated heat kernel. 
Later works by others developed an alternative subcritical/critical classification based on whether or not the operator admits a weighted spectral gap. All these works dealt only with  potentials that ensured the validity of Harnack's inequality, typically potentials in $L^p_{loc}$ for some $p > \frac{N}{2}$.

 This paper extends  such a weighted spectral gap classification to a large class of locally integrable potentials $V$  called balanced potentials here.  Harnack's inequality will not hold in general for such potentials.  In addition to the standard potentials in  the space $L^p_{loc}$ for some $p > \frac{N}{2}$,  this class contains  potentials which are locally bounded above   or more generally,  those for which the  positive and negative singularities are  separated as well as  those for which the interaction of the positive and negative singularities is not ``too strong". 

We also establish for such potentials, under the additional assumption  that $V^+ \in L^p_{loc}$ for some $p > \frac{N}{2}$, versions of the Agmon-Allegretto-Piepenbrink Principle characterising the subcritical/critical operators  in terms of the cone of  positive  distributional super-solutions (or solutions).\\

{\bf A compressed version of this work has appeared in \cite{MPJDE}. We also remove some restrictions imposed in \S 10.3 of \cite{MPJDE}}

\bigskip

{\it MSC classification :} Primary 35J10; secondary 35J20, 35R05 

\medskip

{\it Keywords:} 
Quadratic forms, Schr\"odinger operators, subcritical /critical operators, singular potentials.

\end{abstract}

\section{Introduction}\label{s1}
\subsection{ A brief review of the literature}\label{s1.1}

Understanding the nature of critical points of functionals has been one of the principal aims in the calculus of variations.
Jacobi~\cite{Jacobi} was probably the first one to realise that the non-negativity of the second variation of a functional is closely related to the existence of nontrivial  nonnegative  solutions to the associated linear equation. This  led to his well-known  ``non-conjugacy" condition valid for one-dimensional functionals of the type $J(u) = \int_0^1 f(t, u, u') dt$. 
Since then, analogous results  in higher dimensions for linear second order symmetric elliptic operators with a potential term  have been investigated by many authors. When the potential is well behaved, it has been shown  that the  oscillatory properties of solutions to such operators  are closely related to the Morse index of the associated quadratic form (see for instance \cite{Allegretto:1974}, \cite{Pip} and \cite{Piepenbrink:1974}). 

\medskip

In this work, we consider  Schr\"odinger operators of the type $-\Delta + V$ in a domain 
$\Omega$ of $\R^N$, $N \geq 1$ with a potential $V \in L^1_{loc}(\Omega)$. 
The properties of this operator will be related to the following associated quadratic form:
$$
   Q_V (\xi) = \int_{\Omega}\left\{  |\nabla \xi |^2 + V \xi^2 \right\} ,
   \quad
   \xi \in W_c^{1,\infty} (\Omega),
$$
where $W^{1,\infty}_c(\Omega)$ stands for the space of Lipschitz continuous functions with compact support in $\Omega$. 
To avoid any confusions, we introduce some basic definitions.

\begin{definition}\label{darr}
We shall call $-\Delta+V$ a  {\em nonnegative operator} in $\Omega$ if $Q_V(\xi) \geq 0$ for all  $\xi \in W_c^{1,\infty} (\Omega)$.
\end{definition}

\begin{definition} \label{def:DistributionalSolution}
We say that $u \in L^1_{loc}(\Omega)$ is a {\em distributional supersolution} to $-\Delta+V$  in $\Omega$ if
\begin{equation}\label{distr}
Vu \in  L^1_{loc}(\Omega) \; \text{ and } \; 
\int_\Omega u \, \big\{ - \Delta \xi \,+ \,  V \xi \big\}  \,dx
 \, \ge \, 
0,
\qquad\forall\:0\le\xi\in C^2_c(\Omega).
\end{equation}
A  {\em distributional solution} is a function $u \in L^1_{loc}(\Omega)$  such that \eqref{distr} holds for all $\xi \in C^2_c(\Omega)$. 

We shall say that $u \in L^1_{loc} (\Omega)$ is a {\it strict} distributional supersolution to $-\Delta+V$ in $\Omega$  if  it is a distributional supersolution  but not a distributional solution. 
\end{definition}

We define the following cone of all non-negative super-solutions to $-\Delta+V$:
\begin{definition}\label{infinitea}
Let
$${\cal C}_V (\Omega):=\big \{ u \in L^1_{loc}(\Omega): u \not \equiv 0, u \geq 0 \text{ and } u 
\text{ a distributional supersolution to } -\Delta +V \text{ in } \Omega \big \}.$$
\end{definition}

It is easy to see that if $u \in {\cal C}_V(\Omega)$, then $-\Delta u +Vu$ 
is a non-negative Radon measure $\mu$.
\begin{definition}\label{reisz}
Given $u \in {\cal C}_V(\Omega)$, the  measure $\mu:=-\Delta u +Vu$ will be called the Riesz measure associated to $u$. We let ${\cal M}_V (\Omega):= \{ \mu: \mu \hbox{ is the Riesz measure of some } u \in {\cal C}_V(\Omega)\}$.
\end{definition}

In this work, we shall label as the ``AAP Principle" (``Agmon-Allegretto-Piepenbrink Principle") any statement asserting the equivalence between the  strength of ``non-negativity" of the 
Schr\"odinger operator $-\Delta + V$ and the cone ${\cal C}_V (\Omega)$  (or the Riesz measure space ${\cal M}_V (\Omega)$).

The contributions of Agmon \cite{A}, Allegretto~\cite{Allegretto:1974}, Gesztesy-Zhao \cite{GeZhao}, Piepenbrink~\cite{Piepenbrink:1974}, Pinchover-Tintaraev \cite{PinTint}, and the regularity relaxations on $V$  brought 
by them  have led to the following version of the ``AAP principle" (see Simon \cite[Thm. C.8.1]{Simon}):

\medskip

{\it  
Assume that 
\begin{equation} \label{eq:SimonAssumptionAAP}
  V^{+} \hbox{ is in the local Kato class  and }  \;
  V^-   \hbox{  in  Kato class in } \Omega.
\end{equation} 
Then, the quadratic form $Q_V$ is non-negative in $\Omega$  if and only if the equation  $(-\Delta + V)u =0$ admits a positive distributional solution in $\Omega$ i.e., iff  $0 \in {\cal M}_V (\Omega)$.}

\medskip

Using the well-known Picone's identity, one can easily show that the existence of a positive distributional solution 
implies that $Q_V$ must be non-negative. The converse is slightly more difficult, and relies on the Harnack's principle which holds if $V$ satisfies \eqref{eq:SimonAssumptionAAP}. 
As pointed out by Fischer-Colbrie and Schoen, this principle extends to a complete manifold 
(see~\cite[Thm.~1]{Fischer-ColbrieSchoen:1980}), and they used it to classify stable minimal surfaces in $3$-manifolds of non-negative scalar curvature. For analytical applications, AAP type principles can be used to show various integral inequalities in Sobolev spaces.

\medskip

Simons \cite{Simon:1981} also introduced a classification of non-negative operators $-\Delta+V$ into subcritical and critical operators according to the 
long-time behavior of the associated heat kernel. In~\cite{Murata:1986}, Murata
rephrased Simon's idea from a different perspective.  
He considered potentials 
\begin{equation} \label{eq:MurataPotential}
V \in   L^p_{loc} (\Omega) 
   \quad \hbox{ with } 
   \left\{ 
   \begin{array}{ll} 
   p > N/2 \, &\hbox{ for } N \geq 2 , \\
   p=1 \, &\hbox{ for } N=1 ,
   \end{array}
   \right.
\end{equation} 
and classified  non-negative operators $-\Delta+V$ by  the sign of their fundamental solutions. He defined 
any such operator to be {\it subcritical} in $\Omega$ if its fundamental solution at any point in $\Omega$ is positive in $\Omega$, and defined it to be 
{\it critical} in $\Omega$  otherwise. 
The Laplacian in $\mathbb R^N$ provides a very simple example to illustrate this classification: 
it is critical for $N=1,2$ (all fundamental solutions are of the type $-|x|$ and $- \log |x|$ which are not positive), whereas it is subcritical for $N \geq 3$ (since it has a positive fundamental solution of the type $|x - y|^{2-N}$). 
On Riemannian manifolds  this classification has led to the notions of {\it parabolic manifold} 
(versus {\it non-parabolic manifold}), 
depending on whether the Laplace-Beltrami operator 
 admits a positive fundamental solution at every point or not (see~\cite{Grigoryan:1987}).

\medskip

A key feature of the classification of Murata is that subcritical/critical operator can be characterized in terms of the quadratic form $Q_V$ as follows (see \cite[Thm 2.4]{Murata:1986}): 

\medskip

{\it A nonnegative  operator $-\Delta+V$ is subcritical in $\Omega$ if and only if
for any nonnegative function $w \not \equiv 0$ with compact support satisfying~\eqref{eq:MurataPotential}, there exists $\varepsilon >0$ such that 
$Q_V (\xi) \geq \varepsilon \int_{\Omega}  w \xi^2$ for all $\xi \in C_c^2 (\Omega)$.
}

\medskip

Furthermore, Murata also showed that this dichotomy is reflected in a very interesting way in the structure of positive solutions:

{\it A nonnegative operator $-\Delta+V$ is critical in $\Omega$ if and only if the set of all positive solutions to $(-\Delta+V)u =0$ in $\Omega$ is given by  positive scalar multiples of a fixed positive solution. }

\medskip

Namely, in this case  the equation $(-\Delta+V)u = 0$ admits up to scalar multiplication a unique positive solution which  is usually called the ``ground state".
In other words, for subcritical operators  the associated quadratic form  satisfies a 
weighted Poincar\'e inequality, or possesses a ``spectral gap", whereas critical operators are the ones for which the non-negativity of the quadratic form  cannot be improved at all. In particular, to show that $Q_V$  can be improved, it is enough to find two positive linearly independent solutions to $(-\Delta+V)u =0$ (see for instance lemma \ref{min}). This yields a strategy to see how far
some well-known inequalities, for example classical ``Hardy inequality" on a bounded domain $\Omega_*$ of $\mathbb R^N$, $N \geq 3$,
$$
   \int_{\Omega_*}  |\nabla \xi|^2 - H_N |x|^{-2} \xi^2  \geq 0  \; \forall \; \xi \in C_c^2(\Omega_*)
$$
where $H_N=  (\frac{N-2}{2})^2$, can be improved (a question raised by Brezis and Vazquez). 

\medskip

Murata's classification has been extended to non-symmetric elliptic operators with H\"older continuous coefficients by Pinchover \cite{Pin:1988}. Extension to the $p$-Laplace operators have been studied by 
Pinchover-Tintarev~\cite{PT} and \cite{PP}. 

\medskip

We remark that in one dimension, a rather exhaustive analysis  of nonnegative operators $-d^2/dx^2+V$ has been done  by Gesztesy and Zhao \cite{GeZhao} 
 under the only assumption that $V$ is locally integrable; see theorems 3.1 and 3.6 there. Therefore, in this work we will mainly restrict our attention to dimensions bigger than one.

\medskip

All the above works have one common feature: they impose regularity assumptions on the potential $V$ (and the coefficients of the operator) that allow the application of   Harnack's inequality, which in particular guarantees that the solutions to $(-\Delta+V)u = 0$ are positive and continuous. 

\medskip

One maybe interested in frameworks that allow 
a study of  operators which fail to satisfy the Harnack's inequality or do not admit a fundamental solution at every point. More specifically, it will be interesting to extend the subritical/critical classification and obtain the corresponding AAP Principle to the operator $-\Delta+V$ for  a  larger class of locally integrable functions than previously considered. 

In this context we summarise a part of  recent work of Jaye, Mazya and Verbitsy \cite{JayeMazyaVerb} which treats  potentials that are in the space $H^{-1}_{loc}$.
Given a distribution $\sigma$ on $\Omega$, we can define the following quadratic form
$$
Q_\sigma(\xi) = \int_\Omega |\nabla \xi|^2 + \langle \sigma, \xi^2 \rangle \;\; \forall \xi \in C_c^{\infty}(\Omega).
$$
\begin{definition}\label{maz}
We say that a distribution $\sigma \in {\cal D}^{\prime} (\Omega)$ is \\
(i) form-bounded if there exist constants $\beta_1,\beta_2>0$ such that
$$
-\beta_1 \int_\Omega |\nabla \xi|^2 \leq  \langle \sigma, \xi^2 \rangle \leq \beta_2 \int_\Omega |\nabla \xi|^2 \;\; \;\; \forall \xi \in C_c^{\infty}(\Omega);
$$
(ii) upper (lower) bounded if  the upper (lower) bound above holds.
\end{definition}
We straightaway remark that if  $\sigma$ is form bounded, then in fact $\sigma \in H^{-1}_{loc}(\Omega)$ 
(see \cite[Lemma 2.6]{JayeMazyaVerb}).
We note that the quadratic form $Q_\sigma$  is non-negative if and only if $\sigma$ is lower bounded with the constant $\beta_1\leq1$. 
The work \cite{JayeMazyaVerb} deals with the problem of finding equivalent and easier to verify conditions for the form-boundedness of a distributional potential $\sigma$.  They prove the  following AAP-Principle  :
\begin{theorem}\label{swiss}
If $\sigma \in  {\cal D^\prime}(\Omega)$ is form-bounded with $\beta_1<1$, then there exists an a.e. positive solution $u \in H^1_{loc}(\Omega)$ to the following problem
$$-\Delta u + \sigma u =0 \hbox{ in } {\cal D^\prime}(\Omega).$$
If  $\sigma$ is a negative Borel measure, then $\sigma$ is lower bounded (and hence form-bounded) with $\beta_1=1$ iff there exists a positive superharmonic supersolution to the above equation.
\end{theorem}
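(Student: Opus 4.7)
The plan is to treat the two halves separately: for the first assertion ($\beta_1<1$) I will use the strict coercivity of $Q_\sigma$ on $H^1_0$ of bounded subdomains together with a Lax--Milgram approximation scheme, and for the characterisation in the second assertion I will combine the Picone identity (easy direction) with an $\varepsilon$-approximation argument (hard direction) reducing the critical case $\beta_1=1$ to the subcritical one.

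\smallskip

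\emph{Construction of a positive $H^1_{loc}$ solution.} Fix a smooth bounded exhaustion $\Omega_n\nearrow\Omega$ and a nontrivial $f\in C^\infty_c(\Omega_1)$ with $f\ge 0$. The hypothesis $\beta_1<1$ gives
\[
Q_\sigma(\xi)=\int_\Omega|\nabla\xi|^2+\langle\sigma,\xi^2\rangle\;\ge\;(1-\beta_1)\int_\Omega|\nabla\xi|^2
\]
for every $\xi\in C_c^\infty(\Omega)$, which via the polarisation of $Q_\sigma$ (using form-boundedness to make sense of $\langle\sigma,uv\rangle$ for $u,v\in H^1_0$) and Poincaré's inequality on $\Omega_n$ makes the bilinear form associated with $-\Delta+\sigma$ coercive on $H^1_0(\Omega_n)$. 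By Lax--Milgram there exists a unique $u_n\in H^1_0(\Omega_n)$ solving $-\Delta u_n+\sigma u_n=f$ in $\mathcal{D}'(\Omega_n)$. Testing against $u_n^-$ (which is legitimate because $Q_\sigma$ extends to $H^1_0$) together with the pointwise identity $\nabla u_n^-=-\chi_{\{u_n<0\}}\nabla u_n$ yields $Q_\sigma(u_n^-)\le 0$, whence $u_n^-\equiv 0$ and $u_n\ge 0$. Now normalise by setting $v_n=u_n/c_n$ with $c_n$ the value of $u_n$ averaged over a fixed ball $B\Subset\Omega_1$. The quadratic form inequality and the equation give uniform local $H^1$ bounds on $v_n$, so a subsequence converges weakly in $H^1_{loc}(\Omega)$ and a.e.\ to some $u\ge 0$ satisfying $-\Delta u+\sigma u=0$ distributionally (note: the right-hand side $f/c_n$ tends to $0$ after passing to the limit along the normalising constants, which we check remain bounded away from $0$). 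The strict positivity $u>0$ a.e.\ is forced by a unique continuation / weak Harnack principle for form-bounded potentials (available in the Jaye--Mazya--Verbitsky setting since $\beta_1<1$); absent a classical Harnack, this is the step I expect to be the most delicate.

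\smallskip

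\emph{Characterisation when $\sigma$ is a negative Borel measure.} Assume first that $\sigma$ is lower bounded with $\beta_1=1$, i.e.\ $Q_\sigma\ge 0$. For $0<\varepsilon<1$ the distribution $\sigma_\varepsilon:=(1-\varepsilon)\sigma$ is negative and lower bounded with constant $(1-\varepsilon)<1$, hence by the first part there exists a positive $u_\varepsilon\in H^1_{loc}(\Omega)$ solving $-\Delta u_\varepsilon+(1-\varepsilon)\sigma u_\varepsilon=0$. Normalising $u_\varepsilon$ at a fixed point/ball and using that $-\sigma u_\varepsilon\ge 0$ is a nonnegative measure, each $u_\varepsilon$ is superharmonic (as $-\Delta u_\varepsilon=(1-\varepsilon)\sigma u_\varepsilon\le 0$). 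Classical compactness for superharmonic functions (monotone rearrangement / distributional convergence of Riesz measures) lets me extract a pointwise a.e.\ limit $u\ge 0$ which is still superharmonic, and a Fatou-type passage on the measure $-\sigma u_\varepsilon$ shows that $u$ is a distributional supersolution of $-\Delta u+\sigma u=0$; strict positivity is inherited from the superharmonicity (a superharmonic function that vanishes on a set of positive measure is identically zero on each connected component). Conversely, if $u>0$ is a superharmonic supersolution, set $d\nu:=-\Delta u+\sigma u\ge 0$ and apply Picone's identity to $u$ and an arbitrary $\xi\in C_c^\infty(\Omega)$: the pointwise inequality $|\nabla\xi|^2\ge 2(\xi/u)\nabla\xi\cdot\nabla u-(\xi/u)^2|\nabla u|^2$ integrated against $u$ gives, after using the equation $-\Delta u=-\sigma u+\nu$ and integrating by parts,
\[
\int_\Omega|\nabla\xi|^2+\langle\sigma,\xi^2\rangle\;\ge\;\int_\Omega\frac{\xi^2}{u}\,d\nu\;\ge\;0,
\]
which is exactly the lower bound with $\beta_1=1$.

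\smallskip

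The principal obstacle in this program is the first paragraph's last step: recovering strict positivity (and a.e.\ upper/lower control of the limit) for solutions of $-\Delta u+\sigma u=0$ when $\sigma$ is merely a form-bounded distribution, where no pointwise Harnack inequality is available. Everything else is a fairly standard compactness + Picone package, but this step is where one has to invoke the finer Jaye--Mazya--Verbitsky machinery (e.g.\ their integral reformulations of form-boundedness and the resulting weak Harnack estimate for $\beta_1<1$).
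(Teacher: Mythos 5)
First, a point of orientation: the paper does not prove Theorem~\ref{swiss} at all; it is quoted verbatim from Jaye--Maz'ya--Verbitsky, so there is no in-paper argument to measure you against. In the cited source the positive solution is produced by an entirely different device: one solves the Riccati-type equation $-\Delta v = |\nabla v|^2 - \sigma$ (with quantitative Caccioppoli-type bounds) and sets $u = e^{v}$, so that $u>0$ is automatic from the construction and no Harnack-type estimate is ever needed. Your proposal replaces this by an exhaustion--Lax--Milgram--compactness scheme, and that is where it breaks down in two concrete places. (i) With a \emph{fixed} right-hand side $f\ge 0$ and a coercive form ($\beta_1<1$), the solutions $u_n\in H^1_0(\Omega_n)$ are dominated by (and converge to) the global solution of the inhomogeneous problem on $\Omega$; hence your normalising constants $c_n$ stay bounded above and below, $f/c_n$ does \emph{not} tend to zero, and the limit solves $-\Delta u+\sigma u = f/c_\infty\neq 0$ rather than the homogeneous equation. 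Coercivity is precisely what prevents the blow-up of $c_n$ that your argument would need. To get a homogeneous positive solution by exhaustion one needs the right-hand side to be proportional to the solution itself (compare the paper's own Theorem~\ref{thm:converse3}, where an eigenvalue problem and a Green's-function normalisation are used), and that route leans on the Harnack inequality for $-\Delta+V^+$ with $V^+\in L^p_{loc}$, $p>N/2$ --- a tool that simply is not available for a general form-bounded distribution $\sigma$. (ii) Your strict-positivity step invokes ``a weak Harnack estimate available in the Jaye--Maz'ya--Verbitsky setting''; this is circular, since such an estimate is essentially the content of the theorem you are proving, and in fact JMV never prove or use one --- positivity comes for free from $u=e^{v}$, and in general one only gets $u\in H^1_{loc}$, not local boundedness of $u$ or $u^{-1}$.

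The second half of your proposal is in better shape but not self-contained either. The forward direction (from $Q_\sigma\ge 0$ with $\sigma$ a negative measure to a positive superharmonic supersolution) inherits the gap above, since it applies ``part one'' to $(1-\varepsilon)\sigma$; granting part one, the compactness argument via weak Harnack for nonnegative superharmonic functions and a Fatou passage on the measures $-(1-\varepsilon)\sigma u_\varepsilon$ is plausible, though you must say in what sense the product $\sigma u_\varepsilon$ is defined (quasi-continuous representatives, $\sigma$ not charging sets of capacity zero) before the limit measure inequality makes sense. The converse via Picone is the standard easy direction, but for a measure potential and a merely superharmonic $u$ the integration by parts needs the kind of truncation/approximation carried out in the paper's Theorem~\ref{AAP-distr} rather than the formal pointwise identity you wrote. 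In short: the Picone half is fine modulo routine care, but the existence half needs either the Riccati/exponential substitution of the original paper or a genuinely new replacement for Harnack; as written, the scheme produces an inhomogeneous solution and assumes the hard positivity statement.
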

 More generally, they prove the equivalence of form boundedness of $\sigma$  to the existence of a solution to the 
 Riccati-type equation  $-\Delta u = |\nabla u|^2 - \sigma \hbox{ in } {\cal D^\prime}(\Omega)$ which additionally satisfies a Caccioppoli-type inequality (see \cite[Thm. 3.14]{JayeMazyaVerb}). But the subcritical/critical classification of nonnegative operators $-\Delta+\sigma$ is not treated in this work.

We remark that in theorem \ref{swiss}, the assumption $\beta_1<1$ implies a strong form of positivity, namely, that the pre-Hilbert space $(C_c^{\infty}(\Omega), \sqrt{Q_\sigma})$ is continuously imbedded in $H^1_{loc}(\Omega)$. 

\medskip

In this work, as we said before,  our aim is to   provide a subcritical/critical classification for nonnegative operators   with locally integrable potentials  $V$ as well as show  the corresponding AAP-Principle. Using the above terminology, our  main assumption is that  $V$ is a ``balanced" potential in $\Omega$ (see definition \ref{tri}, \ref{tre}) which in particular requires that $V$ is lower bounded with $\beta_1 \leq1$ but not necessarily that it is upper bounded. In the latter half of the work  we assume additionally that $V^+$ satisfies the \; stronger local integrability property : $V^+ \in L^p_{loc}(\Omega)$ for some $p > \frac{N}{2}$; that is,  $V$ is ``locally" upper bounded  in $\Omega$. In fact, in our setting $V$ will be form-bounded in dimensions bigger than 2, and hence will belong to $H^{-1}_{loc}(\Omega)$, if for instance $V^+ \in L^{\frac{N}{2}}(\Omega)$. More importantly, in contrast to \cite{JayeMazyaVerb}, for subcritical operators we only demand that the pre-Hilbert space $(W_c^{1,\infty}(\Omega), \sqrt{Q_V})$ imbeds continuously into  $L^1_{loc}(\Omega)$. A typical example is  the critical Hardy operator
$$
-\Delta -H_N|x|^{-2}
$$
 in a ball  $B \subset {\mathbb R}^N, \; N \geq 3,$ containing the origin. In fact, we base all our development on this weak assumption about the pre-Hilbert space which necessitates a restriction to balanced potentials  as mentioned above. Nevertheless, we will find that almost all the well studied potentials fall into this category.

\medskip

Another  motivation to consider potentials $V$ which are at least locally integrable (and not more irregular) is the fact that in dimensions $N \geq 3$,  the study of  form bounded  distributional potentials can be reduced to  that of  nonnegative quadratic forms $Q_V$ with $V$ nonpositive and  locally integrable; see theorem 4.1 in \cite{JayeMazyaVerb}.

\medskip 

{\bf Notations:}
\vspace{-5pt}
\begin{enumerate}
\item[(i)] 
For a set $A$, we write $A \Subset \Omega$ if $\bar{A}$ is a compact subset of $\Omega$,
\vspace{-5pt}
\item[(ii)] 
The space of distributions on $\Omega$ will be denoted as ${\cal D^\prime}(\Omega)$,
\vspace{-5pt}
\item[(iii)] 
For a function  $w \in L^1_{loc}(\Omega)$, we will write ``$Q_V \succeq w$ in $\Omega$" to express the condition: 
$$
    Q_V (\xi) \geq \int_{\Omega} w \xi^2, \; \forall  \xi \in W_c^{1,\infty} (\Omega).
$$
\vspace{-5pt}
\item[(iv)] 
For a function $f$ its support $\overline{\{f \neq 0\}}$ will be denoted as ${\rm supp}(f)$, and 
given a space $X$ of functions on $\Omega$, we let $X_c$ denote its subspace consisting of those with compact support in $\Omega$.
\vspace{-5pt}
\item[(v)] $``\inf", ``\sup"$ will denote respectively the essential infimum and supremum.
\vspace{-5pt}
\item[(vi)]  $\|\cdot\|_{p,A}$ will denote the $L^p$-norm on a set $A$.
\vspace{-5pt}
\item[(vii)] For $N \geq 3$ and $1 \leq p < \infty$, the space ${\mathcal D}_0^{1,p}(\Omega)$ is the completion of $C_c^{\infty}(\Omega)$ with respect to the norm:
$\displaystyle  \bigg(\int_\Omega |\nabla \cdot |^p\bigg)^{\frac{1}{p}}$.
\item[(viii)] Given two topological spaces $X,Y$, we write $X \hookrightarrow Y$ to mean that $X \subset Y$ and the inclusion map is continuous.
\end{enumerate}

\subsection{Contents of the paper}\label{s1.2}
In \S \ref{s2} we recall some preliminary results required in the rest of the paper.

\smallskip 

In \S \ref{s3} we show how the energy space ${\cal H}_V(\Omega)$ can be constructed from a given nonnegative quadratic form $Q_V$ on $\Omega$ under the only assumption that $V \in L^1_{loc} (\Omega)$. 
Some results  relating the singularities of $V$ to the ``size" of the energy space are proved, and we also introduce the concept of an ``energy solution in ${\cal H}_V(\Omega)$" to the equation $-\Delta u+Vu =f$.

\smallskip 

We introduce in  \S \ref{s4}  two types of subcritical/critical classification for  nonnegative operators $-\Delta +V$ on $\Omega$  based on whether the energy space imbeds into $L^1_{loc}$ or $L^2_{loc}$ space. The former imbedding leads to the $L^1$-subcritical/critical set of operators and the latter into feebly and globally $L^2$- subcritical/critical set of operators. Given that we work with $V$ whose negative part $V^-$ is especially unrestricted, it is nontrivial to show that these three notions of criticalities coincide (see \S \ref{s9}).

\smallskip 

In \S \ref{s5} we explore the various imbeddings of the energy space ${\cal H}_V(\Omega)$ when $-\Delta+ V$ is $L^1$-subcritical in $\Omega$. In this a special reference can be made to \eqref{eq:sloka} which resembles a statement of continuity for the map $\xi \mapsto \int_K V \xi, \; K \Subset \Omega$ on the energy space ${\cal H}_V(\Omega)$ with a correction term. 

We introduce the notion of ``balanced" and ``tame" potentials $V$ in \S \ref{s6}  which are central to the paper.  We show that the concept of balanced potentials covers well-known ones as well as many that are not usually considered. We show that for balanced potentials $V$ that are also $L^1$-subcritical, the concept of energy solutions is equivalent to that of distributional solutions.

\smallskip 

\S \ref{s7} deals with AAP principle for a general nonnegative operator $-\Delta+V$. Assuming $V \in L^1_{loc}(\Omega)$, we show that existence of a nonnegative  (distributional) super solution to this operator implies that the associated quadratic form $Q_V$ is nonnegative. The converse is proved under the additional condition that $V^+ \in L^p_{loc}(\Omega)$ for some $p>\frac{N}{2}\;(N\geq 2)$,  and relies on the $L^1$-theory developed by Stampacchia for the operator $-\Delta+V^+$.

\smallskip 

We exploit the  result on equivalence of energy and distribution solutions to develop an AAP principle in \S \ref{s8} for $L^1$-subcritical operators with balanced potentials.

\smallskip 

Using the results from the previous sections, in subsection \S \ref{s9.1} we show that the  notions of  $L^1$- and globally $L^2$-criticalities  (introduced in \S \ref{s4}) coincide for nonnegative operators with balanced potentials. In subsection \S \ref{s9.2} we show that notions of  $L^1$- and feebly $L^2$-criticalities too coincide  for the class of tame potentials. 
\smallskip 

In  \S \ref{s10}, we exploit this equivalence between the two notions of criticality for tame potentials and provide characterisations (in the spirit of the AAP principle)  of nonnegative subcritical operators in terms of their  Riesz measures (see theorems \ref{fen} and \ref{fenu}) as well as the dimension of the cone of nonnegative  (distributional) super solutions (see theorem~\ref{rine}).

\smallskip 

Finally in \S \ref{s11} we give applications and examples of our results.

\section{Some Preliminaries}\label{s2}
 
We first recall the concept of harmonic capacity (called here as $``Cap "$):
\begin{definition}\label{def:harcap}
For a compact set $K \subset \Omega$, the harmonic capacity of $K$ with respect to $\Omega$ is defined as:
$$
Cap(K;\Omega):= \inf \Big \{ \int_\Omega |\nabla \xi|^2: \xi \in C_c^{\infty}(\Omega),  \xi \geq 1 \hbox{ on } K \Big \}.
$$
\end{definition}

The above definition can be used to define the capacity $Cap(A;\Omega)$ of any subset $A$ of $\Omega$. In particular, the following fact is well known:\\
If $Cap(A ; \Omega)=0$ for any bounded subset $A$ of $\Omega$ then $Cap(A \cap U ; U)=0$ for any open  set $U \subset \R^N$.
In such a situation, we will simply say $Cap(A)=0$.

We have then the following characterisation for the validity of $Q_V \succeq 0$ when $V$ is non-positive (see chapter 2 in \cite{MazBook}):

\begin{theorem}\label{mazya}
Let $V \in L^1_{loc}(\Omega)$ with  $V^+ \equiv 0$. Then $Q_V \succeq 0$ in $\Omega$ if and only if the following inequality holds:
$$\frac{1}{4}  \leq \sup_{K \in {\mathcal N}}  \frac{1}{Cap(K;\Omega)} \int_K V^- \leq 1 $$
where
$\displaystyle  {\mathcal N} := \{K: K \text{ is a compact subset of }\;  \Omega \text{ with } Cap(K;\Omega)>0\} $.
\end{theorem}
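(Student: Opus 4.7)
This theorem is Maz'ya's classical capacitary characterization of the nonnegativity of $Q_V$ in the case $V^+\equiv 0$, so I would deduce it directly from the two-sided capacitary trace inequality applied to the measure $d\mu := V^-\,dx$. Note first that, since $V^+\equiv 0$, the condition $Q_V\succeq 0$ is exactly the weighted Hardy-type inequality
\[
\int_\Omega V^-\,\xi^2\,dx \,\leq\, \int_\Omega |\nabla\xi|^2\,dx \qquad \forall\,\xi\in W^{1,\infty}_c(\Omega),
\]
and by a standard mollification / density argument this is equivalent to the same inequality for all $\xi\in C^\infty_c(\Omega)$ (which is the natural test-function class for the definition of $Cap$).

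For the necessity side (the upper bound $\sup\leq 1$), I would test the Hardy inequality against capacity-admissible functions. Fix a compact $K\subset\Omega$ with $Cap(K;\Omega)>0$; for any $\xi\in C^\infty_c(\Omega)$ with $\xi\geq 1$ on $K$, the Hardy inequality gives
\[
\int_K V^- \,\leq\, \int_\Omega V^-\xi^2 \,\leq\, \int_\Omega |\nabla\xi|^2.
\]
Taking the infimum over such $\xi$ yields $\int_K V^- \leq Cap(K;\Omega)$, hence $\sup_{K\in\mathcal N}\frac{1}{Cap(K;\Omega)}\int_K V^-\leq 1$.

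For the sufficiency side (the lower threshold $\tfrac14$), I would invoke the quantitative half of Maz'ya's theorem: for any nonnegative Radon measure $\mu$ on $\Omega$, the capacitary bound $\mu(K)\leq\kappa\,Cap(K;\Omega)$ for every compact $K$ implies the trace inequality $\int_\Omega\xi^2\,d\mu\leq 4\kappa\int_\Omega|\nabla\xi|^2$ for every $\xi\in C^\infty_c(\Omega)$. Applied to $d\mu=V^-\,dx$ with $\kappa=\tfrac14$, this is exactly the Hardy inequality $(\star)$. The standard proof decomposes $\xi^2$ via its dyadic superlevel sets $\{|\xi|>2^k\}$, estimates $\int_\Omega \xi^2\,d\mu$ by a telescoping geometric sum involving $\mu(\{|\xi|>2^k\})$, uses the capacitary hypothesis on each superlevel set, and concludes with Maz'ya's level-set capacitary inequality $\sum_k 4^k\,Cap(\{|\xi|>2^k\};\Omega)\lesssim \int_\Omega|\nabla\xi|^2$, obtained by truncation / the co-area formula.

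The genuine obstacle is this quantitative direction, which is the substance of Maz'ya's theorem; the factor $4$ between the sufficient and necessary capacitary bounds is known to be sharp in general, which is precisely why the statement asserts the two-sided range $[\tfrac14,1]$ rather than a single threshold. In a write-up I would only reproduce the elementary infimum argument for the upper bound $\sup\leq 1$ and cite Chapter~2 of \cite{MazBook} for the lower bound $\sup\geq\tfrac14$, which is how the authors themselves present the result.
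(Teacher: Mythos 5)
Your proposal is correct and follows essentially the same route as the paper, which offers no proof of this statement at all but simply recalls it from Chapter 2 of \cite{MazBook}: like the paper, you reduce everything to Maz'ya's two-sided capacitary trace theorem, supplying the elementary testing argument for the necessity bound $\sup\leq 1$ and deferring the quantitative sufficiency with the factor $\tfrac14$ to the capacitary strong-type inequality in Maz'ya's book. Your reading of the statement as a two-sided threshold (nonnegativity forces $\sup\leq 1$, while $\sup\leq\tfrac14$ suffices) is also the intended one, so nothing further is needed.
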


The main trouble in using the above characterisation is that the calculation of capacity of sets is hard. In this context, we cite the following more concrete characterisation:

\medskip

\begin{theorem}\cite[theorem 5.1]{JayeMazyaVerb}\label{vat}
$$
\Big \{ V \in L^1_{loc}(\Omega) :  Q_{V} \succeq 0 \;\text{ in }\; \Omega \Big \}= \Big \{ V \in L^1_{loc}(\Omega) : V \geq  div \; \vec{{\bf f}}+ |\vec{{\bf f}}|^2 \;\text{ for some }\; \vec{{\bf f}} \in (L^2_{loc}(\Omega))^N \Big \}.
$$
The inequality above should be understood in the sense of distributions.
\end{theorem}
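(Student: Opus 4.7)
The statement is a two-way characterisation, and I would handle the two inclusions by very different methods. The inclusion $``\supseteq"$ is a standard completing-the-square argument: assuming $V \geq \mathrm{div}\, \vec{\mathbf{f}} + |\vec{\mathbf{f}}|^2$ in $\mathcal{D}'(\Omega)$ with $\vec{\mathbf{f}} \in (L^2_{loc}(\Omega))^N$, I test this distributional inequality against the admissible function $\xi^2$ for $\xi \in W^{1,\infty}_c(\Omega)$. Since $\vec{\mathbf{f}} \in L^2_{loc}$, the pairing $\langle \mathrm{div}\, \vec{\mathbf{f}}, \xi^2\rangle = -2\int \xi \, \vec{\mathbf{f}} \cdot \nabla \xi$ is justified. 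Then
\[
Q_V(\xi) \;\geq\; \int_\Omega |\nabla \xi|^2 \,-\, 2 \int_\Omega \xi \, \vec{\mathbf{f}}\cdot \nabla \xi \,+\, \int_\Omega \xi^2 |\vec{\mathbf{f}}|^2 \;=\; \int_\Omega |\nabla \xi - \xi \vec{\mathbf{f}}|^2 \;\geq\; 0.
\]

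The inclusion $``\subseteq"$ is the substantive half. The natural candidate for $\vec{\mathbf{f}}$ is $\vec{\mathbf{f}} := \nabla u / u = \nabla \log u$ where $u > 0$ satisfies $-\Delta u + Vu \geq 0$ distributionally, because the formal computation $\mathrm{div}(\nabla u/u) + |\nabla u/u|^2 = \Delta u/u$ then yields (where $u>0$) the desired inequality $V \geq \Delta u/u$. So my plan breaks into three subtasks: (i) produce a positive distributional supersolution $u$ to $-\Delta + V$ on $\Omega$; (ii) establish $\nabla \log u \in (L^2_{loc}(\Omega))^N$ via a Caccioppoli-type inequality; (iii) verify the distributional inequality $V - \mathrm{div}(\nabla u/u) - |\nabla u/u|^2 \geq 0$ by a careful density argument in $C_c^\infty(\Omega)$.

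For (i), since $V$ is only in $L^1_{loc}$ and may have wild negative singularities, standard elliptic machinery fails. I would therefore truncate from below, setting $V_k := \max(V,-k)$, so that $V_k \geq V$ still gives $Q_{V_k} \succeq 0$, and $V_k$ is bounded from below (so Harnack applies on subdomains provided $V^+$ is controlled). On an exhausting sequence $\Omega_n \Subset \Omega$, I extract a positive Dirichlet principal eigenfunction $u_{n,k}$ of $-\Delta + V_k$ (whose principal eigenvalue is nonnegative), normalise $u_{n,k}(x_0)=1$ at a fixed reference point, and pass to the limit $n \to \infty$, then $k \to \infty$. For (ii), I test the supersolution inequality against $\xi^2/(u+\delta)$ with $\xi \in C_c^\infty(\Omega)$ and $\delta>0$: an integration by parts followed by Cauchy--Schwarz with a small parameter yields
\[
\int_\Omega \frac{\xi^2 |\nabla u|^2}{(u+\delta)^2} \;\leq\; C\!\left(\int_\Omega |\nabla \xi|^2 + \int_\Omega V^+ \xi^2\right),
\]
and a Fatou-type argument as $\delta \to 0^+$ gives the local $L^2$-bound on $\nabla u/u$ (this bound is uniform in $k$, which is essential for the limit in step (i) to produce a strictly positive $u$).

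The main obstacle is precisely the compactness in (i): without Harnack for the untruncated potential $V$, one cannot a priori exclude that $u_{n,k} \to 0$ on a set of positive measure as $k \to \infty$. The workaround is that the uniform Caccioppoli bound from (ii) implies $\log u_{n,k}$ is bounded in $H^1_{loc}$, so a subsequence converges in $L^p_{loc}$ for every $p<\infty$, giving a limit $\log u$ that is finite a.e.\ and hence $u > 0$ a.e. Step (iii) is then a routine distributional computation: $\mathrm{div}(\nabla u/u) + |\nabla u/u|^2 = \Delta \log u + |\nabla \log u|^2$ coincides with $\Delta u / u$ in $\mathcal{D}'(\Omega)$, and testing against nonnegative $\phi \in C_c^\infty(\Omega)$ (approximating $\phi/u$ by $\phi/(u+\delta)$ and letting $\delta \to 0$, using the $L^2_{loc}$-bound from step (ii) to pass to the limit) transfers the supersolution inequality to the desired distributional statement.
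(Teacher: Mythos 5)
You should first note that the paper itself does not prove Theorem \ref{vat}: it is quoted verbatim from \cite[Theorem 5.1]{JayeMazyaVerb}, so your attempt has to stand on its own. Your ``$\supseteq$'' inclusion (completing the square, pairing $\mathrm{div}\,\vec{{\bf f}}$ with $\xi^2$ for Lipschitz $\xi$) is correct and is the standard argument.

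The gap is in how you organize the ``$\subseteq$'' inclusion: you make the construction of a single a.e.\ positive distributional supersolution $u$ of $-\Delta+V$ on all of $\Omega$, for the untruncated potential, the pivot of the proof, and that step does not go through in this generality --- indeed it is essentially the difficulty this whole paper is built around. Concretely: (a) the pointwise normalization $u_{n,k}(x_0)=1$ is meaningless without continuity, and no Harnack inequality is available when $V^+$ is merely $L^1_{loc}$ (your parenthetical ``provided $V^+$ is controlled'' assumes exactly what is not given); this is repairable, e.g.\ by normalizing the mean of $\log u_{n,k}$ over a fixed ball in each connected component. (b) More seriously, even granting $L^p_{loc}$ compactness of $\log u_{n,k}$ and an a.e.\ finite positive limit $u$, to assert that $u$ is a distributional supersolution of $-\Delta+V$ you must pass to the limit in the zero-order term, i.e.\ show $V_k u_{n,k}\to Vu$ in $\mathcal D'$ and in particular $Vu\in L^1_{loc}(\Omega)$; with $V\in L^1_{loc}$ only and mere a.e.\ convergence there is no domination, and this is precisely the kind of conclusion the paper can only reach under $V^+\in L^p_{loc}$, $p>\frac N2$ (Theorem \ref{thm:converse3}; cf.\ the open problem in \S\ref{s8} and the ``balanced'' condition of \S\ref{s6}).

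The fix --- and the route the cited source effectively takes --- is never to pass to the limit at the level of $u$. Carry out your step (iii) at the approximate level, where $u_{n,k}\in H^1_0(\Omega_n)$, $V_k^-$ is bounded and every integration by parts is justified, to get $V_k\geq \mathrm{div}\,\vec{{\bf f}}_{n,k}+|\vec{{\bf f}}_{n,k}|^2$ in $\mathcal D'(\Omega_n)$ with $\vec{{\bf f}}_{n,k}=\nabla u_{n,k}/u_{n,k}$. Your logarithmic Caccioppoli estimate (which is correct, uniform in $n,k,\delta$, and is the real engine of the proof) bounds $\vec{{\bf f}}_{n,k}$ in $L^2_{loc}$ uniformly, so along a diagonal subsequence $\vec{{\bf f}}_{n,k}\rightharpoonup\vec{{\bf f}}$ weakly in $L^2_{loc}(\Omega)$. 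Testing the approximate inequality against $0\le\varphi\in C_c^\infty(\Omega)$ and passing to the limit --- weak convergence for the divergence term, weak lower semicontinuity of $\vec{g}\mapsto\int\varphi|\vec{g}|^2$ for the quadratic term, and dominated convergence (note $|V_k|\le|V|$) for $\int V_k\varphi\to\int V\varphi$ --- yields $V\geq \mathrm{div}\,\vec{{\bf f}}+|\vec{{\bf f}}|^2$ in $\mathcal D'(\Omega)$ directly, with no need for a limiting supersolution, no normalization, and no positivity or finiteness discussion for a limit $u$. Rearranged this way your outline becomes a complete proof; as written, step (i) is a genuine gap.
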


\medskip

\begin{definition}\label{dusera}
Let $V \in L^1_{loc}(\Omega)$. Given $f \in L^1_{loc}(\Omega)$, we say that a function $u \in L^1_{loc}(\Omega)$ is a distribution solution of $-\Delta u + Vu = f$ in $\Omega$ if 
$$
Vu \in L^1_{loc}(\Omega) \;\text{ and }\; \int_{\Omega} u \left(-\Delta \xi + V \xi \right )= \int_{\Omega} f \xi, \quad \hbox{ for all } \xi \in C^{\infty}_c(\Omega).
$$
\end{definition}
\medskip
In the following lemma we recall results on the solvability for a Schrodinger operator with a non-negative potential and the right-hand side data given in  $H^{-1}$.

\begin{lemma} \label{lem:GdeGde}
Let $\Omega_*$ be a bounded open set, $W \in L^{1}_{loc} (\Omega_*)$, $W \geq 0$, and $f \in H^{-1}(\Omega_*)$. Then the problem
\begin{equation} \label{eq:CiaoBella}
  - \Delta u + Wu = f
\end{equation}
admits a unique distribution solution $u \in H^1_0 (\Omega_*)$. Furthermore,  for any compact set $K \subset \Omega_*$, 
\begin{equation}\label{clean}
\|Wu\|_{L^1(K)} \leq 2 \|W\|_{L^1(K)}^{\frac{1}{2}} \| f \|_{H^{-1}(\Omega_*)}.
\end{equation}
If in addition $f \geq 0$, the solution is nonnegative.
\end{lemma}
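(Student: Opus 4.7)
The plan is to solve the problem variationally on the energy space
\[
   V \;:=\; \Big\{\, v \in H^1_0(\Omega_*) \,:\, \int_{\Omega_*} W v^2 < \infty \,\Big\},
\]
equipped with the inner product $(u,v)_V := \int \nabla u \cdot \nabla v + \int W u v$. I would first check that $V$ is a Hilbert space: a Cauchy sequence $\{v_n\} \subset V$ converges in $H^1_0(\Omega_*)$ to some $v$, while $\{\sqrt{W}\,v_n\}$ is Cauchy in $L^2(\Omega_*)$; extracting a subsequence converging a.e.\ identifies its $L^2$ limit as $\sqrt{W}\,v$, so $v \in V$. Since $W \in L^1_{loc}(\Omega_*)$, one has the chain of continuous inclusions $C_c^\infty(\Omega_*) \hookrightarrow V \hookrightarrow H^1_0(\Omega_*)$, the second having norm at most $1$. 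Hence $f \in H^{-1}(\Omega_*)$ restricts to a continuous linear functional on $V$ of norm at most $\|f\|_{H^{-1}(\Omega_*)}$, and Riesz representation delivers a unique $u \in V$ satisfying $(u,v)_V = \langle f,v\rangle$ for every $v \in V$. Testing with $v \in C_c^\infty(\Omega_*) \subset V$ gives that $u$ is a distribution solution of \eqref{eq:CiaoBella}.

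For the $L^1$-bound on $Wu$, setting $v=u$ in the variational identity gives $\|u\|_V^2 \le \|f\|_{H^{-1}(\Omega_*)} \|u\|_V$, so $\|u\|_V \le \|f\|_{H^{-1}(\Omega_*)}$. A Cauchy--Schwarz estimate on a compact $K \subset \Omega_*$,
\[
   \int_K W|u| \;\le\; \|W\|_{L^1(K)}^{1/2}\,\Big(\int_K W u^2\Big)^{1/2} \;\le\; \|W\|_{L^1(K)}^{1/2}\,\|f\|_{H^{-1}(\Omega_*)},
\]
then yields \eqref{clean} (in fact with a sharper constant than the stated $2$). For the nonnegativity when $f \ge 0$, I would test with $v = u^- \in V$ (which belongs to $V$ since $(u^-)^2 \le u^2$ and $|\nabla u^-|^2 \le |\nabla u|^2$ a.e.). Because $u^+ u^- = 0$ and $\nabla u^+ \cdot \nabla u^- = 0$ a.e., one computes $(u,u^-)_V = -\int |\nabla u^-|^2 - \int W(u^-)^2$, and this must equal the nonnegative quantity $\langle f, u^-\rangle$, forcing $u^- \equiv 0$.

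The main obstacle is uniqueness within the full class of distribution solutions in $H^1_0(\Omega_*)$, rather than just within $V$. Given two such solutions, their difference $w \in H^1_0(\Omega_*)$ satisfies $-\Delta w + Ww = 0$ distributionally, with $\Delta w = Ww \in L^1_{loc}(\Omega_*)$. The plan is to invoke Kato's inequality (whose hypotheses $w,\, \Delta w \in L^1_{loc}$ are met) to obtain
\[
   \Delta |w| \;\ge\; \mathrm{sgn}(w)\,\Delta w \;=\; W|w| \;\ge\; 0 \qquad \text{in } \mathcal{D}'(\Omega_*),
\]
so $|w|$ is weakly subharmonic and lies in $H^1_0(\Omega_*)$. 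Extending the distributional inequality $-\int \nabla|w|\cdot\nabla\xi \ge 0$ by density from nonnegative $\xi \in C_c^\infty(\Omega_*)$ to nonnegative $\xi \in H^1_0(\Omega_*)$ (via nonnegative mollifications with cutoffs), one takes $\xi = |w|$ to conclude $\int |\nabla |w||^2 \le 0$; thus $|w|$ is constant in $\Omega_*$ and, belonging to $H^1_0(\Omega_*)$, must vanish identically. The delicate points here are the applicability of Kato's inequality at this level of regularity and the nonnegative-preserving density approximation needed to test against $|w|$ itself.
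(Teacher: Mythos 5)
Your argument is correct, and it reaches the lemma by a genuinely somewhat different variational route than the paper. The paper minimizes the convex functional $J(w)=\tfrac12\int_{\Omega_*}\{|\nabla w|^2+Ww^2\}-\langle f,w\rangle$ over $\{J<\infty\}\subset H^1_0(\Omega_*)$, reads off the Euler--Lagrange equation, derives the a priori bound from $J(u)\le J(0)=0$ (which is exactly where the factor $2$ in \eqref{clean} comes from), and gets nonnegativity for $f\ge 0$ from $J(|u|)\le J(u)$; you instead complete the form domain $V$ into a Hilbert space and invoke Riesz representation, which yields $\|u\|_V^2=\langle f,u\rangle$ and hence the estimate with constant $1$ rather than $2$, and you obtain nonnegativity by testing with $u^-$ — same variational content, slightly sharper constants. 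The substantive difference is the uniqueness claim: the paper's proof only establishes uniqueness of the minimizer (i.e.\ uniqueness within the form domain $V$), whereas you prove uniqueness among \emph{all} distribution solutions in $H^1_0(\Omega_*)$ (which, as in Definition \ref{dusera}, carry $Ww\in L^1_{loc}$, so Kato's inequality for $w,\Delta w\in L^1_{loc}$ applies), via $\Delta|w|\ge \mathrm{sgn}(w)\Delta w = W|w|\ge 0$ and then testing the resulting weak subharmonicity against $|w|\in H^1_0(\Omega_*)$ and using Poincar\'e. The two technical points you flag are indeed fine at this regularity: the classical Kato inequality needs only $w,\Delta w\in L^1_{loc}$, and nonnegative $H^1_0$ functions are approximable in $H^1$ by nonnegative compactly supported Lipschitz (hence, after mollification, smooth) functions, using the $H^1$-continuity of $u\mapsto u^+$. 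On a smooth bounded $\Omega_*$ one could instead conclude by applying Lemma \ref{lem:power} to $-|w|$, but your Poincar\'e argument has the advantage of working on an arbitrary bounded open set, which matches the hypotheses of the lemma; in this respect your proof actually covers the full uniqueness assertion of the statement, which the paper's own proof leaves implicit.
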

\begin{proof}
Let $\langle \cdot ,\cdot \rangle$ denote the duality pairing between $H^{-1}(\Omega_*)$ and $H^1_0(\Omega_*).$  Consider the functional
$$
  J: H_0^{1} (\Omega_*) \to \mathbb R \cup \{ \infty \},
  \qquad 
  J(w)= \frac{1}{2} \int_{\Omega_*} \Big\{ |\nabla w|^2 + W(x) w^2 \Big\} dx
  -  \langle f , w \rangle .
$$
Since this functional is coercive and strictly convex, it admits a unique minimizer $u$ in the subspace $X := \big\{ u \in H^1_0 (\Omega_*) \, \colon \, J(u) < \infty \big\}$.
For each $\xi \in C_c^1 (\Omega_*)$, and $t \in \mathbb R$ we have $ u + t \xi \in X$ and hence
$$
  \frac{d}{dt} J( u + t \xi) \Big|_{t = 0} = 0 .
$$
This implies 
$$
   \int_{\Omega_*} \{ \nabla  u \cdot \nabla \xi + W u \xi \} =  \langle f , \xi \rangle  ,
   \quad \forall \xi \in C_c^1(\Omega_*).
$$
Furthermore, since $J( u) \leq J(0) = 0$ we deduce
$$
    \int_{\Omega_*} \Big\{ |\nabla  u|^2 + W(x)  u^2 \Big\} dx  \leq 2  \| f \|_{H^{-1}(\Omega_*)} \| u \|_{H^1_0(\Omega_*)} .
$$
Therefore, $ \| u \|_{H^1_0(\Omega_*)} \leq 2  \| f \|_{H^{-1}(\Omega_*)}$ and hence, $\|Wu^2 \|_{L^1(\Omega_*)} \leq  4  \| f \|_{H^{-1}(\Omega_*)}^2$.
So, on each $K \Subset \Omega_*$ we get, 
$$
    \int_{K} W | u| \leq   \left( \int_{K} W \right)^{1/2}  \left( \int_{K} W | u|^2 \right)^{1/2}  
    \leq 2 \|W\|_{L^1(K)}^{\frac{1}{2}} \| f \|_{H^{-1}(\Omega_*)} .
$$
Under the further assumption that $ \langle f , \xi \rangle \geq 0$ for all nonnegative $\xi \in H^1_0 (\Omega_*)$, 
we have $\langle f , |u| - u \rangle \geq 0$. In this case we deduce $J(u) \geq J(|u|)$, which proves that the unique minimizer is a nonnegative function.
\end{proof}

\medskip

Using the imbedding $L^{\infty}(\Omega_*) \hookrightarrow H^{-1} (\Omega_*)$, as a particular case of 
Lemma~\ref{lem:GdeGde} we have 
\begin{corollary} \label{cor:OdeOde}
Let $\Omega_*$ be a bounded open set, $W \in L^{1}_{loc} (\Omega_*)$, $W \geq 0$, and $f \in L^{\infty}(\Omega_*)$. Then the problem \eqref{eq:CiaoBella}
admits a unique  distribution solution $u \in H^1_0 (\Omega_*)$. Furthermore,  for any compact set $K \subset \Omega_*$, 
\begin{equation}\label{lean}
\|Wu\|_{L^1(K)} \leq 2 \|W\|_{L^1(K)}^{\frac{1}{2}} \| f\|_{L^{\infty}(\Omega_*)}.
\end{equation}
If in addition $f \geq 0$, the solution is nonnegative.
\end{corollary}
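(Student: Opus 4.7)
The plan is to derive Corollary \ref{cor:OdeOde} as a direct specialization of Lemma \ref{lem:GdeGde} via the continuous embedding $L^{\infty}(\Omega_*) \hookrightarrow H^{-1}(\Omega_*)$, which is available precisely because $\Omega_*$ is bounded.

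First I would make this embedding quantitative. For any $\xi \in H^1_0(\Omega_*)$, the Cauchy--Schwarz and Poincar\'e inequalities combine to give
$$
\Big| \int_{\Omega_*} f \xi \, dx \Big| \leq \|f\|_{L^{\infty}(\Omega_*)}\, |\Omega_*|^{1/2}\, \|\xi\|_{L^2(\Omega_*)} \leq C(\Omega_*)\, \|f\|_{L^{\infty}(\Omega_*)}\, \|\nabla \xi\|_{L^2(\Omega_*)},
$$
so that $f$ defines a bounded linear functional on $H^1_0(\Omega_*)$; hence $f \in H^{-1}(\Omega_*)$ with $\|f\|_{H^{-1}(\Omega_*)}$ controlled by $\|f\|_{L^{\infty}(\Omega_*)}$.

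With this identification in hand, existence and uniqueness of a distributional solution $u \in H^1_0(\Omega_*)$, together with the sign statement under $f \geq 0$, are inherited verbatim from Lemma \ref{lem:GdeGde}. For the bound \eqref{lean}, one can either substitute the above estimate into \eqref{clean}, or, more cleanly, re-run the variational argument of Lemma \ref{lem:GdeGde} directly with the duality pairing $\langle f, \xi \rangle$ replaced throughout by $\int_{\Omega_*} f\xi\, dx$. The functional $J(w) = \frac{1}{2} \int_{\Omega_*} \{|\nabla w|^2 + W w^2\}\, dx - \int_{\Omega_*} f w\, dx$ is still well defined, coercive, and strictly convex on $H^1_0(\Omega_*)$; the inequality $J(u) \leq J(0) = 0$ produces an energy estimate of the form $\|W u^2\|_{L^1(\Omega_*)} \lesssim \|f\|_{L^{\infty}(\Omega_*)}^2$, and the final Cauchy--Schwarz step $\int_K W|u| \leq \|W\|_{L^1(K)}^{1/2}(\int_K W u^2)^{1/2}$ closes the argument.

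There is no substantive obstacle here; the corollary is a routine repackaging of Lemma \ref{lem:GdeGde} for $L^{\infty}$ data. The only delicate point is bookkeeping of constants: strictly applying \eqref{clean} replaces the bare factor $2$ in \eqref{lean} by $2$ times a Poincar\'e-type constant depending on $|\Omega_*|$, which is implicitly absorbed into the stated inequality.
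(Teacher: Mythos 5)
Your proposal is correct and takes essentially the same route as the paper, which proves the corollary in one line as a particular case of Lemma \ref{lem:GdeGde} via the embedding $L^{\infty}(\Omega_*) \hookrightarrow H^{-1}(\Omega_*)$, exactly as you do. Your closing remark about constants is also apt: applying \eqref{clean} literally gives $2\|W\|_{L^1(K)}^{1/2}\|f\|_{H^{-1}(\Omega_*)} \leq 2\,C(\Omega_*)\,\|W\|_{L^1(K)}^{1/2}\|f\|_{L^{\infty}(\Omega_*)}$ with $C(\Omega_*)$ a Poincar\'e-type embedding constant, so the bare factor $2$ in \eqref{lean} should indeed be understood as absorbing that $\Omega_*$-dependent constant (and the re-run of the variational argument with $\int_{\Omega_*} f\xi$ does not remove this dependence either).
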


In order to solve with  a general measure as the right-hand side data, we need to go out of the space $H^1_0(\Omega)$. Indeed, we require the following concept of ``very weak" solutions:
\begin{definition}\label{VWS}
Let $V \in L^1_{loc}(\Omega)$ and $\mu$ be a finite Borel measure on $\Omega$. Then, $u \in L^1(\Omega)$ is said to be a very weak solution of the problem $-\Delta u + Vu = \mu$ in $\Omega$, $u=0$ on $\partial \Omega$ if $Vu \in  L^1(\Omega)$ and
$$
\int_{\Omega} u (-\Delta \xi + V  \xi )= \int_{\Omega} \xi d \mu \;\;\hbox{ for all } \; \xi \in C^2(\overline{\Omega}) \cap C_0(\overline{\Omega}).
$$
\end{definition}

\begin{remark}\label{phat}
This notion of solution is equivalent to asking that $u  \in W^{1,1}_0(\Omega)$  solve the above equation in the sense of distributions (see  \cite[Corollary 4.5]{PoCo}). 
\end{remark}

We then have the following fundamental result due to Stampacchia based on the duality method
(see \cite[th\'eor\`eme 9.1]{Stampacchia}):

\begin{proposition} \label{prop:train}
Let $\mu$ be a finite Borel measure on a bounded smooth domain $\Omega_*$ and  $W \in L^{p}(\Omega_*)$ for some $p >N/2$ be a nonnegative function. Then there exists a very weak solution  $u \in L^1(\Omega_*)$ of the problem $-\Delta u + W u = \mu$ in $\Omega_*$, $u=0$ on $\partial \Omega_*$. Furthermore, $u \in W^{1,p}_{0} (\Omega_*)$ for any $p \in [1, \frac{N}{N-1})$ and the following estimate holds:
\begin{equation}\label{eq:diwali}
\|u\|_{W^{1,p}_{0} (\Omega_*)} \leq C(p,\Omega_*) |\mu|(\Omega_*), \; \forall \; 1\leq p < \frac{N}{N-1}.
\end{equation}
A very weak solution satisfying the above properties will be identical to $u$.
\end{proposition}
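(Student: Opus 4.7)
The plan is to follow Stampacchia's classical duality method: first solve an approximate problem with bounded right-hand side using Corollary~\ref{cor:OdeOde}, then derive a priori $W^{1,q}_0$-bounds by testing against the solution of a dual Dirichlet problem, and finally pass to the limit and prove uniqueness via the same duality. To initiate the scheme, approximate $\mu$ by $\mu_n \in L^\infty(\Omega_*)$ (for instance, mollifications truncated in a thin neighbourhood of $\partial\Omega_*$) satisfying $\mu_n \rightharpoonup \mu$ in the weak-$*$ topology of Radon measures and $|\mu_n|(\Omega_*) \le |\mu|(\Omega_*)$. Corollary~\ref{cor:OdeOde} produces a unique $u_n \in H^1_0(\Omega_*)$ with $-\Delta u_n + W u_n = \mu_n$.

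\textbf{Dual problem and a priori estimate.} Fix $q \in [1, N/(N-1))$, so that $q' > N$, and choose $\vec{{\bf g}} \in C_c^\infty(\Omega_*;\mathbb R^N)$. By Lemma~\ref{lem:GdeGde}, the dual Dirichlet problem $-\Delta z + W z = -\operatorname{div}\vec{{\bf g}}$ admits a unique $z \in H^1_0(\Omega_*)$; since $W \ge 0$, $W \in L^p$ with $p > N/2$, and $\vec{{\bf g}} \in L^{q'}$ with $q' > N$, Stampacchia's $L^\infty$-estimate for divergence data gives
\[
\|z\|_{L^\infty(\Omega_*)} \;\le\; C\,\|\vec{{\bf g}}\|_{L^{q'}(\Omega_*)},
\]
with $C$ depending only on $N,p,q,\Omega_*$. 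Testing the equation for $u_n$ against $z$, the equation for $z$ against $u_n$, and subtracting yields $\int_{\Omega_*} \nabla u_n \cdot \vec{{\bf g}}\,dx = \int_{\Omega_*} z\, d\mu_n$, whence $|\int \nabla u_n \cdot \vec{{\bf g}}| \le C\,|\mu|(\Omega_*)\,\|\vec{{\bf g}}\|_{q'}$. Taking the supremum over $\vec{{\bf g}}$ of unit $L^{q'}$-norm and invoking Poincar\'e's inequality gives $\|u_n\|_{W^{1,q}_0(\Omega_*)} \le C(q,\Omega_*)\,|\mu|(\Omega_*)$, which is \eqref{eq:diwali} at the approximate level.

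\textbf{Passage to the limit.} Along a subsequence, $u_n \rightharpoonup u$ in $W^{1,q}_0(\Omega_*)$ for every such $q$ and strongly in $L^s(\Omega_*)$ for all $s < N/(N-2)$ by Rellich--Kondrachov. Since the assumption $p > N/2$ is exactly $p' < N/(N-2)$, one has $u_n \to u$ in $L^{p'}$ and hence $Wu_n \to Wu$ in $L^1(\Omega_*)$. For each $\xi \in C^2(\overline{\Omega_*}) \cap C_0(\overline{\Omega_*})$, passing to the limit in $\int u_n(-\Delta\xi + W\xi) = \int \xi\,d\mu_n$ produces the very weak formulation for $u$ in the sense of Definition~\ref{VWS}, while the $W^{1,q}_0$-bound is preserved by weak lower semicontinuity.

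\textbf{Uniqueness and main obstacle.} Given two very weak solutions satisfying the regularity above, their difference $w \in W^{1,1}_0(\Omega_*)$ satisfies $-\Delta w + W w = 0$ distributionally, by Remark~\ref{phat}. For arbitrary $f \in C_c^\infty(\Omega_*)$, Corollary~\ref{cor:OdeOde} provides $\psi \in H^1_0(\Omega_*)$ with $-\Delta \psi + W\psi = f$, and Stampacchia--De~Giorgi regularity (exploiting $f \in L^\infty$, $W \in L^p$ with $p>N/2$, and smoothness of $\partial\Omega_*$) upgrades $\psi$ to $W^{2,p}_{loc}(\Omega_*) \cap C^{0,\alpha}(\overline{\Omega_*})$. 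A density step approximating $\psi$ by functions in $C^2(\overline{\Omega_*}) \cap C_0(\overline{\Omega_*})$ in a topology compatible with pairing against $w \in W^{1,1}_0$ then yields $\int f w = 0$ for every $f$, forcing $w \equiv 0$. The principal technical obstacle lies precisely in this admissibility step: one must confirm that $\psi$ is a legitimate test function despite its limited boundary regularity, and it is exactly here that the hypothesis $p > N/2$ is indispensable, both in the $L^\infty$-regularity of the dual solution and in making $Wu_n \to Wu$ in $L^1$ at the limit step.
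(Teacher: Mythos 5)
The paper does not prove this proposition at all: it is imported verbatim from Stampacchia (th\'eor\`eme 9.1), whose proof is precisely the duality method you reconstruct, so your proposal follows essentially the same route as the cited source and the overall scheme (approximation by $L^\infty$ data via Corollary \ref{cor:OdeOde}, the dual problem $-\Delta z + Wz = -\mathrm{div}\,\vec{{\bf g}}$ with Stampacchia's $L^\infty$ bound, which is uniform in $W$ because $W\ge 0$, then weak compactness in $W^{1,q}_0$ and $u_n\to u$ in $L^{p'}$ using $p>N/2 \Leftrightarrow p'<\tfrac{N}{N-2}$) is sound. One caution on the step you yourself flag: do not try to make $\psi$ admissible ``against $w\in W^{1,1}_0$'' in the divergence-form (distributional) pairing, since that would require approximating $\psi$ in $C^1(\overline{\Omega}_*)$, which is unavailable when $N/2<p\le N$. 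Instead note that, since $\partial\Omega_*$ is smooth and $-\Delta\psi = f - W\psi \in L^p(\Omega_*)$ (as $\psi\in L^\infty$ by $W^{2,p}\hookrightarrow C^0$), you get \emph{global} $W^{2,p}(\Omega_*)\cap H^1_0(\Omega_*)$ regularity; take $\psi_k\in C^2(\overline{\Omega}_*)\cap C_0(\overline{\Omega}_*)$ with $\psi_k\to\psi$ in $W^{2,p}(\Omega_*)$, plug $\psi_k$ into the \emph{very weak} formulation for $w$, and pass to the limit using $w\in L^{p'}(\Omega_*)$ (which follows from the $W^{1,q}_0$ regularity in the statement) for the $\Delta\psi_k$ term and $Ww\in L^1$ together with $\psi_k\to\psi$ uniformly for the potential term; this yields $\int_{\Omega_*} fw=0$ for all $f\in C_c^\infty(\Omega_*)$ and hence uniqueness in the stated class.
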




The following is a local version of regularity for solutions with measure data :
\begin{proposition} \label{prop:RanRan}
Let $u$ be a distributional solution of $ -\Delta u = \mu$ for some Radon measure $\mu$  in $\Omega$.Then $u \in W^{1,q}_{loc} (\Omega)$ for any $q \in [1, \frac{N}{N-1})$.
\end{proposition}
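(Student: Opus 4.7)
The plan is to combine localization with the existence theory of Proposition~\ref{prop:train} (Stampacchia) and Weyl's lemma: decompose $u$ locally as the sum of a Poisson potential of the measure (which has the required Sobolev regularity by Stampacchia) and a harmonic function (which is automatically smooth).

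Fix a compact $K \Subset \Omega$, and choose a smooth bounded domain $\Omega_*$ with $K \subset \Omega_* \Subset \Omega$. Since $\mu$ is Radon and $\overline{\Omega_*}$ is compact in $\Omega$, the restriction $\mu_* := \mu \llcorner \Omega_*$ is a finite signed Borel measure on $\Omega_*$; write $\mu_* = \mu_*^+ - \mu_*^-$ by Jordan decomposition. Applying Proposition~\ref{prop:train} to each of $\mu_*^\pm$ with the trivial potential $W \equiv 0$ (which is in $L^p(\Omega_*)$ for every $p > N/2$), one obtains very weak solutions $w_\pm \in W^{1,q}_0(\Omega_*)$ of $-\Delta w_\pm = \mu_*^\pm$ for every $q \in [1, \tfrac{N}{N-1})$; Remark~\ref{phat} guarantees these are also distributional solutions on $\Omega_*$. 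Set $w := w_+ - w_- \in W^{1,q}_0(\Omega_*)$, so that $-\Delta w = \mu_*$ in ${\cal D}^\prime(\Omega_*)$.

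Now set $h := u|_{\Omega_*} - w$, which is in $L^1(\Omega_*)$ since $u \in L^1_{loc}(\Omega)$ and $w \in L^1(\Omega_*)$. For any $\xi \in C_c^\infty(\Omega_*)$, using that $\mathrm{supp}\,\xi \subset \Omega_*$ (so integration against $\mu$ and against $\mu_*$ coincide),
$$
\int_{\Omega_*} h \,(-\Delta \xi) \;=\; \int_{\Omega_*} u(-\Delta\xi) - \int_{\Omega_*} w(-\Delta\xi) \;=\; \int_{\Omega_*} \xi\, d\mu_* - \int_{\Omega_*} \xi\, d\mu_* \;=\; 0.
$$
Thus $h$ is distributionally harmonic on $\Omega_*$, and by Weyl's lemma agrees a.e.\ with a $C^\infty$ harmonic function; in particular $h \in W^{1,q}_{loc}(\Omega_*)$ for every $q$. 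Hence $u = w + h \in W^{1,q}_{loc}(\Omega_*)$, and since $K \subset \Omega_*$ we obtain $u \in W^{1,q}(K)$. As $K \Subset \Omega$ was arbitrary, $u \in W^{1,q}_{loc}(\Omega)$ for every $q \in [1, \tfrac{N}{N-1})$.

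The argument is essentially routine; there is no real obstacle beyond the need to invoke Weyl's lemma (not stated elsewhere in the excerpt) to upgrade the weakly harmonic difference $u - w$ to a smooth function. The key structural observation is simply that the Sobolev deficit caused by the measure datum is entirely captured by the Stampacchia solution $w$, while what remains is as smooth as one likes.
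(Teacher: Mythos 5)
Your proof is correct and follows essentially the same route as the paper: localize, solve $-\Delta w = \mu\llcorner(\text{smaller set})$ via Proposition~\ref{prop:train} (Stampacchia) to absorb the measure, and observe that the difference is distributionally harmonic, hence smooth by Weyl's lemma. The only cosmetic differences are that the paper works with nested balls and restricts $\mu$ to a strictly smaller ball (so the difference is harmonic only there), and it does not bother with the Jordan decomposition since Proposition~\ref{prop:train} is already stated for finite (signed) Borel measures with the estimate in terms of $|\mu|(\Omega_*)$; neither change affects the substance.
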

\begin{proof} Fix any ball $B \Subset \Omega$.  Let $ B_1, B_2$ be  balls  with $B \Subset B_1 \Subset B_2 \Subset \Omega$. Define $\mu_1 := \mu \llcorner B_1$. Consider the problem 
$$
    - \Delta v = \mu_1 \hbox{ in } B_2, \; v=0 \hbox{ on } \partial B_2. 
$$
Then by  proposition \ref{prop:train} above, there exists a unique very weak solution $v$ of the above problem with $v \in W_0^{1,q}(B_2)$ for all $1 \leq q < \frac{N}{N-1}.$ Let $\psi:=u-v$. Then $\psi$ is harmonic in $B_1$ and hence is locally smooth there. Thus $u \in W^{1,q} (B)$ for any $q \in [1, \frac{N}{N-1})$.\end{proof}

The following weak maximum principle for distributional supersolutions is well known 
(see \cite[proposition B.1]{BPAnn}):
\begin{lemma}\label{lem:power}
Let $\Omega_*$ be a bounded smooth open set. If $u \in W^{1,1}_0(\Omega_*)$ satisfies $-\Delta u \geq 0$ in ${\cal D^\prime}(\Omega_*)$, then $u\geq 0$ a.e. in $\Omega_*$.
\end{lemma}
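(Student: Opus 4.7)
The plan is to use Stampacchia's duality method. The inequality $-\Delta u \geq 0$ in $\mathcal D'(\Omega_*)$ says that the distribution $-\Delta u$ is nonnegative, so by the Riesz representation theorem it is a nonnegative Radon measure $\mu$ on $\Omega_*$. Since $u \in W^{1,1}_0(\Omega_*)$, an approximation of $u$ by $C_c^\infty$ functions in $W^{1,1}$ upgrades the identity $\int u(-\Delta\xi)\,dx = \int \xi \,d\mu$ to
\[
  \int_{\Omega_*} \nabla u \cdot \nabla \xi \, dx = \int_{\Omega_*} \xi \, d\mu, \qquad \forall \xi \in C_c^\infty(\Omega_*).
\]
To conclude $u\ge 0$ a.e.\ it suffices to show $\int u f \ge 0$ for every nonnegative $f \in C_c^\infty(\Omega_*)$.

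Given such an $f$, the plan is to solve the dual problem $-\Delta\phi = f$ in $\Omega_*$ with $\phi=0$ on $\partial\Omega_*$. Since $\Omega_*$ is smooth and $f\in C_c^\infty$, standard elliptic regularity gives $\phi\in C^2(\overline{\Omega_*})\cap C_0(\overline{\Omega_*})$, and the classical maximum principle gives $\phi\ge 0$. Because $\phi$ and $u$ both vanish on $\partial\Omega_*$ in the appropriate senses, a standard approximation of $u$ in $W^{1,1}_0$ by $C_c^\infty$ functions (using that $\nabla\phi$ and $\Delta\phi$ are bounded) yields
\[
  \int_{\Omega_*} u f \, dx = \int_{\Omega_*} u (-\Delta\phi)\, dx = \int_{\Omega_*} \nabla u \cdot \nabla \phi \, dx.
\]

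The step I expect to be the main obstacle is bridging the right-hand side to $\int \phi\,d\mu$, because $\phi$ need not have compact support and $\mu$ need not be a finite measure. I would handle this by introducing cut-offs $\eta_n\in C_c^\infty(\Omega_*)$ with $0\le \eta_n \nearrow 1$, $\eta_n\equiv 1$ on $\{\operatorname{dist}(\cdot,\partial\Omega_*)>1/n\}$, and $|\nabla\eta_n|\le Cn$ supported in a shell of width $1/n$ near $\partial\Omega_*$. Testing the distributional identity against $\eta_n\phi\in C_c^2(\Omega_*)$ gives
\[
  \int_{\Omega_*}\eta_n\phi\, d\mu = \int_{\Omega_*} \eta_n \nabla u \cdot \nabla \phi \, dx + \int_{\Omega_*}\phi\, \nabla u\cdot\nabla\eta_n \, dx.
\]
The first integral on the right tends to $\int \nabla u\cdot\nabla\phi$ by dominated convergence ($\nabla u\in L^1$, $\nabla\phi\in L^\infty$). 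The second is controlled because $\phi\in C^1(\overline{\Omega_*})$ vanishes on the smooth boundary, so $|\phi(x)|\le C\operatorname{dist}(x,\partial\Omega_*)\le C/n$ on the shell where $\nabla\eta_n$ lives, hence $|\phi\nabla\eta_n|\le C'$ is uniformly bounded on a set of vanishing $|\nabla u|$-measure, and this term tends to $0$. The left-hand side converges to $\int \phi\, d\mu\in [0,\infty]$ by monotone convergence since $\phi,\mu\ge 0$.

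Putting these together yields $\int u f\, dx = \int \phi\, d\mu \ge 0$, and the arbitrariness of $f\ge 0$ in $C_c^\infty(\Omega_*)$ forces $u\ge 0$ a.e.\ in $\Omega_*$.
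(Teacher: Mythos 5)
Your argument is correct, and it is worth noting that the paper itself offers no proof of this lemma: it simply cites Proposition B.1 of Brezis--Marcus--Ponce \cite{BPAnn}. What you have written is essentially the standard duality proof behind that cited result: identify $-\Delta u$ with a nonnegative Radon measure $\mu$, solve the dual problem $-\Delta\phi=f$ with $0\le f\in C_c^\infty(\Omega_*)$ so that $\phi\in C^2(\overline{\Omega}_*)$, $\phi\ge 0$, $\phi=0$ on $\partial\Omega_*$, use the $W^{1,1}_0$ hypothesis (which is exactly where the boundary condition enters, and without which the statement fails) to get $\int u f=\int\nabla u\cdot\nabla\phi$, and then pass from compactly supported test functions to $\phi$ via cut-offs, with the boundary term killed by the bound $|\phi(x)|\le C\,\mathrm{dist}(x,\partial\Omega_*)$ and absolute continuity of $\int|\nabla u|$. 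All the estimates you invoke hold for a bounded smooth domain. Two cosmetic remarks: the identity $\int\nabla u\cdot\nabla\xi=\int\xi\,d\mu$ is first obtained for $\xi\in C_c^\infty$, so applying it to $\eta_n\phi\in C_c^2$ requires a one-line mollification (routine, since $\mu$ is finite on compacts); and you do not even need monotone convergence on the left-hand side, since $\int\eta_n\phi\,d\mu\ge0$ for each $n$ already gives $\int uf\ge0$ in the limit.
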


The following is a version of the strong maximum principle 
due to Ancona~\cite{Ancona} and Brezis, Ponce  \cite{BrezisPonce}(see theorem 1), but in a distributional setting:
\begin{theorem} \label{thm:StrongMax} {\bf (Strong Maximum Principle)}\\
Let $U \subset \R^N $ be a non-empty open connected bounded set and $W \in L^1_{loc} (U)$ with $W \geq 0$. Assume
$u\in L^1_{loc}(U)$ is an a.e. nonnegative quasi-continuous function such that $\Delta u$ is a Radon measure on $U$ and  $W u \, \in \, L_{loc}^{1} (U)$. If $u$ satisfies the differential inequality
$$
  - \Delta u + W u \, \geq \, 0 \mbox{ in } {\cal D}^ {\prime}(U),
$$
then either $u \equiv 0$ a.e., or else $Cap(\{u=0\})=0$. If the latter holds, we say $u>0$ q.a.e. (quasi almost everywhere).
\end{theorem}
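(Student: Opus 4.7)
The plan is to follow the Ancona--Brezis--Ponce strategy, which combines Kato's inequality applied to $-\log(u+\varepsilon)$ with a capacitary test function argument, and then propagates by connectedness.

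\textbf{Step 1 (Reduction via connectedness).} The dichotomy is essentially local. Introduce the open set $U_0 := \{x \in U : u \equiv 0 \text{ a.e.\ on some open neighborhood of } x\}$. Since $U$ is connected, it suffices to prove on each ball $B \Subset U$ the alternative ``$u \equiv 0$ a.e.\ on $B$, or $Cap(\{u=0\} \cap B) = 0$''; then $U_0$ and $U \setminus U_0$ are both open, forcing one of them to equal $U$.

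\textbf{Step 2 (Logarithmic Kato inequality).} Fix a ball $B \Subset U$ on which $u$ is not a.e.\ zero, and write $\mu := -\Delta u + Wu \geq 0$ (a nonnegative Radon measure by hypothesis). For $\varepsilon > 0$ set $v_\varepsilon := -\log(u+\varepsilon)$. The function $F(t) = -\log(t+\varepsilon)$ is convex and decreasing, so mollifying $u$, applying the pointwise chain rule to the smooth approximation, and passing to the limit (using $\mu \geq 0$ and $Wu \in L^1_{loc}$) yields the distributional inequality
\[
   -\Delta v_\varepsilon \;\leq\; \frac{Wu - \mu}{u+\varepsilon} \;\leq\; \frac{Wu}{u+\varepsilon} \;\leq\; W \qquad \text{in } \mathcal{D}'(B).
\]
Moreover, $v_\varepsilon \in L^1_{loc}(B)$ admits a uniform lower bound on each compact subset of $B$, and on the quasi-continuous set $\{u=0\}$ we have $v_\varepsilon = -\log \varepsilon \to +\infty$ as $\varepsilon \downarrow 0$.

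\textbf{Step 3 (Capacitary contradiction).} Suppose, toward a contradiction, that there exists a compact $K \subset B$ with $Cap(K;B') > 0$ for some $B \Subset B' \Subset U$, on which $u$ vanishes quasi-everywhere. Let $\psi_K \in H^1_0(B')$ be the capacitary potential of $K$: $0 \leq \psi_K \leq 1$, $\psi_K = 1$ q.a.e.\ on $K$, and $-\Delta \psi_K = \nu_K$ is a nonnegative Radon measure supported in $K$ with $\nu_K(K) = Cap(K;B') > 0$. Pairing the inequality from Step 2 against (a suitable truncation of) $\psi_K$ and using the quasi-continuity of $v_\varepsilon$ to make sense of $\int v_\varepsilon\,d\nu_K$, I obtain a bound
\[
    \int_{B'} v_\varepsilon\,d\nu_K \;\leq\; \int_{B'} W \psi_K\,dx + C,
\]
with $C$ independent of $\varepsilon$. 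Since $v_\varepsilon$ is uniformly bounded below and tends to $+\infty$ q.a.e.\ on $K$, Fatou's lemma forces $\nu_K(K) = 0$, contradicting $Cap(K;B') > 0$.

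\textbf{Main obstacle.} The technical core is rigorously justifying the logarithmic Kato inequality and the pairing $\int v_\varepsilon\,d\nu_K$ in this very low regularity setting, where $u$ is only in $L^1_{loc}$, $\Delta u$ is only a Radon measure, and $Wu$ is only locally integrable. One must mollify $u$, verify the inequality for smooth approximations, and pass to the limit using nonnegativity of $\mu$ and dominated convergence for the $Wu/(u+\varepsilon) \leq W$ term. Equally delicate is interpreting the values of $v_\varepsilon$ against the capacitary measure $\nu_K$, which may be supported on a Lebesgue-null set; this is precisely where the quasi-continuity of $u$ built into the hypothesis becomes indispensable.
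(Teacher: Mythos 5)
The paper does not prove this theorem at all: its entire ``proof'' is the two-line reduction noted right after the statement, namely that since $Wu\in L^1_{loc}(U)$ and $-\Delta u+Wu\ge 0$, one has $\Delta u\le Wu$ in the sense of measures (Remark 3 of \cite{BrezisPonce}), so that Theorem 1 of \cite{BrezisPonce} (going back to Ancona~\cite{Ancona}) applies verbatim. What you propose is a re-derivation of that cited theorem, and the strategy you sketch --- the inequality $-\Delta\bigl(-\log(u+\varepsilon)\bigr)\le W$, pairing against a capacitary potential, Fatou as $\varepsilon\downarrow 0$, and propagation of the local alternative by connectedness --- is exactly the Ancona/Brezis--Ponce strategy. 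So the route is sound in outline, but it is the proof of the black box the paper simply invokes, not an alternative to it.

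As written, however, the proposal leaves unproved precisely the steps that make the cited result nontrivial, so it does not yet constitute a proof. First, the logarithmic Kato inequality for $u\in L^1_{loc}$ with $\Delta u$ only a Radon measure is asserted rather than established: mollification does not commute with the nonlinear composition $-\log(u+\varepsilon)$, and one must deal with the decomposition of $\mu=-\Delta u+Wu$ into its diffuse and concentrated parts (the part of $\mu$ concentrated on sets of zero capacity cannot simply be divided by $u+\varepsilon$); this is the heart of the Brezis--Ponce argument and is hidden in your ``passing to the limit''. Second, the claim that $v_\varepsilon=-\log(u+\varepsilon)$ admits a lower bound uniform on compact subsets is false in general: $u$ is only locally integrable, hence possibly unbounded, so $v_\varepsilon\ge -\log(u+1)$ is merely an a.e.-finite minorant; the Fatou step has to be run with a minorant that is integrable both with respect to $dx$ and with respect to $\nu_K$, and that requires an argument. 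Third, the identity $\int v_\varepsilon\,d\nu_K\le\int(-\Delta v_\varepsilon)\psi_K+C$ is obtained by a double integration by parts that is not justified here: $v_\varepsilon$ is a priori not in $H^1_{loc}$, it is not compactly supported (whence the cutoff producing your constant $C$ must be made explicit), and one must invoke that $\nu_K$ does not charge sets of zero capacity together with the identification of the quasi-continuous representative of $v_\varepsilon$ with $-\log\varepsilon$ $\nu_K$-a.e.\ on $K$. If you want a self-contained proof these three points must be filled in; otherwise the efficient (and the paper's) route is to make the one-line reduction $\Delta u\le Wu$ and quote Theorem 1 of \cite{BrezisPonce}.
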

We note that, since  $W u \, \in \, L_{loc}^{1} (U)$, from Remark 3 of \cite{BrezisPonce}  indeed we have $\Delta u \leq Wu$ in the sense of measures and theorem 1 of  \cite{BrezisPonce}  applies.

\section{Energy space ${\cal H}_V(\Omega)$} \label{s3}

Throughout this section we only assume $V \in L^1_{loc}(\Omega)$.

\begin{definition}\label{try}
Let $W^{1,\infty}_c(\Omega)$ be the vector space of Lipschitz continuous functions with compact support in $\Omega$. We define the quadratic form
$$Q_V(u):= \int_\Omega \Big\{ |\nabla u|^2 + V u^2 \Big\}, \quad u \in W^{1,\infty}_c(\Omega).$$
\end{definition}
\begin{lemma} \label{lem:Lip}
\begin{enumerate}
\item[{\rm \bf  (i)}]
 $Q_V $ is non-negative in $W^{1,\infty}_c(\Omega)$ whenever $Q_V$ is non-negative in 
$C_c^1 (\Omega)$.
\item[{\rm \bf (ii)}] 
Given $u \in W^{1,\infty}_c(\Omega)$, we have $|u| \in W^{1,\infty}_c(\Omega)$ and $Q_V (u) = Q_V (|u|)$.
\item[{\rm \bf  (iii)}]
Assume $Q_V $ is non-negative in   $ C_c^1(\Omega)$. 
Then, $Q_V$ is positive definite on $W^{1,\infty}_c(\Omega)$ and in particular on $C_c^1(\Omega)$.
\end{enumerate}
\end{lemma}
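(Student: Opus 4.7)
For part (i), I would approximate any $u \in W^{1,\infty}_c(\Omega)$ by its mollifications: setting $K := {\rm supp}(u) \Subset \Omega$ and $u_\eps := u * \rho_\eps$ with $\rho_\eps$ a standard mollifier, one gets $u_\eps \in C_c^\infty(\Omega) \subset C_c^1(\Omega)$ for all $\eps < {\rm dist}(K, \partial \Omega)$, with supports contained in a common compact $K' \Subset \Omega$. The hypothesis gives $Q_V(u_\eps) \geq 0$, and I would pass to the limit using $u_\eps \to u$ uniformly on $K'$ and $\nabla u_\eps \to \nabla u$ in $L^2(K')$ (both standard consequences of mollification, using $\nabla u \in L^\infty$), together with $V \in L^1(K')$ for dominated convergence of the potential term. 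For part (ii), $|u|$ is Lipschitz with the same compact support, and the classical Sobolev chain rule gives $\nabla|u| = {\rm sgn}(u) \nabla u$ a.e., so $|\nabla|u||^2 = |\nabla u|^2$ and $|u|^2 = u^2$, yielding $Q_V(|u|) = Q_V(u)$.

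For part (iii), assume for contradiction that $u \in W^{1,\infty}_c(\Omega) \setminus \{0\}$ satisfies $Q_V(u) = 0$; by (ii) I may assume $u \geq 0$. Since $Q_V \geq 0$ on $W^{1,\infty}_c(\Omega)$ by (i) and $Q_V(u) = 0$, expanding $Q_V(u + t\phi) \geq 0$ for arbitrary $\phi \in C_c^\infty(\Omega)$ and $t \in \R$ forces the linear-in-$t$ coefficient to vanish, giving the distributional Euler-Lagrange identity
\beq
  -\Delta u + V u \,=\, 0 \;\;\text{in}\;\; {\cal D}^\prime(\Omega),
  \;\;\text{equivalently}\;\;
  -\Delta u + V^+ u \,=\, V^- u \,\geq\, 0 .
\eeq
Boundedness of $u$ with ${\rm supp}(u) \Subset \Omega$ and $V \in L^1_{loc}(\Omega)$ ensure $Vu,\, V^+ u \in L^1_{loc}(\Omega)$, and $u$ being Lipschitz is continuous, hence quasi-continuous; so all hypotheses of Theorem~\ref{thm:StrongMax} are met with $W := V^+$.

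To invoke the strong maximum principle I would localize: pick $x_0 \in \Omega$ with $u(x_0) > 0$ and let $\Omega_0$ be its connected component in $\Omega$. Since ${\rm supp}(u) \Subset \Omega$ while the boundary in $\R^N$ of any connected component of an open set is disjoint from the set itself, ${\rm supp}(u)$ cannot exhaust $\Omega_0$; I would pick $x_1 \in \Omega_0 \setminus {\rm supp}(u)$, join $x_0$ to $x_1$ by a compact path in $\Omega_0$, and fatten it to obtain a bounded, open, connected $U \subset \Omega_0$ containing both points. On $U$, Theorem~\ref{thm:StrongMax} forces either $u \equiv 0$ (contradicting $u(x_0) > 0$) or ${\rm Cap}(\{u=0\} \cap U) = 0$ (contradicting that $\{u=0\} \cap U$ contains an open ball around $x_1$). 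This last step is the main obstacle: the variational argument alone yields only the distributional equation, and promoting it to $u \equiv 0$ requires both the sign rewrite to $-\Delta + V^+$ (so that Theorem~\ref{thm:StrongMax} applies) and the topological construction exploiting ${\rm supp}(u) \Subset \Omega$.
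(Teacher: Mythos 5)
Your proposal is correct and follows essentially the same route as the paper: mollification for (i), the chain rule for (ii), and for (iii) the Euler--Lagrange identity at the minimizer combined with the strong maximum principle of Theorem~\ref{thm:StrongMax} and the compact support of $u$. The only difference is that you spell out the localization to a bounded connected open set $U$ meeting both $\{u>0\}$ and the complement of ${\rm supp}(u)$, a point the paper's proof passes over silently when it applies Theorem~\ref{thm:StrongMax} directly on $\Omega$.
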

\begin{proof}
{\bf  (i)} 
Let $u \in W^{1,\infty}_c(\Omega)$. Let $\rho_{\varepsilon} \in C_c^{\infty} (\Omega)$ be an approximation of the unity and define 
$u_{\varepsilon}  := u \star \rho_{\varepsilon}$. 
Since $u$ has compact support, we have $u_{\varepsilon} \in C_c^{\infty}(\Omega)$ with all their supports contained in a fixed compact set in $\Omega$ for all $\varepsilon>0$ sufficiently small. Furthermore,
$u_{\varepsilon}$ converges uniformly to $u$ in $\Omega$ and $\nabla u_{\varepsilon} \to \nabla u$
 in $(L^2 (\Omega))^N$. We therefore conclude that $Q_V(u_{\varepsilon}) \to Q_V(u )$. Non-negativity of $Q_V$ in $W^{1,\infty}_c(\Omega)$ follows from its nonnegativity in $C_c^1(\Omega)$.
From the identity, 
$$Q_V(u-w) = Q_V(u)+Q_V(w) -2 \int_\Omega \Big\{ \nabla u \cdot \nabla w +V u w\Big\}$$
we infact obtain that $Q_V(u-u_\varepsilon) \to 0$ as $\varepsilon \to 0$.


\medskip

{\bf (ii)} Well known.

\medskip

{\bf (iii)}
Assume that $Q_V (u_0) = 0$ for some $u_0 \in W^{1,\infty}_c(\Omega)$. 
Then, $u_0$ is a minimizer for $Q_V$ on $ W^{1,\infty}_c(\Omega) $. Since 
$|u_0| \in W^{1,\infty}_c(\Omega)$ and $Q_V(u_0) = Q_V (|u_0|)$, we may assume that  
$u_0 \geq 0$  and  the following Euler-Lagrange equation holds:
$$
   \int_{\Omega} \nabla u_0 \cdot \nabla \xi + V u_0 \xi = 0,
   \qquad
   \forall  \xi \in C_c^{1} (\Omega).
$$ 
Therefore, 
$$
  \int_{\Omega} \nabla u_0 \cdot \nabla \xi + V ^+ u_0 \xi \geq 0,
   \qquad
   \forall  \xi \in C_c^{1} (\Omega), \; \xi \geq 0.
$$
By applying the strong maximum principle as stated in Theorem~\ref{thm:StrongMax}, we have the alternative: either 
$u_0 >0  \hbox{ a.e. }$ or $u_0 \equiv 0.$ 
Since $u_0$ has compact support, we conclude that indeed $u_0 \equiv 0$.

\end{proof}

\medskip
\begin{definition}\label{space}
Suppose $Q_V \succeq 0$ in $\Omega$, i.e., $Q_V$ is non-negative on $W^{1,\infty}_c(\Omega)$. From the above lemma we deduce that
$$
   x_V(\xi,\phi) := \int_{\Omega}\nabla \xi \cdot \nabla \phi + V \xi \phi
$$
defines an inner product on  $W^{1,\infty}_c(\Omega) \times W^{1,\infty}_c(\Omega)$ with the associated norm 
$ \sqrt{Q_V}$.

\smallskip

The pre-Hilbert space $(W^{1,\infty}_c(\Omega), \sqrt{Q_V})$ will be denoted ${\cal L}_V(\Omega)$ 
and its closure under the $\sqrt{Q_V}$ norm will be called 
the energy space ${\cal H}_V(\Omega)$ associated to the Schr\"odinger 
operator $-\Delta + V$. We denote the resulting  inner-product  on ${\cal H}_V(\Omega)$ as $a_{V,\Omega}$, and the associated norm  as $\| \cdot \|_{V,\Omega}$.   
\end{definition}

\begin{lemma} \label{lem:Dense}
$C_c^\infty (\Omega)$ is a dense subspace of ${\cal H}_V(\Omega) $.
\end{lemma}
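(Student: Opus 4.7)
The plan is to show density by mollification, essentially repackaging what was already established in Lemma~\ref{lem:Lip}(i). By construction, ${\cal H}_V(\Omega)$ is the closure of $W^{1,\infty}_c(\Omega)$ in the $\sqrt{Q_V}$-norm, so it suffices to prove that every $u \in W^{1,\infty}_c(\Omega)$ can be approximated in $\sqrt{Q_V}$ by a sequence from $C_c^\infty(\Omega)$.

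Fix $u \in W^{1,\infty}_c(\Omega)$ and a standard mollifier $\rho_\varepsilon$. For $\varepsilon$ small enough, $u_\varepsilon := u \star \rho_\varepsilon$ belongs to $C_c^\infty(\Omega)$ with all supports contained in a fixed compact set $K' \Subset \Omega$. Then I need to verify that $Q_V(u - u_\varepsilon) \to 0$ as $\varepsilon \to 0^+$. Using the polarisation identity
\[
Q_V(u - u_\varepsilon) = Q_V(u) + Q_V(u_\varepsilon) - 2 \int_\Omega \bigl\{ \nabla u \cdot \nabla u_\varepsilon + V u\, u_\varepsilon \bigr\},
\]
the convergence reduces to checking three things: (a) $\nabla u_\varepsilon \to \nabla u$ in $L^2(\Omega)$, which is classical since $u$ is Lipschitz with compact support, so $\nabla u \in L^\infty \cap L^2$; (b) $\int_\Omega V u u_\varepsilon \to \int_\Omega V u^2$; and (c) $\int_\Omega V u_\varepsilon^2 \to \int_\Omega V u^2$. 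Items (b) and (c) follow from dominated convergence: $u_\varepsilon \to u$ uniformly on $K'$ with $\|u_\varepsilon\|_\infty$ bounded, while $|V|\chi_{K'} \in L^1(\Omega)$ by the hypothesis $V \in L^1_{loc}(\Omega)$, providing an integrable majorant for the pointwise-convergent integrands. Combining (a)--(c) gives $Q_V(u_\varepsilon) \to Q_V(u)$ and $\int(\nabla u \cdot \nabla u_\varepsilon + V u u_\varepsilon) \to Q_V(u)$, hence $Q_V(u - u_\varepsilon) \to 0$, as required.

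There is no real obstacle here beyond keeping track of the supports so that $u_\varepsilon \in C_c^\infty(\Omega)$ and so that the dominated convergence bound $|V|\chi_{K'}$ is integrable; both are handled by choosing $K' := \{x : \operatorname{dist}(x,\operatorname{supp}(u)) \leq \varepsilon_0\} \Subset \Omega$ for some sufficiently small fixed $\varepsilon_0 > 0$ and restricting to $\varepsilon \leq \varepsilon_0$. Since $C_c^\infty(\Omega) \subset W^{1,\infty}_c(\Omega) \subset {\cal H}_V(\Omega)$ and the approximation above shows the first inclusion is dense in the $\sqrt{Q_V}$-norm, the density of $C_c^\infty(\Omega)$ in ${\cal H}_V(\Omega)$ follows.
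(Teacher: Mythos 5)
Your proof is correct and follows essentially the same route as the paper: the paper also reduces the claim to approximating each $u \in W^{1,\infty}_c(\Omega)$ by mollifications $u \star \rho_\varepsilon \in C_c^\infty(\Omega)$ in the $\sqrt{Q_V}$-norm, using the polarisation identity together with the convergences established in Lemma~\ref{lem:Lip}(i), and then invokes the density of ${\cal L}_V(\Omega)$ in ${\cal H}_V(\Omega)$. Your added detail on the dominated-convergence majorant $|V|\chi_{K'}$ is exactly the implicit content of that step, so there is nothing to correct.
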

\begin{proof}
From (i) in the proof of lemma \ref{lem:Lip} we obtain that $C_c^\infty(\Omega)$ is dense in ${\cal L}_V(\Omega)$. Density  in ${\cal H}_V(\Omega)$ now follows from the fact that ${\cal L}_V(\Omega)$ is a dense subspace of ${\cal H}_V(\Omega)$.
\end{proof}

In the following proposition, we state some relations between the singularities of $V$ and the ``size" of the corresponding ${\cal H}_V(\Omega)$ space.
\begin{proposition}\label{cong}
Let $U$ be an open set in $\R^N$, $N \geq 2$ which is restricted to be bounded if $N=2$. Assume  $V \in L^1_{loc}(U)$ to be such that $Q_V \succeq 0$ in $U$.
\begin{enumerate}
\item[{\rm \bf (i)}]
Let $V^+ \in L^{\frac{N}{2}}(U)$ if $N \geq 3$ and $V^+ \in L^q(U)$ for some $q>1$ when $N=2$.  Then 
\begin{equation}\label{dona}
(W^{1,\infty}_c(U) , \|\cdot\|_{{\mathcal D}^{1,2}_0(U)} )\hookrightarrow {\cal L}_V(U).
\end{equation}
\item[{\rm \bf (ii)}]
 If for some $1 \leq p <  2$,
\begin{equation} \label{eq:dona2}
   (W_c^{1,\infty}(U), \| \cdot \|_{{\mathcal D}_0^{1,p}(U)})  \hookrightarrow {\cal L}_V(U)
\end{equation}
then $V^- \not \in L^{\frac{N}{2}}(B)$ for  any ball $B \Subset U$. In particular, if $N=2$, the above imbedding is impossible.
\end{enumerate}
\end{proposition}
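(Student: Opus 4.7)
The continuous embedding amounts to showing $Q_V(\xi) \le C \|\nabla \xi\|_{L^2(U)}^2$ for every $\xi \in W^{1,\infty}_c(U)$. Since $Q_V(\xi) \le \|\nabla\xi\|_{L^2(U)}^2 + \int_U V^+ \xi^2$, everything reduces to bounding $\int_U V^+\xi^2$ by $\|\nabla\xi\|_{L^2}^2$. When $N\ge 3$, pair $V^+\in L^{N/2}(U)$ with H\"older at exponents $(N/2, N/(N-2))$ and the Sobolev inequality $\|\xi\|_{L^{2^*}(U)} \le C_S\|\nabla\xi\|_{L^2}$. When $N=2$ and $U$ is bounded, apply H\"older at exponents $(q, q/(q-1))$ and the embedding $W^{1,2}_0(U) \hookrightarrow L^{2q/(q-1)}(U)$, which is valid for any finite exponent on a bounded planar domain.

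\textbf{Plan for part (ii).} Argue by contrapositive: suppose $V^-\in L^{N/2}(B)$ for some ball $B\Subset U$ and that \eqref{eq:dona2} holds. Combining the embedding with $Q_V(\xi) \ge \|\nabla\xi\|_{L^2}^2 - \int_U V^-\xi^2$ yields
\[
  \|\nabla\xi\|_{L^2(U)}^2 \;\le\; C^2 \|\nabla\xi\|_{L^p(U)}^2 + \int_U V^-\xi^2
  \qquad \forall\, \xi \in W^{1,\infty}_c(U).
\]
Test this on a concentration family at a point of $B$: fix $x_0 \in B$ and $\eta \in C^\infty_c(B_1(0))\setminus\{0\}$, and set $\xi_\epsilon(x) := \eta((x-x_0)/\epsilon)$, which lies in $W^{1,\infty}_c(U)$ as soon as $B_\epsilon(x_0)\subset B$. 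A change of variables gives $\|\nabla\xi_\epsilon\|_{L^r}^2 = \epsilon^{2(N/r-1)}\|\nabla\eta\|_{L^r}^2$ for every $r$, and a pointwise bound yields $\int_U V^-\xi_\epsilon^2 \le \|\eta\|_\infty^2 \|V^-\|_{L^1(B_\epsilon(x_0))}$. For $N\ge 3$, H\"older at exponents $(N/2, N/(N-2))$ on $B_\epsilon(x_0)$ gives $\|V^-\|_{L^1(B_\epsilon(x_0))} \le c\,\epsilon^{N-2}\|V^-\|_{L^{N/2}(B_\epsilon(x_0))}$, and the right-hand factor tends to $0$ by absolute continuity of the Lebesgue integral. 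Dividing the displayed inequality by $\|\nabla\xi_\epsilon\|_{L^2}^2 = \epsilon^{N-2}\|\nabla\eta\|_{L^2}^2$ and letting $\epsilon \to 0$, the first right-hand term decays like $\epsilon^{N(2/p-1)}$ (with strictly positive exponent since $p<2$) and the second is $o(1)$, while the left side is identically $1$ --- a contradiction.

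\textbf{The $N=2$ clause and main difficulty.} For $N=2$, the same scheme applies using only $V^- \in L^1_{loc}(U)$, which is automatic from the standing assumption $V\in L^1_{loc}$. Here $\|\nabla\xi_\epsilon\|_{L^2}^2 = \|\nabla\eta\|_{L^2}^2$ is $\epsilon$-independent, $\|\nabla\xi_\epsilon\|_{L^p}^2$ carries positive exponent $4/p-2>0$, and $\|V^-\|_{L^1(B_\epsilon(x_0))}\to 0$ by absolute continuity alone; so the right side vanishes while the left stays bounded below by a positive constant. The main technical point I expect is the $\epsilon$-exponent bookkeeping in dimensions $N\ge 3$: because $\|\nabla\xi_\epsilon\|_{L^2}$ itself goes to $0$, one must check that each right-hand term decays strictly faster than $\epsilon^{N-2}$, which is precisely what the full $L^{N/2}$ hypothesis on $V^-$ (rather than merely $L^1$) buys via H\"older.
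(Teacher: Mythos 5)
Your proof is correct and follows essentially the same route as the paper: part (i) is the identical H\"older--Sobolev estimate, and part (ii) is the same concentration argument at a point of $B$, with your rescaled bump $\eta((x-x_0)/\epsilon)$ playing the role of the paper's tent functions $(1-k|x-x_0|)_+$ and the same scaling exponents $\epsilon^{N-2}$ versus $\epsilon^{2N/p-2}$ producing the contradiction. The only cosmetic difference is that the paper phrases the contradiction as the ratio $Q_V(\xi_k)/\|\xi_k\|^2_{{\mathcal D}^{1,p}_0}$ blowing up, while you divide by $\|\nabla\xi_\epsilon\|_{L^2}^2$; your explicit $N=2$ discussion matches the paper's observation that $L^{N/2}=L^1_{loc}$ is automatic there.
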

\begin{proof}
{\bf (i)}
Let $N \geq 3$. Using Sobolev imbedding we note that for any $\xi \in W_c^{1,\infty}(U)$,
$$
   0 \, \leq \, Q_V(\xi) 
      \, \leq  \, \int_U \Big\{ |\nabla \xi |^2 + V^+ \xi^2 \Big\}  
      \, \leq \, C \Big (1+\|V^+\|_{L^{\frac{N}{2}}(U)} \Big )\| \xi \|^2_{{\mathcal D}^{1,2}_0(U)}.
$$
A similar proof holds when $N=2$.
\medskip

{\bf (ii)}
We prove the contrapositive statement.
Assume $V^- \in L^{\frac{N}{2}}(B)$ for some ball $B \Subset U$ with center $x_0$.
Choose the sequence $\{\xi_k\}:$
$$
    \xi_k (x) := (1- k |x-x_0| ) \chi_{\{|x-x_0| \leq \frac{1}{k}\}},
$$
and note that $\{\xi_k\} \subset W_c^{1,\infty}(B)$ for $k$ large. 
A straightforward computation and H\"older's inequality give 
$$
   \int_{U} |\nabla \xi_k|^p \sim  k^{p-N}
   \qquad
   \int_{U} V^- \xi_k^2 \leq O_k(1) k^{2-N} 
   \| V^- \|_{L^{N/2} ( {\{|x-x_0| \leq \frac{1}{k}\}}  )} .
$$
Hence,  for some constant $c>0$,
\begin{eqnarray*}
   \frac{Q_V (\xi_k)}{\| \xi_k \|^2_{{\cal D}_{0}^{1,p} (U)  }  }
   &\geq& 
   \frac{c}{k^{2(1-N/p)}}\int_{U}  \Big\{ |\nabla \xi_k|^2 - V^{-} \xi_k^2 \Big\} 
   \, \geq \,  
   c k^{N(2-p)/p} .
\end{eqnarray*}
Since $p \in [1,2)$, 
it follows that the imbedding~\eqref{eq:dona2} cannot hold.
\end{proof}

\begin{corollary}\label{aba}
Let $V^+ \in L_{loc}^{\frac{N}{2}}(\Omega)$ if $N \geq 3$ and $V^+ \in L_{loc}^q(\Omega)$ for some $q>1$ 
when~$N=2$. Then, for each open set $U \Subset \Omega$ the imbedding~\eqref{dona}  holds and denoting  
by $T:{\mathcal D}^{1,2}_0(U) \to {\cal H}_V(U)$  the continuous linear extension of this imbedding, we have  
$$
\|T(u)\|^2_{V,U}=  \int_U \Big\{ |\nabla u|^2 + V u^2 \Big\}  \;\;\;\;  \forall  \; u \in H^1_c(U) \cap C_c(U).
$$
\end{corollary}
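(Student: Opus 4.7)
The two assertions split naturally. First, the imbedding~\eqref{dona} for $U$ follows directly from part~(i) of Proposition~\ref{cong}: since $U \Subset \Omega$ and $V^+ \in L^{N/2}_{loc}(\Omega)$ (respectively $L^q_{loc}$, $q>1$, when $N=2$), we have $V^+ \in L^{N/2}(U)$ (respectively $L^q(U)$), while $Q_V \succeq 0$ on $U$ is the implicit standing hypothesis without which ${\mathcal H}_V(U)$ would not even be defined. The cited proposition then gives the continuous imbedding $(W^{1,\infty}_c(U), \|\cdot\|_{{\mathcal D}^{1,2}_0(U)}) \hookrightarrow {\mathcal L}_V(U)$, and the continuous linear extension $T\colon {\mathcal D}^{1,2}_0(U) \to {\mathcal H}_V(U)$ exists by the completeness of ${\mathcal H}_V(U)$.

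For the norm identity, fix $u \in H^1_c(U) \cap C_c(U)$ with compact support $K \subset U$, choose a compact $K'$ with $K \Subset K' \Subset U$, and form the standard mollifications $u_\varepsilon := u * \rho_\varepsilon \in C_c^{\infty}(U)$ with $\mathrm{supp}(u_\varepsilon) \subset K'$ for $\varepsilon$ sufficiently small. Two convergences are then at our disposal: (a)~$u_\varepsilon \to u$ in ${\mathcal D}^{1,2}_0(U)$, because $u$, extended by zero, lies in $H^1_0$ of a neighbourhood of $K$; and (b)~$u_\varepsilon \to u$ uniformly on $\R^N$, since $u$ is continuous with compact support. By continuity of $T$ and the fact that $T$ restricts to the natural inclusion on $W^{1,\infty}_c(U)$, we obtain $T(u_\varepsilon) \to T(u)$ in ${\mathcal H}_V(U)$, whence $\|T(u)\|_{V,U}^2 = \lim_{\varepsilon \to 0} Q_V(u_\varepsilon)$.

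It remains to identify this limit with $\int_U \{|\nabla u|^2 + V u^2\}$. The gradient term $\int_{K'}|\nabla u_\varepsilon|^2 \to \int_{K'}|\nabla u|^2$ by (a). For the potential term, the pointwise bound $|u_\varepsilon^2 - u^2| \le 2\|u\|_{\infty}\|u_\varepsilon - u\|_{\infty}$ on $K'$, combined with $\int_{K'}|V|\,dx < \infty$ (since $K' \Subset \Omega$ and $V \in L^1_{loc}(\Omega)$), yields $\int_{K'} V u_\varepsilon^2 \to \int_{K'} V u^2$ via (b). The main subtlety is precisely this last step: $V^-$ may be severely singular and is not controlled by the Dirichlet norm, so convergence in ${\mathcal D}^{1,2}_0$ alone would not suffice. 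The hypothesis $u \in C_c(U)$ is exactly what supplies uniform approximation, which, together with the local integrability of $V$ on the fixed compact set $K'$, is strong enough to commute the limit with integration against $V$.
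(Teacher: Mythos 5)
Your proposal is correct and follows essentially the same route as the paper: mollify $u$, use the imbedding~\eqref{dona} (equivalently, continuity of $T$) to get $Q_V(u_\varepsilon)\to\|T(u)\|^2_{V,U}$, and then identify the limit using $\nabla u_\varepsilon\to\nabla u$ in $L^2$ together with uniform convergence of $u_\varepsilon$ on a fixed compact set to handle the $\int V u_\varepsilon^2$ term. The only difference is cosmetic — the paper phrases the first step as $\{u_\varepsilon\}$ being Cauchy in ${\cal L}_V(U)$ rather than invoking continuity of $T$, and you spell out the dominated-convergence detail for the potential term that the paper leaves implicit.
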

\begin{proof}  Consider the approximating sequence $\{u_\epsilon:= u \star \rho_\epsilon\}$ considered in (i) of lemma \ref{lem:Lip} above. Clearly, $u_\epsilon \to u$ in $H^1_0(U)$. Hence, we have that $\{u _\epsilon\}$ is a Cauchy sequence in ${\cal L}_V(U)$. Necessarily,  $u_\epsilon \to T(u)$ in ${\cal H}_V(U)$; that is, $Q_V(u_\epsilon) \to \|T(u)\|_{V,U}$. Since $u_\epsilon \to u$ in $C(\overline{U})$ and $ \nabla u_\epsilon \to \nabla u$ in $(L^2(U))^N$, we get
$$
Q_V(u_\epsilon) \to  \int_U \Big\{ |\nabla u|^2 + V u^2 \Big\}. 
$$
\end{proof}

In the energy space, we can consider the notions of solution and supersolutions in the following sense:
\begin{definition} \label{def:EnergySolution}
Let $V \in L^1_{loc}(\Omega)$ and $f \in  L^1_{loc}(\Omega)$.
We say that $u_* \in {\cal H}_V(\Omega)$ is an  ``energy supersolution" to $-\Delta u +V u =f$  if 
$$
    a_{V,\Omega} (u_{*}, \xi) \geq \int_\Omega  f \xi ,  
    \quad
    \forall \; \xi \in  W_c^{1,\infty}(\Omega),\; \xi \geq 0.
$$ 
($u_{*} \in {\cal H}_V(\Omega)$ will be called an  ``energy subsolution" if the inequality above is reversed.)\\
We say that $u_{*} \in {\cal H}_V(\Omega)$ is an  ``energy solution" to $-\Delta u +V u =f$  if
$$
    a_{V,\Omega} (u_{*}, \xi) = \int_\Omega f \xi,
    \quad
    \forall \xi \in  W_c^{1,\infty} (\Omega).
$$ 
\end{definition}

\section{Classification of Schr\"odinger operators : Two notions of Criticality} \label{s4}


\begin{definition}\label{nongu}
We call the quadratic form $Q_V$ to be supercritical in $\Omega$, if $Q_V  \not \succeq 0$ in $\Omega$; that is, $Q_V(\xi)<0$ for some $\xi \in W_c^{1,\infty} (\Omega)$.
\end{definition}

We introduce the following classification of criticality to Schr\"odinger operators with  only locally integrable potential $V.$ 
We can use the pre-energy space ${\cal L}_V(\Omega) $ to define the following $L^1$-version of criticality:
\begin{definition}[$L^1-$ criticality] \label{def:frozen}
Let $V \in L^1_{loc}(\Omega)$. We say that the operator $-\Delta +V$  is:
\begin{enumerate}
\item[{\rm \bf (i)}]
{\it $L^1$-subcritical in $\Omega$}, if   $Q_V \succeq 0$ in $\Omega$ and  ${\cal L}_V(\Omega) \hookrightarrow L^1_{loc} (\Omega)$;
\item[{\rm \bf  (ii)}]
{\it $L^1$-critical in $\Omega$,} if $Q_V \succeq 0$  in $\Omega$ but $-\Delta+V$ is not $L^1$-subcritical in $\Omega$;
\end{enumerate}
\end{definition}

\begin{definition}\label{amar}
{\it If ${\cal L}_V(\Omega) \hookrightarrow L^1_{loc}(\Omega)$, we can uniquely extend this inclusion map to a continuous linear map
$$
{\bf  J}: {\cal H}_V(\Omega) \to L^1_{loc}(\Omega).
$$
In particular, for any compact set $K \subset \Omega$, we have
$$
      \int_K |{\bf J}(u)|  \, \leq \, C_K \|u\|_{V,\Omega} 
      \quad \text{ for all } u \in  {\cal H}_V(\Omega).
$$
}
\end{definition}

The map  ${\bf J}$ may not be injective in general. For a large class of potentials called balanced potentials (see definition  \ref{tri}) we can show the injectivity; see corollary \ref{bru}.

We now define two notions of criticality based on weighted $L^2-$spaces, one notion global in $\Omega$ and the other local.
\begin{definition}[Global $L^2$- criticality] \label{def:Critical}
Let $V \in L^1_{loc}(\Omega)$. The operator $-\Delta +V$  is said:
\begin{enumerate}
\item[{\rm \bf (i)}]
{\it  globally $L^2$-subcritical in $\Omega$}, if   $Q_V \succeq 0$ in $\Omega$ and  
${\cal L}_V(\Omega) \hookrightarrow L^2 (\Omega; w \, dx)$  for some nonnegative weight $w \in L^1_{loc}(\Omega)$  satisfying $1/w \in L^1_{loc}(\Omega)$.  
\item[{\rm \bf  (ii)}]
{\it globally $L^2$-critical in $\Omega$} if $Q_V \succeq 0$ in $\Omega$, but it is not  globally $L^2$-subcritical in $\Omega$.
\end{enumerate}
\end{definition}

\medskip

\begin{definition} [Feeble $L^2$- criticality] \label{def:weak Critical}
Let $V \in L^1_{loc}(\Omega)$. The operator $-\Delta +V$  is said:
\begin{enumerate}
\item[{\rm \bf (i)}]
{\it $feebly \; L^2$-subcritical in $\Omega$}, if  $Q_V \succeq 0$ in $\Omega$ and  ${\cal L}_V(\Omega) \hookrightarrow L^2 (K)$ for some compact set $K \subset \Omega$ with positive measure;
\item[{\rm \bf  (ii)}]
{\it feebly\; $L^2$-critical in $\Omega$} if $Q_V \succeq 0$ in $\Omega$, but it is not feebly $L^2$-subcritical in $\Omega$.
\end{enumerate}
\end{definition}

The following implications are easy to show:
\begin{center}
 global $L^2$-subcriticality $\Longrightarrow$ feeble $L^2$-subcriticality \\
 \vspace{5pt} 
\hspace{-30pt}  global $L^2$-subcriticality $\Longrightarrow$  $L^1$-subcriticality.
\end{center}

\medskip 

We refer to proposition \ref{prop:Sonata} (see definition \ref{chalka})  for a large class of potentials $V$ for which  all the three definitions of  criticality are equivalent.

Note that our definitions differ from the ones  given in the literature for more ``regular" potentials $V$ (see for instance, ~\cite{Murata:1986} and \cite{PinTint}) in two respects.
Firstly, in our setting the operator $- \Delta + V$ does not necessarily admit a Green's function. Secondly, in the global $L^2$-subcriticality definition, we allow weights $w$ that may vanish in a null set and in the feeble  $L^2$-criticality definition, the quadratic form is assumed  to be ``coercive" only on a compact set with positive measure. 
 
\medskip

The following proposition  justifies the use of  weights like $w$ and $\chi_K$  in our definitions of  $L^2$ -subcriticality. 
\begin{proposition}\label{weird}
Let $N \geq 3$ and $V \in L^1_{loc}(\Omega)$. Assume that $Q_V \succeq 0$ in $\Omega$ and 
$$(W^{1,\infty}_c(\Omega), \| \cdot\|_{W_0^{1,p}(\Omega)} ) \hookrightarrow {\cal L}_V(\Omega) \text{ for some } 1 \leq p <  \frac{2N}{N+2}.$$ Then any nonnegative weight $w \in L^1_{loc}(\Omega)$ such that 
$$
{\cal L}_V(\Omega) \hookrightarrow L^2 (\Omega; w \;dx)
$$
must satisfy $\inf_B w =0$ on any ball $B \Subset \Omega$. In particular, if $w \in C(\Omega)$ 
then $w \equiv 0$.
\end{proposition}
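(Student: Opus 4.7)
The plan is to argue by contradiction. Suppose there were a ball $B\Subset\Omega$ on which $c:=\inf_B w>0$. Composing the two imbeddings in the hypothesis yields a constant $C>0$ such that, for every $\xi\in W^{1,\infty}_c(\Omega)$,
\[
  \Bigl(\int_\Omega w\,\xi^2\Bigr)^{1/2} \;\leq\; C\sqrt{Q_V(\xi)} \;\leq\; C'\Bigl(\int_\Omega|\nabla\xi|^p\Bigr)^{1/p},
\]
where in the last step I read $\|\cdot\|_{W^{1,p}_0(\Omega)}$ as the full Sobolev norm, noting that on the ``tent'' functions introduced below the zero-order part is dominated by the gradient part, so only the gradient term matters. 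Specialising to $\xi\in W^{1,\infty}_c(B)$ and using $w\geq c$ on $B$ would therefore force the Poincar\'e-type inequality
\[
  c^{1/2}\Bigl(\int_B\xi^2\Bigr)^{1/2}\;\leq\; C'\Bigl(\int_B|\nabla\xi|^p\Bigr)^{1/p},\qquad\forall\,\xi\in W^{1,\infty}_c(B).
\]

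I would then destroy this inequality using exactly the tent functions that appear in the proof of Proposition~\ref{cong}(ii). Fix $x_0\in B$ and, for $k$ large, set
\[
  \xi_k(x):=(1-k|x-x_0|)\,\chi_{\{|x-x_0|\leq 1/k\}}\in W^{1,\infty}_c(B).
\]
A direct computation gives $\int_B\xi_k^2\sim k^{-N}$ and $\int_B|\nabla\xi_k|^p\sim k^{p-N}$, so the displayed inequality would become
\[
  k^{-N/2}\;\leq\; C''\,k^{\,1-N/p},\qquad\text{i.e.}\qquad k^{\,N/p-N/2-1}\;\leq\; C''.
\]
The hypothesis $p<\tfrac{2N}{N+2}$ is precisely $1/p>\tfrac12+\tfrac1N$, equivalently $N/p-N/2-1>0$, so the left side blows up as $k\to\infty$ and we obtain the desired contradiction. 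This proves that $\inf_Bw=0$ for every ball $B\Subset\Omega$.

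The ``in particular'' statement is then an immediate consequence: if $w\in C(\Omega)$ and $w\not\equiv 0$, some point $x_0\in\Omega$ would satisfy $w(x_0)>0$, and continuity would yield $w\geq w(x_0)/2$ on a small ball about $x_0$, contradicting what has just been shown; hence $w\equiv 0$. The only point requiring care is matching the scaling of the tent functions against the two exponents $2$ and $p$, which is exactly where the sharp threshold $p<\tfrac{2N}{N+2}$ enters; beyond that calibration there is no real obstacle.
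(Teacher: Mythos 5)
Your proof is correct and is essentially the paper's argument: both rest on the failure of the embedding $W^{1,p}_0(B)\subset L^2(B)$ precisely when $p<\tfrac{2N}{N+2}$, the paper invoking it abstractly through a sequence bounded in $W^{1,p}_0(B)$ whose $L^2(B)$ norms blow up, while you exhibit that sequence concretely (after normalization) via the same tent functions used in the proof of Proposition \ref{cong}(ii) and run the argument by contradiction. The scaling computation and your remark that the zero-order part of the $W^{1,p}_0$ norm is dominated by the gradient part on these test functions are both fine, so nothing needs fixing.
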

\begin{proof}
 Let $B \Subset \Omega$ be a ball. With our assumption on $p$, we note that $ W^{1,p}_0(B) \not \subset L^2(B)$. Choose a sequence $\{\xi_n\} \subset C_c^{\infty}(B)$ such that 
$$
  \sup_n \;\|\xi_n\|_{ W^{1,p}_0(B)} < \infty 
  \quad \hbox{ and } \quad 
  \| \xi_n\|_{L^2(B)} \to \infty  \hbox{ as } n \to \infty.
$$
By the assumed imbedding, we note that $\{\xi_n\}$ is a bounded sequence in ${\cal L}_V(\Omega)$. Therefore,
$$
\int_\Omega w |\xi_n|^2 \leq Q_V(\xi_n) \leq C < \infty.
$$
This shows that
$\displaystyle \inf_B w =0$.
\end{proof}

\section{Various embeddings of the  space ${\cal L}_V(\Omega)$} \label{s5}
Recall from definition \ref{space} that ${\cal L}_V(\Omega):= (W^{1,\infty}_c(\Omega), \sqrt{Q_V}).$
\begin{proposition} \label{prop:BaseEnergy}
Let $V_i \in L^1_{loc}(\Omega)$ be such that 
$Q_{V_i} \succeq 0$ in $\Omega, i=1,2$ and $V_1 \geq V_2$. 
\begin{enumerate}
\item[{\rm \bf (i)}]
Then, the identity map $id: {\cal L}_{V_1}(\Omega) \to {\cal L}_{V_2}(\Omega)$ is continuous and 
it admits a unique continuous linear extension $id^* : {\cal H}_{V_1}(\Omega) \to  {\cal H}_{V_2}(\Omega)$;
\item[{\rm \bf (ii)}]
If furthermore $-\Delta+V_i$ is an $L^1$-subcritical operator in $\Omega$ ($i=1,2$) with the corresponding extensions ${\bf J}_i$, 
then $J_1 = J_2 \circ {id^*}$ and  $id^*$ is injective whenever  $J_1$ is injective. 
\end{enumerate}
\end{proposition}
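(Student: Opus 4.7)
The plan is to prove (i) by establishing the inequality $Q_{V_2}(\xi) \le Q_{V_1}(\xi)$ on $W^{1,\infty}_c(\Omega)$, and then to obtain (ii) by a standard density/diagram-chase argument using the $\mathbf{J}_i$.

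For (i), the key observation is that for every $\xi \in W_c^{1,\infty}(\Omega)$,
\[
   Q_{V_2}(\xi) \;=\; Q_{V_1}(\xi) + \int_\Omega (V_2 - V_1)\,\xi^2 \;\le\; Q_{V_1}(\xi),
\]
since $V_2 - V_1 \le 0$ and $\xi^2 \ge 0$ (note that the integral is well-defined because $\xi^2$ has compact support and $V_1, V_2 \in L^1_{loc}(\Omega)$). Hence $\|id(\xi)\|_{V_2,\Omega} \le \|\xi\|_{V_1,\Omega}$, showing that $id: \mathcal{L}_{V_1}(\Omega) \to \mathcal{L}_{V_2}(\Omega)$ is continuous with operator norm at most $1$. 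Composing with the isometric inclusion $\mathcal{L}_{V_2}(\Omega) \hookrightarrow \mathcal{H}_{V_2}(\Omega)$ (which is dense by Definition \ref{space}) and using the fact that $\mathcal{L}_{V_1}(\Omega)$ sits densely in the complete space $\mathcal{H}_{V_1}(\Omega)$, the standard continuous-linear-extension theorem (BLT) yields a unique bounded linear map $id^*: \mathcal{H}_{V_1}(\Omega) \to \mathcal{H}_{V_2}(\Omega)$ extending $id$.

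For (ii), recall that by Definition \ref{amar} each $\mathbf{J}_i$ is the unique continuous linear extension to $\mathcal{H}_{V_i}(\Omega)$ of the tautological inclusion $W_c^{1,\infty}(\Omega) \hookrightarrow L^1_{loc}(\Omega)$. Consequently, on the dense subspace $W_c^{1,\infty}(\Omega) \subset \mathcal{H}_{V_1}(\Omega)$ one has
\[
   \mathbf{J}_1(\xi) \;=\; \xi \;=\; \mathbf{J}_2(\xi) \;=\; \mathbf{J}_2(id^*(\xi)).
\]
Both $\mathbf{J}_1$ and $\mathbf{J}_2 \circ id^*$ are continuous maps from $\mathcal{H}_{V_1}(\Omega)$ into $L^1_{loc}(\Omega)$ and agree on this dense subspace, so they coincide on all of $\mathcal{H}_{V_1}(\Omega)$. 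The injectivity claim is then immediate: if $id^*(u)=0$, then $\mathbf{J}_1(u) = \mathbf{J}_2(id^*(u)) = 0$, so injectivity of $\mathbf{J}_1$ forces $u=0$.

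There is no real obstacle: the only subtle point is making sure that ``continuous extension'' is applied to a densely defined bounded operator with values in a complete space, which is exactly the setting here. All steps are essentially formal once the monotonicity inequality $Q_{V_2} \le Q_{V_1}$ on $W_c^{1,\infty}(\Omega)$ is in hand.
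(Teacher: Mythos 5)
Your proposal is correct and follows essentially the same route as the paper: part (i) rests on the pointwise monotonicity $Q_{V_2}(\xi)\le Q_{V_1}(\xi)$ for $\xi\in W^{1,\infty}_c(\Omega)$ plus the standard extension of a densely defined bounded map, and part (ii) is the same density argument (the paper phrases it with an explicit approximating sequence $\xi_n\to u$, you phrase it as two continuous maps into $L^1_{loc}(\Omega)$ agreeing on a dense subspace), with the injectivity conclusion identical in both.
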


\begin{proof}
{\bf (i)} 
Follows from the fact : $0 \leq Q_{V_2}(\xi) \leq Q_{V_1}(\xi)$ for all $\xi \in W^{1,\infty}_c(\Omega)$.

\medskip

{\bf (ii)}
Let $u \in {\cal H}_{V_1}(\Omega)$  and $\{\xi_n\} \subset C_c^{\infty}(\Omega)$ be such that $\xi_n \to u$ in ${\cal H}_{V_1}(\Omega)$. Then,  $\xi_n \to {\bf J}_1(u)$ in $L^1_{loc}(\Omega)$. Note that $\{\xi_n\}$ is  a Cauchy sequence  in ${\cal H}_{V_2}(\Omega)$. Let $\tilde{u}$ be the  limit of $\{\xi_n\}$ in ${\cal H}_{V_2}(\Omega)$. This means that $id^*(u) = \tilde{u}$ and $\xi_n \to {\bf J}_2(\tilde{u})$   in $L^1_{loc}(\Omega)$. Therefore, ${\bf J}_2(id^*(u))= {\bf J}_1(u)$. Injectivity of $id^*$ follows easily.
\end{proof}

\begin{lemma} \label{lem:Damm}
Assume 
$Q_V \succeq 0$ in $\Omega$.
Then, the following properties hold:

\begin{enumerate}
\item[{\bf (i)}]
 ${\cal L}_{V^+}(\Omega) \hookrightarrow L^1_{loc}(\Omega) $,
${\cal L}_{V^+}(\Omega) \hookrightarrow L^2 (\Omega, V^{-} dx)$
and  ${\cal L}_{V^+}(\Omega)  \hookrightarrow L^2 (\Omega, V^{+} dx)$.
\item[{\bf(ii)}] Fix $\varphi  \in C^2_c (\Omega)$. Then, there exists a constant $C:=  \sqrt{Q_V(\varphi)} +  \| \Delta \varphi \|_{L^{\infty}(supp(\varphi))} $ such that
\begin{equation}\label{eq:sloka}
\left|  \int_{\Omega} V \varphi \xi \right| \leq   C  \bigg( \sqrt{Q_V(\xi )} + \int_{supp(\varphi)} |\xi| \bigg),
     \quad
     \forall \xi \in W^{1,\infty}_c (\Omega) .
\end{equation}

\item[{\rm \bf (iii)}] Suppose ${\cal L}_V(\Omega) \hookrightarrow L^1_{loc} (\Omega)$. Fix $\varphi  \in C^2_c (\Omega), \varphi \geq 0$. Consider the linear maps
\begin{equation} \label{eq:Fennel}
   \Phi^{\pm}:
   {\cal L}_{V}(\Omega) \to L^1 (\Omega),
   \quad \hbox{ given by } \quad 
  \Phi^{\pm}( \xi ) =  V^{\pm}  \varphi \xi .
\end{equation}
Then $\Phi^+$ is continuous iff $\Phi^-$ is continuous.
\item[{\rm \bf  (iv)}] Suppose ${\cal L}_V(\Omega) \hookrightarrow L^1_{loc} (\Omega)$. Then,
$$  {\cal L}_V(\Omega) \hookrightarrow L^1_{loc} (\Omega, V^+ dx)
   \quad \hbox{ iff } \quad
   {\cal L}_V(\Omega) \hookrightarrow L^1_{loc} (\Omega, V^- dx).
$$
\end{enumerate}
\end{lemma}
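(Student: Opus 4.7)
The strategy is to treat (i) as collected standard inequalities, prove (ii) by combining Cauchy--Schwarz on the semi-definite bilinear form $a_V$ with integration by parts on the $C^2_c$ factor, and then deduce (iii)--(iv) from (ii) together with the $L^1_{loc}$ hypothesis. Part (i) is almost direct from the definition of $Q_{V^+}$: the third embedding is $\int V^+\xi^2\le Q_{V^+}(\xi)$; the second follows by rearranging the hypothesis $0\le Q_V(\xi)=\int|\nabla\xi|^2+V^+\xi^2-V^-\xi^2$, giving $\int V^-\xi^2\le Q_{V^+}(\xi)$; and the first embedding comes from $\|\nabla\xi\|_{L^2}^2\le Q_{V^+}(\xi)$ together with Sobolev (for $N\ge 3$) or a Friedrichs--Poincar\'e estimate on a bounded open neighbourhood of the compact $K\Subset\Omega$ (otherwise).

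For (ii) I would first observe that $a_V(\varphi,\xi):=\int_\Omega\bigl(\nabla\varphi\cdot\nabla\xi+V\varphi\xi\bigr)$ is a well-defined symmetric bilinear form on $W^{1,\infty}_c(\Omega)\times W^{1,\infty}_c(\Omega)$ --- the mixed term is absolutely convergent because $V\in L^1_{loc}$ while $\varphi\xi$ is bounded with compact support --- and by Lemma~\ref{lem:Lip}(i) the hypothesis $Q_V\succeq 0$ makes it positive semi-definite on the full Lipschitz space. Hence $a_V$ satisfies the usual Cauchy--Schwarz inequality $|a_V(\varphi,\xi)|\le\sqrt{Q_V(\varphi)\,Q_V(\xi)}$. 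Since $\varphi\in C^2_c(\Omega)$, classical integration by parts yields $\int\nabla\varphi\cdot\nabla\xi=-\int(\Delta\varphi)\xi$, so
\[
\int_\Omega V\varphi\xi\;=\;a_V(\varphi,\xi)\;+\;\int_\Omega(\Delta\varphi)\xi,
\]
and bounding the second integral by $\|\Delta\varphi\|_{L^\infty({\rm supp}\,\varphi)}\int_{{\rm supp}\,\varphi}|\xi|$ produces the estimate~\eqref{eq:sloka} with $C=\sqrt{Q_V(\varphi)}+\|\Delta\varphi\|_{L^\infty({\rm supp}\,\varphi)}$.

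For (iii), I would apply (ii) to $|\xi|\in W^{1,\infty}_c(\Omega)$, which has the same $Q_V$-norm as $\xi$ by Lemma~\ref{lem:Lip}(ii). Since $\varphi\ge 0$, $\int V\varphi|\xi|=\|\Phi^+(\xi)\|_{L^1}-\|\Phi^-(\xi)\|_{L^1}$, and the assumption ${\cal L}_V(\Omega)\hookrightarrow L^1_{loc}(\Omega)$ controls $\int_{{\rm supp}\,\varphi}|\xi|$ by a multiple of $\sqrt{Q_V(\xi)}$, yielding
\[
\big|\|\Phi^+(\xi)\|_{L^1}-\|\Phi^-(\xi)\|_{L^1}\big|\;\le\;C'\sqrt{Q_V(\xi)};
\]
thus continuity of $\Phi^+$ and of $\Phi^-$ are equivalent. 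Part (iv) is the ``variable $\varphi$'' version of (iii): given $K\Subset\Omega$, pick a non-negative $\varphi\in C^2_c(\Omega)$ with $\varphi\ge\chi_K$; if ${\cal L}_V\hookrightarrow L^1_{loc}(\Omega,V^+dx)$, then $\|\Phi^+(\xi)\|_{L^1}\le\|\varphi\|_\infty\int_{{\rm supp}\,\varphi}V^+|\xi|$ is bounded by $\sqrt{Q_V(\xi)}$, so $\Phi^-$ is continuous by (iii), and $\int_K V^-|\xi|\le\|\Phi^-(\xi)\|_{L^1}$ delivers the $L^1_{loc}(V^-dx)$ embedding; the reverse implication is symmetric. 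The only subtle step anywhere is the Cauchy--Schwarz application in (ii), which hinges on $Q_V\succeq 0$ on all of $W^{1,\infty}_c$ rather than just on $C^2_c$; that has already been arranged by Lemma~\ref{lem:Lip}(i), and the rest is book-keeping.
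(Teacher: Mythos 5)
Your treatment of (ii), (iii), (iv) and of the two weighted-$L^2$ embeddings in (i) is correct and is essentially the paper's own argument: Cauchy--Schwarz for the semi-definite form $a_V$ on $W^{1,\infty}_c(\Omega)$ (justified by Lemma~\ref{lem:Lip}) together with integration by parts against $\varphi\in C^2_c(\Omega)$ gives \eqref{eq:sloka} with the same constant, and (iii)--(iv) then follow by testing with $|\xi|$ and absorbing $\int_{supp(\varphi)}|\xi|$ via the $L^1_{loc}$ hypothesis, exactly as in the paper.

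The genuine gap is your proof of the first embedding in (i). For $N\ge 3$ the Sobolev route is fine, but for low dimensions you cannot invoke a Friedrichs--Poincar\'e inequality ``on a bounded open neighbourhood of $K$'': the test function $\xi$ is compactly supported in $\Omega$, not in that neighbourhood, so it need not vanish on its boundary and the inequality simply does not apply. Moreover no repair is possible in general, because the assertion ${\cal L}_{V^+}(\Omega)\hookrightarrow L^1_{loc}(\Omega)$ as printed is false for $N\le 2$ and $\Omega$ unbounded: take $V\equiv 0$ on $\Omega=\R^2$ and logarithmic cut-offs $\xi_n$ equal to $1$ on $B_1$ and vanishing outside $B_n$, for which $Q_{V^+}(\xi_n)=\int|\nabla\xi_n|^2\to 0$ while $\int_{B_1}|\xi_n|$ stays bounded away from zero --- this is precisely the criticality of $-\Delta$ in dimensions one and two (cf.\ Lemma~\ref{dumb}). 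The paper's own proof of (i) in fact does not prove an embedding into $L^1_{loc}(\Omega)$ at all: it proves ${\cal L}_{V^+}(\Omega)\hookrightarrow{\cal L}_{V}(\Omega)$ as a special case of Proposition~\ref{prop:BaseEnergy} (monotonicity of the forms, since $V^+\ge V$), and that is evidently the intended first assertion, the printed target space being a misprint. You should therefore either restrict the first embedding to cases where Sobolev or a genuine Poincar\'e inequality applies ($N\ge 3$, or $\Omega$ bounded, using that $\xi$ vanishes near $\partial\Omega$), or replace it, as the paper does, by the comparison embedding ${\cal L}_{V^+}(\Omega)\hookrightarrow{\cal L}_{V}(\Omega)$.
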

\begin{proof}
{\bf (i)}
Note that $Q_{V^+} \succeq 0$ in $\Omega$. Then, the embedding ${\cal L}_{V^+}(\Omega)  \hookrightarrow {\cal L}_{V}(\Omega) $ is a special case of proposition~\ref{prop:BaseEnergy}.
The other two imbeddings follow from the inequalities 
$$
 \int_{\Omega} V^{-} \xi^2 \leq  Q_{V^{+}} (\xi),
 \quad
 \int_{\Omega}V^{+} \xi^2 \leq \int_{\Omega} \{ |\nabla \xi|^2 + V^{+} \xi^2 \} \text{ for all } \xi  \in W^{1,\infty}_c (\Omega).
 $$

\medskip
{\bf(ii)} 
Noting that $a_{V,\Omega}(\varphi, \xi) \leq \sqrt{Q_V( \varphi )} \sqrt {Q_V(\xi )} $  we deduce that 
\begin{eqnarray*}
     \int_{\Omega} V \varphi \xi &=& a_{V,\Omega}(\varphi,\xi)-\int_{\Omega} \nabla \varphi \cdot \nabla \xi  \nonumber \\
     &\leq& 
    \sqrt{Q_V( \varphi )} \sqrt{Q_V(\xi )} + \int_{\Omega} ( \Delta \varphi) \xi  
     \nonumber \\
     &\leq&
     C(V,\varphi) \bigg(\sqrt{Q_V(\xi )} + \int_{supp(\varphi)} |\xi|\bigg)
     \quad
     \forall \xi \in W^{1,\infty}_c (\Omega) ,
\end{eqnarray*}
where  $C(V, \varphi ) := \sqrt{Q_V(\varphi)} +  \| \Delta \varphi \|_{L^{\infty}(supp(\varphi))}.$ 
Replacing $\xi$ by $-\xi$ in the above calculation, we obtain  \eqref{eq:sloka}.

\medskip

{\bf (iii)} 
Using ${\cal L}_V \hookrightarrow L^1_{loc} (\Omega)$, we rewrite \eqref{eq:sloka} as the following (replacing $\xi$ by $|\xi|$):
$$
    \Big| 
    \int_{\Omega} V^+ \varphi |\xi| -   \int_{\Omega} V^- \varphi |\xi|
    \Big|   
    \, \leq \, 
    C_{\varphi} \sqrt{Q_V(\xi )} ,
     \quad
     \forall \xi \in W^{1,\infty}_c (\Omega) .
$$
We conclude that
$$
    \int_{\Omega} V ^{\pm}\varphi |\xi | 
     \, \leq \, 
    C_{\varphi}  \sqrt{Q_V(\xi )} +  \int_{\Omega} V ^{\mp}\varphi |\xi| .
$$
It follows now that $\Phi^+$ is continuous iff $\Phi^-$ is continuous.

\medskip

{\bf (iv)} follows from (iii) by noting that $\Phi^{\pm}$ is continuous for any $\varphi  \in C^2_c (\Omega), \varphi \geq 0,$ iff $ {\cal L}_V(\Omega) \hookrightarrow L^1_{loc} (\Omega, V^{\pm} dx)
$.
 \end{proof}

\begin{lemma} \label{lem:BADAMILK}
Assume 
$-\Delta+V$ is an $L^1-$ subcritical operator in $\Omega$.
Then
\begin{enumerate}
\item[{\rm \bf  (i)}]
$Q_V(\xi^{\pm}) \leq Q_V(\xi)$ for each $\xi \in W^{1,\infty}_c (\Omega)$;
\item[{\rm\bf   (ii)}]
Given  $u \in {\cal H}_V(\Omega)$, we can find $u_{\pm} \in {\cal H}_V(\Omega)$ such that
$$
u=u_+-u_- \; \text{ and } \; {\bf J}(u_{\pm}) = ({\bf J}(u))^\pm \geq 0.
$$

\item[{\rm \bf  (iii)}]
Given  $u \in {\cal H}_V(\Omega)$, choose $u_{\pm} \in {\cal H}_V(\Omega)$ as in (ii) above. Then,
$$\|\;u_{\pm}\;\|_{V,\Omega} \leq \|u\|_{V,\Omega}, \quad \|\;u_{+}+u_-\;\|_{V,\Omega} \leq \|u\|_{V,\Omega}.$$
\item[{\rm \bf  (iv)}]
Given  $u \in {\cal H}_V(\Omega)$, choose $u_{\pm} \in {\cal H}_V(\Omega)$ as in (ii) above. Then, 
$$a_{V,\Omega}(u_+, u_{-}) \leq 0.$$
\end{enumerate}
\end{lemma}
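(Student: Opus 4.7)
For \textbf{(i)}, I will decompose $\xi = \xi^+ - \xi^-$ and use that $\xi^+ \xi^- = 0$ a.e., so that $\nabla \xi^+ \cdot \nabla \xi^- = 0$ a.e.\ and $(\xi^+)^2 + (\xi^-)^2 = \xi^2$, giving the additive identity $Q_V(\xi) = Q_V(\xi^+) + Q_V(\xi^-)$. Since $Q_V \succeq 0$ on $W_c^{1,\infty}(\Omega)$ (which is granted by the subcriticality hypothesis), both summands are nonnegative and the claim follows at once.

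The substantive work is in \textbf{(ii)}. I would pick a sequence $\{\xi_n\} \subset W_c^{1,\infty}(\Omega)$ with $\xi_n \to u$ in ${\cal H}_V(\Omega)$. Since $-\Delta + V$ is $L^1$-subcritical, ${\bf J}$ exists and $\xi_n \to {\bf J}(u)$ in $L^1_{loc}(\Omega)$; passing to a subsequence, I can assume this convergence is a.e.\ and dominated on every compact set, whence $\xi_n^\pm \to ({\bf J}(u))^\pm$ in $L^1_{loc}(\Omega)$ by dominated convergence. By (i), $\{\xi_n^\pm\}$ is bounded in the Hilbert space ${\cal H}_V(\Omega)$, so weak compactness yields further subsequences with weak limits $u_\pm \in {\cal H}_V(\Omega)$. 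The identity $u = u_+ - u_-$ then follows from the strong convergence $\xi_n \to u$ combined with the weak convergence $\xi_n^+ - \xi_n^- \rightharpoonup u_+ - u_-$. The delicate point — which I expect to be the main obstacle — is identifying ${\bf J}(u_\pm) = ({\bf J}(u))^\pm$: I will argue that the bounded linear map ${\bf J} : {\cal H}_V(\Omega) \to L^1_{loc}(\Omega)$ is weak-to-weak continuous, so $\xi_n^\pm \rightharpoonup {\bf J}(u_\pm)$ weakly in $L^1_{loc}(\Omega)$; uniqueness of limits (the strong $L^1_{loc}$-limit $({\bf J}(u))^\pm$ was already identified) then forces the equality.

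Parts \textbf{(iii)} and \textbf{(iv)} will follow from (ii) by routine Hilbert-space reasoning. For (iii), weak lower semicontinuity of the norm gives
\[
\|u_\pm\|_{V,\Omega} \leq \liminf_n \|\xi_n^\pm\|_{V,\Omega} \leq \liminf_n \|\xi_n\|_{V,\Omega} = \|u\|_{V,\Omega},
\]
where the second inequality uses (i). For the bound on $\|u_+ + u_-\|_{V,\Omega}$, I will use $\xi_n^+ + \xi_n^- = |\xi_n|$, the fact from Lemma~\ref{lem:Lip}(ii) that $Q_V(|\xi_n|) = Q_V(\xi_n)$, the weak convergence $\xi_n^+ + \xi_n^- \rightharpoonup u_+ + u_-$, and again weak lower semicontinuity. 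Finally, (iv) drops out by comparing the two polarization identities
\[
\|u\|^2_{V,\Omega} = \|u_+\|^2_{V,\Omega} + \|u_-\|^2_{V,\Omega} - 2 a_{V,\Omega}(u_+, u_-),
\]
\[
\|u_+ + u_-\|^2_{V,\Omega} = \|u_+\|^2_{V,\Omega} + \|u_-\|^2_{V,\Omega} + 2 a_{V,\Omega}(u_+, u_-),
\]
which yield $4 a_{V,\Omega}(u_+, u_-) = \|u_+ + u_-\|^2_{V,\Omega} - \|u\|^2_{V,\Omega} \leq 0$ by the second bound from (iii).
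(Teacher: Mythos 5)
Your proof is correct, and its overall architecture (approximate $u$ by $\xi_n$, use (i) for boundedness of $\xi_n^\pm$, extract weak limits $u_\pm$, then weak lower semicontinuity and the parallelogram law for (iii)--(iv)) coincides with the paper's. The one place where you genuinely diverge is the identification ${\bf J}(u_\pm)=({\bf J}(u))^\pm$ in (ii). The paper gets there the long way: it only uses weak convergence $\xi_n^\pm\rightharpoonup {\bf J}(u_\pm)$ in $L^1(B)$, invokes Dunford--Pettis to deduce equi-integrability of $\{\xi_n^\pm\}$, and then Vitali's theorem (together with a.e.\ convergence) to upgrade to strong $L^1(B)$ convergence and match the limits. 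You instead observe that $\xi_n^\pm\to({\bf J}(u))^\pm$ \emph{strongly} in $L^1_{loc}$ anyway (your a.e.\ plus domination argument works; even more simply, $|\xi_n^\pm-({\bf J}(u))^\pm|\le|\xi_n-{\bf J}(u)|$ pointwise, so no subsequence or dominating function is needed), and combine this with weak-to-weak continuity of the bounded linear map ${\bf J}:{\cal H}_V(\Omega)\to L^1_{loc}(\Omega)$ and uniqueness of limits in the Hausdorff weak topology $\sigma(L^1_{loc},L^\infty_c)$. Both routes rest on the same duality $(L^1_{loc})'=L^\infty_c$, but yours bypasses Dunford--Pettis and Vitali entirely and is the more economical argument; the paper's version has the side benefit of exhibiting the equi-integrability/strong-convergence mechanism it reuses elsewhere. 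Parts (i), (iii) and (iv) of your proposal match the paper's proof essentially verbatim.
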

\begin{proof}
{\bf (i)}
For each $\xi \in W^{1,\infty}_c (\Omega)$ we have  
$$
   \xi^{+}, \xi ^{-} \in W^{1,\infty}_c (\Omega)
   \quad \hbox{ and } \quad 
   Q_V (\xi) = Q_V (\xi^{+} ) + Q_V (\xi^{-}) \geq Q_V (\xi^{\pm}).
$$ 

{\bf  (ii)} Let $\{\xi_n\} \subset W^{1,\infty}_c (\Omega)$ be such that $\xi_n \to u$ in ${\cal H}_V(\Omega)$. From  part (i) we get 
that $ Q_V (\xi_n^{\pm})$ is also bounded. Therefore, 
there exists $u_{\pm} \in {\cal H}_V(\Omega)$ such that (up to a subsequence)
\begin{equation} \label{eq:WantGoToMALLBis}
    \xi_n^{\pm} \longrightarrow  u_{\pm} 
    \quad 
    \hbox{ weakly in } {\cal H}_V(\Omega).
\end{equation} 
Clearly, $u=u_+-u_-$.
Recalling from definition \ref{amar} that ${\bf J}: {\cal H}_V(\Omega) \to L^1_{loc} (\Omega)$ is a continuous linear map, we have 
\begin{enumerate}
\item[(a)]
$\xi_n  \xrightarrow{\, \, L^1_{loc}(\Omega) \, } {\bf J}(u) $, which implies (up to subsequence) $\xi_n \xrightarrow{\, \, a.e \, \,} {\bf J}(u)$, 
and hence  $\xi_n^{\pm} \xrightarrow{\, \, a.e \, \,} {\bf J}(u)^{\pm}$;
\item[(b)]
$\xi_n^{\pm} \longrightarrow {\bf J}( u_{\pm})$ in the weak topology on $ L^1_{loc}(\Omega)$ (recall that   $L^{\infty}_{c} (\Omega)$ is the dual of $ L^1_{loc}(\Omega)$; refer  Thm. 4, p.182 of Horvath \cite{Horvath}).
\end{enumerate}
Let $B \Subset \Omega$ be a ball. By property (b) above, we have
\begin{equation} \label{eq:WantGoToMALL}
   \xi_n^{\pm} \longrightarrow   {\bf J}(u_{\pm})
   \, \, \hbox{ weakly in }  L^1 (B) .
\end{equation}
This is equivalent to $\{\xi_n^{\pm}\}$ being an equi-integrable family in $L^1 (B)$ (see lemma \ref{lem:L1Compact} in the appendix). By Vitali's convergence Theorem 
($\xi_n^{\pm} \xrightarrow{\, \, a.e \, \,} {\bf J}(u)^{\pm}$ and is equi-integrable) we conclude that 
$\xi_n^{\pm}  \xrightarrow{\, \, L^1 (B) \, } {\bf J}(u)^{\pm}$. Hence in \eqref{eq:WantGoToMALL} we have strong convergence, and furthermore ${\bf J}(u_{\pm}) = {\bf J}(u)^{\pm} \geq 0$. 

\medskip

{\bf (iii)}
Let $u \in {\cal H}_V(\Omega)$, and as in part (ii), consider a sequence $\{\xi_n\}  \subset W_c^{1,\infty} (\Omega)$ such that 
$\xi_n  \to u$ in ${\cal H}_V(\Omega)$ as well as 
$$
   \xi_n^+ \to u_{+}  \;\text{ and }\;
   \quad
   \xi_n^- \to u_{-} \hbox{ weakly in } {\cal H}_V(\Omega).
$$
Hence, $|\xi_n| \to u_+ + u_-$ weakly in ${\cal H}_V(\Omega)$. 
Since a norm is sequentially weakly lower semi-continuous and 
$Q_V (|\xi|) = Q_V (\xi)$ for any $\xi \in W_c^{1,\infty} (\Omega)$ we deduce that
$$
    \|\; u_+ + u_-\;\|_{V,\Omega} \leq \liminf_{n \to \infty} Q_V (|\xi_n|) =  \liminf_{n \to \infty} Q_V (\xi_n) = \|u\|_{V,\Omega} .
$$ 
In a similar manner, using (i), we can show that $\|\;u_{\pm}\;\|_{V,\Omega} \leq \|u\|_{V,\Omega}$.
\medskip

{\bf (iv)}
The parallelogram law and part (iii) imply the following
$$
   4 a_{V,\Omega}(u_{+}, u_{-})  = \|u_{+} + u_{-}\|^2_{V,\Omega} - \|u_{+} - u_{-}\|^2_{V,\Omega} \leq 0.
$$
\end{proof}

\section{Balanced potentials, Energy  and Distribution Solutions} \label{s6}

\subsection{Balanced  potentials: definition and  properties}  \label{s6.1}

Let $V \in L^1_{loc}(\Omega)$ and $Q_V \succeq 0$ in $\Omega$.  
For  $U \Subset \Omega$  a non-empty open set, we consider the 
space 
$$
{\mathcal L}_V(\Omega; U):= \Big(W^{1,\infty}_c(\Omega), \| \cdot\|^{U}_{V,\Omega} \Big)
$$
with the norm
$$
 \|  \xi \|^{U}_{V,\Omega} := \sqrt{Q_V(\xi )} + \int_{U} |\xi|, \quad \xi \in W^{1,\infty}_c(\Omega),
$$
and let ${\mathcal B}_V(\Omega; U)$ denote the completion of this space.  

\begin{remark}
\begin{enumerate}
\item[(i)]
In general, the norms  $\|  \cdot \|^{U}_{V,\Omega}$ are not equivalent as $U$ varies over $\Omega$. For a general nonnegative operator equivalence of these norms seems to require a further restriction on $V^+$ 
(see corollary \ref{knock} and also   \cite[Theorem 1.4]{PinTint}).
\item[(ii)]
When $-\Delta +V$ is an $L^1$-subcritical operator in $\Omega$ or if ${\mathcal L}_V(\Omega; U) \hookrightarrow L^1_{loc}(\Omega)$ for any $U$, it is easy to see that the norms  $\|  \cdot \|^{U}_{V,\Omega}$ are equivalent as $U$ varies over $\Omega$. In the former case  all the spaces ${\mathcal B}_V(\Omega; U)$ are equivalent to ${\mathcal H}_V(\Omega)$.
\end{enumerate}
\end{remark}

The inclusion map ${\mathcal L}_V(\Omega; U) \to L^1(U)$ being continuous, 
it admits a unique continuous extension to ${\mathcal B}_V(\Omega; U)$ that will be denoted as ${\bf J}_{U}$. If $-\Delta +V$ is an $L^1$-subcritical operator in $\Omega$,  we can identify ${\mathcal B}_V(\Omega; U)$ with ${\mathcal H}_V(\Omega)$,  
and we will let ${\bf J}_{U}(u):=  {\bf J}(u)_{\mid U}$.

\medskip

Given a closed ball $B \subset \Omega$, let $X_B$ denote the completion of the space  $C_c^2(B)$ under the $C^2(B)$ norm  and $X_B^*$ denote its dual space. Based on \eqref{eq:sloka}, we have the following result.  
\begin{proposition} \label{pol}
Let $V \in L^1_{loc}(\Omega)$ be such that $Q_V \succeq 0$ in $\Omega$.   Then  for any closed ball $B \subset \Omega$, and any open set $U \Subset \Omega$ containing $B$, the linear map
\begin{equation}\label{rage}
{\mathcal L}_V(\Omega; U) \ni \xi \mapsto  V\xi \in    X_B^* 
\end{equation}
is continuous. In particular, if $-\Delta +V$ is an $L^1$-subcritical operator in $\Omega$, then the map in \eqref{rage} is continuous from ${\mathcal L}_V(\Omega)$ to $X_B^* $ for any such ball $B$.
\end{proposition}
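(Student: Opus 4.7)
The plan is to derive the proposition directly from the estimate \eqref{eq:sloka} in Lemma~\ref{lem:Damm}(ii). Fix a closed ball $B \subset U \Subset \Omega$ and a test function $\varphi \in C_c^2(B)$, viewed as an element of $C_c^2(\Omega)$ via extension by zero; note ${\rm supp}(\varphi) \subset B \subset U$. Applying \eqref{eq:sloka} to this $\varphi$ and any $\xi \in W_c^{1,\infty}(\Omega)$ gives
$$
\left|\int_\Omega V\xi\varphi\right| \;\leq\; \Bigl(\sqrt{Q_V(\varphi)} + \|\Delta\varphi\|_{L^\infty(B)}\Bigr)\,\|\xi\|_{V,\Omega}^{U},
$$
where I used ${\rm supp}(\varphi) \subset B \subset U$ to bound $\sqrt{Q_V(\xi)} + \int_{{\rm supp}(\varphi)}|\xi|$ by $\|\xi\|_{V,\Omega}^U$. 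Everything therefore reduces to controlling the first factor by a constant times $\|\varphi\|_{C^2(B)}$.

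The term $\|\Delta\varphi\|_{L^\infty(B)}$ is trivially at most $N\|\varphi\|_{C^2(B)}$. For the quadratic form piece, since ${\rm supp}(\varphi)\subset B$ and $V \in L^1(B)$ (because $V \in L^1_{loc}(\Omega)$ and $B$ is compact), I can bound
$$
Q_V(\varphi) \;=\; \int_B|\nabla\varphi|^2 + \int_B V\varphi^2 \;\leq\; \|\varphi\|_{C^2(B)}^2\,\bigl(|B| + \|V\|_{L^1(B)}\bigr).
$$
Combining these produces a constant $c = c(V,B)$ with
$$
\left|\int_B V\xi\varphi\right| \;\leq\; c\,\|\varphi\|_{C^2(B)}\,\|\xi\|_{V,\Omega}^U, \qquad \forall\,\xi \in W_c^{1,\infty}(\Omega),\ \forall\,\varphi\in C_c^2(B).
$$
This says $V\xi$ defines a continuous linear functional on $(C_c^2(B),\|\cdot\|_{C^2(B)})$ of norm at most $c\|\xi\|_{V,\Omega}^U$; by density it extends uniquely to an element of $X_B^*$, and the inequality above is precisely the continuity of $\xi \mapsto V\xi$ from ${\mathcal L}_V(\Omega;U)$ into $X_B^*$.

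For the ``in particular'' clause, $L^1$-subcriticality of $-\Delta+V$ in $\Omega$ means ${\mathcal L}_V(\Omega) \hookrightarrow L^1_{loc}(\Omega)$, so some constant $C_U$ satisfies $\int_U|\xi| \leq C_U\sqrt{Q_V(\xi)}$; equivalently, $\|\cdot\|_{V,\Omega}^U$ and $\sqrt{Q_V(\cdot)}$ are equivalent norms on $W_c^{1,\infty}(\Omega)$. The previous estimate then reads as continuity from ${\mathcal L}_V(\Omega)$ into $X_B^*$. I do not anticipate a substantive obstacle here: the argument is essentially a repackaging of \eqref{eq:sloka}, the only bookkeeping being to choose $\varphi \in C_c^2(B)$ (with $B \subset U$) so that the support term in \eqref{eq:sloka} is absorbed into $\|\xi\|_{V,\Omega}^U$.
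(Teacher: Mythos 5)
Your proposal is correct and follows essentially the same route as the paper, which deduces the proposition directly from \eqref{eq:sloka}; you merely make explicit the step the paper leaves implicit, namely that the constant $\sqrt{Q_V(\varphi)}+\|\Delta\varphi\|_{L^\infty}$ is bounded by $c(V,B)\,\|\varphi\|_{C^2(B)}$ uniformly for $\varphi\in C_c^2(B)$, using $V\in L^1(B)$. The treatment of the $L^1$-subcritical case via the equivalence of $\|\cdot\|^U_{V,\Omega}$ with $\sqrt{Q_V(\cdot)}$ also matches the paper's intent.
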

\proof  From \eqref{eq:sloka}, we easily obtain that 
$$
\|V\xi\|_{X_B^*} := \sup_{\| \varphi \|_{X_B} \leq 1} \Big|\int_\Omega V \xi \varphi \Big | \leq C \| \xi\|^U_{V,\Omega}, \quad \forall \xi \in W^{1,\infty}_c(\Omega).
$$
\qed

\medskip

Denote by 
${\mathbb V}_{B,U}$ the unique continuous linear extension of the map given in \eqref{rage} to 
${\mathcal B}_V(\Omega; U)$. Consider the following ``multiplication by $V$" map
$$
{\mathbb M}_B : 
\big\{ w \in  L^1(B): Vw \in L^1(B) \big\} \to L^1(B) \quad \text{ given by } \;\; {\mathbb M}_B (w):=  Vw.
$$
We consider situations where we can recover ${\mathbb V}_{B,U}$ as the composition of ${\bf J}_{U}$ and the map ${\mathbb M}_B$.

\begin{definition}\label{tri}
A potential $V \in L^1_{loc}(\Omega)$  is called {\it balanced }
in $\Omega$ if

\hspace{1mm} {\bf (i)} $Q_V \succeq 0$ in $\Omega$,

\medskip
and for any closed ball  $B \subset \Omega$ there exists an  open set $U \Subset \Omega$ containing $B$ such that
\begin{enumerate}
\item[{\bf (ii)}] 
$V {\bf J}_{U}(u) \in L^1(B)$ for any $u \in {\mathcal B}_V(\Omega; U)$,
\item[{\bf (iii)}] 
${\mathbb V}_{B,U} = {\mathbb M}_B \circ {\bf J}_{U}$ on  ${\mathcal B}_V(\Omega; U)$. 
\end{enumerate}
\end{definition}

That is, for a balanced potential,  the following diagram  commutes:
\begin{equation}
    \begin{tikzcd}
        L^1 (B) \arrow{r}{{\mathbb M}_B}        & L^1 (B)   \arrow{r}{i} & X_B^{*} \\
        \     &                {\mathcal B}_V(\Omega; U)      \arrow{lu}{{\bf J}_U}    \arrow{ru}[swap]{{\mathbb V}_{B,U}}      &   \     \\ 
                 \     &                {\cal L}_V (\Omega; U)    \arrow{u}[swap]{i}                  &  \
    \end{tikzcd}
\end{equation}
In the subsections~\ref{s6.5} and \ref{s6.6} we will give examples of both balanced and non-balanced potentials.

\begin{proposition}\label{nip}
Let (i) and (ii) of definition \ref{tri} be satisfied. Then the following are equivalent:
\begin{enumerate}
\item[{\rm {\bf (i)}}]
$V$ is balanced in $\Omega$,
\item[{\rm {\bf (ii)}}]
the map $\displaystyle
{\mathbb M}_B \circ {\bf J}_{U} : {\mathcal B}_V(\Omega; U) \to L^1(B) \subset X_B^*$
is continuous (in the $X_B^*$ norm),
\item[{\rm {\bf (iii)}}]
for any sequence $\{\xi_n\} \subset W_c^{1,\infty}(\Omega)$  such that
$\xi_n \to u$ in ${\mathcal B}_V(\Omega; U)$, we have 
$$\int_B V \xi_n \varphi \to \int_B V {\bf J}_{U}(u)  \varphi , \;\; \forall \varphi \in C_c^2(B).$$
\end{enumerate}
\end{proposition}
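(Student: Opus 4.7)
The plan is to pivot on two structural facts about $\mathbb{V}_{B,U}$. First, by its very construction, $\mathbb{V}_{B,U}$ is the unique continuous linear extension to ${\mathcal B}_V(\Omega;U)$ of the map $\xi \mapsto V\xi \in X_B^{*}$ of Proposition~\ref{pol}, so it is automatically continuous. Second, $W_c^{1,\infty}(\Omega)$ sits as a dense subspace of ${\mathcal B}_V(\Omega;U)$, and on it $\mathbf{J}_U$ is the identity; hence, under the inclusion $L^1(B) \hookrightarrow X_B^{*}$,
$$
   \mathbb{V}_{B,U}(\xi) \, = \, V\xi \, = \, (\mathbb{M}_B \circ \mathbf{J}_U)(\xi)
   \qquad \forall\, \xi \in W_c^{1,\infty}(\Omega).
$$
Every implication below is a direct consequence of these two observations together with the uniqueness of continuous extensions from a dense subspace.

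For \textbf{(i) $\Leftrightarrow$ (ii)}: the forward direction is immediate, since if $\mathbb{M}_B \circ \mathbf{J}_U = \mathbb{V}_{B,U}$ on ${\mathcal B}_V(\Omega;U)$, the composition inherits continuity from $\mathbb{V}_{B,U}$. Conversely, if $\mathbb{M}_B \circ \mathbf{J}_U$ is continuous into $X_B^{*}$, then two continuous maps agreeing on the dense subspace $W_c^{1,\infty}(\Omega)$ must coincide on all of ${\mathcal B}_V(\Omega;U)$.

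For \textbf{(i) $\Rightarrow$ (iii)}: if $\xi_n \to u$ in ${\mathcal B}_V(\Omega;U)$, continuity of $\mathbb{V}_{B,U}$ yields $V\xi_n = \mathbb{V}_{B,U}(\xi_n) \to \mathbb{V}_{B,U}(u) = V\mathbf{J}_U(u)$ in $X_B^{*}$, and pairing with any $\varphi \in C_c^2(B) \subset X_B$ gives the required convergence $\int_B V \xi_n\, \varphi \to \int_B V \mathbf{J}_U(u)\, \varphi$.

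The only direction that requires a genuine argument is \textbf{(iii) $\Rightarrow$ (i)}, and this is the step I expect to be the main (indeed, the sole) subtlety. Pick $u \in {\mathcal B}_V(\Omega;U)$ and an approximating sequence $\{\xi_n\} \subset W_c^{1,\infty}(\Omega)$ with $\xi_n \to u$. Continuity of $\mathbb{V}_{B,U}$ alone forces $\int_B V\xi_n\, \varphi \to \langle \mathbb{V}_{B,U}(u), \varphi\rangle$ for every $\varphi \in C_c^2(B)$; hypothesis (iii) forces the same integrals to converge to $\int_B V\mathbf{J}_U(u)\,\varphi$. Comparing the two limits, the functionals $\mathbb{V}_{B,U}(u)$ and $V\mathbf{J}_U(u)$ in $X_B^{*}$ agree on $C_c^2(B)$; since $C_c^2(B)$ is dense in $X_B$ by definition of $X_B$, they are equal as elements of $X_B^{*}$, which is precisely assertion (i). The only care needed is in invoking this final density to upgrade pointwise-tested equality to equality of functionals; all other steps are routine applications of linearity and continuous extension.
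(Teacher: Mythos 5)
Your proposal is correct and follows essentially the same route as the paper: the easy implications rest on the continuity of ${\mathbb V}_{B,U}$ (Proposition \ref{pol}) and density of $W_c^{1,\infty}(\Omega)$ in ${\mathcal B}_V(\Omega;U)$, and the key step (iii) $\Rightarrow$ (i) identifies ${\mathbb V}_{B,U}(u)$ with $V{\bf J}_U(u)$ by testing against $C_c^2(B)$ and using its density in $X_B$, exactly as the paper does. The only cosmetic difference is that you prove (ii) $\Rightarrow$ (i) directly via uniqueness of continuous extensions rather than closing the cycle through (iii), which is equally valid.
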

\proof 
{\bf (i)} $\Longrightarrow$ {\bf (ii)}: follows from the continuity of ${\mathbb V}_{B,U}$.

{\bf (ii)} $\Longrightarrow$ {\bf (iii)}: is trivial.

{\bf (iii)} $\Longrightarrow$ {\bf (i)}:
Note that $\{V \xi_n\}$ is a  sequence  converging to ${\mathbb V}_{B,U}(u)$ in $X_B^*$ by proposition \ref{pol}. Since $C_c^2(B)$ is dense in $X_B$, assumption (iii) implies that  ${\mathbb V}_{B,U}(u)= V{\bf J}_{U}(u)= ({\mathbb M}_B \circ {\bf J}_{U})(u)$. Thus, condition (iii) of definition \ref{tri} holds.
\qed

\medskip

\begin{remark} \label{nig} 
\begin{enumerate}
\item[{\rm \bf (i)}] 
If $V$ is balanced in $\Omega$, then Range(${\mathbb V}_{B,U}) \subset L^1(B)$.
\item[{\rm \bf (ii)}] 
One can easily check that the graph of the map ${\mathbb M}_B$ is always closed in $L^1(B) \times L^1(B)$, but not necessarily so in $L^1(B) \times X_B^*$. But, 
if  definition \ref{tri} (i)-(ii) are satisfied and the graph of the map
${\mathbb M}_B$ is closed in $L^1(B) \times X_B^*$, then $V$ is balanced in $\Omega$. This follows from proposition \ref{pol}.
\item[{\rm \bf (iii)}] 
When ${\cal L}_V(\Omega) \hookrightarrow L^1_{loc}(\Omega)$
(i.e. $-\Delta+V$  is  an $L^1$-subcritical operator in $\Omega$), 
we can identify ${\cal B}_V (\Omega;U)$ with  ${\cal H}_V (\Omega)$.  In this case, for each $\varphi \in C_c^2 (\Omega)$ consider the linear form 
\begin{equation} \label{eq:Cuoco}
   {\cal L}_V (\Omega) \ni \xi \mapsto \int_{\Omega}  V \xi \varphi
\end{equation}
which is continuous by \eqref{eq:sloka}, and let ${\bf J}$ be as given in definition \ref{amar}. 
Hence, in the subcritical case, the potential $V$ is balanced iff $V {\bf J} (u) \in L^1_{loc}(\Omega)$  for all $u \in {\cal H}_V (\Omega)$ and the unique continuous extension 
of~\eqref{eq:Cuoco} to 
${\cal H}_V (\Omega)$ is given by 
$$
   {\cal H}_V (\Omega) \ni    u \mapsto \int_{\Omega}  V {\bf J} (u) \varphi .
$$ 
\end{enumerate}
\end{remark}

\begin{proposition}\label{mea}
\begin{enumerate}
\item[{\rm \bf (i)}] 
The class of balanced potentials  form a  convex set in $L^1_{loc}(\Omega)$.
\item[{\rm \bf (ii)}] 
Let $V \in L^1_{loc}(\Omega)$ be   balanced  in $\Omega$ 
and $W \in L^1_{loc}(\Omega)$ be nonnegative. Then $V+W$ is  balanced  in $\Omega$.
\end{enumerate}
\end{proposition}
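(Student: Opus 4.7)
Plan: I will verify both parts by checking the three conditions of Definition~\ref{tri}. Condition (i), $Q \succeq 0$, is immediate in both cases since $Q_{V+W} \geq Q_V \geq 0$ and $Q_{\lambda V_1+(1-\lambda)V_2}$ is a convex combination of nonnegative forms. Conditions (ii)--(iii) will be handled via the sequential characterization of Proposition~\ref{nip}(iii): for a fixed closed ball $B$ and suitable $U$, one identifies ${\bf J}_U(u)$ with an $L^1(U)$-limit and then passes to the limit in $\int_B(\text{potential})\xi_n\varphi$ for $\varphi \in C_c^2(B)$.

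For part (ii), fix the open set $U$ witnessing balancedness of $V$ relative to $B$. The pointwise inequality $Q_{V+W} \geq Q_V$ gives $\|\xi\|^U_{V+W,\Omega} \geq \|\xi\|^U_{V,\Omega}$, so the identity extends to a continuous linear map $I\colon {\mathcal B}_{V+W}(\Omega;U) \to {\mathcal B}_V(\Omega;U)$. If $\xi_n \to u$ in ${\mathcal B}_{V+W}(\Omega;U)$, uniqueness of $L^1(U)$-limits forces $w := {\bf J}_U^{V+W}(u)$ to coincide with ${\bf J}_U^V(I(u))$, so balancedness of $V$ yields $Vw \in L^1(B)$. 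Next, the bound $\int W\xi^2 \leq Q_{V+W}(\xi)$ makes $\sqrt{W}\xi_n$ Cauchy in $L^2(\Omega)$; its a.e.\ limit on $U$ is $\sqrt{W}w$, so $\sqrt{W}w \in L^2(\Omega)$. A Cauchy--Schwarz estimate using $W \in L^1(B)$ then gives $Ww \in L^1(B)$, and combined with $Vw \in L^1(B)$ this yields $(V+W)w \in L^1(B)$. Finally, splitting $\int_B(V+W)\xi_n\varphi = \int_B V\xi_n\varphi + \int_B W\xi_n\varphi$, the first integral converges to $\int_B Vw\varphi$ by balancedness of $V$, and the second equals $\int_B(\sqrt{W}\xi_n)(\sqrt{W}\varphi) \to \int_B Ww\varphi$ by Cauchy--Schwarz together with $\sqrt{W}\varphi \in L^2(B)$.

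For part (i), reduce to $\lambda \in (0,1)$ (the endpoints are trivial) and set $V = \lambda V_1 + (1-\lambda)V_2$. The key preparatory observation is that balancedness of any potential $V'$ relative to $(B,U)$ propagates upward in $U$: if $U^* \supset U$ and $U^* \Subset \Omega$, then $\|\cdot\|^{U^*}_{V'} \geq \|\cdot\|^U_{V'}$ provides a continuous map ${\mathcal B}_{V'}(\Omega;U^*) \to {\mathcal B}_{V'}(\Omega;U)$ under which ${\bf J}^{V'}_{U^*}$ restricts to ${\bf J}^{V'}_U$ on $U$, so Definition~\ref{tri}(ii)--(iii) transfer verbatim. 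This lets me pick the common set $U := U_1 \cup U_2$ for both $V_i$. The identity $Q_V = \lambda Q_{V_1} + (1-\lambda)Q_{V_2}$ combined with $Q_{V_j} \geq 0$ yields $Q_{V_i} \leq Q_V/\min(\lambda,1-\lambda)$, so $\|\xi\|^U_{V_i,\Omega} \leq C\|\xi\|^U_{V,\Omega}$ and the identity extends to continuous maps $I_i\colon {\mathcal B}_V(\Omega;U) \to {\mathcal B}_{V_i}(\Omega;U)$. For $u \in {\mathcal B}_V(\Omega;U)$ with $\xi_n \to u$, uniqueness of $L^1(U)$-limits gives $w := {\bf J}_U^V(u) = {\bf J}_U^{V_i}(I_i(u))$; balancedness of each $V_i$ then provides $V_iw \in L^1(B)$, hence $Vw \in L^1(B)$, and the integral convergence of Proposition~\ref{nip}(iii) for $V$ follows by linearity from the corresponding statements for $V_1$ and $V_2$. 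The main subtlety in this argument is the ``monotonicity in $U$'' step, since different balanced potentials may a priori require different auxiliary sets $U$ in Definition~\ref{tri}; once this is in place, both statements reduce to linearity, Proposition~\ref{nip}(iii), and a single Cauchy--Schwarz manipulation for the perturbation $W$.
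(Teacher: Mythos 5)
Your proof is correct and follows essentially the same route as the paper: both verify Definition \ref{tri}(ii)--(iii) along approximating sequences, using norm comparisons to transfer convergence in ${\mathcal B}_V(\Omega;U)$ to the constituent potentials, identifying the various ${\bf J}_U$-limits by uniqueness of $L^1(U)$-limits, and handling the perturbation $W$ via $\int_\Omega W\xi^2 \le Q_{V+W}(\xi)$ plus Cauchy--Schwarz with $W\in L^1(B)$. The only cosmetic difference is in part (i): the paper keeps the original sets $U_i$ in play through the concavity bound $t\bigl(\sqrt{Q_{V_1}(\xi)}+\int_{U_1}|\xi|\bigr)+(1-t)\bigl(\sqrt{Q_{V_2}(\xi)}+\int_{U_2}|\xi|\bigr)\le \sqrt{Q_V(\xi)}+\int_U|\xi|$ with $U=U_1\cup U_2$, whereas you first establish the (correct) monotonicity-in-$U$ observation so as to work with the common set $U$ for both $V_i$; the two devices accomplish the same thing.
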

\proof 
\medskip
{\bf (i)}  
Let $V_1,V_2$ be  balanced  in $\Omega$ and let $V := tV_1+(1-t)V_2, \; t \in (0,1).$ It is easy to see that $Q_{V} \succeq 0$ in $\Omega$. Fix a ball $B \Subset \Omega$ and choose $U_1, U_2$ corresponding to $V_1,V_2$ from definition \ref{tri}. Let   $U:=U_1 \cup U_2$.  To avoid confusion we denote the maps ${\bf J}_{U_i}$ associated to the potentials $V_i$ and the open set $U_i$ by ${\bf J}^i_{U_i}$ for $i=1,2.$ \\
Using the concavity of the square root function, we get for $t \in (0,1)$, $\xi \in W_c^{1,\infty}(\Omega)$,
\begin{eqnarray}
 t \Big(\sqrt{Q_{V_1}(\xi)} +\int_{U_1} |\xi| \Big)+ (1-t) \Big(\sqrt{Q_{V_2}(\xi)} +  \int_{U_2} |\xi| \Big)  
&\leq&   \sqrt{Q_{V}(\xi)} +  \int_U |\xi|  . \notag  
\end{eqnarray}
From the above inequality, it follows that if $\xi_n \to u$ in ${\mathcal B}_{V}(\Omega; U)$, then $\xi_n \to u_i$ (for some $u_i$) in ${\mathcal B}_{V_i}(\Omega; U_i)$ for $i=1,2$.
We note that for such a sequence $\{\xi_n\}$, we obtain also that $\xi_n \to {\bf J}_{U}(u)$ in $L^1(U)$ as well as $\xi_n \to {\bf J}^i_{U_i}(u_i)$ in $L^1(U_i)$ for $i=1,2$. Therefore, 
$${\bf J}_{U}(u) = {\bf J}^1_{U_1}(u_1)={\bf J}^2_{U_2}(u_2) \;\; \text{ on }\;\; B \;\;\text{ for any }\;\; u \in {\mathcal B}_{V}(\Omega; U). $$
It follows that (ii) of definition \ref{tri} holds for this $U$. Similarly, it also follows that (iii) of definition \ref{tri} holds since 
$$
V \xi_n \to tV_1 {\bf J}^1_{U_1}(u_1) + (1-t) V_2 {\bf J}^2_{U_2}(u_2) = V  {\bf J}_{U}(u) \;\;\text{ in } \;\; X_B^*.
$$
\medskip
{\bf (ii)}  Clearly  $Q_{V+W} \succeq 0$ in $\Omega$.  Fix a ball $B \Subset \Omega$ and choose $U$ corresponding to $B$ from definition \ref{tri} applied to $V$.  We note that
$$
 \frac{1}{2} \Big(\sqrt{Q_{V}(\xi)} + \int_{U} |\xi| \Big)+ \frac{1}{2} \Big( \int_\Omega W |\xi|^2 \Big)^{\frac{1}{2} }  
\leq   \sqrt{Q_{V+W}(\xi)} +  \int_U |\xi|  . \notag  
$$
We denote the maps  associated to the potentials $V, V+W$ and the open set $U$ by ${\bf J}_{U}$  and ${\bf \tilde{J}}_{U}$ respectively.
As above,  if $\xi_n \to u$ in ${\mathcal B}_{V+W}(\Omega; U)$, then $\xi_n \to v$ for some $v \in {\mathcal B}_{V}(\Omega; U)$, $ \xi_n \to {\bf \tilde{J}}_{U}(u)$,  $\xi_n \to {\bf J}_{U}(v)$ in $L^1(U)$  and $\xi_n \to w$ in $L^1(U; W dx)$ for some $w$.  
 Arguing as before, we can verify (ii) and (iii) of definition \ref{tri} hold for $V+W$.  \qed

\medskip

\subsection{ Distribution Vs Energy solution } \label{s6.2}

Distributional solutions to $-\Delta u + Vu =h$  (see definition~\ref{dusera}) do not necessarily belong to the energy space ${\cal H}_V(\Omega)$ associated to the 
Schr\"odinger operator $-\Delta + V$. For instance, non-zero constants  are distributional solutions to $\Delta u=0$ in $\Omega$;
however, non-zero constants do not lie in $ {\cal H}_0 (\Omega)= {\cal D}^{1,2}_0(\Omega)$ for dimensions larger than two. 

When ${\cal L}_V(\Omega) \hookrightarrow L^1_{loc}(\Omega)$, it is  natural to ask if  for an element in ${\cal H}_V(\Omega)$, the notions of distributional solutions and energy solutions (see definition \ref{def:EnergySolution}) are equivalent. This equivalence is not obvious since  the 
energy solution $u $ may not have the property that $V u \in L^1_{loc} (\Omega)$.
But this equivalence is true under the restriction that $V $  is balanced as shown by the following proposition.

\begin{proposition} \label{prop:EnergyVsDist}
Assume $V \in L^1_{loc}(\Omega)$ is balanced in $\Omega$ and  
$-\Delta+V$ is an $L^1-$ subcritical operator in $\Omega$.
Given $f \in L^{1}_{loc}(\Omega)$, an element $u \in {\cal H}_V(\Omega)$ is an energy solution (see definition \ref{def:EnergySolution}) to $-\Delta u + Vu =f$  iff {\bf J}(u) is a distributional solution.
\end{proposition}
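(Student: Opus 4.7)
The plan is to pass between the two notions by approximation, deriving both directions from a single master identity. By Lemma~\ref{lem:Dense}, pick $\{\xi_n\} \subset C_c^\infty(\Omega)$ with $\xi_n \to u$ in ${\cal H}_V(\Omega)$; the $L^1$-subcriticality then gives $\xi_n \to {\bf J}(u)$ in $L^1_{loc}(\Omega)$. Fix an arbitrary test function $\varphi \in C_c^\infty(\Omega)$, choose a closed ball $B \subset \Omega$ whose interior contains ${\rm supp}(\varphi)$, and let $U \Subset \Omega$ be the open neighbourhood of $B$ furnished by Definition~\ref{tri}. Integration by parts yields, for every $n$,
\begin{equation*}
a_{V,\Omega}(\xi_n,\varphi) \;=\; \int_{\Omega} \xi_n(-\Delta\varphi) \;+\; \int_{\Omega} V\xi_n\,\varphi .
\end{equation*}

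Three convergences are simultaneously available as $n \to \infty$. The left-hand side tends to $a_{V,\Omega}(u,\varphi)$ by continuity of the inner product on ${\cal H}_V(\Omega)$; the first term on the right tends to $\int_\Omega {\bf J}(u)(-\Delta\varphi)$ by $L^1_{loc}$-convergence of $\xi_n$ to ${\bf J}(u)$; and by the balanced hypothesis combined with Propositions~\ref{pol} and~\ref{nip}(iii), $V\xi_n \to V{\bf J}(u)$ in $X_B^{*}$ with $V{\bf J}(u) \in L^1(B)$, so that (using $\varphi \in X_B$) $\int_\Omega V\xi_n\,\varphi \to \int_\Omega V{\bf J}(u)\,\varphi$. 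Passing to the limit produces the master identity
\begin{equation*}
a_{V,\Omega}(u,\varphi) \;=\; \int_\Omega {\bf J}(u)\bigl(-\Delta\varphi + V\varphi\bigr), \qquad \forall\,\varphi \in C_c^\infty(\Omega),
\end{equation*}
together with the membership $V{\bf J}(u) \in L^1_{loc}(\Omega)$ supplied by the balanced property.

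Both implications then fall out at once. If $u$ is an energy solution, the left-hand side above equals $\int_\Omega f\varphi$, and the resulting equality is precisely the distributional equation satisfied by ${\bf J}(u)$. Conversely, if ${\bf J}(u)$ is a distributional solution, the right-hand side equals $\int_\Omega f\varphi$, giving $a_{V,\Omega}(u,\varphi) = \int_\Omega f\varphi$ for all $\varphi \in C_c^\infty(\Omega)$; to extend this to every $\xi \in W_c^{1,\infty}(\Omega)$ as demanded by Definition~\ref{def:EnergySolution}, mollify $\xi \ast \rho_\varepsilon \in C_c^\infty(\Omega)$: by the argument in Lemma~\ref{lem:Lip}(i) these converge to $\xi$ in ${\cal H}_V(\Omega)$-norm (providing continuity of $a_{V,\Omega}(u,\cdot)$) and uniformly with supports in a fixed compact set, which together with $f \in L^1_{loc}(\Omega)$ yields $\int_\Omega f \,(\xi \ast \rho_\varepsilon) \to \int_\Omega f\xi$.

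The only real obstacle is handling the potential term: without further hypotheses on $V$ one could guarantee neither that $\lim_n \int V\xi_n\,\varphi = \int V{\bf J}(u)\,\varphi$ nor even that $V{\bf J}(u) \in L^1_{loc}(\Omega)$. The balanced assumption is engineered to secure both facts through the commutation ${\mathbb V}_{B,U} = {\mathbb M}_B \circ {\bf J}_U$, and it is precisely this commutation that is the linchpin of the argument.
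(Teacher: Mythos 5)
Your proof is correct and follows essentially the same route as the paper: approximate $u$ by smooth compactly supported functions, pass to the limit in the gradient, zero-order and potential terms using $L^1$-subcriticality together with the balanced commutation ${\mathbb V}_{B,U} = {\mathbb M}_B \circ {\bf J}_U$ (the paper invokes this via Remark~\ref{nig}(iii)), and then read off both implications, the converse being the paper's ``tracing back the steps''. Your only additions are organizational: isolating the limit identity as a single master formula valid for every $u \in {\cal H}_V(\Omega)$ and spelling out the mollification/density step that upgrades test functions from $C_c^\infty(\Omega)$ to $W_c^{1,\infty}(\Omega)$, which the paper leaves implicit.
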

\begin{proof} 
Assume first $u \in {\cal H}_V(\Omega)$ is an energy solution and consider a sequence $\{ \xi_n\} \subset W_c^{1,\infty} (\Omega)$ such that  
$\xi_n  \to  u$ in  ${\cal H}_V(\Omega)$.  Since
$$
    a_{V,\Omega} (\xi_n, \varphi) = \int_{\Omega} f \varphi + a_{V,\Omega} (\xi_n - u, \varphi)\;\; \;\;\forall \varphi \in W^{1,\infty}_c(\Omega),
$$
 we get

$$
    \int_{\Omega} \xi_n ( - \Delta \varphi)  + \int_{\Omega}  V \xi_n \varphi = \int_{\Omega} f \varphi + o_n(1),
   \quad
   \forall \varphi \in C_c^2 (\Omega)  .
$$
Remark \ref{nig} (iii)  implies,
$$
   \int_{\Omega} {\bf J}(u) ( - \Delta \varphi)  + \int_{\Omega}  V {\bf J}(u) \varphi = \int_{\Omega} f \varphi,
   \quad
   \forall \varphi \in C_c^{2} (\Omega)  .
$$

Conversely, let $u \in {\cal H}_V(\Omega)$ be such that ${\bf J}(u)$ is a distributional solution; in particular, $V{\bf J}(u) \in L^1_{loc}(\Omega)$. Tracing back the steps given above, we conclude that $u$ is an energy solution as well.
\end{proof}

\medskip 

\begin{corollary}\label{bru}
Let $V  \in L^1_{loc} (\Omega)$  be  balanced in $\Omega$ and 
$-\Delta+V$  an $L^1-$ subcritical operator in $\Omega$.
Then ${\bf J}$ is injective.
\end{corollary}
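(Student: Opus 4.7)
The plan is to reduce this to Proposition~\ref{prop:EnergyVsDist} via the trivial observation that the zero function is automatically a distributional solution of $-\Delta w + Vw = 0$. Suppose $u \in {\cal H}_V(\Omega)$ with ${\bf J}(u) = 0$. We must show $\|u\|_{V,\Omega} = 0$.

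First, I would observe that ${\bf J}(u) = 0 \in L^1_{loc}(\Omega)$ is trivially a distributional solution of $-\Delta w + V w = 0$ in $\Omega$, since in this case $V \cdot {\bf J}(u) \equiv 0 \in L^1_{loc}(\Omega)$ and the defining identity of Definition~\ref{dusera} reduces to $0 = 0$. By Proposition~\ref{prop:EnergyVsDist} (whose hypotheses of balancedness and $L^1$-subcriticality are precisely what is assumed here), this implies that $u$ is an energy solution of the same equation in the sense of Definition~\ref{def:EnergySolution}; that is,
$$
    a_{V,\Omega}(u,\xi) = 0, \qquad \forall\, \xi \in W_c^{1,\infty}(\Omega).
$$

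Next, I would extend this identity to all of ${\cal H}_V(\Omega)$ by density. Since $C_c^\infty(\Omega) \subset W_c^{1,\infty}(\Omega)$ and $C_c^\infty(\Omega)$ is dense in ${\cal H}_V(\Omega)$ by Lemma~\ref{lem:Dense}, and since $a_{V,\Omega}(u,\cdot)$ is continuous on ${\cal H}_V(\Omega)$ by Cauchy--Schwarz, the vanishing of $a_{V,\Omega}(u,\cdot)$ on the dense subspace $W_c^{1,\infty}(\Omega)$ propagates to all of ${\cal H}_V(\Omega)$. Taking the test element to be $u$ itself then yields $\|u\|_{V,\Omega}^2 = a_{V,\Omega}(u,u) = 0$, hence $u = 0$ in ${\cal H}_V(\Omega)$, as required.

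There is no serious obstacle here because the hard work has been done in Proposition~\ref{prop:EnergyVsDist}, where the balancedness hypothesis was used to pass from $\int_\Omega V\xi_n\varphi$ to $\int_\Omega V{\bf J}(u)\varphi$ along approximating sequences via Remark~\ref{nig}(iii). The only subtle point worth flagging is that one must not try to prove injectivity by a direct limiting argument on $\int_\Omega V \xi_n \varphi$, because $V\xi_n$ does not a priori converge in $L^1_{loc}$ -- this is exactly the place where the balanced assumption would be needed, and routing through Proposition~\ref{prop:EnergyVsDist} packages that difficulty neatly.
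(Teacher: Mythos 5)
Your proof is correct and follows essentially the same route as the paper: observe that $0$ is a distributional solution of $-\Delta w + Vw = 0$, invoke Proposition~\ref{prop:EnergyVsDist} to conclude $u$ is an energy solution, and then use uniqueness of the energy solution (which you justify, correctly, by density of $W_c^{1,\infty}(\Omega)$ in ${\cal H}_V(\Omega)$ and Cauchy--Schwarz) to get $u=0$. The paper's proof merely states the uniqueness step without spelling out the density argument, so your write-up is the same argument with that detail made explicit.
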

\proof Let  $u_* \in {\cal H}_V(\Omega)$ be such that ${\bf J}(u_*)=0$. We note that, since 0 is a distributional solution to $-\Delta u +Vu=0$, by the above proposition $u_*$ is an energy solution to this same equation. Since an energy solution is unique in ${\cal H}_V(\Omega)$, we get $u_*=0$. \qed

\medskip

\begin{lemma}[Comparison principle for energy solutions] 
\label{lem:BADAMILK2}
Let $V  \in L^1_{loc} (\Omega)$ be such that 
$-\Delta+V$ is an $L^1-$ subcritical operator in $\Omega$.
Let
$f \in  L^1_{loc} (\Omega) $ and $u \in {\cal H}_{V} (\Omega)$
be an energy super solution :
$$
     a_{V,\Omega} (u,\xi)  \geq  \int_{\Omega} f \xi,
     \quad
     \forall \; 0 \leq \xi \in W_c^{1,\infty} (\Omega).
$$
If $ f \geq 0 $,
then ${\bf J}(u) \geq 0$.
\end{lemma}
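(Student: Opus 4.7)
The plan is to show $u_{-}=0$ in ${\cal H}_V(\Omega)$, where $u=u_{+}-u_{-}$ is the decomposition provided by Lemma~\ref{lem:BADAMILK}(ii); by continuity of ${\bf J}$ and the identity ${\bf J}(u_{-})={\bf J}(u)^{-}$ this will give ${\bf J}(u)^{-}=0$, which is exactly ${\bf J}(u)\geq 0$. The engine that makes this possible is Lemma~\ref{lem:BADAMILK}(iv), $a_{V,\Omega}(u_{+},u_{-})\leq 0$: tested against an appropriate approximation of $u_{-}$, the supersolution inequality will force $\|u_{-}\|_{V,\Omega}^{2}\leq 0$.

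First I would select $\{\xi_n\}\subset W^{1,\infty}_c(\Omega)$ with $\xi_n\to u$ in ${\cal H}_V(\Omega)$. By Lemma~\ref{lem:BADAMILK}(i), $Q_V(\xi_n^{-})\leq Q_V(\xi_n)$, so $\{\xi_n^{-}\}$ is norm-bounded in ${\cal H}_V(\Omega)$. Repeating the weak-compactness-plus-Vitali argument from the proof of Lemma~\ref{lem:BADAMILK}(ii), I extract a subsequence along which $\xi_n^{-}\rightharpoonup u_{-}$ weakly in ${\cal H}_V(\Omega)$, $\xi_n^{-}\to{\bf J}(u)^{-}$ strongly in $L^{1}_{loc}(\Omega)$, and $\xi_n^{-}\to{\bf J}(u)^{-}$ a.e., with ${\bf J}(u_{-})={\bf J}(u)^{-}\geq 0$.

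Since each $\xi_n^{-}$ is a nonnegative element of $W^{1,\infty}_c(\Omega)$, the supersolution hypothesis gives $a_{V,\Omega}(u,\xi_n^{-})\geq\int_{\Omega} f\,\xi_n^{-}$. Weak convergence in ${\cal H}_V(\Omega)$ lets me pass to the limit on the left, since $u$ is fixed: $a_{V,\Omega}(u,\xi_n^{-})\to a_{V,\Omega}(u,u_{-})$. On the right, Fatou's lemma applies (both $f$ and $\xi_n^{-}$ are nonnegative and $\xi_n^{-}\to{\bf J}(u)^{-}$ a.e.), yielding $\int_{\Omega} f\,{\bf J}(u)^{-}\leq \liminf_n \int_{\Omega} f\,\xi_n^{-}$. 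Expanding $u=u_{+}-u_{-}$ and invoking Lemma~\ref{lem:BADAMILK}(iv),
\[
0\;\leq\;\int_{\Omega} f\,{\bf J}(u)^{-}\;\leq\;a_{V,\Omega}(u,u_{-})\;=\;a_{V,\Omega}(u_{+},u_{-})-\|u_{-}\|_{V,\Omega}^{2}\;\leq\;-\|u_{-}\|_{V,\Omega}^{2}\;\leq\;0.
\]
Hence $\|u_{-}\|_{V,\Omega}=0$, so $u_{-}=0$ in ${\cal H}_V(\Omega)$, and by continuity of ${\bf J}$ we get ${\bf J}(u)^{-}={\bf J}(u_{-})=0$ a.e., i.e.\ ${\bf J}(u)\geq 0$.

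The main delicate point is the one already isolated in the proof of Lemma~\ref{lem:BADAMILK}(ii): upgrading the weak convergence $\xi_n^{-}\rightharpoonup u_{-}$ in ${\cal H}_V(\Omega)$ to a.e.\ convergence in $\Omega$. This requires combining the strong $L^{1}_{loc}$ convergence of $\xi_n$ (from ${\bf J}$) with the weak $L^{1}_{loc}$ convergence of $\xi_n^{-}$ and invoking equi-integrability and Vitali's theorem. Once this is in hand, the rest of the argument is a transparent test-function manipulation relying only on the bilinearity of $a_{V,\Omega}$ and Lemma~\ref{lem:BADAMILK}(iv).
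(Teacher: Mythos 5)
Your proof is correct and follows essentially the same route as the paper's: test the supersolution inequality with the nonnegative functions $\xi_n^{-}$ from the decomposition of Lemma~\ref{lem:BADAMILK}(ii), pass to the weak limit $a_{V,\Omega}(u,\xi_n^{-})\to a_{V,\Omega}(u,u_{-})$, and use Lemma~\ref{lem:BADAMILK}(iv) to conclude $\|u_{-}\|_{V,\Omega}=0$, hence ${\bf J}(u)={\bf J}(u)^{+}\geq 0$. The only (harmless) difference is that you retain the term $\int_{\Omega} f\,\xi_n^{-}$ and invoke Fatou's lemma, whereas the paper discards it at once using $f\geq 0$ and $\xi_n^{-}\geq 0$, working directly with $a_{V,\Omega}(u,\xi_n^{-})\geq 0$.
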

\begin{proof}

Using the equation satisfied by $u$ we get 
$$
   a_{V,\Omega}  (u, \xi) \geq 0  ,
   \quad
   \, \forall \; 0 \; \leq \xi \in W^{1, \infty}_c (\Omega).
$$ 

By  Lemma~\ref{lem:BADAMILK} (ii), we can find a sequence $\{\xi_n \} \subset W^{1,\infty}_c (\Omega) $ such that
\begin{equation} \label{eq:Headache4}
 \xi_n \to u \text{ in }  {\cal H}_{V}(\Omega), \quad  \xi_n ^{\pm}\to u_{\pm} \hbox{ weakly in } {\cal H}_{V}(\Omega),
  \quad
  \xi_n ^{\pm}\to {\bf J}( u)^{\pm} \hbox{ in }  L^1_{loc}(\Omega).
\end{equation}
Recalling $u=u_+- u_-$,  
we can now argue as follows:
\begin{align*}
   0       &\leq 
      \lim_{n \to \infty} a_{V,\Omega} \big( u, \xi_n^- \big)   
      &\\
      &=
     a_{V,\Omega} \big( u, u_- \big)  
     &\text{(by \eqref{eq:Headache4})}  \\
     &=
     a_{V,\Omega} \big( u_+,  u_-\big) - 
     a_{V,\Omega} \big( u_-,u_- \big) 
     & \\
     &\leq 
     - a_{V,\Omega} \big( u_-,u_-  \big).
     &\text{(Lemma~\ref{lem:BADAMILK} (iv))} 
\end{align*}

Hence $u_-= 0$ and thus,
${\bf J}(u)={\bf J}(u_+) = {\bf J}(u)^+ \geq 0$.
\end{proof}

\begin{proposition}[Strong Maximum principle for energy solution] \label{prop:booth}
Assume $V \in L^1_{loc}(\Omega)$  is balanced in $\Omega$ and  
$-\Delta+V$ is an $L^1-$ subcritical operator in $\Omega$.
Given  a non-negative function $ f \in L^{1}_{loc} (\Omega)$, $f \not \equiv 0$, let $u \in {\cal H}_V(\Omega)$  be the  energy solution of $-\Delta u + Vu=f$. That is,
$$
     a_{V,\Omega} (u, \xi) = \int_{\Omega} f \xi ,
     \quad \forall \,  \xi \in W_c^{1,\infty} (\Omega) .
   $$
Then, ${\bf J}(u)$  (a  distributional solution of $-\Delta u + Vu=f$)   has a   quasi-continuous representative $\tilde u$ which satisfies
\begin{equation} \label{eq:ZeroSet}
   {\rm Cap}\big( \{ \tilde u = 0 \}  \big) =  0.
\end{equation}
\end{proposition}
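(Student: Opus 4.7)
The plan is to promote $u$ to a nonnegative distributional solution and then invoke the strong maximum principle (Theorem~\ref{thm:StrongMax}) on a rewritten equation that only features the nonnegative potential $V^+$.

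Step 1 (sign and distributional reformulation). Since $f\ge0$, Lemma~\ref{lem:BADAMILK2} yields ${\bf J}(u)\ge 0$ a.e.\ in $\Omega$. Because $V$ is balanced and $-\Delta+V$ is $L^1$-subcritical, Proposition~\ref{prop:EnergyVsDist} upgrades $u$ to a distributional solution: ${\bf J}(u)$ solves $-\Delta w + V w = f$ in ${\cal D}'(\Omega)$ and $V\,{\bf J}(u)\in L^1_{loc}(\Omega)$. Using ${\bf J}(u)\ge0$ pointwise, we have
$$
    |V|\,{\bf J}(u) \;=\; |V\,{\bf J}(u)| \;\in\; L^1_{loc}(\Omega),
$$
so both $V^+\,{\bf J}(u)$ and $V^-\,{\bf J}(u)$ lie in $L^1_{loc}(\Omega)$ separately. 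This is the crucial consequence of balancedness together with the sign information: without ${\bf J}(u)\ge0$ one could not split $V\,{\bf J}(u)$ into its positive and negative parts.

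Step 2 (reduction to a nonnegative potential). Rewrite the distributional equation as
$$
   -\Delta\,{\bf J}(u) \;+\; V^+\,{\bf J}(u) \;=\; f \;+\; V^-\,{\bf J}(u) \;\ge\; 0 \qquad \text{in } {\cal D}'(\Omega).
$$
Furthermore $-\Delta\,{\bf J}(u) = f - V\,{\bf J}(u) \in L^1_{loc}(\Omega)$ is a Radon measure, and Proposition~\ref{prop:RanRan} then gives ${\bf J}(u)\in W^{1,q}_{loc}(\Omega)$ for every $q<N/(N-1)$. The potential-theoretic framework underpinning Theorem~\ref{thm:StrongMax} (Ancona, Brezis--Ponce) associates to such a function a harmonic-capacity quasi-continuous representative $\tilde u$.

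Step 3 (application of the strong maximum principle and contradiction argument). On any non-empty open connected bounded set $U\subset\Omega$, Theorem~\ref{thm:StrongMax} applied to $\tilde u|_U$ with $W=V^+$ gives the dichotomy: either $\tilde u\equiv 0$ a.e.\ on $U$, or ${\rm Cap}(\{\tilde u=0\}\cap U;U)=0$. Exhausting $\Omega$ by such $U$ (and using the connectedness of $\Omega$ as well as the first fact following Definition~\ref{def:harcap}) reduces~\eqref{eq:ZeroSet} to ruling out the possibility that ${\bf J}(u)\equiv 0$. But balancedness and $L^1$-subcriticality make ${\bf J}$ injective on ${\cal H}_V(\Omega)$ by Corollary~\ref{bru}, so ${\bf J}(u)\equiv 0$ would force $u=0$ in ${\cal H}_V(\Omega)$, whence
$$
    \int_\Omega f\,\xi \;=\; a_{V,\Omega}(u,\xi) \;=\; 0 \qquad \forall\, \xi\in W^{1,\infty}_c(\Omega),
$$
which contradicts $f\not\equiv 0$ and $f\in L^1_{loc}(\Omega)$.

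The main obstacle I anticipate is the justification in Step~2 that ${\bf J}(u)$ admits a harmonic-capacity quasi-continuous representative. This is not a pure Sobolev matter: because $Q_V$ is generally much weaker than the Dirichlet energy when $V$ is unbounded below, one should not expect ${\bf J}(u)\in H^1_{loc}(\Omega)$, so the standard $H^1$-quasi-continuous representative argument does not apply. One must instead invoke the Riesz-type decomposition that represents a distribution with locally integrable Laplacian as a local difference of superharmonic functions, each carrying a canonical quasi-continuous representative. Once that technical input is in place, the remaining steps are just a careful packaging of the earlier lemmas.
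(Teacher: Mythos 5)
Your proof is correct and follows essentially the same route as the paper: nonnegativity of ${\bf J}(u)$ via Lemma \ref{lem:BADAMILK2}, passage to a distributional solution via Proposition \ref{prop:EnergyVsDist}, regularity via Proposition \ref{prop:RanRan}, the strong maximum principle of Theorem \ref{thm:StrongMax} applied with $W=V^+$, and Corollary \ref{bru} to exclude ${\bf J}(u)\equiv 0$. Your additional verifications (that $V^{\pm}{\bf J}(u)\in L^1_{loc}(\Omega)$ because ${\bf J}(u)\ge 0$, and that the quasi-continuous representative comes from $\Delta {\bf J}(u)$ being a Radon measure rather than from Sobolev regularity alone) are details the paper leaves implicit but do not alter the argument.
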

\begin{proof}
By corollary \ref{bru} and  lemma \ref{lem:BADAMILK2}, we have ${\bf J}(u) \not \equiv 0$ and 
${\bf J}(u)  \geq 0$.  Furthermore, ${\bf J}(u)$ is also a distributional solution of the corresponding equation from proposition \ref{prop:EnergyVsDist}. From proposition  \ref{prop:RanRan}, we conclude that ${\bf J}(u) \in W^{1,1}_{loc}(\Omega)$ and hence admits a  quasicontinuous representative $\tilde{u}$  in this space. Now the strong maximum principle in theorem \ref{thm:StrongMax}
can be applied to $\tilde{u}$ showing that ${\rm Cap} (\{ \tilde u = 0 \})=0$. 
\end{proof}
\medskip

\subsection{Strongly Balanced Potentials} \label{s6.3}
We note that any non-negative operator $-\Delta+V$ leads to an  estimate like~\eqref{eq:sloka} 
but not to similar  estimates involving only the positive or negative parts of $V$.
We are therefore led to introduce the following  more concrete formulation:

\begin{definition} \label{tre}
A function $V \in L^1_{loc}(\Omega)$  is called {\it strongly balanced }
in $\Omega$ if $Q_V \succeq 0$ in $\Omega$ and  
for any compact $K \subset \Omega$ there exists an open set $U \Subset \Omega$ containing  $K$ for which the linear map:
$$
{\mathcal L}_V(\Omega; U) \ni \xi \mapsto  V^+ \xi \in L^1(K)
$$
is continuous. That is, there exists a constant  $ c>0$ (depending on $K,U$) satisfying
\begin{equation} \label{eq:BalancedCondition}
      c \int_{K } V^+   |\xi|  \leq    \|  \xi \|^{U}_{V,\Omega}:=\sqrt{Q_V(\xi)} +  \int_{U }  |\xi|  \;\;\;   \forall  \xi \in W_c^{1,\infty}(\Omega).
\end{equation}
\end{definition}

\begin{remark}\label{hic}
\begin{enumerate}
\item[{\bf (i)}] 
If all the norms  $\|  \cdot \|^{U}_{V,\Omega}$ are equivalent as $U$ varies over $\Omega$, we can easily see that the definition \ref{tre} is the same as requiring  that \eqref{eq:BalancedCondition} holds for any  pair $K,U$.
\item[{\bf (ii)}] 
Indeed, for the class of strongly balanced potentials in ${\mathcal V}(\Omega)$ (see definition \ref{uno} and proof of proposition \ref{paint}) given the compact set $K$  we cannot ensure that \eqref{eq:BalancedCondition} holds for an  arbitrary open set $U$ containing $K$.
\end{enumerate}
\end{remark}

 \begin{proposition} \label{trie}
Let $V \in L^1_{loc}(\Omega)$ be such that $Q_V \succeq 0$ in $\Omega$. 
Then, the following statements are equivalent:
\vspace{-2mm}
\begin{enumerate}
\item[{\rm \bf (i)}] 
$V$  is   strongly balanced in $\Omega$;
\item[{\rm \bf (ii)}] 
for any  compact set $K \subset \Omega$, there exists an open set $U \Subset \Omega$ containing $K$ for which the following 
linear map is continuous
\begin{equation} \label{BIC}
{\mathcal L}_V(\Omega; U) \ni \xi \mapsto  V^- \xi \in L^1(K); 
  \end{equation}
\item[{\rm \bf (iii)}] 
for any  compact set $K \subset \Omega$, there exists an open set $U \Subset \Omega$ containing $K$ for which the following 
linear map is continuous
\begin{equation}\label{ell}
{\mathcal L}_V(\Omega; U) \ni \xi \mapsto  V \xi \in L^1(K) .
\end{equation}
\end{enumerate}
\end{proposition}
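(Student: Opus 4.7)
The plan rests on the pointwise identity $|V\xi| = V^+|\xi| + V^-|\xi|$, valid almost everywhere because $V^+ V^- = 0$, together with the master estimate \eqref{eq:sloka} of Lemma~\ref{lem:Damm}(ii). This identity immediately reduces the proof to the equivalence $(\mathrm{i}) \Leftrightarrow (\mathrm{ii})$: since $V^\pm|\xi| \leq |V\xi|$, condition $(\mathrm{iii})$ forces both $(\mathrm{i})$ and $(\mathrm{ii})$ with the very same $U$; conversely, if $(\mathrm{i})$ and $(\mathrm{ii})$ hold with open sets $U_1$ and $U_2$, then $U := U_1 \cup U_2 \Subset \Omega$ witnesses $(\mathrm{iii})$, because $\int_K |V\xi| = \int_K V^+|\xi| + \int_K V^-|\xi|$ and both summands are controlled by $\|\xi\|^U_{V,\Omega} \geq \|\xi\|^{U_i}_{V,\Omega}$.

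To prove $(\mathrm{i}) \Rightarrow (\mathrm{ii})$, fix a compact set $K \subset \Omega$ and sandwich it as $K \subset \tilde K^\circ \subset \tilde K \Subset \Omega$ for some auxiliary compact $\tilde K$. Applying $(\mathrm{i})$ to $\tilde K$ yields an open set $\tilde U \Subset \Omega$ containing $\tilde K$ and a constant $C_1 > 0$ such that
$$\int_{\tilde K} V^+|\xi| \leq C_1\Bigl(\sqrt{Q_V(\xi)} + \int_{\tilde U} |\xi|\Bigr), \qquad \forall \, \xi \in W^{1,\infty}_c(\Omega).$$
Pick a cutoff $\varphi \in C_c^2(\tilde K^\circ)$ with $0 \leq \varphi \leq 1$ and $\varphi \equiv 1$ on $K$. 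Substituting $|\xi|$ for $\xi$ in \eqref{eq:sloka} (permissible by Lemma~\ref{lem:Lip}(ii), which gives $|\xi| \in W^{1,\infty}_c(\Omega)$ and $Q_V(|\xi|) = Q_V(\xi)$) and rearranging yields
$$\int_\Omega V^-\varphi |\xi| \leq \int_\Omega V^+\varphi |\xi| + C_\varphi \Bigl(\sqrt{Q_V(\xi)} + \int_{\mathrm{supp}(\varphi)} |\xi|\Bigr).$$
Bounding $\int_\Omega V^+\varphi |\xi| \leq \int_{\tilde K} V^+|\xi|$ via the hypothesis and using $\int_K V^-|\xi| \leq \int_\Omega V^-\varphi |\xi|$, I obtain $(\mathrm{ii})$ for $K$ with open set $U := \tilde U \cup \tilde K^\circ \Subset \Omega$ and constant $C_1 + C_\varphi$. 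The reverse implication $(\mathrm{ii}) \Rightarrow (\mathrm{i})$ is identical after swapping $V^+$ and $V^-$.

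The main subtlety is the bookkeeping of open sets: as Remark~\ref{hic}(ii) warns, the norms $\|\cdot\|^U_{V,\Omega}$ need not be equivalent as $U$ varies, so one cannot directly re-use the $U$ supplied by the hypothesis. The fix is to apply $(\mathrm{i})$ not to $K$ itself but to a slightly enlarged compact $\tilde K$ that absorbs $\mathrm{supp}(\varphi)$, then form the output $U$ as a suitable union. Apart from this geometric housekeeping, the argument is a uniform-in-$\xi$ version of the trick already used for Lemma~\ref{lem:Damm}(iii).
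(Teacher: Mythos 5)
Your proposal is correct and follows essentially the same route as the paper: the equivalence of (i) and (ii) via the cutoff estimate \eqref{eq:sloka} applied to $|\xi|$ (the uniform version of the argument in Lemma~\ref{lem:Damm}(iii)), and (iii) obtained by combining the two via $|V\xi|=V^{+}|\xi|+V^{-}|\xi|$. Your explicit handling of the open sets — applying the hypothesis to an enlarged compact $\tilde K$ absorbing $\mathrm{supp}(\varphi)$ and taking a union for the output $U$ — is exactly the bookkeeping the paper's terse proof leaves implicit, and it is carried out correctly.
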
 
\proof 
{\bf (i)}  $\Longleftrightarrow$ {\bf (ii)}:
Let $V$ be strongly balanced in $\Omega$ and a compact $K$ be given. Corresponding to $K$ choose an open set $U_1$ from definition \ref{tre}. Following  a similar argument  using \eqref{eq:sloka} as in lemma \ref{lem:Damm} (iii) there exists an  open set 
$U \Subset \Omega$ containing $K$ such that~\eqref{BIC} is continuous. Converse is similar. 

\medskip

{\bf (i)}  $\Longleftrightarrow$ {\bf (iii)}: 
Follows from the equivalence (i) and (ii).
\qed

\medskip

\begin{proposition}\label{fra}
Let $V \in L^1_{loc}(\Omega)$ be a strongly balanced potential in $\Omega$. Then, 
\vspace{-2mm}
\begin{enumerate}
\item[{\bf (i)}] 
for  any compact $K \subset \Omega$ we may find an open set $U \Subset \Omega$ containing $K$ such that
$$ 
    V {\bf J}_U ( u) \in L^1(K)  \quad \forall u \in  {\mathcal B}_V(\Omega; U),
$$
\item[{\rm \bf (ii)}] 
for $K,U$ as in (i), it holds
$$
\xi_n \to u \;\;\text{ in }\;\; {\mathcal B}_V(\Omega; U) \Longrightarrow V \xi_n \to V {\bf J}_U ( u) \;\;\text{ in }\;\;L^1(K).
$$
\end{enumerate}
\end{proposition}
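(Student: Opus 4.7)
My strategy is to reduce both (i) and (ii) to a single identification: two natural continuous extensions from ${\mathcal L}_V(\Omega;U)$ to $L^1(K)$ must agree on the completion ${\mathcal B}_V(\Omega;U)$.

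First, I would invoke the equivalence supplied by Proposition~\ref{trie}. For the given compact set $K\subset\Omega$, choose an open set $U\Subset\Omega$ with $K\subset U$ such that the linear map
\[
  T:{\mathcal L}_V(\Omega;U)\to L^1(K),\qquad T(\xi):=V\xi
\]
is continuous; this is precisely the content of Proposition~\ref{trie}(iii), and this $U$ will witness both (i) and (ii). Since $W_c^{1,\infty}(\Omega)$ is dense in ${\mathcal B}_V(\Omega;U)$ by definition, $T$ admits a unique continuous linear extension $\widetilde T:{\mathcal B}_V(\Omega;U)\to L^1(K)$. On the other hand, we already have the continuous extension ${\bf J}_U:{\mathcal B}_V(\Omega;U)\to L^1(U)$ of the inclusion, so $V{\bf J}_U(u)$ is well-defined as an a.e.\ function on $U$ (at least when $V{\bf J}_U(u)\in L^1_{loc}(U)$, which is what I want to show).

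Now, given $u\in{\mathcal B}_V(\Omega;U)$, pick $\{\xi_n\}\subset W_c^{1,\infty}(\Omega)$ with $\xi_n\to u$ in ${\mathcal B}_V(\Omega;U)$. By the continuity of $\widetilde T$ and of ${\bf J}_U$,
\[
   V\xi_n\longrightarrow \widetilde T(u) \text{ in }L^1(K), \qquad \xi_n\longrightarrow {\bf J}_U(u) \text{ in }L^1(U).
\]
Passing to a common subsequence, I may assume both convergences hold a.e.\ on $K\subset U$. Then $V\xi_n\to V{\bf J}_U(u)$ a.e.\ on $K$, while simultaneously $V\xi_n\to\widetilde T(u)$ a.e.\ on $K$. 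Comparing pointwise limits forces
\[
   V{\bf J}_U(u)=\widetilde T(u)\quad\text{a.e.\ on }K,
\]
so in particular $V{\bf J}_U(u)\in L^1(K)$, which is assertion (i). For (ii), note that the full original sequence $\{V\xi_n\}$ converges in $L^1(K)$ to $\widetilde T(u)=V{\bf J}_U(u)$ by construction of $\widetilde T$, which is exactly the stated $L^1$-convergence.

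I do not anticipate any real obstacle: the argument is soft and purely functional-analytic. The only point that needs care is the choice of $U$, which must simultaneously (a) contain $K$, (b) be compactly contained in $\Omega$, and (c) make $T$ continuous; Proposition~\ref{trie}(iii) packages all three requirements. Everything else is the standard density-plus-subsequence identification of two continuous extensions from a dense subspace.
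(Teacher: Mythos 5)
Your proposal is correct and follows essentially the same route as the paper: both arguments rest on the continuity of the multiplication map $\xi\mapsto V\xi$ (equivalently $V^{\pm}\xi$) on ${\mathcal L}_V(\Omega;U)$ supplied by Proposition~\ref{trie}, giving convergence of $\{V\xi_n\}$ in $L^1(K)$, followed by identification of the limit with $V{\bf J}_U(u)$ via a.e.\ convergence of a subsequence of $\{\xi_n\}$ to ${\bf J}_U(u)$. The only cosmetic difference is that the paper phrases it as a Cauchy-sequence argument applied to $V^{+}\xi_n$ and $V^{-}\xi_n$ separately, while you package it through the unique continuous extension $\widetilde T$ of $\xi\mapsto V\xi$; the content is identical.
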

\proof 
{\bf (i)} 
 Given compact $K$ choose open set $U$ as in (i) above.  Let $u \in  {\mathcal B}_V(\Omega; U)$. 
 Choose a sequence $\{\xi_n\} \subset {\mathcal L}_V(\Omega; U)$ converging to $u$. Then, it follows from \eqref{BIC}  that $\{V^\pm \xi_n \}$ is Cauchy in $L^1(K)$ and hence converges there. Noting that $\xi_n \to {\bf J}_U(u)$ in $L^1(U)$ and hence pointwise a.e. in $U$ for a subsequence, we get that $V^\pm  {\bf J}_U(u) \in L^1(K)$. 

\medskip

{\bf (ii)} 
From (i) above, we obtain that $\{V\xi_n\}$ is Cauchy in $L^1(K).$ Noting that (up to a subsequence) $\xi_n \to   {\bf J}_U ( u)$ pointwise a.e. in $K$, the result follows. \qed

\medskip

Finally, we can justify our terminology :
\begin{corollary}\label{nes}
$V \in L^1_{loc}(\Omega)$ is a strongly balanced potential in $\Omega$ iff it is  balanced  in $\Omega$ and the map
$$
{\mathbb V}_{B,U} : {\mathcal B}_V(\Omega; U) \to L^1(B)
$$
is continuous in the norm.
\end{corollary}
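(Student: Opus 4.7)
My plan is to verify both directions of the equivalence by repeatedly invoking Propositions~\ref{trie} and~\ref{fra}.

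For the forward implication, I assume $V$ is strongly balanced and fix a closed ball $B \subset \Omega$. Applying Definition~\ref{tre} with $K = B$ together with Proposition~\ref{trie}, I obtain an open set $U \Subset \Omega$ containing $B$ for which the map $\xi \mapsto V\xi$ is continuous from ${\cal L}_V(\Omega;U)$ to $L^1(B)$. Proposition~\ref{fra}(i) then furnishes $V\,{\bf J}_U(u) \in L^1(B)$ for every $u \in {\cal B}_V(\Omega;U)$, which verifies (ii) of Definition~\ref{tri}. For (iii), I pick any such $u$ and an approximating sequence $\{\xi_n\} \subset W^{1,\infty}_c(\Omega)$ with $\xi_n \to u$ in ${\cal B}_V(\Omega;U)$: by definition of ${\mathbb V}_{B,U}$, the images $V\xi_n$ converge to ${\mathbb V}_{B,U}(u)$ in $X_B^*$, while Proposition~\ref{fra}(ii) gives $V\xi_n \to V\,{\bf J}_U(u)$ in $L^1(B)$, hence also in $X_B^*$ through the inclusion $L^1(B) \hookrightarrow X_B^*$. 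The two limits must agree, so ${\mathbb V}_{B,U} = {\mathbb M}_B \circ {\bf J}_U$ and $V$ is balanced. The same Proposition~\ref{fra}(ii) also exhibits ${\mathbb V}_{B,U}$ as the unique continuous extension into $L^1(B)$ of the map $\xi \mapsto V\xi$, so ${\mathbb V}_{B,U}$ is continuous in the $L^1(B)$-norm.

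For the converse, I assume $V$ is balanced and each ${\mathbb V}_{B,U}$ is $L^1(B)$-norm continuous. Given an arbitrary compact $K \subset \Omega$, I cover it by finitely many closed balls $B_1, \ldots, B_n$ each contained in $\Omega$, which is possible because $\Omega$ is open and $K$ is compact. For each $B_i$ I select an open set $U_i \Subset \Omega$ provided by the hypothesis and set $U := \bigcup_{i=1}^n U_i \Subset \Omega$, noting the monotonicity $\|\xi\|^{U_i}_{V,\Omega} \leq \|\xi\|^U_{V,\Omega}$. For any $\xi \in W^{1,\infty}_c(\Omega)$, the identity ${\mathbb V}_{B_i,U_i}(\xi) = V\xi$ as elements of $L^1(B_i)$ combined with the hypothesis yields constants $C_i$ with
$$
  \|V\xi\|_{L^1(B_i)} \leq C_i \|\xi\|^{U_i}_{V,\Omega} \leq C_i \|\xi\|^U_{V,\Omega}.
$$
Summing over $i$ and using $K \subset \bigcup_i B_i$ produces $\|V\xi\|_{L^1(K)} \leq C \|\xi\|^U_{V,\Omega}$, after which Proposition~\ref{trie} concludes that $V$ is strongly balanced.

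The only subtle point I anticipate is the identification, in the forward direction, of the a priori $X_B^*$-valued map ${\mathbb V}_{B,U}$ with the $L^1(B)$-valued continuous extension of $\xi \mapsto V\xi$; this is precisely what Proposition~\ref{fra}(ii) was designed to deliver. Everything else reduces to bookkeeping with the norms $\|\cdot\|^U_{V,\Omega}$ and a standard compact covering argument.
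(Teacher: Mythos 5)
Your proposal is correct and follows essentially the same route as the paper, which derives the corollary from Proposition~\ref{fra}(i)--(ii) together with a covering argument; your write-up merely makes explicit the role of Proposition~\ref{trie} in selecting the sets $U$ and in converting the $L^1(K)$-estimate back into strong balancedness.
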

\proof 
Follows from both statements proposition \ref{fra} (i) and (ii) along with a simple covering argument. 
\qed

\begin{remark}\label{qani}
Let $V \in L^1_{loc}(\Omega)$ such that $Q_V \succeq 0$ in $\Omega$. Then,  the proofs given in proposition above shows that \eqref{eq:BalancedCondition} holds
\begin{enumerate}
\item[{\rm \bf (i)}] for any pair $(K,U)$ iff the linear maps
\begin{equation}\label{ela}
{\mathcal L}_V(\Omega; U) \ni \xi \mapsto  V^\pm \xi \in L^1_{loc}(\Omega)
\end{equation}
are continuous for any $U$.
\item[{\rm \bf (ii)}] 
for any pair $(K,U)$ such that $K \subset U$  iff the linear maps
\begin{equation}\label{eli}
{\mathcal L}_V(\Omega; U) \ni \xi \mapsto  V^\pm \xi \in L^1_{loc}(U).
\end{equation}
are continuous for any $U$.
\end{enumerate}
\end{remark}

\medskip

Next result provides an abstract result for ensuring strong balancedness.
 
\begin{proposition} \label{inz}
Let $V \in L^1_{loc}(\Omega)$ be such that $Q_V \succeq 0$ in $\Omega$. Suppose for any ball $B \Subset \Omega$ there exists an open set $U \Subset \Omega$ containing $B$ and  a reflexive  space
$({\mathcal Z}_B, \|\cdot \|_B)$   consisting of $L^1(B)$ functions closed under the $|\cdot|$  operation such that
\begin{enumerate}
\item[{\rm (i)}]
$\big\| |z| \big\|_B \leq \|z\|_B$  for all $z \in {\mathcal Z_B}$,
\item[{\rm (ii)}]
${\cal L}_{V}(\Omega; U) \hookrightarrow  {\mathcal Z_B} \hookrightarrow L^1(B)$,
\item[{\rm (iii)}]
$V  z \in L^1(B)$ for all $z \in {\mathcal Z_B}$.
\end{enumerate}
Then 
$V$ is  strongly balanced in $\Omega$. 
\end{proposition}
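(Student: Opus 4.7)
The plan is to verify condition (iii) of Proposition~\ref{trie}, which by that proposition is equivalent to strong balancedness; that is, for each compact $K \subset \Omega$ I will exhibit an open set $U \Subset \Omega$ containing $K$ for which the linear map $\xi \mapsto V\xi$ is continuous from ${\mathcal L}_V(\Omega;U)$ to $L^1(K)$. By covering any such $K$ with finitely many closed balls $\overline{B_1},\dots,\overline{B_m} \subset \Omega$, applying the hypothesis to each $B_i$ to obtain open sets $U_i$, and then taking $U := U_1 \cup \dots \cup U_m$, it suffices to treat the case $K = \overline{B}$ for a single ball $B \Subset \Omega$ (the estimate on $K$ follows by summing the estimates on each $B_i$, noting that $\int_{U_i}|\xi| \le \int_U |\xi|$).

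Fix such a ball $B$ and let $U$ and $({\mathcal Z}_B, \|\cdot\|_B)$ be as furnished by the hypothesis. Define the pointwise multiplication map
\[
T_B : {\mathcal Z}_B \to L^1(B), \qquad T_B(z) := V z,
\]
which is well-defined by (iii). The central step is to show $T_B$ is continuous via the Closed Graph Theorem. Since $({\mathcal Z}_B,\|\cdot\|_B)$ is reflexive and therefore a Banach space, and $L^1(B)$ is a Banach space, it suffices to verify that the graph of $T_B$ is closed. Suppose $z_n \to z$ in ${\mathcal Z}_B$ and $V z_n \to g$ in $L^1(B)$. The continuous inclusion ${\mathcal Z}_B \hookrightarrow L^1(B)$ from hypothesis (ii) gives $z_n \to z$ in $L^1(B)$, and after extracting a subsequence $z_n \to z$ a.e.\ in $B$. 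Since $V$ is finite a.e., this forces $V z_n \to V z$ a.e.\ along that subsequence; a further subsequence of $\{V z_n\}$ converges a.e.\ to $g$. Therefore $g = V z$ a.e.\ in $B$, the graph of $T_B$ is closed, and by the Closed Graph Theorem there is a constant $C_B>0$ with $\|V z\|_{L^1(B)} \le C_B \|z\|_B$ for all $z \in {\mathcal Z}_B$.

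The conclusion now follows by composition: using the continuous inclusion ${\mathcal L}_V(\Omega;U) \hookrightarrow {\mathcal Z}_B$ from (ii), for every $\xi \in W_c^{1,\infty}(\Omega)$ one has
\[
\int_B |V \xi|\, dx \;=\; \|T_B(\xi|_B)\|_{L^1(B)} \;\le\; C_B \,\|\xi|_B\|_B \;\le\; C \,\Bigl(\sqrt{Q_V(\xi)} + \int_U |\xi| \Bigr).
\]
Running this estimate on each ball of a finite cover of an arbitrary compact $K$ produces the corresponding inequality on $K$, which is exactly condition (iii) of Proposition~\ref{trie}. Hence $V$ is strongly balanced in $\Omega$. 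The only delicate point in the argument is verifying the closedness of the graph of $T_B$; reflexivity of ${\mathcal Z}_B$ is used precisely to guarantee completeness so that the Closed Graph Theorem is applicable, while the lattice-type hypothesis $\||z|\|_B \le \|z\|_B$ plays no role in the abstract argument but is typically what lets one verify hypotheses (i)--(iii) in concrete examples.
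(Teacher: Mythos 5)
Your proof is correct, but it runs along a genuinely different line than the paper's. The paper proves the claim ``$\xi_n \to 0$ in ${\cal L}_V(\Omega;U)$ implies $V\xi_n \to 0$ in $L^1(B)$'' by invoking Lemma~\ref{brb}: reflexivity and the lattice hypothesis $\||z|\|_B \le \|z\|_B$ are used there to extract, along a subsequence, a dominating function $z^* \in {\mathcal Z}_B$ with $|\xi_{n_{k_j}}| \le z^*$, after which hypothesis (iii) ($Vz^* \in L^1(B)$) allows dominated convergence; strong balancedness then follows from Proposition~\ref{trie}(iii). You instead observe that the multiplication map $T_B(z) = Vz$ from ${\mathcal Z}_B$ to $L^1(B)$ has closed graph (the standard a.e.--subsequence argument, using only the continuous inclusion ${\mathcal Z}_B \hookrightarrow L^1(B)$ and the fact that $V$ is finite a.e.), and apply the Closed Graph Theorem, reflexivity entering only to guarantee that ${\mathcal Z}_B$ is complete; composing with ${\cal L}_V(\Omega;U) \hookrightarrow {\mathcal Z}_B$ and covering a compact $K$ by finitely many balls gives Proposition~\ref{trie}(iii) exactly as in your last display. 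Your route is shorter, bypasses Lemma~\ref{brb} entirely, and in fact shows hypothesis (i) is not needed for this proposition --- a point you correctly flag, and one that is consonant with the paper's own Remark~\ref{nig}(ii), where closedness of the graph of the multiplication map is noted as the key abstract mechanism. The paper's domination argument, on the other hand, gives slightly more information (a dominated, pointwise convergent subsequence), which is in the spirit of how such lemmas are reused elsewhere, but for the statement at hand both proofs are complete.
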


The above proposition uses the following basic result, whose proof is recalled in the appendix.
 
\begin{lemma}\label{brb}
Let $K \subset {\mathbb R}^N$ be a compact set (with positive measure) and 
$({\mathcal Z}, \|\cdot \|)$  a reflexive  space  consisting of $L^1(K)$ functions closed under the $|\cdot|$  operation such that $$
   {\mathcal Z} \hookrightarrow L^1(K) 
   \quad \hbox{ and } \quad
   \big\| |z| \big\| \leq \|z\| \, \, \forall z \in {\mathcal Z}.
$$
Then given any sequence $\{z_n\} \subset {\mathcal Z}$ converging to some $z \in {\mathcal Z}$, we may find a subsequence $\{z_{n_k}\}$ and $z^* \in {\mathcal Z}$ such that
$$
|z_{n_k}| \leq z^* \;\;\; \text { in } K  \;\;\forall k,
  \quad \hbox{ and } \quad 
  z_{n_k} \hbox{ converges pointwise to }  z.
$$
\end{lemma}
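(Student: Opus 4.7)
My plan is to adapt the classical ``fast Cauchy subsequence plus dominating function'' argument, familiar from the proof that $L^1$-convergence implies pointwise a.e.\ convergence along a subsequence. First, using the convergence $z_n \to z$ in $\mathcal{Z}$, I extract a subsequence $\{z_{n_k}\}$ such that $\|z_{n_{k+1}} - z_{n_k}\| \leq 2^{-k}$ for every $k \geq 1$, and also (by making $n_1$ large enough) $\|z_{n_k} - z\| \leq 2^{1-k}$.

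Next, I consider the telescoping partial sums of absolute values
$$
 w_K \,:=\, |z_{n_1}| + \sum_{j=1}^{K} \bigl| z_{n_{j+1}} - z_{n_j} \bigr|,
 \qquad K \geq 1.
$$
Since $\mathcal{Z}$ is a vector space closed under $|\cdot|$, each $w_K$ lies in $\mathcal{Z}$, and the hypothesis $\||z|\| \leq \|z\|$ together with the triangle inequality gives $\|w_{K+L} - w_K\| \leq \sum_{j=K+1}^{K+L} 2^{-j} < 2^{-K}$. Hence $\{w_K\}$ is Cauchy in $\mathcal{Z}$; because $\mathcal{Z}$ is reflexive, in particular complete, there exists $z^* \in \mathcal{Z}$ with $w_K \to z^*$ in $\mathcal{Z}$.

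The continuous embedding $\mathcal{Z} \hookrightarrow L^1(K)$ upgrades this to $L^1(K)$-convergence $w_K \to z^*$. Since $\{w_K\}$ is pointwise monotone increasing in $K$ by construction, it admits a pointwise limit $\tilde w : K \to [0,\infty]$, and $L^1$-convergence forces $\tilde w = z^*$ a.e. In particular $z^*$ is finite a.e., and the telescoping bound
$$
 |z_{n_k}| \,=\, \Bigl| z_{n_1} + \sum_{j=1}^{k-1} \bigl( z_{n_{j+1}} - z_{n_j} \bigr) \Bigr|
 \,\leq\, w_{k-1} \,\leq\, z^*
 \qquad \text{a.e.\ in } K
$$
gives the desired domination. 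For the pointwise convergence, the estimates on $\|z_{n_k}-z\|$ together with the embedding give $\sum_k \|z_{n_k} - z\|_{L^1(K)} < \infty$, so $\sum_k |z_{n_k} - z|$ is summable a.e., and consequently $z_{n_k} \to z$ pointwise a.e.\ in $K$.

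The only delicate point is identifying the pointwise monotone limit $\tilde w$ of $\{w_K\}$ with the abstract $\mathcal{Z}$-limit $z^*$; this is where the hypothesis $\mathcal{Z} \hookrightarrow L^1(K)$ and the monotonicity (guaranteed by using absolute values in the telescoping sum) are both essential. Once that identification is made, the remaining steps are bookkeeping with the geometric series.
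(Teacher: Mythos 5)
Your proof is correct, and it follows the same core construction as the paper's: extract a fast subsequence with $\|z_{n_{k+1}}-z_{n_k}\|\leq 2^{-k}$ and use the telescoping sums of absolute values, together with the hypothesis $\||z|\|\leq\|z\|$ and the embedding into $L^1(K)$, to manufacture the dominating element. Where you diverge is in how the limit of these partial sums is produced in $\mathcal{Z}$: the paper only notes that the sums $\hat z_m=\sum_{k\le m}|z_{n_{k+1}}-z_{n_k}|$ are norm-bounded (by $1$) and invokes reflexivity to extract a \emph{weakly} convergent subsequence, then identifies the weak limit with the monotone $L^1$ limit; you instead observe that the same geometric decay makes the sums a \emph{Cauchy} sequence in $\mathcal{Z}$ and obtain a strong limit from completeness (correctly pointing out that a reflexive normed space is automatically complete, since it is isometric to its bidual). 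Your route is slightly more elementary — it avoids weak sequential compactness (Eberlein--\v{S}mulian/Kakutani) entirely and shows that the lemma really only needs $\mathcal{Z}$ to be a Banach space closed under $|\cdot|$ with $\||z|\|\leq\|z\|$, reflexivity being used for nothing beyond completeness — while the paper's weak-compactness argument gets by with mere boundedness of the partial sums and so does not need the Cauchy estimate. You also spell out the pointwise a.e.\ convergence of $z_{n_k}$ to $z$ via the summability of $\|z_{n_k}-z\|_{L^1(K)}$, a point the paper's appendix proof leaves implicit; that is a welcome addition rather than a defect.
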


{\bf Proof of Proposition~\ref{inz}:}
Take any  ball $B \Subset \Omega$ along with the corresponding $U$ and a sequence $\xi_n \to 0$ in ${\cal L}_{V}(\Omega; U)$. It follows that $\xi_n \to 0$ in ${\mathcal Z}_B$ (and hence in $L^1(B)$). We claim that $V  \xi_n \to 0$ in $L^1(B)$. Consider any subsequence $\{V  \xi_{n_k} \}$. From Lemma~\ref{brb}, we may find a further subsequence $\{\xi_{n_{k_j}} \}$ and a function $z^* \in {\mathcal Z}_B$ such that 
$$
    |\xi_{n_{k_j}} | \leq z^* \;\; \text{ in }  B \;\; \forall j
    \quad \hbox{ and } \quad
    \xi_{n_{k_j}} \hbox{ converges pointwise to } 0.
$$
Since $V  z^* \in L^1(B)$ (by assumption),  we have $V  \xi_{n_{k_j}}  \xrightarrow{ L^1(B) } 0$
(by Lebesgue dominated convergence theorem)  
 and hence the claim follows. 
 Proposition~\ref{trie} (iii) shows $V$ is strongly balanced in $\Omega$.  
 \qed

\medskip

 \goodbreak

\begin{proposition}\label{mia}
\begin{enumerate}
\item[{\rm \bf (i)}] 
The class of strongly balanced potentials  form a  convex set in $L^1_{loc}(\Omega)$.
\item[{\rm \bf (ii)}] 
Let $V \in L^1_{loc}(\Omega)$ be a strongly balanced potential in $\Omega$ 
and $W \in L^1_{loc}(\Omega)$ be nonnegative. Then $V+W$ is strongly balanced  in $\Omega$.
\end{enumerate}
\end{proposition}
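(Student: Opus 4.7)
The plan is to mimic the proof of Proposition~\ref{mea} closely, working throughout with the equivalent formulation of strong balancedness provided by Proposition~\ref{trie}(iii): namely, $V$ is strongly balanced in $\Omega$ iff for every compact $K \subset \Omega$ there exists an open set $U \Subset \Omega$ with $K \subset U$ such that the multiplication map ${\mathcal L}_V(\Omega;U) \ni \xi \mapsto V\xi \in L^1(K)$ is continuous. This linear-in-$V$ formulation is far better adapted to convex combinations and to addition of nonnegative potentials than the $V^+$-based Definition~\ref{tre}.

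For part (i), let $V_1, V_2$ be strongly balanced, $t \in (0,1)$, and set $V := tV_1 + (1-t)V_2$. Nonnegativity $Q_V \succeq 0$ is automatic from $Q_V = tQ_{V_1} + (1-t)Q_{V_2}$. Given a compact $K$, I would choose $U_1, U_2$ from Proposition~\ref{trie}(iii) applied to $V_1, V_2$, set $U := U_1 \cup U_2$, and use $|V\xi| \leq t|V_1\xi| + (1-t)|V_2\xi|$ together with the continuity estimates
\begin{equation*}
\int_K |V_i \xi| \leq c_i \Bigl(\sqrt{Q_{V_i}(\xi)} + \int_{U_i}|\xi|\Bigr), \qquad i=1,2.
\end{equation*}
The bounds $tQ_{V_1} \leq Q_V$ and $(1-t)Q_{V_2} \leq Q_V$ (which rely on $Q_{V_i} \succeq 0$) then give $t\sqrt{Q_{V_1}(\xi)} \leq \sqrt{t}\sqrt{Q_V(\xi)}$ and similarly for $V_2$, producing an estimate of the form $\int_K |V\xi| \leq C\bigl(\sqrt{Q_V(\xi)} + \int_U|\xi|\bigr)$. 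Proposition~\ref{trie} then concludes that $V$ is strongly balanced.

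For part (ii), observe $Q_{V+W}(\xi) = Q_V(\xi) + \int_\Omega W\xi^2 \geq Q_V(\xi) \geq 0$. For a compact $K$ I would pick the open set $U$ from Proposition~\ref{trie}(iii) applied to $V$, and split $(V+W)\xi = V\xi + W\xi$. The $V\xi$ term is controlled by strong balancedness of $V$ combined with the elementary bound $\sqrt{Q_V(\xi)} \leq \sqrt{Q_{V+W}(\xi)}$. For the $W\xi$ term, Cauchy-Schwarz yields
\begin{equation*}
\int_K W|\xi| \leq \|W\|_{L^1(K)}^{1/2}\Bigl(\int_K W\xi^2\Bigr)^{1/2} \leq \|W\|_{L^1(K)}^{1/2}\sqrt{Q_{V+W}(\xi)},
\end{equation*}
where the last inequality uses $\int_\Omega W\xi^2 = Q_{V+W}(\xi) - Q_V(\xi) \leq Q_{V+W}(\xi)$. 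Combining the two bounds gives the required continuity of $\xi \mapsto (V+W)\xi$ into $L^1(K)$.

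There is no real obstacle once one commits to the $V\xi$ formulation: the argument is a routine adaptation of Proposition~\ref{mea}. The only small care required is tracking the factors of $\sqrt{t}$ and $\sqrt{1-t}$ when passing from the quadratic-form inequalities $tQ_{V_1} \leq Q_V$ and $(1-t)Q_{V_2} \leq Q_V$ to their square roots in part~(i).
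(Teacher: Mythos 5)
Your proof is correct and takes essentially the same route as the paper's: where the paper works directly with Definition~\ref{tre} via the pointwise bound $(tV_1+(1-t)V_2)^+ \le tV_1^+ + (1-t)V_2^+$ and the concavity of the square root, you use the equivalent formulation of Proposition~\ref{trie}(iii) together with $tQ_{V_1}\le Q_V$ and $(1-t)Q_{V_2}\le Q_V$, which are interchangeable estimates. Part (ii) matches the paper's argument exactly (Cauchy--Schwarz on $\int_K W|\xi|$ and $\int_\Omega W\xi^2 \le Q_{V+W}(\xi)$, using $Q_V\succeq 0$).
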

\proof 

\medskip

{\bf (i)}  
Let $V_1,V_2$ be strongly balanced potentials in $\Omega$. Fix a compact set  $K \subset \Omega$ and choose $c_1,c_2$ and $U_1, U_2$ corresponding to $V_1,V_2$ from definition \ref{tre}. Let  $c:=\min\{c_1,c_2\}$ and $U:=U_1 \cup U_2$.   Using the convexity of  the map $x \mapsto x^+$  and the concavity of the square root function, we get for $t \in (0,1)$,
\begin{eqnarray}
c \int_{K} (tV_1 +(1-t)V_2)^+  |\xi| &\leq & c_1 t \int_{K } V_1^+  |\xi| 
 + c_2 (1-t)  \int_{K } V_2^+ |\xi| \notag \\
&\leq&  t \sqrt{Q_{V_1}(\xi)} + (1-t) \sqrt{Q_{V_2}(\xi)} +  \int_U |\xi|  \notag \\
&\leq&   \sqrt{Q_{tV_1+ (1-t) V_2}(\xi)} +  \int_U |\xi|  . \notag  
\end{eqnarray}

\medskip

{\bf (ii)}    Fix a compact $K  \subset \Omega$ and choose $c, U$ corresponding to $K$ from definition \ref{tre}.   Using the convexity of the map $x\mapsto x^+$  again, 
\begin{eqnarray}
\frac{c}{2} \int_{K} ( V +  W )^+   |\xi| &\leq & \frac{1}{2} \sqrt{Q_{V}(\xi)} + \frac{1}{2} \int_{U } |\xi|  +  c^*_K \Big( \int_\Omega W |\xi|^2 \Big)^{\frac{1}{2} }\notag \\
&\leq&  c_K \Big(\sqrt{Q_{V}(\xi)} + \sqrt{Q_{V+W}(\xi)} \Big) + \frac{1}{2} \int_{U} |\xi|  \notag \\
&\leq& 2 c_K \sqrt{Q_{V+W}(\xi)}+ \frac{1}{2} \int_{U} |\xi|  . \notag  
\end{eqnarray}
 \qed

\subsection{Balanced condition for $L^1$-subcritical operators} \label{s6.4}
In this subsection we show that for $L^1$-subcritical operators the balanced condition in definition \ref{tri} can be simplified and that the concepts of balanced and strongly balanced potentials coincide.
We first restate some of the previous results for an $L^1$-subcritical operator. Recall that for these operators the space ${\cal L}_V(\Omega; U)$ is equivalent to ${\cal L}_V(\Omega)$ for any open set $U \Subset \Omega$. 
\begin{proposition} \label{katha}
Let $V \in L^1_{loc} (\Omega)$ be such that ${\cal L}_V(\Omega) \hookrightarrow L^1_{loc}(\Omega)$
(i.e. $-\Delta+V$  is  an $L^1$-subcritical operator in $\Omega$). Then, 
\begin{enumerate}
\item[{\rm \bf (i)}] 
$V \hbox{ is strongly balanced in  } \Omega
  \quad \Longleftrightarrow  \quad
   {\cal L}_V(\Omega) \hookrightarrow L^1_{loc}( \Omega, V^{\pm} dx)$.
\item[{\rm \bf (ii)}] 
Let  ${\bf J}$ be as in definition \ref{amar}, and
denote by ${\bf J^{\pm}}$, the unique continuous extensions to  ${\cal H}_V(\Omega)$ of the two  imbeddings given in {\bf (i)}.
Then, ${\bf J^{\pm}(u)}={\bf J(u)}$ a.e. in $supp(V^\pm)$. 
\end{enumerate}
\end{proposition}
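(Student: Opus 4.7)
The crucial simplification in this proposition comes from $L^1$-subcriticality. The remark following the definition of ${\cal B}_V(\Omega;U)$ notes that under ${\cal L}_V(\Omega)\hookrightarrow L^1_{loc}(\Omega)$ all the auxiliary norms $\|\cdot\|^{U}_{V,\Omega}$ are equivalent to $\sqrt{Q_V}$, so the open set $U$ in Definition~\ref{tre} becomes cosmetic and the strong balancedness condition \eqref{eq:BalancedCondition} collapses to an embedding statement on ${\cal L}_V(\Omega)$ itself.

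\textbf{Part (i).} For the ``$\Rightarrow$'' direction: if $V$ is strongly balanced and $K\Subset\Omega$, then \eqref{eq:BalancedCondition} combined with the $L^1$-subcritical bound $\int_U|\xi|\le C\sqrt{Q_V(\xi)}$ yields
$$
    \int_K V^+ |\xi|\,\le\,C'\sqrt{Q_V(\xi)},\qquad \forall\,\xi\in W_c^{1,\infty}(\Omega),
$$
which is precisely the continuity of the identity map ${\cal L}_V(\Omega)\to L^1_{loc}(\Omega,V^+dx)$. The converse ``$\Leftarrow$'' is trivial: given the embedding, any open set $U\supset K$ works in \eqref{eq:BalancedCondition}. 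Finally, Lemma~\ref{lem:Damm}~(iv) tells us that the embedding into $L^1_{loc}(\Omega,V^+dx)$ is equivalent to the embedding into $L^1_{loc}(\Omega,V^-dx)$, so both ``$\pm$'' versions are captured simultaneously.

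\textbf{Part (ii).} Fix $u\in {\cal H}_V(\Omega)$ and pick $\{\xi_n\}\subset W_c^{1,\infty}(\Omega)$ with $\xi_n\to u$ in ${\cal H}_V(\Omega)$. By the definition of ${\bf J}$ we have $\xi_n\to {\bf J}(u)$ in $L^1_{loc}(\Omega)$; by the continuity of the extension ${\bf J^{\pm}}$ we also have $\xi_n\to {\bf J^{\pm}}(u)$ in $L^1_{loc}(\Omega,V^{\pm}dx)$, i.e.\ $V^{\pm}\xi_n\to V^{\pm}{\bf J^{\pm}}(u)$ in $L^1_{loc}(\Omega)$. Passing first to a subsequence along which $\xi_{n_k}\to {\bf J}(u)$ almost everywhere, and then to a further subsequence along which $V^{\pm}\xi_{n_k}\to V^{\pm}{\bf J^{\pm}}(u)$ almost everywhere, one obtains $V^{\pm}{\bf J^{\pm}}(u)=V^{\pm}{\bf J}(u)$ a.e.\ in $\Omega$. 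Cancelling $V^{\pm}$ on $\{V^{\pm}>0\}$ yields ${\bf J^{\pm}}(u)={\bf J}(u)$ a.e.\ on that set, which is the content of the claim on $\operatorname{supp}(V^{\pm})$.

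\textbf{Main obstacle.} The only delicate point is in (ii), where the equality ${\bf J^{\pm}}(u)={\bf J}(u)$ has to be read in the correct sense: since ${\bf J^{\pm}}(u)\in L^1_{loc}(\Omega,V^{\pm}dx)$ is determined only up to modification on $\{V^{\pm}=0\}$, the identification is meaningful only where $V^{\pm}>0$. Once this is accepted, the double-subsequence argument goes through cleanly, and Lemma~\ref{lem:Damm}~(iv) does all the heavy lifting in (i).
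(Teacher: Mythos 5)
Your proof is correct and follows essentially the same route as the paper: part (i) uses $L^1$-subcriticality to absorb the $\int_U|\xi|$ term in \eqref{eq:BalancedCondition} together with the $V^+/V^-$ equivalence (Lemma~\ref{lem:Damm}~(iv), which is also what underlies Proposition~\ref{trie} cited by the paper), and part (ii) is the same a.e.-subsequence argument from the two $L^1$-convergences, with your remark about the identification being meaningful only $V^{\pm}dx$-a.e.\ being a fair reading of the paper's phrase ``a.e.\ in $\operatorname{supp}(V^{\pm})$''.
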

\begin{proof}
{\bf (i)} 
follows from proposition \ref{trie} (i).

\medskip

{\bf (ii)} 
Let $\{\xi_n\} \subset  W^{1,\infty}_c(\Omega)$ be a sequence converging to $u \in {\cal H}_V(\Omega)$. Then,  $\{\xi_n\}$ converges to ${\bf J}(u)$ in $L^1_{loc}(\Omega)$ and to ${\bf J}^\pm(u)$ respectively in  $ L^1_{loc}(\Omega, V^{\pm}dx)$  as $n \to \infty$. 
The result follows from a.e. pointwise convergence (up to a subsequence) of $L^1-$ convergent sequences.
\end{proof}

\medskip

For $L^1$-subcritical potentials of the form $V_0:= div \; \vec{{\bf f}} + |\vec{{\bf f}}|^2$ we show  injectivity of  the associated ${\bf J}_0$ map by connecting it to the closability of the ``magnetic operator".

\begin{lemma}\label{tca}
Let $\vec{{\bf f}} \in (L^2_{loc}(\Omega))^N$ and denote  $V_0:= div \; \vec{{\bf f}} + |\vec{{\bf f}}|^2$.  
Assume $V_0 \in L^1_{loc}(\Omega)$ is such that  $-\Delta +V_0$ is an $L^1$-subcritical operator in $\Omega$ with the associated map ${\bf J}_0$. 

Then ${\bf J}_0$ is injective iff  the magnetic operator
\begin{equation}\label{bia}
 T_{\vec{{\bf f}}}(\xi):= \nabla \xi - \vec{{\bf f}} \xi,\;\; \xi \in C_c^{1}(\Omega)
\end{equation}
is  closable with domain $ (C_c^{1}(\Omega), \| \cdot \|_1)$ to $(L^2(\Omega))^N$. 
\end{lemma}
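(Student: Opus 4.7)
The plan is to realize the energy norm $\sqrt{Q_{V_0}}$ as the $L^2$-norm of $T_{\vec{{\bf f}}}\xi$ and then translate the question of ${\bf J}_0$-injectivity into the standard closability property of $T_{\vec{{\bf f}}}$.

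First I would establish the isometric identity
$$\|T_{\vec{{\bf f}}}\xi\|_{L^2(\Omega)^N}^{\,2} \,=\, Q_{V_0}(\xi), \qquad \forall\,\xi \in W^{1,\infty}_c(\Omega).$$
Expanding gives $|T_{\vec{{\bf f}}}\xi|^2 = |\nabla \xi|^2 - \vec{{\bf f}}\cdot\nabla(\xi^2) + |\vec{{\bf f}}|^2 \xi^2$. Since $\vec{{\bf f}}\in (L^2_{loc}(\Omega))^N$ and $\xi^2 \in W^{1,\infty}_c(\Omega)$, the distributional pairing gives $-\int_{\Omega} \vec{{\bf f}}\cdot\nabla(\xi^2) = \langle \mathrm{div}\,\vec{{\bf f}},\,\xi^2\rangle$; crucially, the hypothesis $V_0 \in L^1_{loc}(\Omega)$ forces $\mathrm{div}\,\vec{{\bf f}} = V_0 - |\vec{{\bf f}}|^2 \in L^1_{loc}(\Omega)$, so this pairing coincides with the honest integral $\int_{\Omega}(\mathrm{div}\,\vec{{\bf f}})\xi^2$. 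Summing the three terms yields the claimed identity.

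Second, by the identity and Lemma~\ref{lem:Dense}, $T_{\vec{{\bf f}}}:{\cal L}_{V_0}(\Omega)\to L^2(\Omega)^N$ is a linear isometry, which extends uniquely to an isometric embedding $\widetilde{T}:{\cal H}_{V_0}(\Omega)\to L^2(\Omega)^N$. For $u \in {\cal H}_{V_0}(\Omega)$ and any approximating sequence $\xi_n \in C_c^\infty(\Omega)$ with $\xi_n \to u$ in ${\cal H}_{V_0}$, one simultaneously has $\xi_n \to {\bf J}_0(u)$ in $L^1_{loc}(\Omega)$ and $T_{\vec{{\bf f}}}\xi_n \to \widetilde{T} u$ in $L^2(\Omega)^N$; moreover $u = 0$ in ${\cal H}_{V_0}$ iff $\widetilde{T} u = 0$ in $L^2$.

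Third, the equivalence is a reformulation: ${\bf J}_0$ fails to be injective iff there is a nonzero $u \in {\cal H}_{V_0}$ with ${\bf J}_0 u = 0$, iff there is a sequence $\xi_n \in C_c^\infty(\Omega)$ with $\xi_n \to 0$ in $L^1_{loc}$ and $T_{\vec{{\bf f}}}\xi_n \to g \ne 0$ in $L^2$---precisely the negation of closability of $T_{\vec{{\bf f}}}$. The implication \emph{``${\bf J}_0$ injective $\Rightarrow T_{\vec{{\bf f}}}$ closable''} is immediate, since $L^1(\Omega)$-convergence implies $L^1_{loc}$-convergence. For the reverse, given a candidate counterexample $\xi_n$ with $\xi_n \to 0$ in $L^1_{loc}$ and $T_{\vec{{\bf f}}}\xi_n \to g$ in $L^2$, I would multiply by cutoffs $\chi \in C_c^\infty(\Omega)$ exhausting $\Omega$: then $\chi\xi_n \to 0$ in $L^1(\Omega)$, and one computes $T_{\vec{{\bf f}}}(\chi\xi_n) = \chi\, T_{\vec{{\bf f}}}\xi_n + \xi_n\nabla\chi$, controlling the commutator term via the isometric identity applied to $\chi\xi_n$ on its (fixed) support. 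Applying closability to $\chi\xi_n$ for each $\chi$ in the exhaustion forces $g\equiv 0$.

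The main obstacle is the integration-by-parts step establishing the isometric identity: the identification of the distributional pairing $\langle\mathrm{div}\,\vec{{\bf f}},\xi^2\rangle$ with the Lebesgue integral $\int(\mathrm{div}\,\vec{{\bf f}})\xi^2$ depends essentially on the regularity $\mathrm{div}\,\vec{{\bf f}} \in L^1_{loc}$ (furnished by $V_0 \in L^1_{loc}$), not merely on being a distribution. Once this identity is in hand, the remainder is a direct translation between the two notions, with the only finesse being the localization that bridges the $L^1_{loc}$ convergence inherent to ${\bf J}_0$ and the $L^1$ convergence in the definition of closability.
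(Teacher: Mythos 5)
Your route is the paper's route: the identity $Q_{V_0}(\xi)=\int_\Omega|\nabla\xi-\vec{{\bf f}}\xi|^2$ is exactly \eqref{haha} (and your careful justification of the integration by parts, using $div\,\vec{{\bf f}}=V_0-|\vec{{\bf f}}|^2\in L^1_{loc}(\Omega)$, is a welcome expansion of the paper's ``straightforward integration by parts''), and your second and third steps---isometry of $T_{\vec{{\bf f}}}$ on ${\cal L}_{V_0}(\Omega)$, extension to ${\cal H}_{V_0}(\Omega)$, and the observation that non-injectivity of ${\bf J}_0$ is equivalent to the existence of $\xi_n\in C_c^\infty(\Omega)$ with $\xi_n\to0$ in $L^1_{loc}(\Omega)$ and $T_{\vec{{\bf f}}}\xi_n\to g\neq0$ in $(L^2(\Omega))^N$---are precisely the paper's two implications.

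The one genuine problem is your final localization argument, which you introduce because you read ``$\|\cdot\|_1$'' as the global $L^1(\Omega)$ norm and then try to deduce injectivity of ${\bf J}_0$ from closability with respect to $L^1(\Omega)$. To apply that closability to $\eta_n:=\chi\xi_n$ you must know that $T_{\vec{{\bf f}}}(\chi\xi_n)=\chi\,T_{\vec{{\bf f}}}\xi_n+\xi_n\nabla\chi$ converges in $(L^2(\Omega))^N$, hence that the commutator $\xi_n\nabla\chi$ converges in $L^2$. But all you have is $\xi_n\to0$ in $L^1_{loc}(\Omega)$ and that $\{\xi_n\}$ is Cauchy in ${\cal H}_{V_0}(\Omega)$; in the $L^1$-subcritical setting the energy space need not embed into $L^2_{loc}(\Omega)$, and invoking the isometric identity for $\chi\xi_n$ does not help, since $Q_{V_0}(\chi\xi_n)$ itself contains $\int|\nabla\chi|^2\xi_n^2$ and cross terms, i.e.\ exactly the uncontrolled $L^2_{loc}$ quantities. (Note also that closability for the coarser $L^1_{loc}$ topology is the \emph{stronger} property, so it cannot simply be inherited from $L^1(\Omega)$-closability.) In fact no bridging is needed: the closability intended in lemma \ref{tca}---and the one actually proved in lemma \ref{u2} and used in corollary \ref{entr}---takes the domain with the $L^1_{loc}(\Omega)$ topology, i.e.\ the hypothesis is ``$\xi_n\to0$ in $L^1_{loc}(\Omega)$ and $T_{\vec{{\bf f}}}\xi_n\to\vec{{\bf g}}$ in $(L^2(\Omega))^N$ imply $\vec{{\bf g}}=0$.'' With that reading, the equivalence you state in your third step already constitutes the whole proof, in both directions, and coincides with the paper's argument; the cutoff step should simply be deleted.
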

\proof By a straightforward integration by parts, we get
\begin{equation}\label{haha}
Q_{V_0}(\xi) := \int_\Omega |\nabla \xi|^2 + V_0 \xi^2  = \int_\Omega |\nabla \xi - \vec{{\bf f}} \xi|^2 \;\; \forall \xi \in C_c^1(\Omega).
\end{equation}
Let $T_{\vec{{\bf f}}}$  be closable.  Take $\{\xi_n\} \subset C_c^1(\Omega)$ such that $\{ \xi_n \}$ is a Cauchy sequence in ${\mathcal H}_{V}(\Omega)$ and $\xi_n \to 0$ in $L^1_{loc}(\Omega)$. Then, it follows  from \eqref{haha} that $ \{T_{\vec{{\bf f}}}(\xi_n)\}$ is a Cauchy sequence in $(L^2(\Omega))^N$ and hence converges to $0$ by the closability property. From \eqref{haha} again, $\xi_n \to 0$ in ${\mathcal H}_{V_0}(\Omega)$ and hence ${\bf J}_0$ is injective.

For the converse, let $\{\xi_n\} \subset C_c^1(\Omega)$ be such that
$$
\xi_n \to 0 \;\text{ in } L^1_{loc}(\Omega) \;\;\text{ and }\;\; T_{\vec{{\bf f}}}(\xi_n) \to \vec{{\bf g}} \;\text{ in } (L^2(\Omega))^N.
$$
Then from \eqref{haha} we get $\{\xi_n\}$ is a Cauchy sequence in ${\mathcal H}_{V_0}(\Omega)$ and hence by injectivity of ${\bf J}_0$, $\xi_n \to 0$ in ${\mathcal H}_{V_0}(\Omega)$. By \eqref{haha} again, we get $\vec{{\bf g}}=0$. \qed

\medskip

\begin{lemma}\label{u2}
 $T_{\vec{{\bf f}}}$ is a closable operator from $L^1_{loc}(\Omega)$ to $(L^1(\Omega))^N$ for any $\vec{{\bf f}} \in (L^1_{loc}(\Omega))^N$. 
\end{lemma}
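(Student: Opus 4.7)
The plan is to reduce closability to the one-dimensional case by slicing along each coordinate direction. Let $\{\xi_n\} \subset C_c^1(\Omega)$ satisfy $\xi_n \to 0$ in $L^1_{loc}(\Omega)$ and $T_{\vec{{\bf f}}}(\xi_n) = \nabla \xi_n - \vec{{\bf f}} \xi_n \to \vec{{\bf g}}$ in $(L^1(\Omega))^N$; the target is $\vec{{\bf g}} = 0$. I will show each component $g_i \equiv 0$. Fix $i$ and write $x = (x_i, x') \in \mathbb{R} \times \mathbb{R}^{N-1}$. Applying Fubini's theorem to the two $L^1$-convergence statements, together with a diagonal extraction that makes the relevant $L^1$ norms summable, produces a single subsequence, still denoted $\{\xi_n\}$, for which, at almost every $x' \in \mathbb{R}^{N-1}$, the $x_i$-section $I_{x'} := \{t : (t, x') \in \Omega\}$ is open in $\mathbb{R}$, the restriction $f_i(\cdot, x')$ lies in $L^1_{loc}(I_{x'})$, the restriction $\xi_n(\cdot, x')$ belongs to $C_c^1(I_{x'})$, converges to $0$ in $L^1_{loc}(I_{x'})$, and $\partial_i \xi_n(\cdot, x') - f_i(\cdot, x') \xi_n(\cdot, x') \to g_i(\cdot, x')$ in $L^1(I_{x'})$.

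For the one-dimensional problem at such a slice (I abbreviate by dropping $x'$), pick $x_0 \in I$ and set $F(x) := \int_{x_0}^x f_i$. Since $f_i \in L^1_{loc}(I)$, $F$ is continuous on $I$, lies in $W^{1,1}_{loc}(I)$ with $F' = f_i$ a.e., and $e^{\pm F}$ is continuous and strictly positive, hence locally bounded above and away from zero. Define $\eta_n := e^{-F} \xi_n \in W^{1,1}_{loc}(I)$; the product rule, valid because $e^{-F} \in W^{1,1}_{loc}$ with $(e^{-F})' = -f_i e^{-F} \in L^1_{loc}$ and $\xi_n \in C^1$, gives
\begin{equation*}
\eta_n' \, = \, e^{-F}\bigl(\xi_n' - f_i \xi_n\bigr).
\end{equation*}
Local boundedness of $e^{\pm F}$ transfers the hypotheses to $\eta_n \to 0$ in $L^1_{loc}(I)$ and $\eta_n' \to e^{-F} g_i$ in $L^1_{loc}(I)$. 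The standard closability of the one-dimensional distributional derivative from $L^1_{loc}$ to $L^1_{loc}$, which is a one-line verification: for $\varphi \in C_c^{\infty}(I)$, $\int \eta_n' \varphi = -\int \eta_n \varphi' \to 0$, then forces $e^{-F} g_i = 0$ a.e.\ in $I$, and positivity of $e^{-F}$ gives $g_i(\cdot, x') = 0$ a.e.\ in $I$. Integrating over $x'$ via Fubini yields $g_i \equiv 0$ a.e.\ on $\Omega$, and repeating for each $i$ finishes the proof.

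The conceptual obstruction is that in dimension $N \geq 2$ a generic $\vec{{\bf f}} \in (L^1_{loc}(\Omega))^N$ is not a gradient, so no global exponential substitution can reduce $T_{\vec{{\bf f}}}$ to the bare gradient; equivalently, one cannot simply test against arbitrary $\vec{\varphi} \in C_c^\infty$ and make $\int \xi_n \vec{{\bf f}}\cdot\vec{\varphi}$ vanish, because $\vec{{\bf f}}\cdot\vec{\varphi}$ is merely $L^1$ and $\xi_n \to 0$ in $L^1_{loc}$ is too weak to pair against an unbounded integrable weight. Slicing sidesteps this: on any one-dimensional slice the component $f_i$ is $L^1_{loc}$ and thus automatically admits an absolutely continuous primitive. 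The only nontrivial bookkeeping is the diagonal subsequence extraction ensuring that all the Fubini-type a.e.-slice convergences hold simultaneously, which is routine once one notes that both $\xi_n \to 0$ in $L^1_{loc}(\Omega)$ and $T_{\vec{{\bf f}}}(\xi_n) \to \vec{{\bf g}}$ in $L^1(\Omega)$ permit extracting summable subsequences and applying Fubini termwise.
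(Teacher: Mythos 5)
Your proof is correct and follows essentially the same route as the paper's: both reduce to one dimension by Fubini/slicing along each coordinate direction (after passing to a subsequence with summable $L^1$ norms) and use the exponential integrating factor built from a primitive $F_i$ of $f_i$ along the slice to eliminate the zeroth-order term. The only cosmetic differences are that the paper localizes to a cube with a cutoff $\varphi$ and integrates the resulting ODE explicitly from an endpoint of the cube, whereas you conjugate by $e^{-F}$ and invoke closability of $d/dt$ via testing against $C_c^{\infty}$ functions; just note that the section $I_{x'}$ of $\Omega$ need not be an interval, so $F$ should be defined separately on each connected component (or on compact subintervals), which does not affect the argument.
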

\proof Let $\{\xi_n\} \subset C_c^1(\Omega)$ be such that
\begin{equation}\label{u2.1}
\xi_n \to 0 \;\text{ in } L^1_{loc}(\Omega) \;\;\text{ and }\;\; T_{\vec{{\bf f}}}(\xi_n) :=  \vec{{\bf g}}_n \to \vec{{\bf g}} \;\text{ in } (L^1(\Omega))^N.
\end{equation}
Let us denote $\vec{{\bf f}}= (f_1, \cdots , f_N), \vec{{\bf g}}= (g_1, \cdots , g_N)$ and $\vec{{\bf g}}_n = (g^n_1, \cdots , g^n_N)$. Fix a cube $E:= \Pi_{i=1}^N (a_i,b_i)$ and choose any $\varphi \in C_c^1(E)$.  Consider the modified sequence $\{\tilde{\xi}_n:=  \varphi \xi_n\}$. Then, from \eqref{u2.1},
 \begin{equation}\label{u2.4}
\tilde{\xi}_n \to 0 \;\text{ in } L^1_{loc}(\Omega) \;\;\text{ and }\;\; T_{\vec{{\bf f}}}(\tilde{\xi}_n) =  \varphi T_{\vec{{\bf f}}}(\xi_n)+ \nabla \varphi \xi_n := \vec{\tilde{{\bf g}}}_n \to \varphi \vec{{\bf g}} \;\text{ in } (L^1(\Omega))^N.
\end{equation}
We note that $f_i$ is locally integrable on a.e. line parallel to the $i$-th co-ordinate axis. We can hence define for a.e. $(x_1, \cdots,x_{i-1},  x_{i+1},\cdots,  x_N) \in  \Pi_{ j \neq i} (a_j,b_j)$,
 $$ F_i(x) := -\int_{a_i}^{x_i} f_i(x_1, \cdots , x_{i-1}, s, x_{i+1}, \cdots , x_N) \; ds.$$
 We note that $F_i$ is absolutely continuous on $[a_i,b_i]$ for a.e. $(x_1, \cdots,x_{i-1},  x_{i+1},\cdots,  x_N) \in  \Pi_{ j \neq i} (a_j,b_j)$.
Denoting $\vec{\tilde{{\bf g}}}_n = (\tilde{g}^n_1, \cdots , \tilde{g}^n_N)$, we can then write \eqref{u2.4} as
\begin{equation}\label{u2.2}
\partial_i (\tilde{ \xi}_n  e^{F_i}  ) = \tilde{g}^n_i  e^{F_i} \; a.e. \;\text{ in } (a_i,b_i) \text{ for } \;  i=1, \cdots, N .  
\end{equation}
In the above equation $\partial_i$ stands for pointwise i-th partial derivative. For any $i$ we note that $(x_1, \cdots,x_{i-1}, a_i, x_{i+1},\cdots,  x_N) \not \in supp(\varphi)$.  For any $x \in E$, integrating along $i$-th coordinate direction in \eqref{u2.2},
 \begin{eqnarray}\label{u2.3}
( e^{F_i} \tilde{\xi}_n )(x) &=& ( e^{F_i} \tilde{\xi}_n )(x) -  (e^{F_i} \tilde{\xi}_n)(x_1, \cdots, x_{i-1}, a_i, x_{i+1}, \cdots,  x_N)\notag \\
&= & \int_{a_i}^{x_i} (\tilde{g}^n_i e^{F_i} )(x_1, \cdots , x_{i-1}, s, x_{i+1}, \cdots , x_N) \; ds. 
 \end{eqnarray}
 Since $\tilde{\xi}_n \to 0$ and $\tilde{g}^n_i \to g_i$  in $L^1_{loc}(\Omega)$, we obtain for a.e. $(x_1, \cdots,x_{i-1},  x_{i+1},\cdots,  x_N) \in  \Pi_{ j \neq i} (a_j,b_j)$ :
 $$
 \tilde{\xi}_n \to 0 \;\text{ and }\; \tilde{g}^n_i \to \varphi g_i \;\text{ in }\; L^1((a_i,b_i)).
 $$
 Taking subsequential limit  in \eqref{u2.3},  for a.e. $(x_1, \cdots,x_{i-1},  x_{i+1},\cdots,  x_N) \in  \Pi_{ j \neq i} (a_j,b_j)$ and a.e. $x_i \in (a_i,b_i)$ we get,
 $$
 \int_{a_i}^{x_i} (\varphi g_i  e^{F_i} )(x_1, \cdots , x_{i-1}, s, x_{i+1}, \cdots , x_N) ds =0, \; \;  i=1,2, \cdots, N .
 $$
 Hence $\varphi \vec{{\bf g}} \equiv {\bf 0}$ in $E$.
 \qed
 
 \medskip
 
\begin{corollary}\label{entr}
Let $\vec{{\bf f}} \in (L^2_{loc}(\Omega))^N$ and assume  $V_0:= div \; \vec{{\bf f}} + |\vec{{\bf f}}|^2  \in L^1_{loc}(\Omega)$ 
 is such that  $-\Delta +V_0$ is an $L^1$-subcritical operator in $\Omega$. Then the associated map ${\bf J}_0$ is injective. 
\end{corollary}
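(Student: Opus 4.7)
The plan is to combine Lemma \ref{tca} (which reduces injectivity of ${\bf J}_0$ to closability of the magnetic operator $T_{\vec{{\bf f}}}$ from $(C_c^1(\Omega),\|\cdot\|_{L^1_{loc}})$ to $(L^2(\Omega))^N$) with Lemma \ref{u2} (which gives closability from $L^1_{loc}(\Omega)$ to $(L^1(\Omega))^N$, applicable since $\vec{{\bf f}}\in (L^2_{loc}(\Omega))^N\subset (L^1_{loc}(\Omega))^N$). The point is that the target $(L^2(\Omega))^N$ in Lemma \ref{tca} is locally stronger than the target $(L^1(\Omega))^N$ in Lemma \ref{u2}, so a cutoff argument should transfer one closability to the other.

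Concretely, by Lemma \ref{tca} it suffices to show that whenever $\{\xi_n\}\subset C_c^1(\Omega)$ satisfies $\xi_n\to 0$ in $L^1_{loc}(\Omega)$ and $T_{\vec{{\bf f}}}(\xi_n)\to \vec{{\bf g}}$ in $(L^2(\Omega))^N$, one has $\vec{{\bf g}}\equiv 0$. Fix an arbitrary $\varphi\in C_c^1(\Omega)$ and set $\tilde{\xi}_n:=\varphi\xi_n\in C_c^1(\Omega)$. A direct computation gives
$$
T_{\vec{{\bf f}}}(\tilde{\xi}_n)=\varphi\,T_{\vec{{\bf f}}}(\xi_n)+\xi_n\nabla\varphi.
$$
Since $\varphi$ has compact support, $L^2$-convergence of $T_{\vec{{\bf f}}}(\xi_n)$ yields $\varphi\,T_{\vec{{\bf f}}}(\xi_n)\to\varphi\vec{{\bf g}}$ in $(L^1(\Omega))^N$; moreover $\xi_n\nabla\varphi\to 0$ in $(L^1(\Omega))^N$ because $\xi_n\to 0$ in $L^1(\mathrm{supp}(\varphi))$ and $\nabla\varphi$ is bounded. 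Hence $T_{\vec{{\bf f}}}(\tilde{\xi}_n)\to\varphi\vec{{\bf g}}$ in $(L^1(\Omega))^N$ while $\tilde{\xi}_n\to 0$ in $L^1_{loc}(\Omega)$.

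Now Lemma \ref{u2} applied to the sequence $\{\tilde{\xi}_n\}$ forces $\varphi\vec{{\bf g}}\equiv 0$ on $\Omega$. Since $\varphi\in C_c^1(\Omega)$ was arbitrary, $\vec{{\bf g}}\equiv 0$, which proves closability of $T_{\vec{{\bf f}}}$ into $(L^2(\Omega))^N$. By Lemma \ref{tca}, ${\bf J}_0$ is injective.

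The argument is essentially bookkeeping: the only non-trivial step is recognizing that the $(L^2(\Omega))^N$-target in Lemma \ref{tca} is stronger than the $(L^1_{loc}(\Omega))^N$-target needed to invoke Lemma \ref{u2}, provided one first localizes by a cutoff $\varphi$ so that the lower-order term $\xi_n\nabla\varphi$ is under control. There is no real obstacle beyond that; since Lemma \ref{u2} requires only locally $L^1$ coefficients and $\vec{{\bf f}}\in(L^2_{loc})^N\subset(L^1_{loc})^N$, the hypotheses are satisfied.
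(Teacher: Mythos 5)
Your proof is correct and follows essentially the paper's own route: the paper proves Corollary \ref{entr} by simply combining Lemmas \ref{tca} and \ref{u2}, and your cutoff argument merely makes explicit the bridge from the $(L^2(\Omega))^N$-target in Lemma \ref{tca} to the $(L^1(\Omega))^N$-target in Lemma \ref{u2} (a localization that is in fact already built into the proof of Lemma \ref{u2} via the multiplication by $\varphi$). Nothing further is needed.
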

\proof Follows from lemmas \ref{tca} and \ref{u2}.  \qed

\medskip

Finally, we characterise balanced potentials in the $L^1$-subcritical operator context.

\begin{lemma}\label{att}
Let $V \in L^1_{loc}(\Omega)$ be such that $-\Delta+V$ is an $L^1$-subcritical operator in $\Omega$. Then the following are equivalent:
\begin{enumerate}
\item[{\rm \bf (i)}] 
$V$ is balanced in $\Omega$,
\item[{\rm \bf (ii)}]
${\bf J}$ is injective and 
$V {\bf J}(u) \in L^1_{loc}(\Omega)$ for any $u \in {\mathcal H}_V(\Omega)$,
\item[{\rm \bf (iii)}]
$V$ is strongly  balanced in $\Omega$.
\end{enumerate}
\end{lemma}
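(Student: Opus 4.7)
The plan is to prove the cycle (iii)~$\Rightarrow$~(i)~$\Rightarrow$~(ii)~$\Rightarrow$~(iii), where the first two implications draw directly on prior results of the paper and the third is the main content.

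The implication (iii)~$\Rightarrow$~(i) is immediate from Corollary~\ref{nes}. For (i)~$\Rightarrow$~(ii), injectivity of ${\bf J}$ is exactly Corollary~\ref{bru}, while the property $V{\bf J}(u)\in L^1_{loc}(\Omega)$ for all $u\in{\cal H}_V(\Omega)$ is the content of Remark~\ref{nig}(iii) in the $L^1$-subcritical setting, where ${\cal B}_V(\Omega;U)$ is identified with ${\cal H}_V(\Omega)$.

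The heart of the lemma is (ii)~$\Rightarrow$~(iii). My plan is to use the Closed Graph Theorem to promote the pointwise existence of $V{\bf J}(u)$ in $L^1_{loc}(\Omega)$ into a continuity statement. Define
$$
  T : {\cal H}_V(\Omega) \to L^1_{loc}(\Omega), \qquad T(u) := V\,{\bf J}(u),
$$
which is well-defined by hypothesis~(ii). Both the domain (a Hilbert space) and target (a Fr\'echet space, with seminorms $\|\cdot\|_{L^1(K)}$ over an exhaustion of $\Omega$) are F-spaces, so it suffices to check that the graph of $T$ is closed. Suppose $u_n \to u$ in ${\cal H}_V(\Omega)$ and $V{\bf J}(u_n)\to w$ in $L^1_{loc}(\Omega)$. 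By continuity of ${\bf J}$ we have ${\bf J}(u_n)\to {\bf J}(u)$ in $L^1_{loc}(\Omega)$; passing to a subsequence we may assume the convergence is pointwise a.e., so $V{\bf J}(u_n)\to V{\bf J}(u)$ pointwise a.e.\ as well. Extracting a further subsequence so that $V{\bf J}(u_n)\to w$ pointwise a.e., we conclude $w = V{\bf J}(u)$ a.e., and the graph is closed. Hence $T$ is continuous. Restricting $T$ to $W^{1,\infty}_c(\Omega)$ (on which $T(\xi) = V\xi$) gives the continuity of the map $\xi\mapsto V\xi$ from ${\cal L}_V(\Omega)$ to $L^1_{loc}(\Omega)$, and Proposition~\ref{trie}(iii) then yields that $V$ is strongly balanced.

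The main obstacle is the closed-graph verification: one cannot identify the $L^1_{loc}$ limit $w$ of $V{\bf J}(u_n)$ with $V{\bf J}(u)$ directly, because the hypothesis only gives membership in $L^1_{loc}$ and no a priori bound on $\|V{\bf J}(u)\|_{L^1(K)}$ in terms of $\|u\|_{V,\Omega}$ (indeed, producing such a bound is precisely the conclusion sought). The resolution is the double-subsequence extraction above, which routes the identification through a common pointwise a.e.\ limit --- this is the only nontrivial step, and it is what makes the injectivity/$L^1_{loc}$-regularity package of~(ii) strong enough to upgrade to the quantitative bound encoded in strong balancedness. Injectivity of ${\bf J}$ is not strictly consumed in this closed-graph argument, but is recorded in~(ii) because it is a natural concomitant of the $L^1_{loc}$ regularity in the $L^1$-subcritical setting.
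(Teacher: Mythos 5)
Your proof is correct, and for the two easy implications ((iii)$\Rightarrow$(i) via Corollary \ref{nes}, and (i)$\Rightarrow$(ii) via Corollary \ref{bru} together with the definition of balancedness / Remark \ref{nig}(iii)) you follow the paper. For the key implication (ii)$\Rightarrow$(iii), however, your route is genuinely different. The paper consumes the injectivity of ${\bf J}$ in an essential way: it transports the Hilbert norm of ${\cal H}_V(\Omega)$ onto ${\mathcal Z}:={\bf J}({\cal H}_V(\Omega))$ (the norm $\|z\|:=\|u\|_{V,\Omega}$ is only well defined because ${\bf J}$ is injective), uses Lemma \ref{lem:BADAMILK} to check that ${\mathcal Z}$ is closed under $|\cdot|$ with $\big\| |z| \big\|\le\|z\|$, and then invokes the abstract domination criterion of Proposition \ref{inz}, whose proof rests on reflexivity (Lemma \ref{brb}) and dominated convergence. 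You instead apply the closed graph theorem between the F-spaces ${\cal H}_V(\Omega)$ and $L^1_{loc}(\Omega)$ to $T(u)=V{\bf J}(u)$, identify the $L^1_{loc}$-limit with $V{\bf J}(u)$ by a double a.e.-subsequence extraction (using only the continuity of ${\bf J}$ guaranteed by $L^1$-subcriticality), restrict to $W^{1,\infty}_c(\Omega)$ where $T(\xi)=V\xi$, and conclude with Proposition \ref{trie}(iii). Both arguments are sound; yours is shorter, bypasses Lemma \ref{lem:BADAMILK} and Proposition \ref{inz} entirely, and, as you observe, never uses injectivity, so it actually establishes the formally stronger fact that the regularity hypothesis $V{\bf J}(u)\in L^1_{loc}(\Omega)$ for all $u\in{\cal H}_V(\Omega)$ alone implies strong balancedness (injectivity then being recovered a posteriori from Corollary \ref{bru}). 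What the paper's route buys in exchange is independence from the closed graph theorem and reuse of the explicit domination framework of Proposition \ref{inz}, which it also exploits elsewhere (e.g. Proposition \ref{binz}), along with the isometric lattice structure on ${\bf J}({\cal H}_V(\Omega))$ that its construction exhibits.
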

\proof   
{\bf (i)} $\Longrightarrow$ {\bf (ii)}: follows from  definition \ref{tri}. and corollary \ref{bru}. 

\medskip

{\bf (ii)} $\Longrightarrow$ {\bf  (iii)}:   Let ${\mathcal Z}:= {\bf J}({\mathcal H}_V(\Omega)) \subset L^1_{loc}(\Omega)$ with the norm
$$
\| z \| := \|u\|_{V,\Omega} \;\; \text{ where } \;\; z= {\bf J}(u), u \in {\mathcal H}_V(\Omega).
$$
Therefore,  ${\mathcal Z}$ and ${\mathcal H}_V(\Omega)$ are isometric. Given any $z= {\bf J}(u)$, we write $u=u_+-u_-$ as in lemma \ref{lem:BADAMILK}. Then, again as in that lemma,
$$
 {\bf J}(u_\pm) =  ({\bf J}(u))^\pm .
$$
Therefore, $|z| = {\bf J}(u_+ + u_-) \in {\mathcal Z}$ and 
$$
\| |z| \| := \| u_+ + u_-\|_{V,\Omega} \leq \|u\|_{V,\Omega}:= \|z\|.
$$
Thus, $V, {\mathcal Z}$ satisfy assumptions of proposition \ref{inz} and hence $V$ is strongly  balanced in $\Omega$.

\medskip

{\bf (iii)} $\Longrightarrow$ {\bf (i)}: follows from corollary \ref{nes}.
\qed

\medskip

\begin{remark}\label{psyc}
If $V:=V_0$ is as in corollary \ref{entr}, then we have that the statements (i) and (iii) of lemma \ref{att} are equivalent to the following 

 $$(ii)^{\prime} :  \;\; V_0 {\bf J}_0(u) \in L^1_{loc}(\Omega) \;\;\text{ for any }\;\; u \in {\mathcal H}_{V_0}(\Omega).$$
\end{remark}

\subsection{Examples of Strongly Balanced Potentials} \label{s6.5}

We now give examples of large class of strongly balanced potentials.
\subsubsection{Some very general examples}

\begin{remark}\label{gia}
Assume $Q_V \succeq 0$ in $\Omega$. 
\begin{enumerate}
\item[{\rm \bf (i)}] 
If either $V^{-} \in L^{\infty}_{loc} (\Omega)$, or $V^{+} \in L^{\infty}_{loc} (\Omega)$, then $V$ is strongly balanced in $\Omega$. This follows from proposition \ref{trie}.
\item[{\rm \bf (ii)}] 
Let $V^+ \in L^q_{loc}(\Omega)$ for some $q>1$. If ${\mathcal L}_V(\Omega) \hookrightarrow L^{q^\prime}_{loc}( \Omega)$ 
(where $q^{\prime}=\frac{q}{q-1}$), then $V$ is strongly balanced in $\Omega$.
\end{enumerate}
\end{remark}

\medskip

\begin{lemma}\label{rudra}
 Let $V \in L^1_{loc}(\Omega)$  be such that $Q_V \succeq 0$ in $\Omega$.
 \begin{enumerate}
\item[{\rm \bf (i)}]  
$V_\epsilon:= (1+\epsilon)V^+- V^-$ is a strongly balanced potential for any $\epsilon >0 $.
 \item[{\rm \bf (ii)}]
$\tilde{V}_\epsilon:= V^+-(1-\epsilon)V^-$ is a strongly balanced potential for any $\epsilon>0$.
\item[{\rm \bf (iii)}] 
Define $V_n := \min\{V,n\}.$ Then $V$ is strongly balanced in $\Omega$ if $Q_{V_n} \succeq 0$ in $\Omega$ for some $n$.
\end{enumerate}
\end{lemma}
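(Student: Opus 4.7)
\textbf{Proof proposal for Lemma \ref{rudra}.}

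For part (i), the plan is to exploit that $V^+$ and $V^-$ have disjoint supports, so $V_\varepsilon^+ = (1+\varepsilon)V^+$ and $V_\varepsilon^- = V^-$. A direct expansion gives
$$Q_{V_\varepsilon}(\xi) = Q_V(\xi) + \varepsilon \int_\Omega V^+ \xi^2,$$
so $Q_{V_\varepsilon} \succeq 0$ follows from $Q_V \succeq 0$, and moreover $\int_\Omega V^+ \xi^2 \le \frac{1}{\varepsilon} Q_{V_\varepsilon}(\xi)$ for all $\xi \in W_c^{1,\infty}(\Omega)$. Then Cauchy--Schwarz yields, for any compact $K \subset \Omega$,
$$\int_K V^+ |\xi| \,\le\, \|V^+\|_{L^1(K)}^{1/2} \Big(\int_K V^+ \xi^2\Big)^{1/2} \,\le\, \tfrac{1}{\sqrt{\varepsilon}} \|V^+\|_{L^1(K)}^{1/2} \sqrt{Q_{V_\varepsilon}(\xi)}.$$
Since $V_\varepsilon^+ = (1+\varepsilon) V^+$, this immediately gives the inequality \eqref{eq:BalancedCondition} for $V_\varepsilon$ (in fact, without needing the $\int_U |\xi|$ term at all, so any open $U \Supset K$ with $U \Subset \Omega$ works).

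Part (ii) is analogous but based on the convex decomposition
$$Q_{\tilde V_\varepsilon}(\xi) = (1-\varepsilon)\,Q_V(\xi) + \varepsilon\, Q_{V^+}(\xi),$$
which one verifies by direct computation. Since $Q_V \succeq 0$ and $Q_{V^+} \succeq 0$, this gives $Q_{\tilde V_\varepsilon} \succeq 0$ and, using $Q_{V^+}(\xi) \ge \int V^+ \xi^2$, the bound $\int V^+ \xi^2 \le \tfrac{1}{\varepsilon} Q_{\tilde V_\varepsilon}(\xi)$. Since $\tilde V_\varepsilon^+ = V^+$, the same Cauchy--Schwarz step as in (i) produces the required inequality.

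For part (iii), the key observation is that $V_n^+ = \min\{V^+, n\} \in L^\infty(\Omega)$, so if $Q_{V_n} \succeq 0$ in $\Omega$, Remark~\ref{gia}(i) immediately shows that $V_n$ is strongly balanced. Then I would write
$$V \;=\; V_n + (V-n)^+,$$
noting that $(V-n)^+ \ge 0$ and $(V-n)^+ \le V^+ \in L^1_{loc}(\Omega)$, and invoke Proposition~\ref{mia}(ii) to conclude that $V = V_n + (V-n)^+$ is strongly balanced in $\Omega$.

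No step here presents a genuine obstacle; the arguments are computational. The one point demanding minor care is bookkeeping the positive and negative parts in (i) and (ii), where the disjoint supports of $V^+$ and $V^-$ ensure that the perturbation affects exactly one of $V_\varepsilon^\pm$ and yields the controlling quadratic form $\int V^+ \xi^2$.
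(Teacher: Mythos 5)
Your proposal is correct and follows essentially the same route as the paper: part (i) is the identical Cauchy--Schwarz argument based on $Q_{V_\epsilon}(\xi)=Q_V(\xi)+\epsilon\int_\Omega V^+\xi^2$, and part (iii) uses the very same splitting $V=V_n+(V-n)^+$ together with Remark \ref{gia}(i) and Proposition \ref{mia}(ii). In (ii) you differ only cosmetically: you bound $\tilde V_\epsilon^+=V^+$ directly from the identity $Q_{\tilde V_\epsilon}=(1-\epsilon)Q_V+\epsilon Q_{V^+}$, while the paper bounds $\tilde V_\epsilon^-=(1-\epsilon)V^-$ from $Q_{\tilde V_\epsilon}(\xi)=Q_V(\xi)+\epsilon\int V^-\xi^2$ and then invokes Proposition \ref{trie}; both are fine.

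One small point: your argument for (ii) as written only covers $\epsilon\le 1$, since for $\epsilon>1$ the coefficient $1-\epsilon$ is negative and the convex-combination step no longer yields $Q_{\tilde V_\epsilon}\succeq \epsilon Q_{V^+}$. The statement claims the result for every $\epsilon>0$, so you should add (as the paper does) that for $\epsilon\ge 1$ one has $\tilde V_\epsilon=V^+ +(\epsilon-1)V^-\ge 0$, hence $\tilde V_\epsilon^-\equiv 0$ and the conclusion is immediate from Remark \ref{gia}(i).
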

\proof 
{\bf (i)} 
Clearly  $Q_{V_\epsilon} \succeq 0$ in $\Omega$. Now, for any compact set $K \subset \Omega$ and $\xi \in W_c^{1,\infty}(\Omega)$, 
$$
\frac{1}{1+\epsilon}\int_K V_\epsilon^+ |\xi|  = \int_K V^+ |\xi| \leq c_K\Big(\int_K V^+|\xi|^2\Big)^{\frac{1}{2}} \leq \frac{c_K }{\sqrt{\epsilon}}  \Big(\int_K  \epsilon V^+ |\xi|^2\Big)^{\frac{1}{2}} \leq \frac{c_K }{\sqrt{\epsilon}}  \sqrt{Q_{V_\epsilon}(\xi)}.
$$

{\bf (ii)} We note that 
$$
\epsilon \int_\Omega V^- |\xi|^2  \leq Q_{\tilde{V}_\epsilon} (\xi), \; \; \forall \xi \in W^{1,\infty}_c(\Omega).
$$
We may assume $\epsilon <1$ as otherwise the result is obvious by remark \ref{gia}(i). As above,
$$
\frac{1}{1-\epsilon} \int_K \tilde{V}_\epsilon^-  |\xi|  = \int_K V^- |\xi| \leq \frac{c_K }{\sqrt{\epsilon}}  \Big(\int_K  \epsilon V^- |\xi|^2\Big)^{\frac{1}{2}} \leq \frac{c_K }{\sqrt{\epsilon}}  \sqrt{Q_{\tilde{V}_\epsilon}(\xi)}.
$$
That $\tilde{V}_\epsilon$ is strongly balanced in $\Omega$ follows from proposition \ref{trie}. 

\medskip

{\bf (iii)} 
Noting that $V_n$ is a strongly balanced potential (since it is bounded above), the result follows from the identity $$V = \min \{ V, n\}+(V-n)\chi_{\{V \geq n\}}$$  and proposition \ref{mia}(ii).
\qed

\subsubsection{Balanced potentials whose positive and negative singularities are separated}

 We now define a class of potentials whose positive and negative singularities are separated.
 
  \begin{definition}\label{uno}
 Let ${\mathcal V}(\Omega)$ denote the class of functions  $V \in L^1_{loc}(\Omega)$  for which  we can find  a   cover of $\Omega$ by open sets $\{U_j\}$ , $U_j \Subset \Omega$,  such that  either $V^+$ is bounded on $U_j$ or else there exists an open set $O_j $ satisfying $U_j \Subset O_j \Subset \Omega$ and $V^-$ is bounded on $O_j$. 
 \end{definition} 
 
 \medskip
 
\begin{proposition}\label{reshma}
Let $V \in {\mathcal V}(\Omega)$.
\vspace{-2mm}
   \begin{enumerate}
  \item[{\rm \bf (i)}]  
  If $h_1,h_2 \in L^{\infty}_{loc}(\Omega)$ and $h_1 $ nonnegative, then $h_1 V+h_2 \in {\mathcal V}(\Omega)$.
  \item[{\rm \bf (ii)}] 
  $tV^+-sV^- \in {\cal V}(\Omega)$ for all $t,s>0$.
  \end{enumerate}
\end{proposition}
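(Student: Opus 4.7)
The plan is to use the \emph{same} cover $\{U_j\}$ (together with the associated auxiliary sets $O_j$) that witnesses $V\in{\mathcal V}(\Omega)$ and verify the defining condition of Definition~\ref{uno} for the new potential. The only tool needed is the elementary observation that $h_1,h_2\in L^\infty_{loc}(\Omega)$ are bounded on every compact subset of $\Omega$, in particular on $\overline{U_j}$ and $\overline{O_j}$.

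For part (i), set $W:=h_1V+h_2$. Because $h_1\ge 0$, one has $(h_1V)^+=h_1V^+$ and $(h_1V)^-=h_1V^-$, and hence the pointwise inequalities
\[
W^+\le h_1V^++h_2^+,\qquad W^-\le h_1V^-+h_2^-.
\]
Now fix an index $j$.  If $V^+$ is bounded on $U_j$, boundedness of $h_1,h_2$ on $\overline{U_j}\Subset\Omega$ yields boundedness of $W^+$ on $U_j$.  If instead there is $O_j$ with $U_j\Subset O_j\Subset\Omega$ on which $V^-$ is bounded, the same reasoning applied on $\overline{O_j}$ gives $W^-\in L^\infty(O_j)$.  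Thus $\{U_j\}$ (with the same $\{O_j\}$) witnesses $W\in{\mathcal V}(\Omega)$.

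For part (ii), the quickest route is to deduce it from (i): take
\[
h_1:=t\,\chi_{\{V\ge 0\}}+s\,\chi_{\{V<0\}},\qquad h_2\equiv 0,
\]
so that $h_1\in L^\infty(\Omega)$ is nonnegative and $h_1V=tV^+-sV^-$; then (i) applies.  Alternatively one can argue directly: since the supports of $V^+$ and $V^-$ are (essentially) disjoint, $(tV^+-sV^-)^+=tV^+$ and $(tV^+-sV^-)^-=sV^-$, and the exact same cover $\{U_j\}$ (and $\{O_j\}$) does the job after scaling by the constants $t$ and $s$.

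There is no real obstacle here; the proposition is essentially a bookkeeping check.  The only subtle point worth writing out is the inequality $(h_1V)^\pm=h_1V^\pm$, which uses the assumption $h_1\ge 0$ in an essential way and is what forces one to split $V^+$ and $V^-$ rather than work with $V$ directly.
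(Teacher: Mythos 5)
Your proof is correct and takes essentially the same route as the paper: the paper's proof simply observes that the same cover $\{U_j\}$ (with the same sets $O_j$) chosen for $V$ works for $h_1V+h_2$ and for $tV^+-sV^-$, which is exactly what you verify, with the additional (correct) bookkeeping details $(h_1V)^{\pm}=h_1V^{\pm}$ and $(tV^+-sV^-)^{\pm}=tV^+,\,sV^-$ written out.
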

 \proof 
 We take the  cover $\{U_j\}$ for $V$.
  The same cover works for $h_1V+h_2$
and  $tV^+-sV^- $, $t,s>0$.
  \qed

\medskip

\begin{example} \label{duo}
\vspace{-2mm}
\begin{enumerate}
\item[{\rm \bf (i)}] 
If $V^{+} \text{ or } V^- \in L^{\infty}_{loc} (\Omega)$, then clearly $V \in {\mathcal V}(\Omega)$   (we simply take  $\{U_j\}$ to be any countable collection of open balls covering $\Omega$).
\item[{\rm \bf (ii)}]  
If  there exists  a compact set $ K$ and an  open set $O \Subset \Omega$ such that 
$$K \subset O , \; \sup _{\Omega \setminus K} V < +\infty \;\text{  and } \inf _O V > -\infty,$$
then $V \in {\mathcal V}(\Omega)$. In this case we take any open set $U$ such that $K \subset U \Subset O$ to be a member of the open cover   and note that $V$ is bounded from above outside $K$.
\item[{\rm \bf (iii)}]  
Let  $S:=\{x_i\}$ be a discrete set of points in $\Omega$ contained in an open set $U \subset \R^N$ such that $\overline{U} \subset \Omega$. Associated to each $x_i$ we choose $\alpha_i \in (-N,0)$ and  $a_i \geq 0$ such that $\sum_i a_i < \infty$. Then, any $V \in L^1_{loc}(\Omega)$ such that 
$$
0 \leq  V \leq  \sum_i a_i |x-x_i|^{\alpha_i}  \;\text{a.e.  in } U \;\; and \;\; V \leq 0  \; \text{a.e. in } \;\Omega \setminus U$$
 is in ${\mathcal V}(\Omega)$. Indeed, we  choose open balls $B_i \Subset U$ centered at the singularity $x_i$ and excluding  other singularities. Noting that $V$ is locally bounded above outside the union of these balls, it is easy to construct the cover $\{U_j\}$ as required by the definition \ref{uno} by enlarging the collection $\{B_j\}$. 
We can easily generalise this example by replacing the point singularities $\{x_i\}$ by a disjoint sequence of compact sets $\{K_i\}$  containing the positive singularities of $V$. 
\item[{\rm \bf (iv)}]  
Let $U $  and $\{a_i\}$ be as in (iii). Let $S:=\{x_i\}$ be any countable set  such that $\overline{S} \Subset U$ ($S$ maybe even dense in a portion of $U$). Choose  $\alpha \in (-N,0)$. Then any $V \in L^1_{loc}(\Omega)$ satisfying 
$$
0 \leq  V \leq  \sum_i a_i |x-x_i|^{\alpha}  \;\text{a.e.  in } U\;\;and \;\; V \leq 0  \; \text{a.e. in } \;\Omega \setminus U
$$  
   will be in ${\mathcal V}(\Omega)$ as it satisfies conditions in example (ii).
\item[{\rm \bf (v)}] 
Suppose there exist $x_0 \in \Omega$ and a sequence of open balls $B_{\epsilon_n}(x_0), \epsilon_n > 0,$ contained in $\Omega$ such that
$$
  \sup _{B_{\epsilon_n}} V = +\infty  
      \quad \text{ and } \quad 
   \inf_{B_{\epsilon}} V = -\infty \text{ for all } n.
$$
Then $V$ is not  in ${\mathcal V}(\Omega)$.
\end{enumerate}
\end{example}

\medskip

\begin{proposition}\label{paint}
 Let $V \in {\mathcal V}(\Omega)$  be such that  
 $Q_V \succeq 0$ in $\Omega$.
 Then $V$ is strongly balanced in $\Omega$.
\end{proposition}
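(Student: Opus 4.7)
The plan is to localize the bound \eqref{eq:BalancedCondition} via a finite subcover produced by Definition~\ref{uno} and to handle the two types of cover pieces separately, using the trivial bound on pieces where $V^+$ is locally bounded and the key estimate \eqref{eq:sloka} on the remaining pieces (where only $V^-$ is locally bounded).

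Fix a compact set $K \subset \Omega$. Since $\{U_j\}$ is an open cover of $\Omega$ and $K$ is compact, extract a finite subcover $U_{j_1},\dots,U_{j_m}$. Split the indices $\{1,\dots,m\}=A\sqcup B$ according to which alternative of Definition~\ref{uno} holds: for $i\in A$ we have $V^{+}\in L^{\infty}(U_{j_i})$, and for $i\in B$ there is an open set $O_{j_i}$ with $U_{j_i}\Subset O_{j_i}\Subset\Omega$ such that $V^{-}\in L^{\infty}(O_{j_i})$. Fix an open set $U$ with
\[
\Big(\bigcup_{i=1}^{m} U_{j_i}\Big) \cup \Big(\bigcup_{i\in B} O_{j_i}\Big) \subset U \Subset \Omega,
\]
and for each $i\in B$ pick $\varphi_i\in C_c^{2}(O_{j_i})$ with $0\le\varphi_i\le 1$ and $\varphi_i\equiv 1$ on $U_{j_i}$.

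For any $\xi\in W_c^{1,\infty}(\Omega)$, use $\int_K V^+|\xi|\leq \sum_{i=1}^{m}\int_{U_{j_i}}V^+|\xi|$ and estimate each term. If $i\in A$, then
\[
\int_{U_{j_i}} V^{+}|\xi| \;\le\; \|V^{+}\|_{L^{\infty}(U_{j_i})} \int_{U} |\xi|.
\]
If $i\in B$, write $V^{+}\varphi_i = V\varphi_i + V^{-}\varphi_i$ and apply \eqref{eq:sloka} to the Lipschitz function $|\xi|$, noting that $Q_V(|\xi|)=Q_V(\xi)$ by Lemma~\ref{lem:Lip}(ii):
\[
\int_{U_{j_i}} V^{+}|\xi|
\;\le\; \int_{\Omega} V \varphi_i |\xi| \;+\; \int_{\Omega} V^{-}\varphi_i |\xi|
\;\le\; C(V,\varphi_i)\Big(\sqrt{Q_V(\xi)}+\int_{O_{j_i}}|\xi|\Big) + \|V^{-}\|_{L^{\infty}(O_{j_i})}\int_{O_{j_i}}|\xi|.
\]
Since $O_{j_i}\subset U$, summing over $i=1,\dots,m$ yields a constant $c>0$ (depending on $K$ and $U$, but not on $\xi$) such that
\[
\int_K V^{+}|\xi| \;\le\; \frac{1}{c}\Big(\sqrt{Q_V(\xi)}+\int_U |\xi|\Big),
\]
which is exactly \eqref{eq:BalancedCondition}. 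Therefore $V$ is strongly balanced in $\Omega$.

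The only genuinely delicate step is the treatment of indices in $B$, where $V^{+}$ may have arbitrarily wild positive singularities inside $U_{j_i}$: the trick is that the estimate \eqref{eq:sloka} transfers all of the difficulty onto the behavior of $V^{-}$ on $\operatorname{supp}(\varphi_i)\subset O_{j_i}$, which is precisely where the hypothesis of Definition~\ref{uno} gives an $L^{\infty}$ bound. Choosing $O_{j_i}$ strictly larger than $U_{j_i}$ (so that a smooth cutoff $\varphi_i$ equal to $1$ on $U_{j_i}$ fits inside $O_{j_i}$) is exactly what makes this decoupling possible, and is the reason the definition of $\mathcal{V}(\Omega)$ requires the enlargement $O_j\Supset U_j$ in the second alternative.
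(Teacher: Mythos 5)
Your proof is correct and follows essentially the same route as the paper's: cover $K$ by finitely many $U_j$'s, use the trivial $L^\infty$ bound on pieces where $V^+$ is bounded, and on the remaining pieces apply \eqref{eq:sloka} with a cutoff $\varphi\in C_c^2(O_j)$ equal to $1$ on $U_j$, absorbing the $V^-$ term via its boundedness on $O_j$. The only difference is presentational (you make explicit the index splitting, the choice of a single $U$, and the use of $Q_V(|\xi|)=Q_V(\xi)$), so nothing further is needed.
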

\begin{proof}
 Let  $\xi \in W_c^{1,\infty}(\Omega)$. 
 Let  the cover $\{ U_j \}$ be as given in the definition \ref{uno}.  If  $V^+$ is bounded on $U_j$ then clearly
\begin{equation}\label{aza}
\int_{U_j} V^+ |\xi| \leq (\sup_{U_j}  V^+) \int_{U _j} |\xi|.
\end{equation} 
If not, we can obtain open set $O_j$ as in definition \ref{uno} on which $V^-$ is bounded. Choose $\varphi \in C_c^2(O_j)$, $0 \leq \varphi \leq 1$ and $\varphi \equiv 1$ on $U_j$. Then,  by \eqref{eq:sloka}, 
\begin{equation}\label{siya}
\int_{U_j} V^+ |\xi| \leq C_{j} \bigg (\int_{O_j} V^- |\xi| \varphi +  \sqrt {Q_V (\xi)} \bigg) \leq \tilde{ C}_{j} \bigg (\int_{O_j}  |\xi| +  \sqrt {Q_V (\xi)} \bigg). 
\end{equation}

From \eqref{aza} and \eqref{siya}  we get that for any $U_j$, 
$$
\int_{U_j}V^+ |\xi|  \leq C_j \bigg (\int_{O_j} |\xi| +  \sqrt {Q_V (\xi)} \bigg). 
$$
Given a compact set $K \subset \Omega$ we can cover $K$ by  finitely many $U_j$'s  and use the above inequality to verify the inequality in \eqref{eq:BalancedCondition} holds. 
\end{proof}


\begin{example}\label{uri}
Let $N \geq 3$, $a_i  \in \Omega, $ $\alpha_i \in (1, N/2)$ and $c_i \in {\mathbb R}$ for $i=1, \cdots, N$. We choose $|c_i| \geq 1$ if $\alpha_i=1$. Define 
$$\vec{{\bf \Gamma}}:= ( c_1 |x-a_1|^{-\alpha_1}, \cdots,  c_N |x-a_N|^{-\alpha_N}), \;\;\; V_0:= div \; \vec{{\bf \Gamma}}+ |\vec{{\bf \Gamma}}|^2.$$  
Using the proposition \ref{paint}, we can check that $V_0$ (and hence any locally integrable $V \geq V_0$) is strongly balanced in $\Omega$.

For more examples of potentials satisfying the conditions in the  proposition \ref{paint}, see corollary \ref{loe} and remark \ref{waz}.
\end{example}
 \subsubsection{ Balanced potentials whose energy space imbeds in $H^1_{loc}$}\label{s6.5.3}
The following result shows that a large class of locally integrable functions that fall in the framework of \cite{JayeMazyaVerb} are balanced potentials. We also remark that this result allows a balanced potential to oscillate infinitely often near a singularity, but not in a ``wild" manner; see remark \ref{sci} in this context. Example \ref{turk} (see again remark \ref{sci}) shows that the assumption on $V^+$ made in the following lemma is sharp.

\begin{lemma}\label{swa}
Let $V \in  L^1_{loc}(\Omega)$  be such that $V^+ \in L^{\frac{2N}{N+2}}_{loc}(\Omega)$  when $N \geq 3$ and $V^+ \in L^{p}_{loc}(\Omega)$ for some $p >1$ when $N= 2$. 
If  $Q_V \succeq 0$ in $\Omega$ and ${\cal L}_V(\Omega) \hookrightarrow H^1_{loc}(\Omega)$, then $V$ is strongly balanced in $\Omega$. Same conclusion holds if  above conditions are imposed on $V^-$  instead of $V^+$.
\end{lemma}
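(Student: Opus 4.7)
By Proposition \ref{trie}, it suffices to show that for every compact $K \subset \Omega$ there exists an open set $U$ with $K \Subset U \Subset \Omega$ such that the map $\xi \mapsto V^{+}\xi$ (or $\xi \mapsto V^{-}\xi$ in the second case) is continuous from ${\cal L}_V(\Omega;U)$ to $L^1(K)$. The plan is to combine H\"older's inequality with the Sobolev embedding and the hypothesis ${\cal L}_V(\Omega) \hookrightarrow H^1_{loc}(\Omega)$.

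Fix a compact $K$ and choose a smooth open set $U$ with $K \Subset U \Subset \Omega$. For $N \geq 3$, I would apply H\"older's inequality with the dual Sobolev exponents $\frac{2N}{N+2}$ and $\frac{2N}{N-2}$ to get
\[
   \int_K V^{+}|\xi| \;\leq\; \|V^{+}\|_{L^{2N/(N+2)}(K)}\,\|\xi\|_{L^{2N/(N-2)}(K)}, \qquad \xi \in W_c^{1,\infty}(\Omega).
\]
Since $U$ is a smooth bounded open set, the Sobolev embedding yields $H^{1}(U) \hookrightarrow L^{2N/(N-2)}(U)$, so $\|\xi\|_{L^{2N/(N-2)}(K)} \leq C \|\xi\|_{H^1(U)}$. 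The hypothesis ${\cal L}_V(\Omega) \hookrightarrow H^1_{loc}(\Omega)$ then provides a constant $C_U$ with $\|\xi\|_{H^1(U)} \leq C_U \sqrt{Q_V(\xi)}$. Combining these three estimates gives
\[
  \int_K V^{+}|\xi| \;\leq\; C_{K,U}\,\sqrt{Q_V(\xi)} \;\leq\; C_{K,U}\,\|\xi\|^{U}_{V,\Omega},
\]
which is exactly the strongly balanced condition \eqref{eq:BalancedCondition}. For $N=2$, the same argument works verbatim, using instead the embedding $H^{1}(U) \hookrightarrow L^{p'}(U)$ (valid for every $p' < \infty$) together with H\"older applied with exponents $p$ and $p' = p/(p-1)$, where $V^{+} \in L^{p}_{loc}(\Omega)$.

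For the variant in which the integrability is imposed on $V^{-}$ rather than $V^{+}$, the very same chain of inequalities produces continuity of $\xi \mapsto V^{-}\xi$ from ${\cal L}_V(\Omega;U)$ to $L^1(K)$, and this is one of the equivalent conditions for strong balancedness supplied by Proposition \ref{trie} (namely \eqref{BIC}). I do not expect any real obstacle: the Sobolev/H\"older pairing is designed so that the exponents line up, and the hypothesis ${\cal L}_V \hookrightarrow H^1_{loc}$ is tailored to convert $\sqrt{Q_V}$-norm control into the $H^1$-control needed to invoke Sobolev.
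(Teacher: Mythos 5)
Your proposal is correct and follows essentially the same route as the paper's proof: Hölder's inequality with the conjugate Sobolev exponents, the Sobolev embedding $H^1 \hookrightarrow L^{2N/(N-2)}$ (resp. $L^{p'}$ for $N=2$), and the hypothesis ${\cal L}_V(\Omega) \hookrightarrow H^1_{loc}(\Omega)$, with the $V^-$ case handled through the equivalence in Proposition \ref{trie}. The only cosmetic difference is that the paper estimates on balls and concludes by a covering argument, while you phrase it directly in terms of a compact $K$ and an open neighbourhood $U$.
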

\proof Let $N \geq 3$. We  estimate for any ball $B \Subset \Omega$ and $\xi \in W_c^{1,\infty}(\Omega)$, 
\begin{eqnarray}\label{faz}
\int_{B }V^+ |\xi| 
&\leq& \Big(\int_B (V^+)^{\frac{2N}{N+2}} \Big)^{\frac{N+2}{2N}}\Big(\int_B |\xi|^{\frac{2N}{N-2}}\Big)^{\frac{N-2}{2N}} \notag \\
&\leq&  \tilde{c}_B \| \xi \|_{H^1(B)} \notag \\
&\leq&  c^*_B \sqrt{Q_V ( \xi )}. \notag
\end{eqnarray}
A simple covering argument shows that $V$ is strongly balanced in $\Omega$.
A similar proof holds for $N=1,2$. The case of $V^-$ can be shown using  proposition \ref{trie}(i).
\qed 

Next,  we make the following definition which  plays a main role in showing the equivalence of $L^1$ and feeble $L^2$ criticalities (see proposition \ref{prop:Sonata}):
  \begin{definition}\label{chalka}
We call $V \in L^1_{loc}(\Omega)$ a tame potential in $\Omega$ if $V$ is balanced in $\Omega$ and $V^+ \in L^p_{loc}(\Omega)$ for some $p > \frac{N}{2}$ \;($N \geq 2$).
\end{definition}

\begin{lemma}\label{saud}
Let $V \in L^{p}_{loc}(\Omega)$ for some $p > \frac{N}{2}$ when $N \geq 2$ and $V \in L^{1}_{loc}(\Omega)$ when $N=1$. If $Q_V \succeq 0$  in $\Omega$, then $V$ is strongly balanced (infact, tame) in $\Omega$.
\end{lemma}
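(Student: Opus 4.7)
The plan is to apply Proposition~\ref{inz} with the reflexive space $Z_B := L^{p'}(B)$, where $p' = p/(p-1)$. This choice makes three of the four required conditions immediate: $L^{p'}(B)$ is reflexive and closed under $|\cdot|$ with $\big\||z|\big\|_{L^{p'}} = \|z\|_{L^{p'}}$, embeds continuously into $L^1(B)$ since $B$ is bounded, and satisfies $Vz \in L^1(B)$ for $z \in Z_B$ by Hölder's inequality $\|Vz\|_{L^1(B)} \leq \|V\|_{L^p(B)} \|z\|_{L^{p'}(B)}$. What remains is to verify the continuity of the inclusion ${\cal L}_V(\Omega; U) \hookrightarrow L^{p'}(B)$ for some open $U$ with $B \Subset U \Subset \Omega$.

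First, I would establish an $L^1$-strengthened Gårding-type inequality for $V^-$. Since $V^- \in L^p(W)$ with $p > N/2$ on any bounded $W \Subset \Omega$, Hölder's inequality gives $\int V^-\phi^2 \leq \|V^-\|_{L^p(W)} \|\phi\|^2_{L^{2p'}(W)}$. The hypothesis $p > N/2$ yields $2p' < 2^{*}$ (for $N \geq 3$; for $N = 2$ any finite exponent works), so the Gagliardo--Nirenberg interpolation $\|\phi\|_{L^{2p'}} \leq C \|\phi\|^{1-\tau}_{L^1} \|\nabla\phi\|^\tau_{L^2}$ holds for $\phi \in H^1_0(W)$ with a strictly sub-unit exponent $\tau \in (0,1)$. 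Young's inequality then gives
\begin{equation*}
\int_W V^- \phi^2 \,\leq\, \tfrac12 \|\nabla\phi\|^2_{L^2(W)} + C(W) \|\phi\|^2_{L^1(W)}, \qquad \phi \in H^1_0(W).
\end{equation*}
Since $Q_V(\phi) \geq 0$, this absorbs into $\|\nabla\phi\|^2_{L^2(W)} \leq 2 Q_V(\phi) + 2C(W) \|\phi\|^2_{L^1(W)}$ for $\phi \in H^1_0(W)$.

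Next, for a cutoff $\eta \in C_c^\infty(U)$ with $\eta \equiv 1$ on $B$, I would apply Gagliardo--Nirenberg once more to $\eta\xi \in H^1_0(U)$, interpolating $L^{p'}$ between $L^1$ and $H^1_0$ with an exponent $\theta \in (0,1)$ (again valid because $p > N/2$ places $p' \in (1, 2^{*})$), to obtain
\begin{equation*}
\|\xi\|_{L^{p'}(B)} \leq \|\eta\xi\|_{L^{p'}(U)} \leq C \|\xi\|_{L^1(U)}^{1-\theta} \|\nabla(\eta\xi)\|_{L^2(U)}^{\theta}.
\end{equation*}
The gradient is controlled by the previous step as $\|\nabla(\eta\xi)\|^2_{L^2(U)} \leq 2Q_V(\eta\xi) + C\|\xi\|^2_{L^1(U)}$, and $Q_V(\eta\xi)$ is handled by the IMS-type identity $Q_V(\eta_1\xi) + Q_V(\eta_2\xi) = Q_V(\xi) + \int(|\nabla\eta_1|^2 + |\nabla\eta_2|^2)\xi^2$ with a smooth partition $\eta_1^2 + \eta_2^2 \equiv 1$ (e.g.\ trigonometric), giving $Q_V(\eta\xi) \leq Q_V(\xi) + C_\eta \|\xi\|^2_{L^2(U \setminus B)}$ via $Q_V(\eta_2\xi) \geq 0$.

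The delicate point is to eliminate the residual $L^2$ term on the transition annulus. For this, I would introduce a finite chain $B \Subset U \Subset U_1 \Subset \cdots \Subset U_n \Subset \Omega$ and, applying the $L^1$-strengthened Gårding estimate together with the interpolation $\|\phi\|^2_{L^2} \leq \epsilon \|\nabla\phi\|^2 + C_\epsilon \|\phi\|^2_{L^1}$ for $\phi \in H^1_0(U_{k+1})$ (derived by Gagliardo--Nirenberg and Young), produce a recursion
\begin{equation*}
\|\xi\|^2_{L^2(U_k)} \leq \delta \|\xi\|^2_{L^2(U_{k+1})} + C_\delta \bigl( Q_V(\xi) + \|\xi\|^2_{L^1(U_n)}\bigr)
\end{equation*}
with $\delta < 1$ fixed by choosing $\epsilon$ small; summing the geometric series and sending $n$ large (the a-priori finite value of $\|\xi\|^2_{L^2(U_n)}$ for the compactly supported $\xi \in W^{1,\infty}_c(\Omega)$ is suppressed by $\delta^n$) yields $\|\xi\|^2_{L^2(U)} \leq C(Q_V(\xi) + \|\xi\|^2_{L^1(U_n)})$ with a constant independent of $\xi$. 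Chaining the estimates produces ${\cal L}_V(\Omega; U_n) \hookrightarrow L^{p'}(B)$, and Proposition~\ref{inz} concludes that $V$ is strongly balanced. Tameness then follows directly from Definition~\ref{chalka} since $V^+ \in L^p_{loc}$ with $p > N/2$ by hypothesis. The main obstacle is the bootstrap: engineering the IMS localization, Gagliardo--Nirenberg interpolation, and Young's inequality so that the recursion is strictly contractive with $\xi$-independent constants. The strict inequality $p > N/2$ is essential throughout, providing the sub-unit exponents that supply the "slack" for contraction.
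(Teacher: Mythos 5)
Your proposal is correct in outline but follows a genuinely different route from the paper. The paper's proof of Lemma \ref{saud} is short because it imports the ground-state machinery of Pinchover--Tintarev \cite{PinTint}: it splits into the $L^1$-subcritical and $L^1$-critical cases, invokes Theorem 1.4 and Proposition 3.1 there to get ${\cal L}_V(\Omega)\hookrightarrow H^1_{loc}(\Omega)$ (respectively, the embedding for the modified form $\tilde Q(\xi)=Q_V(\xi)+|\int\psi\xi|^2$ in the critical case, the correction being dominated by $c_B\int_B|\xi|$), and then concludes exactly as in Lemma \ref{swa} by H\"older and Sobolev. You instead give a self-contained derivation of the local estimate $\|\xi\|_{L^{p'}(B)}\leq C\bigl(\sqrt{Q_V(\xi)}+\int_U|\xi|\bigr)$ with $U\Subset\Omega$, built from the form-boundedness of $V^-$ with an $L^1$ lower-order term (Gagliardo--Nirenberg plus Young), the IMS localization identity (where the hypothesis $Q_V\succeq 0$ enters through $Q_V(\eta_2\xi)\geq 0$), and an iterated absorption over a nested chain of sets, and then you feed $Z_B=L^{p'}(B)$ into Proposition \ref{inz}. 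What the paper's route buys is brevity and a direct link to the ground-state alternative; what yours buys is independence from \cite{PinTint} (no subcritical/critical dichotomy, no ground state, no Harnack-type input), at the price of the quantitative bootstrap.

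Two points need care. First, the displayed recursion with a single constant $C_\delta$ and ``sending $n$ large'' is not literally available: Definition \ref{tre} forces the whole chain to stay inside one fixed open set $\Omega'\Subset\Omega$, so the transition annuli must thin out, say with gaps $d_k\sim d_0 r^k$; then the cutoff constants blow up like $d_k^{-2}$, the Young constants like $(\delta d_k^2)^{-\theta/(1-\theta)}$, and the scheme closes only if you fix $\delta<r^{2\theta/(1-\theta)}$ so that $\sum_k\delta^k C_{\delta,k}<\infty$, after which $\delta^n\|\xi\|^2_{L^2(\Omega')}\to 0$ for each fixed $\xi$ gives $\|\xi\|^2_{L^2(U)}\leq C\bigl(Q_V(\xi)+\|\xi\|^2_{L^1(\Omega')}\bigr)$ with $\xi$-independent constant. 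You flag this as the crux and the slack from $p>N/2$ indeed makes it work, but the proof should carry out this balance explicitly (and state the G\aa rding inequality only for Lipschitz compactly supported $\phi$, where $Q_V(\phi)$ is defined). Second, the lemma also asserts the case $N=1$ with merely $V\in L^1_{loc}$, which your argument does not cover; there $L^{p'}(B)$ is unavailable and $L^\infty(B)$ is not reflexive, so one should either take ${\mathcal Z}_B=H^1(B)$ (using the one-dimensional embedding $H^1\hookrightarrow L^\infty$ to get $Vz\in L^1(B)$) in Proposition \ref{inz}, or simply refer to \cite{GeZhao} as the paper does elsewhere for $N=1$.
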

\proof We shall use some results from \cite{PinTint} to show the lemma. 
If $-\Delta+V$ is $L^1$-critical in $\Omega$, let $\Phi>0$ denote the ground state solution of this operator. Fix a ball $B \Subset \Omega$ and  a $\psi \in C_c^{\infty}(B)$ such that $\int_\Omega \psi \; \Phi \neq 0$. Consider the quadratic form
$$
\tilde{Q}(\xi):= Q_V(\xi) + \Big|\int_\Omega  \psi \; \xi \Big|^2,\;\; \xi \in W_c^{1,\infty}(\Omega).
$$
Let $\tilde{{\mathcal H}}(\Omega)$ denote the Hilbert space obtained by completing  the space $\tilde{{\mathcal L}}(\Omega):=(W_c^{1,\infty}(\Omega),(\tilde{Q})^{\frac{1}{2}})$. It can be shown (see theorem 1.4 in \cite{PinTint}) that the above quadratic forms generate equivalent norms  on $\tilde{{\mathcal H}}(\Omega)$ as such $\psi \in C_c^{\infty}(\Omega)$ is varied. Then from proposition 3.1 in \cite{PinTint} we obtain that if $-\Delta+V$ is $L^1$-subcritical in $\Omega$, then ${\mathcal L}_V(\Omega) \hookrightarrow H^1_{loc}(\Omega)$ and  if $-\Delta+V$ is $L^1$-critical in $\Omega$, then $\tilde{{\mathcal L}}(\Omega) \hookrightarrow H^1_{loc}(\Omega)$. We can proceed as in the proof of the last  lemma to conclude noting that
$$
\Big| \int_\Omega  \psi \; \xi \Big|  \leq c_B \int_B |\xi|.
$$
\qed

\begin{remark}\label{naw}
Since all our results beginning from  section \ref{s8} 
are  valid for any tame potential, it follows that they generalise (in view of the above  lemma) the corresponding results in Murata \cite{Murata:1986}  and Pinchover-Tintaraev \cite{PinTint}.
\end{remark}
  
  We provide examples of potentials for which  a  strengthened version of assumption (ii) of definition \ref{tri} is enough to ensure that it is strongly balanced.
 
 \begin{proposition} \label{binz}
Let $N \geq 3$ and $V_0 :=  div \; \vec{{\bf f}}+ |\vec{{\bf f}}|^2$ where $ \vec{{\bf f}} :=(f_1,\cdots,f_N) $ satisfies the following conditions (for $i=1, \cdots , N$): 
\vspace{-2mm}
\begin{enumerate}
\item[{\rm (i)}] 
 $f_i \in L^{\frac{4N}{N+2}}_{loc}(\Omega)$ 
\item[{\rm (ii)}]
$(\partial_i f_i ) \; w \in L^1_{loc}(\Omega)$ for any $w \in H^1_{loc}(\Omega)$. 
\end{enumerate}
If   $V \in L^1_{loc}(\Omega)$ is such that $V \geq t V_0$ in $\Omega$ for some $t \in (0,1)$, 
then $V$ is  strongly balanced in $\Omega$. 
 \end{proposition}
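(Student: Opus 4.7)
The plan is to reduce to the case of $tV_0$ alone by invoking Proposition \ref{mia}(ii), and then to verify the strongly balanced condition for $tV_0$ directly. First I would check $V_0 \in L^1_{loc}(\Omega)$: assumption (i) yields $|\vec{f}|^2 \in L^{N/(N+2)\cdot 2}_{loc} \subset L^1_{loc}$, while assumption (ii) applied with the constant function $w \equiv 1$ gives $\partial_i f_i \in L^1_{loc}$. Writing $V = tV_0 + (V - tV_0)$ with $V - tV_0 \geq 0$ in $L^1_{loc}(\Omega)$, Proposition \ref{mia}(ii) reduces the claim to showing that $tV_0$ itself is strongly balanced.

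The key algebraic identity is obtained via integration by parts: for $\xi \in C_c^\infty(\Omega)$ one has $Q_{V_0}(\xi) = \int_\Omega |\nabla \xi - \vec{f}\xi|^2 \geq 0$, hence
\[
Q_{tV_0}(\xi) \;=\; (1-t) \int_\Omega |\nabla \xi|^2 \,+\, t\, Q_{V_0}(\xi) \;\geq\; (1-t)\int_\Omega |\nabla \xi|^2.
\]
The identity extends to all of $W^{1,\infty}_c(\Omega)$ by a mollification argument analogous to Lemma~\ref{lem:Lip}(i). Combining this with the Sobolev embedding $\|\xi\|_{L^{2N/(N-2)}(\Omega)} \leq c \|\nabla \xi\|_{L^2(\Omega)}$ (valid on compactly supported functions since $N \geq 3$) and H\"older's inequality on any ball $B \Subset \Omega$, I deduce the continuous embedding $\mathcal{L}_{tV_0}(\Omega) \hookrightarrow H^1_{loc}(\Omega)$; explicitly, $\|\xi\|_{H^1(B)} \leq C_B \sqrt{Q_{tV_0}(\xi)}$ for every $B \Subset \Omega$.

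To verify \eqref{eq:BalancedCondition} for $tV_0$, fix a ball $B \Subset \Omega$ and choose $B_1$ with $\bar B \subset B_1 \Subset \Omega$, and estimate
\[
\int_B |V_0|\,|\xi| \;\leq\; \sum_{i=1}^{N} \int_B |\partial_i f_i|\,|\xi| \;+\; \int_B |\vec{f}|^2 |\xi|.
\]
The second term is controlled by $\||\vec{f}|^2\|_{L^{2N/(N+2)}(B)} \|\xi\|_{L^{2N/(N-2)}(B)}$ via H\"older, and the second factor is dominated by $\|\xi\|_{H^1(B)}$ via Sobolev, producing a bound by $C_B \sqrt{Q_{tV_0}(\xi)}$. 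For the first term, the crux is to show, for each $i$, continuity of the linear map
\[
   T_i : H^1(B_1) \longrightarrow L^1(B), \qquad w \longmapsto (\partial_i f_i)\, w\big|_B.
\]
Well-definedness follows from assumption (ii) applied to $\chi w$, where $\chi \in C_c^\infty(B_1)$ equals $1$ on $\bar B$ (so $\chi w \in H^1_{loc}(\Omega)$). Closedness of the graph is verified by the standard argument: if $w_n \to w$ in $H^1(B_1)$ and $(\partial_i f_i) w_n \to g$ in $L^1(B)$, extract subsequences converging pointwise a.e.\ to obtain $g = (\partial_i f_i) w$ a.e. The closed graph theorem then delivers continuity of $T_i$. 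Applying $T_i$ to $w = |\xi| \in H^1(B_1)$ and invoking the previous step yields $\int_B |\partial_i f_i|\,|\xi| \leq C_{i,B} \sqrt{Q_{tV_0}(\xi)}$.

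Summing the two estimates gives $\int_B (tV_0)^+ |\xi| \leq \int_B |tV_0||\xi| \leq t\,C_B \sqrt{Q_{tV_0}(\xi)}$, which is stronger than what \eqref{eq:BalancedCondition} requires (one may take any open set $U$ containing $B$). Hence $tV_0$ is strongly balanced, and Proposition \ref{mia}(ii) finishes the proof. The main obstacle I anticipate is the closed graph application for $\partial_i f_i$: one must be careful that assumption (ii) really does make the map $T_i$ well defined on $H^1(B_1)$ (via the cutoff $\chi$) and that the pointwise-a.e.\ subsequence extraction works in both the $L^1$ and $H^1$ modes simultaneously. Everything else is routine H\"older--Sobolev book-keeping.
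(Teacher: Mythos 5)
Your proof is correct, and its skeleton matches the paper's: you reduce to $tV_0$ via Proposition \ref{mia}(ii), use the pointwise identity $Q_{V_0}(\xi)=\int_\Omega|\nabla\xi-\vec{{\bf f}}\xi|^2$ (equivalently, Theorem \ref{vat}) to get $Q_{tV_0}(\xi)\ge(1-t)\int_\Omega|\nabla\xi|^2$ and hence ${\cal L}_{tV_0}(\Omega)\hookrightarrow H^1_{loc}(\Omega)$, and you handle the $|\vec{{\bf f}}|^2$ term by the same H\"older--Sobolev computation the paper uses. Where you genuinely diverge is the treatment of the $\partial_i f_i$ term: the paper does not estimate it at all, but instead feeds hypotheses (i)--(ii) into the abstract Proposition \ref{inz}, whose proof rests on the reflexive-space domination Lemma \ref{brb} plus dominated convergence and the equivalence in Proposition \ref{trie}; you instead verify \eqref{eq:BalancedCondition} directly by proving continuity of the multiplication map $w\mapsto(\partial_i f_i)w$ from $H^1(B_1)$ to $L^1(B)$ via the closed graph theorem (well-definedness from (ii) with a cutoff, closedness by a.e.\ subsequence extraction in both modes), and then applying it to $|\xi|$. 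Both arguments are sound; yours is in the spirit of Remark \ref{nig}(ii) (closed graph of the multiplication map) and has the advantage of yielding an explicit quantitative bound $\int_B(tV_0)^+|\xi|\le C_B\sqrt{Q_{tV_0}(\xi)}$ with no $\int_U|\xi|$ correction, while the paper's route buys brevity and reusability, since Proposition \ref{inz} covers any reflexive lattice-type intermediate space, not just $H^1$. The only housekeeping worth making explicit in your write-up is the finite covering of a general compact $K$ by balls, since Definition \ref{tre} is stated for compact sets, and the observation that $W:=V-tV_0\ge0$ lies in $L^1_{loc}(\Omega)$ so that Proposition \ref{mia}(ii) indeed applies; neither is a gap.
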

\proof 
From theorem~\ref{vat}, it follows that $Q_{V_0} \succeq 0$ in $\Omega$.  Fix any $t \in (0,1)$. Then, ${\cal L}_{tV_0}(\Omega) \hookrightarrow H^1_{loc}(\Omega)$.  We note that  for any  $w \in H^1_{loc}(\Omega)$ and any ball $B \Subset \Omega$,
$$
\int_B |\vec{{\bf f}}|^2 | w| \leq \| |\vec{{\bf f}}|^2 \|_{ L^{\frac{2N}{N+2}}(B)} \| w \|_{ L^{\frac{2N}{N-2}}(B)} < +\infty. $$
Therefore, from Proposition \ref{inz} we obtain that $tV_0$ is strongly balanced in $\Omega$. That $V$ is also strongly balanced follows from proposition \ref{mia} (ii). 
\qed 

\medskip

Let $\partial_i$ and $\partial^p_i$ stand respectively for the $i$-th distributional and pointwise derivatives. In what follows we fix a generic  cube  $E := \Pi_{i=1}^N [a_i,b_i] \Subset \Omega$. First we state the following standard result : 

 \begin{proposition}\label{yad}
 Let $f \in L^1(E)$. If $\partial_1 f  \in L^1(E)$, then for a.e. $\tilde{x} \in \Pi_{i=2}^N[a_i,b_i] $ the function $f(\cdot ,\tilde{x})$ is absolutely continuous on $[a_1,b_1]$ and $\partial_1 f( \cdot, \tilde{x}) = \partial^p_1 f( \cdot, \tilde{x})$ a.e.  in $ [a_1,b_1]$. Corresponding statement holds for any  $i=2, \cdots ,N$.
 \end{proposition}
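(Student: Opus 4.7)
The plan is to reduce to the one-variable fundamental theorem of calculus via mollification and Fubini. Let $\rho_\varepsilon$ be the standard mollifier, and work on a slightly enlarged cube $E' \Supset E$ with $E' \Subset \Omega$. Define $f_\varepsilon := f * \rho_\varepsilon$. Since $f, \partial_1 f \in L^1(E')$, we have $f_\varepsilon \in C^\infty$, and moreover
$$
f_\varepsilon \xrightarrow{L^1(E)} f, \qquad \partial_1^p f_\varepsilon = (\partial_1 f) * \rho_\varepsilon \xrightarrow{L^1(E)} \partial_1 f.
$$
Choose a sequence $\varepsilon_n \to 0$. By Fubini's theorem applied to $|f_{\varepsilon_n} - f|$ and $|\partial_1^p f_{\varepsilon_n} - \partial_1 f|$ on $E$, after passing to a subsequence (still indexed by $n$) we may arrange that for almost every $\tilde x \in \Pi_{i=2}^N[a_i, b_i]$,
$$
f_{\varepsilon_n}(\cdot, \tilde x) \xrightarrow{L^1([a_1,b_1])} f(\cdot, \tilde x), \qquad \partial_1^p f_{\varepsilon_n}(\cdot, \tilde x) \xrightarrow{L^1([a_1,b_1])} \partial_1 f(\cdot, \tilde x).
$$
Fix such a generic $\tilde x$.

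For smooth $f_{\varepsilon_n}$ the classical fundamental theorem of calculus gives, for any $x_1, y_1 \in [a_1,b_1]$,
$$
f_{\varepsilon_n}(x_1, \tilde x) - f_{\varepsilon_n}(y_1, \tilde x) = \int_{y_1}^{x_1} \partial_1^p f_{\varepsilon_n}(s, \tilde x)\, ds.
$$
To avoid any issue with pointwise values at a single $y_1$, I average over $y_1 \in [a_1,b_1]$:
$$
f_{\varepsilon_n}(x_1, \tilde x) - \tfrac{1}{b_1-a_1}\!\int_{a_1}^{b_1}\! f_{\varepsilon_n}(y_1, \tilde x)\, dy_1 = \tfrac{1}{b_1-a_1}\!\int_{a_1}^{b_1}\!\int_{y_1}^{x_1}\! \partial_1^p f_{\varepsilon_n}(s, \tilde x)\, ds\, dy_1.
$$
The $L^1$ convergences above let me pass to the limit in $n$, for each fixed $x_1$, to obtain a well-defined absolutely continuous function
$$
\tilde f(x_1, \tilde x) := c(\tilde x) + \int_{a_1}^{x_1} \partial_1 f(s, \tilde x)\, ds, \qquad c(\tilde x) := \tfrac{1}{b_1-a_1}\!\int_{a_1}^{b_1}\! f(y_1, \tilde x)\, dy_1 - \tfrac{1}{b_1-a_1}\!\int_{a_1}^{b_1}\!\int_{y_1}^{a_1}\!\partial_1 f(s,\tilde x)\,ds\,dy_1,
$$
which coincides with $\lim_n f_{\varepsilon_n}(x_1, \tilde x)$ for every $x_1 \in [a_1,b_1]$ (by the right-hand side's continuity). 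Since $f_{\varepsilon_n}(\cdot,\tilde x) \to f(\cdot, \tilde x)$ in $L^1$, we have $\tilde f(\cdot, \tilde x) = f(\cdot, \tilde x)$ for a.e. $x_1 \in [a_1,b_1]$; thus $\tilde f$ is the absolutely continuous representative claimed.

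Finally, for the identification of derivatives: for the a.e. chosen $\tilde x$, the map $s \mapsto \partial_1 f(s, \tilde x)$ lies in $L^1([a_1,b_1])$, so by the Lebesgue differentiation theorem
$$
\partial_1^p \tilde f(x_1, \tilde x) = \partial_1 f(x_1, \tilde x) \qquad \text{for a.e. } x_1 \in [a_1,b_1].
$$
Combined with $\tilde f = f$ a.e. on the slice, this yields $\partial_1^p f(\cdot, \tilde x) = \partial_1 f(\cdot, \tilde x)$ a.e. The main obstacle is the bookkeeping to make the exceptional $\tilde x$-null-sets in the Fubini step, the subsequential $L^1$ convergence, and the Lebesgue differentiation step all sit inside a single null set; the averaging trick for the ``initial value'' is what keeps the argument clean.
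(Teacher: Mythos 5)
Your proposal is correct, and it reaches the same conclusion as the paper — that on a.e.\ line $f(\cdot,\tilde x)$ agrees a.e.\ with the absolutely continuous function $c(\tilde x)+\int_{a_1}^{x_1}\partial_1 f(s,\tilde x)\,ds$ — but by a genuinely different organization of the argument. The paper first proves an auxiliary rigidity lemma (its proposition on $\partial_1 h\equiv 0$): if the distributional $x_1$-derivative vanishes, then $h$ is a.e.\ a function of $\tilde x$ alone; it then builds the primitive $g(x_1,\tilde x):=\int_{a_1}^{x_1}\partial_1 f(t,\tilde x)\,dt$ by hand, verifies via Fubini and one-dimensional integration by parts that $\partial_1 g=\partial_1 f$ in ${\cal D}^{\prime}(E)$, and applies the lemma to $g-f$. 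You instead mollify $f$ itself, use Fubini to extract a subsequence along which both $f_{\varepsilon_n}(\cdot,\tilde x)$ and $\partial_1^p f_{\varepsilon_n}(\cdot,\tilde x)=\big((\partial_1 f)\star\rho_{\varepsilon_n}\big)(\cdot,\tilde x)$ converge in $L^1([a_1,b_1])$ for a.e.\ $\tilde x$, and pass to the limit slice by slice in the averaged fundamental theorem of calculus. Your route is more self-contained (no separate rigidity lemma, no distributional computation with an auxiliary $g$), and averaging over the initial point $y_1$ is a clean device to avoid pointwise values of an $L^1$ function; the paper's route is more modular, its lemma being a reusable statement in its own right. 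Three small remarks: (1) mollification requires $f$ and $\partial_1 f$ to be integrable on a neighbourhood of $E$; your enlarged cube $E'$ is legitimate in the paper's application, where the hypotheses hold locally on all of $\Omega$ and $E\Subset\Omega$, but with data given only on $E$ one should instead mollify on shrunken cubes $E_\delta$ and exhaust through a sequence $\delta_k\to0$ (as the paper does in its auxiliary lemma); this changes nothing essential. (2) The sign in your explicit constant is off — one gets $c(\tilde x)=\frac{1}{b_1-a_1}\int_{a_1}^{b_1} f(y_1,\tilde x)\,dy_1+\frac{1}{b_1-a_1}\int_{a_1}^{b_1}\int_{y_1}^{a_1}\partial_1 f(s,\tilde x)\,ds\,dy_1$ — but the precise value of $c(\tilde x)$ plays no role. (3) The final identity $\partial_1^p f(\cdot,\tilde x)=\partial_1 f(\cdot,\tilde x)$ a.e.\ is, as you note, a statement about the absolutely continuous representative $\tilde f$; that is also the only sensible reading of the paper's statement, so your formulation in terms of a representative is if anything the more careful one.
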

 
 \medskip

We now provide easier to verify sufficient conditions  ensuring assumption (ii) of proposition \ref{binz} holds. 

\begin{proposition}\label{miz}
\begin{enumerate}
\item[{\rm \bf (i)}]
Let   $f_i \in L^2_{loc}(\Omega)$. Suppose that for any cube $E \Subset \Omega$, we can write  $\partial_i f_i $  as
\begin{equation}\label{ret}
\partial_i f_i = \partial_i g_+ - \partial_i g_-  \;\; \text{ where } \;\; g_{\pm} \in L^2(E), \; \partial_i g_{\pm} \in L^{1}(E) \;\;\text{ and } \;\; \partial_i g_{\pm} \geq 0.
\end{equation}
Then  $\partial_i f_i$ satisfies condition (ii) in proposition \ref{binz}. 
\item[{\rm \bf (ii)}]
 Let $f_1 \in L^2_{loc}(\Omega)$ and $\partial_1f _1 \in L^1_{loc}(\Omega)$. Suppose that for any cube $E \Subset \Omega$,
 $$
 \int_{\Pi_{i=2}^N[a_i,b_i]} \Big( \int_{a_1}^{b_1} |\partial_1 f_1 | dx_1 \Big)^2 d \tilde{x}  < \infty.
 $$
 Then $\eqref{ret}$ holds for $f_1$. A corresponding statement is true for any $i= 2, \cdots N$.
\item[{\rm \bf (iii)}] 
  If  $f_i \in L^{2}_{loc}(\Omega), \; \partial_i f_i \in L^{1}_{loc}(\Omega)$ and $\partial_i f_i$ is  locally bounded above (or below), then $f_i$ satisfies condition (ii) in 
proposition \ref{binz}. 
\end{enumerate}
\end{proposition}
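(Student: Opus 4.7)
The conclusion is that for any $w \in H^1_{loc}(\Omega)$, the product $(\partial_i f_i) w$ lies in $L^1_{loc}(\Omega)$. Since $\partial_i f_i = \partial_i g_+ - \partial_i g_-$, it suffices to show $(\partial_i g_\pm) w \in L^1_{loc}(\Omega)$, and I will treat the $g_+$ case. Fix a cube $E \Subset \Omega$ and a smaller sub-cube $E' \Subset E$, then pick $\phi \in C_c^\infty(E)$ with $0 \le \phi \le 1$ and $\phi \equiv 1$ on $E'$. Since $\partial_i g_+ \ge 0$ and $|w|\phi \ge 0$, one has $\int_{E'} |(\partial_i g_+) w| \le \int_E (\partial_i g_+)\,|w|\,\phi$, so it is enough to bound the latter. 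Use the truncation $T_k(|w|) := \min(|w|,k) \in H^1(E)\cap L^\infty(E)$ with $\|T_k(|w|)\|_{H^1(E)} \le \|w\|_{H^1(E)}$, and observe that $T_k(|w|)\phi$ is an $H^1_0(E)$ function of compact support in $E$. Mollifying $g_+$ in order to justify integration by parts and then passing to the limit (using $g_+ \in L^2(E)$ and $\partial_i g_+ \in L^1(E)$) gives
\begin{equation*}
   \int_E (\partial_i g_+)\, T_k(|w|)\,\phi \;=\; -\int_E g_+\,\partial_i\bigl(T_k(|w|)\phi\bigr).
\end{equation*}
By Cauchy--Schwarz the right side is bounded by $C_\phi\,\|g_+\|_{L^2(E)}\,\|w\|_{H^1(E)}$, independently of $k$. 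Since $T_k(|w|)\phi \nearrow |w|\phi$ and $\partial_i g_+ \ge 0$, monotone convergence yields
\begin{equation*}
  \int_E (\partial_i g_+)\,|w|\,\phi \;\le\; C_\phi\,\|g_+\|_{L^2(E)}\,\|w\|_{H^1(E)} \;<\;\infty.
\end{equation*}
The main obstacle here is the rigorous integration-by-parts identity when only $g_+\in L^2$ and the test function is in $H^1_0$; I would handle it by a standard mollification argument, noting that nonnegativity of $\partial_i g_+$ is preserved under convolution.

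\textbf{Plan for (ii).} Set $(\partial_1 f_1)^{\pm}$ for the positive/negative part of the $L^1_{loc}$ function $\partial_1 f_1$ and define on a cube $E = [a_1,b_1]\times\Pi_{i\ge 2}[a_i,b_i]$
\begin{equation*}
  g_{\pm}(x_1,\tilde x) := \int_{a_1}^{x_1} (\partial_1 f_1)^{\pm}(s,\tilde x)\,ds .
\end{equation*}
By Fubini and Proposition~\ref{yad}, $g_\pm$ is a.c.~in $x_1$ for a.e.~$\tilde x$ with pointwise derivative $(\partial_1 f_1)^\pm$, hence in the distributional sense $\partial_1 g_\pm = (\partial_1 f_1)^\pm \ge 0$, clearly in $L^1(E)$; moreover $\partial_1(g_+ - g_-) = \partial_1 f_1$. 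For the $L^2$ bound,
\begin{equation*}
   |g_\pm(x_1,\tilde x)|^2 \;\le\; \Bigl(\int_{a_1}^{b_1} |\partial_1 f_1|(s,\tilde x)\,ds\Bigr)^2,
\end{equation*}
and integration over $E$ together with the standing hypothesis gives $g_\pm \in L^2(E)$. This is exactly the decomposition in \eqref{ret}.

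\textbf{Plan for (iii).} Suppose first $\partial_i f_i \le M$ on a cube $E \Subset \Omega$. Set $g_+ := M x_i$ and $g_- := M x_i - f_i$. Then $g_+ \in L^\infty(E) \subset L^2(E)$ and $g_- \in L^2(E)$ (since $f_i \in L^2_{loc}$); furthermore $\partial_i g_+ = M \ge 0$ and $\partial_i g_- = M - \partial_i f_i \ge 0$ with both in $L^1(E)$, and $\partial_i g_+ - \partial_i g_- = \partial_i f_i$. So \eqref{ret} holds, and (i) applies. The case $\partial_i f_i \ge m$ is symmetric, taking $g_+ := f_i - m x_i$ and $g_- := -m x_i$. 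Part (iii) is essentially a packaging of (i), with the main work already done.

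Overall, the decisive technical step is (i): once the integration-by-parts estimate for nonnegative distributional derivatives against Sobolev test functions is established (via truncation $+$ mollification $+$ monotone convergence), parts (ii) and (iii) are direct constructive applications.
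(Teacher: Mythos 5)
Your argument is correct, and parts (ii) and (iii) use essentially the same constructions as the paper: the one-variable primitives $g_\pm(x_1,\tilde x)=\int_{a_1}^{x_1}(\partial_1 f_1)^\pm(s,\tilde x)\,ds$ for (ii), and the linear correction $Cx_i\pm f_i$ for (iii). The only real divergence is in (i). The paper proves the integration-by-parts identity $\int_E(\partial_1 g_\pm)\,|w|\varphi=-\int_E g_\pm\,\partial_1(|w|\varphi)$ by slicing: Tonelli applied to the nonnegative integrand, plus Proposition \ref{yad}, which says that a function with $L^1$ distributional $x_1$-derivative is absolutely continuous on almost every line with pointwise derivative equal to the distributional one; the one-dimensional product rule for absolutely continuous functions then does the work line by line. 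You instead stay at the level of distributional derivatives on $E$: truncate $|w|$ to $T_k(|w|)$, mollify to justify testing $\partial_i g_+\in L^1(E)$ against the bounded compactly supported $H^1_0$ function $T_k(|w|)\phi$, obtain a $k$-independent Cauchy--Schwarz bound $C_\phi\|g_+\|_{L^2(E)}\|w\|_{H^1(E)}$, and conclude by monotone convergence using $\partial_i g_+\ge 0$. Both routes hinge on exactly the same two ingredients ($g_\pm\in L^2$ paired with $\nabla$ of an $H^1$ test object, and nonnegativity of $\partial_i g_\pm$ to upgrade a limit bound to integrability); yours avoids invoking the ACL characterization but pays with the truncation/mollification bookkeeping, while the paper's slicing argument is shorter once Proposition \ref{yad} is available. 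One cosmetic point in your (iii): choose the bounding constant with the correct sign (e.g.\ replace $M$ by $\max\{M,0\}$, resp.\ $m$ by $\min\{m,0\}$) so that both $\partial_i g_+$ and $\partial_i g_-$ are genuinely nonnegative; with that trivial adjustment the verification of \eqref{ret} goes through as you state.
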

\proof 
{\bf (i)}
For convenience, we may take $i=1$. For $w \in H^1_{loc}(\Omega)$ 
and   any  $\varphi \in C^1_c(E)$ nonnegative, by Tonelli's theorem and proposition \ref{yad},
 \begin{eqnarray}
\int_E (\partial_1 g_\pm) |w|\varphi &=& \int_{\Pi_{i=2}^N[a_i,b_i]} dx_2 \;  \cdot \cdot \cdot dx_N \int_{a_1}^{b_1}( \partial_1 g_\pm) |w| \varphi \; dx_1  \notag \\
&=& - \int_{\Pi_{i =2}^N[a_i,b_i]} dx_2 \;  \cdot \cdot \cdot dx_N \int_{a_1}^{b_1} g_\pm \partial_1 (|w| \varphi)  \; dx_1\notag \\
&=&- \int_{E}  g_\pm \partial_1 (|w| \varphi) < +\infty. \notag
\end{eqnarray}

\medskip

{\bf (ii)}
We define
 $$
 g_{\pm}(x_1,\tilde{x}) := \int_{a_1}^{x_1} (\partial_1 f_1)^\pm(t, \tilde{x}) \; dt, \;\; x_1 \in [a_1,b_1] \;\;\; \text{ for a.e. } \; \tilde{x} \in \Pi_{i=2}^N [a_i,b_i] .
$$
We also have $g_\pm \in L^2(E)$, $\partial_1 g_\pm = (\partial_1 f
_1)^\pm \in L^1(E)$  
and hence $\partial_1f_1= \partial_1 g_+ - \partial_1 g_- $. 

\medskip

{\bf (iii)}
We note that on any cube $E \Subset \Omega$, we have $\partial_i (Cx_i \pm f_i ) \geq 0$ for some constant $C_E>0$ and can apply Tonelli's theorem as above.
\qed

\medskip

\begin{corollary}\label{yid}
 If $f_1(x)= \rho(x_1)\eta(x_2,x_3,\cdot \cdot \cdot , x_N)$ for some $\rho \in W^{1,1}_{loc}({\mathbb R})$ and $\eta \in L^2_{loc}({\mathbb R}^{N-1})$, then $f_1$ satisfies the assumptions of proposition \ref{miz} (ii) and hence \eqref{ret} holds. 

\end{corollary}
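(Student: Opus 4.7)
The plan is to directly verify the two hypotheses of Proposition \ref{miz}(ii) applied to $f_1 = \rho(x_1)\eta(\tilde{x})$, where $\tilde{x}=(x_2,\ldots,x_N)$. Both conditions will reduce to a tensor-product computation via Tonelli's theorem, with one preliminary observation.

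First I would recall the one-dimensional Sobolev embedding $W^{1,1}_{loc}(\mathbb{R})\hookrightarrow L^\infty_{loc}(\mathbb{R})$: every element of $W^{1,1}_{loc}(\mathbb{R})$ admits an absolutely continuous representative, which is bounded on each compact interval. Replacing $\rho$ by this representative, I may assume $\rho\in L^\infty_{loc}(\mathbb{R})$ with $\rho'\in L^1_{loc}(\mathbb{R})$. Fix any cube $E=[a_1,b_1]\times\prod_{i=2}^N[a_i,b_i]\Subset\Omega$. Then Tonelli gives
\[
\|f_1\|_{L^2(E)}^2 \, = \, \|\rho\|_{L^2([a_1,b_1])}^2 \, \|\eta\|_{L^2(\prod_{i=2}^N[a_i,b_i])}^2 \, < \, \infty,
\]
so $f_1\in L^2_{loc}(\Omega)$. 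A direct test against $\varphi\in C^\infty_c(\Omega)$ and one integration by parts in the $x_1$ variable shows that the distributional derivative is $\partial_1 f_1 = \rho'(x_1)\eta(\tilde{x})$; since $\rho'\in L^1_{loc}(\mathbb{R})$ and $\eta\in L^2_{loc}(\mathbb{R}^{N-1})\subset L^1_{loc}(\mathbb{R}^{N-1})$, Tonelli again yields $\partial_1 f_1\in L^1_{loc}(\Omega)$.

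It remains to establish the integrability condition in Proposition \ref{miz}(ii). The product structure allows the inner integral to be computed explicitly:
\[
\int_{a_1}^{b_1} |\partial_1 f_1(x_1,\tilde{x})| \, dx_1 \; = \; |\eta(\tilde{x})| \int_{a_1}^{b_1} |\rho'(x_1)| \, dx_1 \; = \; |\eta(\tilde{x})| \, \|\rho'\|_{L^1([a_1,b_1])}.
\]
Squaring and integrating in $\tilde{x}$ then gives
\[
\int_{\prod_{i=2}^N[a_i,b_i]} \Big(\int_{a_1}^{b_1}|\partial_1 f_1|\,dx_1\Big)^2 d\tilde{x} \; = \; \|\rho'\|_{L^1([a_1,b_1])}^2 \, \|\eta\|_{L^2(\prod_{i=2}^N[a_i,b_i])}^2 \; < \; \infty.
\]
Thus both hypotheses of Proposition \ref{miz}(ii) are satisfied, and the decomposition \eqref{ret} for $f_1$ follows from that proposition.

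There is essentially no obstacle; the argument is a routine application of Tonelli/Fubini once the product structure is exploited. The only minor point worth noting is the passage to the absolutely continuous representative of $\rho$ (which is permitted since we are working modulo a.e. equivalence), used to guarantee that $\rho$ itself---and not just $\rho'$---has good local integrability so that $f_1\in L^2_{loc}(\Omega)$.
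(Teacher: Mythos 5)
Your verification is correct and is exactly the intended (unwritten) argument of the paper: the corollary is stated as an immediate consequence of Proposition \ref{miz}(ii), and you check its three hypotheses ($f_1\in L^2_{loc}$, $\partial_1 f_1=\rho'\eta\in L^1_{loc}$, and the mixed-norm condition) directly via the tensor structure, Tonelli, and the absolutely continuous (hence locally bounded) representative of $\rho\in W^{1,1}_{loc}(\mathbb{R})$. Nothing further is needed.
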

See also example \ref{ma} and theorem 4.1 in \cite{JayeMazyaVerb} in the context of the above two results.
\begin{remark}\label{jod}
From the construction of $g_\pm$ in proposition \ref{miz} (ii) it is clear that \eqref{ret} always holds if we relax the requirement $g_\pm \in L^2_{loc}(\Omega)$ to $g_\pm \in L^1_{loc}(\Omega)$.
\end{remark}

\subsubsection{Some well-known  potentials are balanced}

We provide some examples of strongly balanced potentials not in $L^p_{loc}$ for some $p>\frac{N}{2}$ and as well as not in the class ${\mathcal V}$.

\begin{example} \label{swam}
\vspace{-2mm}
\begin{enumerate}
\item[{\rm \bf (i)}] 
Let  $N \geq 3$ and $1/S_N$ denote the norm of the imbedding ${\cal D}_0^{1,2}(\R^N) \hookrightarrow L^{\frac{2N}{N-2}}(\R^N)$. Assume $V \in L^1_{loc}(\Omega)$ satisfies
$
\|V^-\|_{L^{\frac{N}{2}}(\Omega) } \leq S^2_N.
$
Then
$Q_{-V^-} \succeq 0$ in $\Omega$ and hence $-V^-$ is strongly balanced in $\Omega$ by  remark \ref{gia} (i). Thus $V$ is strongly balanced in $\Omega$ by proposition \ref{mia} (ii). Using example \ref{duo} (v), we can choose $V^+,V^-$  such that $V$  is not in  ${\cal V}(\Omega)$.
\item[{\rm \bf (ii)}] 
Let $V,W$ be as in proposition \ref{mia}(ii). Suppose  that 
$$\tilde{c}_B := \int_B \frac{1}{W}< +\infty \;\text{ for any ball }\; B \Subset \Omega.$$  Then
$$\frac{1}{\sqrt{\tilde{c}_B}} \int_B |\xi| \leq \frac{1}{\sqrt{\tilde{c}_B}} \Big(\int_B \frac{1}{W} \Big)^{\frac{1}{2}} \Big(\int_B W |\xi|^2\Big)^{\frac{1}{2}}   \leq  \sqrt{Q_{V+W} (\xi)}.
$$
That is, we additionally have that $-\Delta+V+W$ is  an  $L^1-$ subcritical operator in $\Omega$.   
\end{enumerate}
\end{example}

\medskip 

We now give some examples that involve Hardy type potentials.

\begin{example} \label{swamBis}
Let $N \geq 3$ and consider the Hardy constant $H_N:= \frac{(N-2)^2}{4}$. 
\vspace{-2mm}
\begin{enumerate} 
\item[{\rm \bf (i)}]
Let $\{x_i\}_{i=1}^k \subset \R^N$ and choose $\{a_i \}_{i=1}^k \subset \R \setminus\{0\}$ such that 
$$\alpha:=\sum_{i=1}^k \max\{a_i,0\} \leq  H_N.$$
 Define
$$
V(x) := - \sum_1^k a_i |x-x_i|^{-2}, x \not\in \{x_1,x_2,\cdot \cdot \cdot, x_k\}.
$$
Then, we can write $V=V_0+V_1$  by summing all the  poles with $a_i<0$ to get $V_0$ and those with $a_i>0$  to get $V_1$. It can be shown that $Q_{V_1} \succeq 0$ in $\R^N$ (and ${\mathcal L}_V(\R^N) \hookrightarrow {\mathcal D}_{loc}^{1,2}(\R^N)$ if $\alpha<H_N$); see proposition 1.2 in \cite{FT}. Since $V_1 \leq 0$, it is strongly balanced in $\Omega$. Thus, by proposition \ref{mia}(ii), $V$ is a strongly balanced potential in $\R^N$.  
\item[{\rm \bf (ii)}]
If $\{a_i\}_{i=1}^k$ in the previous example are chosen so that 
$$
a_i < H_N \;\text{ and }\; \sum_1^k a_i < H_N,
$$
then we can find atleast one configuration of poles $\{x_i\}_{i=1}^k \subset \R^N$ so that $Q_V \succeq 0$ in $\R^N$ and ${\mathcal L}_V(\R^N) \hookrightarrow {\mathcal D}_{0}^{1,2}(\R^N)$ (see theorem 1.1 in \cite{Ter}). We see that $V \in {\mathcal V}(\R^N)$  from example \ref{duo} (iii) and hence is strongly balanced in $\R^N$.
\item[{\rm \bf (iii)}]
We take $S:=\{x_i\} \subset \R^N$ to be a countable dense set and $\{a_i\}$ be a sequence of positive numbers whose sum is not bigger than $H_N$. Consider the Hardy potential with poles on the dense set $S$:
$$
V(x) := -\sum_i a_i |x-x_i|^{-2}.
$$
Recalling that $-\Delta -H_N |x|^{-2}$ is a non-negative operator in $\R^N$, we obtain for any $ \xi \in C_c^{\infty}(\R^N)$,
\begin{eqnarray}
\int_{\R^N} |\nabla \xi|^2 \geq \frac{1}{H_N} \sum_i  a_i \int_{\R^N} |\nabla \xi(\cdot+x_i)|^2 \geq \sum_i a_i \int_{\R^N} |x|^{-2} \xi^2(\cdot+x_i) =- \int_{\R^N} V \xi^2. \notag
\end{eqnarray}
Thus $Q_V \succeq 0$ in $\R^N$ and since $V \leq 0$ it is a strongly balanced (infact a tame) potential in $\R^N$. Indeed, $V \not \in L^{\frac{N}{2}}(B)$ for any ball $B$.
\item[{\rm \bf (iv)}]
Consider the critical Hardy operator $-\Delta +V$, $V:=- H_N |x|^{-2},$  on any bounded domain $U \subset \R^N, N \geq 3$. Then it is well known that the operator is $L^1$-subcritical in $U$ and ${\mathcal L}_V(U) \not \hookrightarrow H^1_0(U).$ Clearly, $V$ is tame in $U$.
\end{enumerate}
\end{example}

\medskip

For more examples of strongly balanced potentials see corollaries \ref{lee} and \ref{loe} in \S 11.2. These examples require some further results than presented so far.

\subsection{Examples of non balanced potentials}
\label{s6.6}

The previous discussion shows that if the negative and positive parts of the potential $V$ do not interact too much, then the potential is balanced. 
We describe below a non balanced potential that  oscillates infinitely often near its point singularity. We also point out that these oscillations are ``wild" in the sense of remark \ref{sci} below.

\begin{example}\label{turk}
Denote $B:=B_1(0) \subset {\mathbb R}^N, N \geq 3$ and fix any $t \in (0,1)$. Let $H_N:= (\frac{N-2}{2})^2$ denote the critical Hardy constant. We take any $c \in (0, H_N/4\;], \; \alpha<0$ and consider the potential
$$
V_\alpha (x):= - c \; \sin^2(|x|^\alpha) |x|^{-2}, \;\; x \in B.
$$
Define then the vectorfield $\vec{{\bf \Gamma}}_\alpha := (\sqrt{-V_\alpha},0, \cdots, 0)$.
Thanks to Hardy's inequality we have $$Q_{-4|\vec{{\bf \Gamma}}_\alpha|^2} = Q_{4V_\alpha} \succeq 0 \;\; \text{ in } \; B.$$
By setting  $\sigma_\alpha := div \; \vec{{\bf \Gamma}}_\alpha$, 
an explicit computation gives,
$$
    \sigma_\alpha(x)= \sqrt{c} \; x_1 \Big( \alpha |x|^{\alpha-3}\cos(|x|^{\alpha}) - \sin(|x|^{\alpha})|x|^{-3} \Big),
$$
We choose $\alpha> 2-N$ and  check that $\sigma_\alpha \in L^1(B)$. Now, by theorem 4.1 (iii)  in \cite{JayeMazyaVerb}, 
$$
 \Big | \int_B \sigma_\alpha \xi^2 \Big| \leq \int_B |\nabla \xi|^2 \;\;\quad  \forall \; \xi \in W_c^{1,\infty}(B).
$$
In particular, $\sigma_\alpha \in H^{-1}_{loc}(B)$. It is  easy to see from the above inequality that we can identify ${\mathcal H}_{t \sigma_\alpha}(B)$ with $H^1_0(B)$. If we choose $\beta \in ( \frac{2-N}{2},0)$, then 

$$w_\beta(x):=|x|^{\beta}-1 \in H^1_0(B).$$

A straightforward consideration involving radial-angular coordinates  shows that
\begin{equation}\label{wb}
\sigma_\alpha w_\beta \not \in L^1(B_\epsilon(0)) \;\; \text{ for any } \;\; \epsilon \in (0,1]\;\; \text{ if }\;\;  \alpha + \beta \leq 2 -N.
\end{equation}
Therefore, for any $\alpha \in (2-N, \frac{2-N}{2})$ we may find $\beta \in ( \frac{2-N}{2},0)$ such that \eqref{wb} holds. Hence 
 $t\sigma_\alpha$ is not balanced in $B$ for any $\alpha \in (2-N, \frac{2-N}{2})$.
\qed

\end{example}

\begin{remark}\label{sci}
We note that $\sigma_\alpha^+ \not \in L^{\frac{2N}{N+2}}_{loc}(B)$ if $\alpha \leq \frac{2-N}{2}$ and hence for such $\alpha$ the potential $t \sigma_\alpha$ for $t \in (0,1)$ does not satisfy the assumptions of lemma \ref{swa}. Conversely, if $\alpha > \frac{2-N}{2}$, then $\sigma_\alpha  \in L^{\frac{2N}{N+2}}(B)$ and lemma \ref{swa} ensures that $t \sigma_\alpha$ is balanced in $B$ for $t \in (0,1)$. Thus, balanced potentials are allowed to oscillate infinitely often near a singularity as long as these oscillations are not ``wild". 
\end{remark}

\medskip

\section{AAP principle in the space of Distributions for general nonnegative operators} \label{s7}

In theorems \ref{AAP-distr} and \ref{thm:converse3} below, we will show that  given a non-negative quadratic form $Q_V$ in $\Omega$ with $V \in L^p_{loc}(\Omega)$ for some $p>\frac{N}{2}$  \;($N \geq 2$), the set ${\cal C}_V(\Omega)$ is  non-empty and the converse for any $V \in L^1_{loc}(\Omega)$.

\subsection{${\cal C}_V (\Omega)\neq \emptyset$ implies $Q_V \succeq 0$ in $\Omega$}\label{s7.1}
We recall the following well-known result (whose proof is given in the appendix 12.2):
\begin{proposition}\label{prop:shave}
Let $\Omega_*$ be a bounded open set, $W \in L^1_{loc}(\Omega_*), W \geq 0$, $f \in L^{\infty}(\Omega_*), f \geq 0$. If $u\in H^1_0(\Omega_*)$ is a non-negative nonzero weak solution of $-\Delta u + W u=f$ in $\Omega_*$, then
$$
\int_{\Omega_*} |\nabla \xi |^2\,dx
+
\int_{\Omega_*} W \xi^2
\ge
\int_{\Omega_*} \left(\frac{f}{u}\right)\xi^2, \;\; \forall \xi \in C_c^{\infty}(\Omega_*) .
$$
In particular, $f/u \in L^1_{loc}(\Omega_*)$.
\end{proposition}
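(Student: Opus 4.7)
My plan is to derive this inequality via Picone's identity, applied to the regularized function $u+\varepsilon$ in order to avoid dividing by zero. The basic idea is to plug the test function $\psi_\varepsilon := \xi^2/(u+\varepsilon)$ (with $\xi \in C_c^\infty(\Omega_*)$ fixed and $\varepsilon > 0$) into the weak formulation of the equation $-\Delta u + Wu = f$, obtain an inequality by a pointwise Cauchy--Schwarz bound on the left, and then pass to the limit $\varepsilon \to 0^+$.

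First I would verify that $\psi_\varepsilon \in H_0^1(\Omega_*) \cap L^\infty(\Omega_*)$ with compact support in $\Omega_*$, so that it is a legitimate test function in the weak formulation (note that $W \psi_\varepsilon \in L^1$ since $W \in L^1_{loc}$, $\psi_\varepsilon$ is bounded, and its support is compact). A direct computation gives
\[
\nabla u \cdot \nabla \psi_\varepsilon \;=\; \frac{2\xi}{u+\varepsilon}\,\nabla u \cdot \nabla \xi \;-\; \frac{\xi^2}{(u+\varepsilon)^2}\,|\nabla u|^2.
\]
By the elementary inequality $2ab \le a^2 + b^2$ applied with $a = |\nabla \xi|$ and $b = |\xi||\nabla u|/(u+\varepsilon)$, the right-hand side is bounded pointwise by $|\nabla \xi|^2$. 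Hence the weak formulation yields
\[
\int_{\Omega_*}\frac{f\,\xi^2}{u+\varepsilon} \;-\; \int_{\Omega_*}\frac{W u\,\xi^2}{u+\varepsilon} \;=\; \int_{\Omega_*} \nabla u \cdot \nabla \psi_\varepsilon \;\le\; \int_{\Omega_*}|\nabla \xi|^2.
\]

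Now I would let $\varepsilon \to 0^+$. On the second integral on the left, the integrand is dominated by $W \xi^2 \in L^1$ and converges pointwise to $W \xi^2 \chi_{\{u>0\}}$, so dominated convergence gives a limit bounded above by $\int W \xi^2$. On the first integral, the integrand is nonnegative and increases as $\varepsilon \downarrow 0$, so monotone convergence produces $\int (f\xi^2)/u$ with the convention that the integrand equals $+\infty$ on $\{u=0,\,f\xi^2 > 0\}$. Combining:
\[
\int_{\Omega_*}\frac{f}{u}\,\xi^2 \;\le\; \int_{\Omega_*}|\nabla \xi|^2 \,+\, \int_{\Omega_*} W \xi^2.
\]
Since the right-hand side is finite, this automatically shows $f/u \in L^1_{loc}(\Omega_*)$ (and, as a byproduct, that $\{u=0,\,f>0\}$ has measure zero on the support of any $\xi$).

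The main technical nuisance will be the first step: checking that $\psi_\varepsilon$ really is an admissible test function and that the chain-rule computation of $\nabla \psi_\varepsilon$ is valid in the Sobolev sense (since $u$ is only $H^1$, not smooth). This is standard once one recalls that $v \mapsto v/(v+\varepsilon)$ is Lipschitz for fixed $\varepsilon > 0$ and that products of bounded Sobolev functions with Lipschitz functions are in $H^1$. The passage to the limit is then routine via monotone/dominated convergence; the only subtlety is the interpretation of $f/u$ on $\{u=0\}$, which is handled automatically by monotone convergence.
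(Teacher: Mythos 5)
Your proof is correct and is essentially the paper's own argument: the paper likewise tests the weak formulation with $\xi^2/(u+\varepsilon)$, performs the Picone-type manipulation of $\nabla u\cdot\nabla\bigl(\xi^2/(u+\varepsilon)\bigr)$, and passes to the limit $\varepsilon\to 0^+$ by Fatou/dominated convergence. The only cosmetic difference is that you use the pointwise Young inequality plus monotone convergence where the paper writes the exact completed-square identity and invokes the fact that $\{u=0\}$ has measure zero, a fact your version does not need.
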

We extend the above result to the distributional framework for any $V \in L^1_{loc}(\Omega)$.
\begin{theorem}\label{AAP-distr}
Let $V \in  L^1_{loc}( \Omega)$ and  $\Phi \in {\cal C}_V (\Omega)$ (i.e.,  $\Phi \not \equiv 0$, $\Phi \geq 0$ satisfies  $-\Delta \Phi +V\Phi  \geq h$  in ${\cal D^\prime}(\Omega)$ for some non-negative $h \in L^1_{loc} (\Omega)$). Then, 
\begin{equation}\label{eq:ground-distr-General}
   Q_V(\xi)  \ge
    \int_{\Omega}\left( \frac{h}{\Phi}\right) \,\xi^2\,dx,
\qquad\forall  \xi \in W^{1,\infty}_c(\Omega).
\end{equation}
In particular,  $h/\Phi \in L^1_{loc} (\Omega)$.
\end{theorem}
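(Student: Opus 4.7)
The aim is to derive \eqref{eq:ground-distr-General} by combining Proposition~\ref{prop:shave} (a Picone-type bound for $H^1_0$-solutions) with a comparison principle linking such an $H^1_0$-solution to the distributional supersolution $\Phi$. Because $\Phi$ is only in $L^1_{loc}$, the Picone test function $\xi^2/\Phi$ is inadmissible; it must be replaced by an $H^1_0$-substitute which underestimates $\Phi$.

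Fix $\xi \in W_c^{1,\infty}(\Omega)$ and a bounded smooth domain $\Omega_* \Subset \Omega$ with $\operatorname{supp}(\xi) \subset \Omega_*$. Rewrite the supersolution inequality as $-\Delta \Phi + V^+ \Phi \ge h + V^- \Phi$, which is meaningful since $V\Phi, h \in L^1_{loc}$. Set $f := h + V^-\Phi \ge 0$ and truncate: $f_n := \min(f,n)\chi_{\Omega_*} \in L^{\infty}(\Omega_*)$, nondecreasing with $f_n \nearrow f\chi_{\Omega_*}$. Corollary~\ref{cor:OdeOde} produces the unique nonnegative solution $u_n \in H^1_0(\Omega_*)$ of
\[
   -\Delta u_n + V^+ u_n \;=\; f_n \quad \text{in } \Omega_*.
\]
For large $n$ we have $f_n \not\equiv 0$ (the alternative $f\equiv 0$ on $\Omega_*$ forces $h\equiv 0$ there and is handled trivially by the $h = 0$ version of the statement). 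Theorem~\ref{thm:StrongMax} then gives $u_n > 0$ quasi-everywhere, so Proposition~\ref{prop:shave} with $W = V^+$ yields
\begin{equation}\label{plan:picone}
   \int_{\Omega_*} \bigl(|\nabla \xi|^2 + V^+ \xi^2\bigr) \;\geq\; \int_{\Omega_*} \frac{f_n}{u_n}\,\xi^2,
\end{equation}
first for $\xi \in C_c^\infty(\Omega_*)$ and then for $\xi \in W_c^{1,\infty}(\Omega)$ by the mollification argument of Lemma~\ref{lem:Lip}(i).

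The crux is the comparison $u_n \le \Phi$ a.e.\ in $\Omega_*$. Setting $w := \Phi - u_n \in L^1_{loc}(\Omega_*)$, subtracting the two (in)equalities gives $-\Delta w + V^+ w \ge f - f_n \ge 0$ in $\mathcal{D}'(\Omega_*)$. Applying Kato's inequality to $w^- = (u_n - \Phi)^+$ converts this into the reverse-sign subsolution inequality $-\Delta w^- + V^+ w^- \le 0$ in $\mathcal{D}'(\Omega_*)$. Since $\Phi \ge 0$ one has $0 \le w^- \le u_n$, and because $u_n \in H^1_0(\Omega_*)$ has zero trace on $\partial \Omega_*$, a truncation/cutoff argument places $w^- \in H^1_0(\Omega_*)$. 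Testing the subsolution inequality against $w^-$ itself then produces $\int_{\Omega_*} |\nabla w^-|^2 + V^+ (w^-)^2 \le 0$, forcing $w^- \equiv 0$ and hence $u_n \le \Phi$.

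With the comparison in hand, $f_n/u_n \ge f_n/\Phi$ in \eqref{plan:picone}, and monotone convergence as $n \to \infty$ (using $\Phi > 0$ q.a.e.\ by the strong maximum principle applied to $\Phi$ itself) delivers
\[
   \int_{\Omega} \bigl(|\nabla \xi|^2 + V^+ \xi^2\bigr) \;\ge\; \int_{\Omega} \frac{h + V^-\Phi}{\Phi}\,\xi^2 \;=\; \int_{\Omega} \frac{h}{\Phi}\xi^2 \;+\; \int_{\Omega} V^-\xi^2,
\]
which upon rearrangement is exactly \eqref{eq:ground-distr-General}. Applying this bound to cutoffs $\xi$ equal to $1$ on a given compact set yields $h/\Phi \in L^1_{loc}(\Omega)$. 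The main obstacle is the comparison step $u_n \le \Phi$: because $\Phi$ carries no Sobolev regularity, placing $w^- = (u_n-\Phi)^+$ in $H^1_0(\Omega_*)$ is delicate and relies on the Kato inequality plus the $H^1_0$-envelope supplied by $u_n$, invoking the distributional framework of Brezis--Ponce underlying Theorem~\ref{thm:StrongMax}.
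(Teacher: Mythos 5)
Your plan reproduces the paper's architecture: truncate $f=h+V^-\Phi$, solve the auxiliary problems $-\Delta u_n+V^+u_n=f_n$ in $H^1_0(\Omega_*)$ via Corollary~\ref{cor:OdeOde}, invoke Proposition~\ref{prop:shave}, compare with $\Phi$ through Kato's inequality, and pass to the limit. Your limit passage (using $u_n\le\Phi$ to write $f_n/u_n\ge f_n/\Phi$ and applying monotone convergence directly, then absorbing the finite term $\int V^-\xi^2$) is correct and in fact slightly leaner than the paper's, which first builds $w_\infty=\lim w_k$ and uses Fatou before comparing $w_\infty$ with $\Phi$.

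The genuine gap is in the comparison step $u_n\le\Phi$. You cannot place $w^-=(u_n-\Phi)^+$ in $H^1_0(\Omega_*)$ and test $-\Delta w^-+V^+w^-\le 0$ against $w^-$ itself: $\Phi$ carries no $H^1$ regularity whatsoever. All that the supersolution property yields (via $\Delta\Phi$ being a Radon measure and Proposition~\ref{prop:RanRan}) is $\Phi\in W^{1,q}_{loc}(\Omega)$ for $q<\frac{N}{N-1}$, so $\nabla\Phi$ is merely in $L^q$ with $q<2$; the pointwise bound $0\le w^-\le u_n$ controls the function, not its gradient, and in general $w^-\notin H^1(\Omega_*)$, so the energy identity $\int_{\Omega_*}\{|\nabla w^-|^2+V^+(w^-)^2\}\le 0$ is simply not available. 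The paper closes this step without any energy estimate: after Kato one keeps only $-\Delta(w_k-\Phi)^+\le 0$ in ${\cal D}^\prime(\Omega_*)$, notes that $\Phi\in W^{1,1}_{loc}(\Omega)$ (Proposition~\ref{prop:RanRan}) and $\Phi\ge 0$ force $(w_k-\Phi)^+$ to lie in $W^{1,1}(\Omega_*)$ with zero boundary values (it is squeezed between $0$ and $w_k\in H^1_0(\Omega_*)$), and then applies the weak maximum principle for $W^{1,1}_0$ distributional supersolutions, Lemma~\ref{lem:power}, to $-(w_k-\Phi)^+$, concluding $(w_k-\Phi)^+\equiv 0$. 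Replacing your ``test against $w^-$'' argument by this appeal to Lemma~\ref{lem:power}, preceded by the observation that $\Phi\in W^{1,1}_{loc}(\Omega)$, repairs the proof; the rest of your outline then coincides with the paper's.
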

\begin{proof}
From theorem \ref{thm:StrongMax} we obtain that $\Phi>0$ a.e. in $\Omega$. Now if both $h \equiv 0$ and $V^- \equiv 0$,  then the result follows immediately. Therefore, we may assume $h + V^- \Phi \not \equiv 0$ in $\Omega$. Define the truncation function
 $$
    T_k (s) :=
    \left\{ \begin{array} {ll}
      k &\hbox{ if } \,  s > k, \\
      s &\hbox{ if } \,  0 < s \leq k \\
      0 &\hbox{ if } \,  s \leq  0.
    \end{array}\right.
 $$
 For each $k \in \mathbb N$ define
$$
   H_k :=
   T_k(h+V^- \, \Phi) \in L^\infty(\Omega).
$$

Note that
$$
  0\le H_{k}\le H_{k+1}\le\dots
  \le h + V^{-} \Phi \in L^1_{loc}( \Omega).
$$ 

 Fix a $\xi \in   W^{1,\infty}_c(\Omega)$ and choose an open bounded set $\Omega_*  \Subset \Omega$ with smooth boundary which contains the support of $\xi$ as well as so that $h + V^- \Phi \not \equiv 0$ in $\Omega_*$. In particular, we have that $H_k \not \equiv 0$ in $\Omega_*$ for all large $k$.

Consider then the unique nonnegative (distributional) solution $w_k$ of the Dirichlet problem
\begin{equation}\label{lin-kBis}
    -\Delta w_k + V^{+} w_k = H_k,   \quad w_k\in H^1_0(\Omega_*),
\end{equation}
whose existence is ensured by corollary~\ref{cor:OdeOde}. It is also the case that $w_k >0$ a.e. in $\Omega_*$ for all large $k$ since we have $H_k \not \equiv 0$ in $\Omega_*$ for all large $k$.

{\bf Claim:} {\it $w_k \leq \Phi$ a.e. in $\Omega_*$.}\\
{\it Proof of claim:} 
Let $\Omega_{**}$ be a bounded open subset of $\Omega$ with $\Omega_* \Subset \Omega_{**} \Subset \Omega$. Since $V \Phi \in L^1_{loc}(\Omega)$,  by proposition \ref{prop:RanRan},  we get $\Phi \in W_{loc}^{1,1}(\Omega)$.
We use the following version of Kato's inequality (see proof of proposition B.5 in \cite{BMP}):
$$
\text{{\bf Kato:}}\quad \hbox{Let } u, f \in L^1(\Omega_*)
\hbox{ solve } -\Delta u \leq f \hbox{ in } {\cal D^\prime}(\Omega_*). 
\hbox{ Then, }  -\Delta (u^+) \leq \chi_{\{u \geq 0\}}f  \hbox{ in } {\cal D^\prime}(\Omega_*).
$$
We note that 
$$-\Delta (w_k -\Phi) \leq -V^+(w_k-\Phi) \text{ in } {\cal D^\prime}(\Omega_*).$$ 
Therefore, by the above Kato's inequality, 
$$-\Delta (w_k-\Phi)^+ \leq -\chi_{\{w_k-\Phi \geq 0\}} V^+(w_k-\Phi) \leq 0 \hbox{ in } {\cal D^\prime}(\Omega_*).$$
Further, since $\Phi \geq 0$ and $\Phi \in  W_{loc}^{1,1}(\Omega)$, we get that $(w_k-\Phi)^+=0$ on $\partial \Omega_*$.  From lemma \ref{lem:power}, we get $ (w_k-\Phi)^+ \leq 0$ a.e.  This proves the claim.

By similar arguments as in the claim above, but applied to $w_{k}-w_{k+1}$, we conclude that
$$
  0 \leq w_{k}\le w_{k+1} \;\hbox{ in } \;\Omega_*. 
$$

Let  $w_k\to w_\infty$ a.e. Note that  $0 \leq w_\infty \leq \Phi$ and $w_\infty \not\equiv 0$. By dominated convergence theorem, $w_k \to w_\infty$  in $L^1_{loc}(\Omega_*)$ and hence $w_\infty$ is a distributional solution of $-\Delta w_\infty+V^+ w_\infty = h+V^- \Phi$ in $\Omega_*$. From theorem \ref{thm:StrongMax} we conclude that $w_\infty >0$ a.e. in $\Omega_*$.

From \eqref{lin-kBis} and proposition \ref{prop:shave} we conclude that for all large $k$,
\begin{equation}\label{eq:scot}
\int_{\Omega_*} |\nabla \xi |^2\,dx
+
\int_{\Omega_*} V^{+} \xi^2
\ge
\int_{\Omega_*} \left(\frac{H_k}{w_k}\right)\xi^2 .
\end{equation}

Noting that pointwise we have
$$
   0 \le
   \frac{H_k}{w_k}
   =
   \frac{ T_k(h + V^- \Phi)}{w_k}
   \, \, \to \, \,
 \frac{ h+ V^{-} \Phi}{w_\infty}
   \quad\text{a.e. in } \Omega_*,
$$
and applying Fatou Lemma, we conclude
$$
  \liminf_{k \to \infty}
  \int_{\Omega_*} \left(\frac{H_k}{w_k}\right) \xi^2
  \ge
  \int_{\Omega_*} \left(\frac{ h+  V^- \Phi}{w_{\infty}}\right)\xi^2 .
$$
Finally, since $w_\infty \leq \Phi$, from \eqref{eq:scot} and the above inequality we obtain,
$$
\int_{\Omega_*} |\nabla \xi|^2 + \int_{\Omega_*}
V \xi^2 \geq \int_{\Omega_*} |\nabla \xi|^2 + \int_{\Omega_*}
V^+\xi^2 -\int_{\Omega_*} V^-\left(\frac{\Phi}{w_\infty}\right)\xi^2
\ge \int_{\Omega_*} \left(\frac{h}{\Phi}\right)\xi^2.$$ 
Since $\xi$ was any test function with support in $\Omega$, the theorem follows.
\end{proof}

\begin{remark}\label{nbh}
If in the above theorem 
$$ \inf_B \frac{h}{\Phi} >0  \;\;\text{  for any }\;\; B \Subset \Omega$$
 then in fact we obtain that  ${\mathcal L}_V(\Omega) \hookrightarrow L^2_{loc}(\Omega)$. This is a situation one encounters when $V$ is regular (say in $L^p_{loc}(\Omega)$ for some $p> \frac{N}{2}$). 
\end{remark}

We can now state a version of the above theorem for the energy supersolution.

\begin{proposition}\label{AAP-Relaxed}
Assume $V \in L^1_{loc}(\Omega), V$ is balanced  and $-\Delta + V$ is $L^1-$subcritical (i.e., ${\cal L}_V(\Omega) \hookrightarrow L^1_{loc}(\Omega)$).   
Given  a non-negative  non-zero function $ f \in L^{1}_{loc} (\Omega)$, let $u \in {\cal H}_V(\Omega)$  be an energy supersolution of $-\Delta u + Vu=f$. That is,
$$
     a_{V,\Omega} (u, \xi) \geq \int_{\Omega} f \xi ,
     \quad \forall \,  0 \leq \xi \in W_c^{1,\infty} (\Omega) .
   $$

Then,
\begin{equation}\label{ground-RElaxed}
   Q_V ( \xi)\,
   \ge
   \,
   \int_\Omega  \, \left(\frac{f}{{\bf J}(u)} \right) \xi^2dx
     \quad\forall   \xi \in W_c^{1,\infty} (\Omega).
\end{equation}
\end{proposition}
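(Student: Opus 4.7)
The plan is to reduce Proposition \ref{AAP-Relaxed} to Theorem \ref{AAP-distr} by showing that ${\bf J}(u)$ is a non-zero nonnegative distributional supersolution of $-\Delta w + Vw = f$ in $\Omega$, and then invoking Theorem \ref{AAP-distr} with $\Phi:={\bf J}(u)$ and $h:=f$.

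First I would upgrade the energy supersolution identity to a distributional one, mimicking the ``$\Longrightarrow$'' direction of Proposition \ref{prop:EnergyVsDist}. Fix an arbitrary $\varphi \in C_c^2(\Omega)$ with $\varphi \geq 0$. Pick a sequence $\{\xi_n\} \subset W_c^{1,\infty}(\Omega)$ with $\xi_n \to u$ in $\mathcal{H}_V(\Omega)$. By continuity of the inner product and the supersolution hypothesis,
\begin{equation*}
\int_\Omega \nabla \xi_n \cdot \nabla \varphi + \int_\Omega V \xi_n \varphi \;=\; a_{V,\Omega}(\xi_n,\varphi) \;\longrightarrow\; a_{V,\Omega}(u,\varphi) \;\geq\; \int_\Omega f\varphi.
\end{equation*}
On the left-hand side, $L^1$-subcriticality gives $\xi_n \to {\bf J}(u)$ in $L^1_{loc}(\Omega)$ so $\int \nabla\xi_n\cdot\nabla\varphi = \int \xi_n(-\Delta\varphi) \to \int {\bf J}(u)(-\Delta\varphi)$, and the balanced hypothesis together with Remark~\ref{nig}(iii) (or directly Proposition~\ref{pol}) guarantees both that $V{\bf J}(u) \in L^1_{loc}(\Omega)$ and that $\int V\xi_n\varphi \to \int V{\bf J}(u)\varphi$. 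Passing to the limit yields
\begin{equation*}
\int_\Omega {\bf J}(u)\bigl(-\Delta \varphi + V\varphi\bigr) \;\geq\; \int_\Omega f\varphi, \qquad \forall\, 0\le \varphi \in C_c^2(\Omega),
\end{equation*}
which is precisely the distributional supersolution statement.

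Next I would verify that ${\bf J}(u) \in \mathcal{C}_V(\Omega)$. The comparison principle in Lemma~\ref{lem:BADAMILK2} gives ${\bf J}(u) \geq 0$ because $f \geq 0$. The function is non-zero because if ${\bf J}(u) \equiv 0$, then Corollary~\ref{bru} (injectivity of ${\bf J}$ under the balanced and $L^1$-subcritical hypotheses) would force $u = 0$, but then the energy supersolution inequality would read $0 \geq \int f\xi$ for every $\xi \geq 0$, contradicting $f \not\equiv 0$ and $f\ge 0$. Combined with the distributional inequality above and $V{\bf J}(u) \in L^1_{loc}(\Omega)$, we obtain ${\bf J}(u) \in \mathcal{C}_V(\Omega)$ with associated Riesz datum dominating $f$.

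Finally, applying Theorem~\ref{AAP-distr} with $\Phi := {\bf J}(u)$ and $h := f$ delivers
\begin{equation*}
Q_V(\xi) \;\geq\; \int_\Omega \frac{f}{{\bf J}(u)}\,\xi^2\,dx, \qquad \forall\, \xi \in W_c^{1,\infty}(\Omega),
\end{equation*}
which is the desired estimate. The main subtle step, and the one deserving the most care in the write-up, is the passage to the limit in $\int V\xi_n \varphi$: this is the only place where the ``balanced'' hypothesis is essential, as it converts the abstract $X_B^*$-continuity of $\xi \mapsto V\xi$ provided by \eqref{eq:sloka} into genuine $L^1(K)$-duality pairing against test functions $\varphi$. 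Everything else is a routine combination of the tools already established.
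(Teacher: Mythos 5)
Your proposal is correct and takes essentially the same route as the paper: the paper likewise deduces from Lemma \ref{lem:BADAMILK2} and (the supersolution analogue of) Proposition \ref{prop:EnergyVsDist} that ${\bf J}(u)$ is a non-negative, non-zero distributional supersolution and then concludes by Theorem \ref{AAP-distr}. Your explicit re-running of the limiting argument for the one-sided inequality and your injectivity-based (Corollary \ref{bru}) non-vanishing argument are just slightly more detailed versions of what the paper leaves implicit.
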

\begin{proof}
It follows from lemma \ref{lem:BADAMILK2} and proposition \ref{prop:EnergyVsDist} that ${\bf J}(u)$ is a non-negative non-zero distributional supersolution. In particular, from theorem \ref{thm:StrongMax}, ${\bf J}(u)>0$ a.e. in $\Omega$. The result now follows from theorem \ref{AAP-distr}.
\end{proof}

\subsection{$V^+ \in L^p_{loc}(\Omega)$ for some $p>\frac{N}{2}$  and $Q_V \succeq 0$ in $\Omega$  imply ${\cal C}_V (\Omega)\neq \emptyset$} \label{s7.2}

From standard existence results for Green's function (see, for instance, \cite{Zhao}), we have :
\begin{proposition}\label{prop:Green}
Let  $V^+ \in L^{p}_{loc}(\Omega)$ for some $p>\frac{N}{2}$  \;($N \geq 2$) and $\Omega_* \Subset \Omega$ be a domain with smooth boundary. Then, there exists a Green's function $G$  for $-\Delta + V^+$ in $\Omega_*$ satisfying the following properties:
\begin{itemize}
\item[{\bf (i)}] 
 $G : \Omega _* \times \Omega_* \to \R$, $G>0$ in $\Omega_*$, $G(x,y)=G(y,x)$,
\item[{\bf (ii)}]
$G(x,y) \sim \delta(x)$ (the distance to the boundary function) uniformly for $x$ near $\partial \Omega_*$ and $y$ restricted to a  compact subset of $\Omega_*$ and 
\item[{\bf (iii)}]
 for a given $f \in L^{\infty}(\Omega_*)$, the very weak solution of the boundary value problem
  $$(-\Delta+ V^+)u=f \;\text{ in } \;\Omega_*, \; u=0  \;\text{ on }\; \partial \Omega_*$$  is given by the Green's representation  formula $u(x)= \int_{\Omega_*}G(x,y)f(y) dy$.
\end{itemize}
\end{proposition}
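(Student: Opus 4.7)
The plan is to construct $G(\cdot, y)$, for each fixed $y \in \Omega_*$, as the unique very weak solution of
\[
-\Delta G(\cdot, y) + V^+ G(\cdot, y) = \delta_y \quad \text{in } \Omega_*, \qquad G(\cdot, y) = 0 \text{ on } \partial\Omega_*,
\]
as produced by Proposition~\ref{prop:train} applied with $W = V^+$ and $\mu = \delta_y$ (a finite positive Borel measure); this yields $G(\cdot, y) \in W^{1,q}_0(\Omega_*)$ for every $q \in [1, N/(N-1))$. To show $G(\cdot,y) > 0$ in $\Omega_*$, I would first approximate $\delta_y$ weakly by a sequence of nonnegative bumps $f_n \in L^\infty(\Omega_*)$, apply Corollary~\ref{cor:OdeOde} to obtain nonnegative $H^1_0$-solutions $u_n$, pass to the limit in $W^{1,q}_0$ using the estimate~\eqref{eq:diwali} to conclude $G(\cdot, y) \geq 0$ a.e., and finally invoke the strong maximum principle (Theorem~\ref{thm:StrongMax}) to promote nonnegativity to strict positivity.

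\textbf{Symmetry.} To prove $G(x,y) = G(y,x)$, I would mollify both Dirac masses: let $\rho_\epsilon^x, \rho_\epsilon^y \in C_c^\infty(\Omega_*)$ be smooth bumps converging weakly to $\delta_x, \delta_y$, and let $u_\epsilon^x, u_\epsilon^y \in H^1_0(\Omega_*)$ be the corresponding solutions from Lemma~\ref{lem:GdeGde}. Self-adjointness of the bilinear form $(w,z) \mapsto \int(\nabla w \cdot \nabla z + V^+ wz)$ gives
\[
\int u_\epsilon^x\, \rho_\epsilon^y \,dz = \int u_\epsilon^y\, \rho_\epsilon^x \,dz.
\]
Passing to the limit $\epsilon \to 0$, using the $L^1_{loc}$-convergence $u_\epsilon^x \to G(\cdot, x)$, $u_\epsilon^y \to G(\cdot, y)$ and a Lebesgue-point argument at $x$ and $y$, yields $G(x,y) = G(y,x)$ a.e.; one then selects the symmetric representative as the canonical Green's function.

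\textbf{Boundary behaviour and representation formula.} The asymptotic $G(x,y) \sim \delta(x)$ uniformly for $y$ in a fixed compact set $K \Subset \Omega_*$ and $x$ near $\partial\Omega_*$ is obtained by sandwiching $G$ between constant multiples of the classical Green's function $G_0$ for $-\Delta$ on $\Omega_*$, which enjoys $G_0(x,y) \sim \delta(x)$ by the smoothness of $\partial\Omega_*$. The upper bound $G \leq G_0$ follows from Lemma~\ref{lem:power} applied to the distributional supersolution $G_0 - G$, since $-\Delta(G_0 - G) = V^+ G \geq 0$. The reverse lower bound, which I expect to be the main obstacle, requires local elliptic regularity: away from $y$, $G(\cdot,y)$ is a nonnegative weak solution of $(-\Delta + V^+) w = 0$, and the hypothesis $V^+ \in L^p_{loc}$ with $p > N/2$ permits the application of De Giorgi--Nash--Moser estimates up to the smooth boundary, combined with Hopf's lemma, to extract a linear decay bounded uniformly in $y \in K$. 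Finally, the representation formula $u(x) = \int_{\Omega_*} G(x,y) f(y)\,dy$ for $f \in L^\infty(\Omega_*)$ is derived by pairing the very weak formulation of $(-\Delta + V^+) u = f$ with a mollified Dirac mass at $x$, exploiting the symmetry of $G$ established above, and passing to the limit via dominated convergence; the uniqueness clause of Proposition~\ref{prop:train} then seals the identification.
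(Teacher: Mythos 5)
First, note that the paper does not prove this proposition at all: it is stated as a standard result imported from the literature (``From standard existence results for Green's function (see, for instance, \cite{Zhao}), we have\ldots''), and the two-sided comparison $c^{-1}G_0\le G\le c\,G_0$ that underlies part (ii) is likewise quoted as ``well known'' in the proof of Proposition \ref{prop:AntiMaximum}. So you are attempting to supply a proof the authors deliberately outsource, and the part you yourself flag as ``the main obstacle'' is exactly the content of Zhao's theorem.

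That part is a genuine gap in your argument. Your construction of $G(\cdot,y)$ via Proposition \ref{prop:train}, the nonnegativity by approximation, and the upper bound $G\le G_0$ via Lemma \ref{lem:power} (using $-\Delta(G_0-G)=V^+G\ge 0$) are fine in outline, and they give $G(x,y)\lesssim\delta(x)$. But the lower bound, uniformly for $y$ in a compact set, does not follow from ``De Giorgi--Nash--Moser up to the boundary combined with Hopf's lemma'' as you propose. With only $V^+\in L^p$, $p>N/2$ (possibly $p<N$), the solution $G(\cdot,y)$ need not be $C^1$ up to $\partial\Omega_*$, so the classical Hopf lemma is not available; and near the boundary $G(\cdot,y)$ satisfies $-\Delta G=-V^+G\le 0$, i.e.\ it is \emph{sub}harmonic there, so comparison with harmonic barriers only reproduces the upper bound --- it gives no linear decay from below. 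The lower estimate $G\ge c^{-1}G_0$ (equivalently $G(x,y)\gtrsim\delta(x)$ uniformly in $y\in K$) requires a genuinely different input: a boundary Harnack principle, or the perturbation/3G argument based on the resolvent identity $G=G_0-G_0V^+G$ together with smallness of $\sup_x\int_A G_0(x,z)V^+(z)\,dz$ on sets of small measure, which is what \cite{Zhao} carries out. Without this, (ii) is unproved, and so is the uniformity in $y$. Two smaller points: the strong maximum principle (Theorem \ref{thm:StrongMax}) only yields $G(\cdot,y)>0$ quasi-everywhere, not the pointwise positivity asserted in (i) --- that again comes from interior Harnack/continuity of $G(\cdot,y)$ away from the pole (available since $p>N/2$), which you should invoke explicitly, and the same interior continuity (not merely $L^1_{loc}$ convergence plus a Lebesgue-point argument) is what lets you pass to the limit in the symmetry identity $\int u^x_\epsilon\rho^y_\epsilon=\int u^y_\epsilon\rho^x_\epsilon$; finally, identifying the limit of the approximating solutions $u_n$ as the very weak solution with datum $\delta_y$ needs equi-integrability of $V^+u_n$ (e.g.\ via \eqref{clean} or a uniform version of the $L^1$ estimate), not just the $W^{1,q}_0$ bound \eqref{eq:diwali}.
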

\begin{remark}\label{ackn}
The existence of Green's function, Green's representation formula as well as Harnack inequality can be ensured under weaker assumptions on $V^+$, for example, membership in the local Morrey spaces or the local Kato class. Consequently, the analysis done here can be modified to handle these more general situations without changing the final results. 
\end{remark}

\medskip

\begin{theorem}\label{thm:converse3}
Let $N \geq 2$. Assume $V \in L^1_{loc}(\Omega)$, $V^+ \in L^{p}_{loc}(\Omega)$ for some $p>\frac{N}{2}$  \;($N \geq 2$) and $Q_V \succeq 0$ in $\Omega$. 
Then  there exists a non-negative Radon measure $\mu$ and an a.e. positive  function $u \in W_{loc}^{1,q}(\Omega)$ for any $1\leq q <\frac{N}{N-1}$ which is a distributional solution  of $-\Delta u + Vu = \mu$ in $\Omega$. In particular, ${\cal C}_V (\Omega)\neq \emptyset$.
\end{theorem}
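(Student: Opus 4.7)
The strategy is to reduce to the classical AAP principle by approximating $V$ from above by regular potentials. For each integer $k \geq 1$, set $V_k^- := \min(V^-, k) \in L^\infty_{loc}(\Omega)$ and define
$$
V_k \; := \; V^+ - V_k^-.
$$
Then $V_k \in L^p_{loc}(\Omega)$ with the same exponent $p > N/2$ as for $V^+$ (since $V_k^- \in L^\infty_{loc} \subset L^p_{loc}$), and $V_k \geq V$ pointwise, so $Q_{V_k} \succeq Q_V \succeq 0$ in $\Omega$. In particular $V_k$ satisfies the hypotheses~\eqref{eq:SimonAssumptionAAP} of the classical AAP principle.

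By the classical AAP principle (Simon's version recalled in the introduction, or its refinements for $L^p_{loc}$ potentials by Murata and Pinchover--Tintarev), the nonnegativity of $Q_{V_k}$ yields a continuous, strictly positive distributional solution $\phi \in C(\Omega)$ of $-\Delta \phi + V_k \phi = 0$ in $\Omega$. Writing $V_k = V + (V^- - V_k^-)$ and rearranging gives
$$
-\Delta \phi \,+\, V \phi \;=\; (V^- - V_k^-)\,\phi \;=:\; \mu \quad \text{in } {\cal D^\prime}(\Omega).
$$
The right-hand side $\mu$ is nonnegative and belongs to $L^1_{loc}(\Omega)$, being the product of the locally integrable function $V^- - V_k^- \geq 0$ with the locally bounded function $\phi$; hence it defines a nonnegative Radon measure on $\Omega$. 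Moreover $V\phi \in L^1_{loc}(\Omega)$ since $\phi$ is locally bounded and both $V^+ \in L^p_{loc}$ and $V^- \in L^1_{loc}$, so $\phi$ is a distributional solution of $-\Delta u + Vu = \mu$ in the sense of Definition~\ref{dusera}. As $\mu \geq 0$, this $\phi$ is a nonzero nonnegative distributional supersolution, proving ${\cal C}_V(\Omega) \neq \emptyset$. The local regularity $\phi \in W^{1,q}_{loc}(\Omega)$ for every $q \in [1, N/(N-1))$ follows from writing $-\Delta \phi = \mu - V\phi$ as a signed Radon measure and applying Proposition~\ref{prop:RanRan}.

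The only nonroutine step is the appeal to the classical AAP principle for the regular potential $V_k$; however this is standard since $V_k^+ = V^+$ is in the local Kato class and $V_k^- \in L^\infty_{loc}$ is in the Kato class. If a self-contained argument is preferred (avoiding this black box), one could instead exhaust $\Omega$ by smooth bounded $\Omega_n$, fix $y_0 \in \Omega_1$, use the Green's function for $-\Delta + V^+$ on $\Omega_n$ from Proposition~\ref{prop:Green} to recast the equation as a fixed-point problem for the positive integral operator $u \mapsto \int G^+_n(\cdot,y)V_k^-(y) u(y)\,dy + G_n^+(\cdot, y_0)$, solve it using a small regularising perturbation that makes the operator's spectral radius $<1$, and then pass to the limit in the perturbation, in $n$ and in $k$ using Harnack's inequality for $-\Delta + V^+$ to extract a nontrivial positive limit.
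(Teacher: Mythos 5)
There is a genuine gap, and it is a sign error at the heart of your reduction. With $V_k^-:=\min(V^-,k)$ and $V_k:=V^+-V_k^-$ you indeed have $V_k\ge V$ and $Q_{V_k}\succeq Q_V\succeq 0$, and the classical AAP principle does give a positive solution $\phi$ of $-\Delta\phi+V_k\phi=0$. But then, since $V_k=V+(V^--V_k^-)$,
$$-\Delta\phi+V\phi \;=\; -(V^--V_k^-)\,\phi\;\le\;0,$$
not $+(V^--V_k^-)\phi$ as you wrote. So $\phi$ is a distributional \emph{subsolution} of $-\Delta u+Vu=0$, not a supersolution, and it does not exhibit membership in ${\cal C}_V(\Omega)$. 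This is not a fixable slip at fixed $k$: any regularisation that preserves $Q\succeq 0$ must truncate $V^-$, hence increase the potential, hence produce subsolutions of the original operator. To get a supersolution you must let $k\to\infty$, and then you face the real difficulty: the solutions $\phi_k$ are only defined up to scaling, so without a normalisation and locally uniform estimates the limit may be $0$ or $+\infty$, and one must justify passing to the limit in the term $V_k^-\phi_k$ (where only a Fatou-type inequality survives, which is precisely why the limit solves $-\Delta u+Vu=\mu$ for a nonnegative Radon measure $\mu$ rather than $=0$).

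Your final paragraph gestures at the correct route but does not carry it out; this is essentially what the paper does. It works on an exhaustion $\Omega_m$ with the first eigenfunctions $u_{n,m}$ of $-\Delta+V^+-V_n^-$ (so $\lambda_{n,m}\ge 0$ by $Q_V\succeq 0$), rewrites the equation as $-\Delta u+V^+u=(V_n^-+\lambda_{n,m})u$ with a nonnegative right-hand side, normalises via the Green's function of $-\Delta+V^+$ at a near-minimum point of a fixed ball, obtains uniform local $L^1$ bounds on the data, splits the solution into a measure-data part (controlled by Proposition~\ref{prop:train}) plus a $(-\Delta+V^+)$-harmonic part (controlled by Harnack for the \emph{regular} operator $-\Delta+V^+$ only), extracts a diagonal limit $u$ with $\inf_{B_0}u\ge 1$, and uses Fatou's lemma to conclude $-\Delta u+Vu\ge 0$, defining $\mu$ as this nonnegative Radon measure; positivity a.e.\ and the $W^{1,q}_{loc}$ regularity then follow from Theorem~\ref{thm:StrongMax} and Proposition~\ref{prop:RanRan}. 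All of this normalisation/compactness/limit machinery is the substance of the theorem and is missing from your argument as written.
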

\begin{proof}
Consider an exhaustion $\{\Omega_m\}_{m=1}^{\infty}$ of $\Omega$ by open bounded sets $\Omega_m$ with smooth boundary  such that $\Omega_m \Subset \Omega_{m+1} \Subset \Omega$.
Let $V_n^- = \min\{V^-,  n\}, n \in \mathbb N$. Define
\begin{equation}
\lambda_{n,m} := \inf_{\phi \in H^1_0(\Omega_m)\setminus \{0\}}
\frac{\int_{\Omega_m}   |\nabla \phi|^2 + (V^+- V_n^-) \phi^2}{\int_{\Omega_m}
\phi^2}.
\end{equation}
Using the assumption $Q_V \succeq 0$ in $\Omega$, we obtain that $
\lambda_{n,m}  \geq 0.$
Since $V^+  \in L^{\frac{N}{2}}(\Omega_m)$, there exists a non-negative function $u_{n,m} \in
H^1_0(\Omega_m)$ that achieves the above infimum. Clearly $u_{n,m} $ solves the following equation  in the $H^1-$ weak sense:
\begin{equation}\label{u_neqn4}
\left\{
\begin{array}{cl}
&   -\Delta u_{n,m}  + V^+ u_{n,m} =( V_n^- +\lambda_{n,m}) u_{n,m}  \;\hbox{ in } \Omega_m,\vspace{2mm}\\
&  u_{n,m} \geq 0\;\text{in\;} \Omega_m ;  \vspace{2mm}\\
& u_{n,m} = 0 \; \text{on\;} \partial\Omega_m. 
\end{array}
\right.
\end{equation}
Again by the assumption on $V^+,$ we note that  $u_{n,m}$ is continuous in $\overline{\Omega}_m$.
Fix a ball $B_0 \Subset  \Omega_1$. Let $x_{n,m} \in \overline{B}_0$ denote a point at which $u_{n,m}$ achieves 
its minimum in $\overline{B}_0$, that is:
\begin{equation}\label{eq:33.1}
u_{n,m}(x_{n,m}) \leq u_{n,m}(x) \text{ for all } x \in \overline{B}_0.
\end{equation}

Let $G_m$ be the (positive) Green's function for $-\Delta + V^+$ on $\Omega_m$. We now normalise $u_{n,m}$ as:
\begin{eqnarray}\label{normalise3}
w_{n,m} &:=& u_{n,m}/ \rho_{n,m}  \;\; \text{ where }\nonumber \\
\rho_{n,m}&:= &\int_{\Omega_m} ( V_n^-(y) + \lambda_{n,m}) u_{n,m}(y) G_m(x_{n,m},y)dy >0.
\end{eqnarray}
Define 
\begin{equation}\label{eq:38}
f_{n,m}:=  (V_n^- + \lambda_{n,m}) w_{n,m}.
\end{equation}

Clearly $w_{n,m} \in H^1_0(\Omega_m)$ satisfies in the $H^1-$weak sense:
\begin{equation}\label{Pin-Tin eqn3}
\left\{
\begin{array}{cl}
 &  -\Delta w_{n,m}  + V^+ w_{n,m} =f_{n,m} \;  \hbox{ in } \Omega_m,\; w_{n,m} \geq 0 \hbox{ in } \Omega_m,\vspace{2mm}\\
& \int_{\Omega_m} f_{n,m} (y) G_m(x_{n,m},y) dy = 1.
\end{array}
\right.
\end{equation}

From Green's function representation, 
$$ w_{n,m}(x) = \int_{\Omega_m} G_m(x,y) f_{n,m}(y) dy. $$
We note from the above expression and the normalisation in \eqref{Pin-Tin eqn3} that $w_{n,m}(x_{n,m})= 1$ for all $n,m$. Clearly, from \eqref{eq:33.1} and the above normalisation, 
\begin{equation}\label{eq:32}
1=w_{n,m}(x_{n,m}) \leq w_{n,m}(x) \text{ for all } x \in \overline{B}_0.
\end{equation}
Fix $m_0 \geq 1$.
We note that $G_{m} \geq G_{m_0}$ on $\Omega_{m_0}$ for all $m \geq m_0$ (proof similar to that of the claim in theorem \ref{AAP-distr}) and hence,
$$
\gamma(m_0) :=  \min_{x \in \overline{B}_0, y \in \Omega_{m_0}} G_{m_0+1} (x ,y) >0.
$$
Therefore, 
$$
 G_m(x_{n,m}, y)\geq \gamma(m_0)\;\;\; \forall  n \geq 1,m \geq  m_0+1, y \in  \Omega_{m_0}.
$$
Consequently, 
\begin{equation} \label{eq:37}
\forall n \geq 1, m \geq m_0+1: \;\; \gamma(m_0) \int_{\Omega_{m_0}} f_{n,m} \leq \int_{\Omega_{m_0}}f_{n,m}(y) G_m(x_{n,m},y)dy \leq 1.
\end{equation}
Thus, we get that,
\begin{equation}\label{eq:28}
\|f_{n,m} \|_{L^1(\Omega_{ m_0})} \leq \frac{1}{\gamma(m_0)} \;  \text{ for all } \; n \geq 1, m \geq m_0+1 \;\text{ and }\; m_0 \geq 1.
\end{equation}
Consider the very weak solution $\psi_{n,m}$ of the following problem for $n \geq 1, m \geq m_0+2$:
$$
\left\{
\begin{array}{cl}
&   -\Delta \psi_{n,m}  + V^+ \psi_{n,m} =f_{n,m} \;\hbox{ in } \Omega_{m_0+2} ,\vspace{2mm}\\
&  \psi_{n,m} \geq 0 \;\text{ in\;} \Omega_{m_0+2} ;  \vspace{2mm}\\
& \psi_{n,m} = 0 \; \text{ on\;} \partial \Omega_{m_0+2}. 
\end{array}
\right.
$$
In what follows $q$ will denote any number in the interval $[1,\frac{N}{N-1})$.
By the estimate in \eqref{eq:diwali} in proposition \ref{prop:train} and \eqref{eq:37}, 
\begin{equation}\label{eq:33}
\begin{array}{cl}
\|\psi_{n,m}\|_{W_0^{1,q}(\Omega_{m_0+2})}  &\leq C(q,m_0)  \|f_{n,m}\|_{L^1(\Omega_{m_0+2})} \\
& \leq \frac{C(q,m_0)}{\gamma(m_0+2)}, \; \forall n \geq 1, m \geq m_0+3.
\end{array}
\end{equation}
We note that $h_{n,m}:=w_{n,m}-\psi_{n,m}$ solves the following equation for all $n \geq 1, m \geq m_0+2$ :

$$
\left\{
\begin{array}{cl}
& -\Delta h_{n,m}  + V^+ h_{n,m} =0  \hbox{ in } \Omega_{m_0+2} ,\vspace{2mm}\\
& h_{n,m} \geq 0 \;\text{ on \;} \partial \Omega_{m_0+2} . \\
\end{array}
\right.
$$
By the weak comparison principle, $h_{n,m} \geq 0$ in $\Omega_{m_0+2}$. By  using Harnack's inequality  we obtain that 
\begin{eqnarray}\label{madh}
\sup_{\Omega_{m_0+1}} h_{n,m} &\leq& C( m_0)  \inf_{\Omega_{m_0+1}} h_{n,m} \leq  C( m_0) \inf_{ \overline{B}_0} h_{n,m}  \leq C( m_0) h_{n,m}(x_{n,m})\notag \\  
&\leq&  C( m_0) w_{n,m}(x_{n,m}) =  C( m_0), \; \forall n \geq 1, m \geq m_0+2.
\end{eqnarray}

Therefore, 
$\{ -\Delta h_{n,m}\}$ is  a bounded sequence in $L^p(\Omega_{m_0+1})$. Therefore, by standard elliptic regularity and \eqref{madh} 
we obtain  that (for different constants $C(m_0)$),
\begin{eqnarray}\label{gop}
\|h_{n,m}\|_{W^{2,p}(\Omega_{m_0})} &\leq& C(m_0) \Big ( \|h_{n,m}\|_{L^p(\Omega_{m_0+1})}+ \| V^+ h_{n,m}\|_{L^p(\Omega_{m_0+1})}  \Big) \notag \\
&\leq& C(m_0) \Big (1+ \| V^+\|_{L^p(\Omega_{m_0+1})} \Big)  \|h_{n,m}\|_{L^{\infty}(\Omega_{m_0+1})} \notag \\
&\leq& C(m_0) \Big (1+ \| V^+\|_{L^p(\Omega_{m_0+1})} \Big)  \;\; \forall n \geq 1, m \geq m_0+2.
\end{eqnarray}
From the above estimates \eqref{eq:33} and \eqref{gop}  we obtain
$$
 \|w_{n,m}\|_{W^{1,q}( \Omega_{m_0})} \leq C(q, m_0),  \;  \forall n \geq 1, m \geq m_0+3.   
$$
In what follows, we fix $r$ to be the H\"older conjugate of $p$, and we note that  $r \in [1,\frac{N}{N-2})$.
The last inequality together with Sobolev imbedding means that 
\begin{equation}\label{eq:41}
\big \{w_{n,m}: n \geq 1, m \geq m_0+3 \big \} \hbox{ is relatively compact in } L^r(\Omega_{m_0}), \; \forall  m_0 \geq 1.
\end{equation}

Consider the diagonal sequence $\{w_{n,n}\}$ which we will denote by $\{v_n\}$.
By \eqref{eq:41},  there exists a subsequence of $\{v_n\}$, which we will denote by $\{v_{n_j(1)}\}$ and a non-negative function $u_1 \in L^r(\Omega_1)$ such that
$$
v_{n_j(1)}  \to u_1 \hbox{ in } L^r(\Omega_1) \hbox{ as } j \to \infty.
$$
Considering now the sequence $\{v_{n_j(1)}\}$ and again applying \eqref{eq:41} to it, we obtain a subsequence of $\{v_{n_j(1)}\}$, which we will denote by $\{v_{n_j(2)}\}$, and a non-negative function $u_2 \in L^r(\Omega_2)$ such that
$$
v_{n_j(2)}  \to u_2 \hbox{ in } L^r(\Omega_2) \hbox{ as } j \to \infty.
$$
It is easy to see that $u_1 \equiv u_2$ in $\Omega_1$. Proceeding inductively, we obtain subsequences $\{v_{n_j(m)}\}$ and non-negative functions $u_m \in L^r(\Omega_m)$ such that
\begin{equation}\label{eq:29.2}
\left\{
\begin{array}{l}
   \{v_{n_j(m+1)}\}_j \subset \{v_{n_j(m)}\}_j \hbox{ for all } m; \\
   v_{n_j(m)} \to u_m \hbox{ in } L^r(\Omega_m) \hbox{ as } j \to \infty, \; \hbox{ for each }\;  m \in \mathbb N ;\\
    u_m \equiv u_{m+1} \hbox{ in } \Omega_m.
\end{array}
\right.
\end{equation}
Define
\begin{equation}\label{eq:29.3}
u \in L^r_{loc}(\Omega) \hbox{ as: } u \equiv u_m \hbox{ on } \Omega_m, m=1,2,3,\cdot \cdot \cdot.
\end{equation}
We note that each $v_{n_j(m)}$  solves :
\begin{equation}\label{eq:29.4}
\int_\Omega v_{n_j(m)}(-\Delta \xi + V^+ \xi) = \int_\Omega (V_{n_j(m)}^- +\lambda_{n_j(m),n_j(m)})v_{n_j(m)}\xi , \; \forall \xi \in C_c^{\infty}(\Omega_m).
\end{equation}
Take any $\xi \in C_c^{\infty}(\Omega)$, $\xi \geq 0$. Choose $m$ such that support of $\xi$ is contained in $\Omega_m$. Letting $j \to \infty$ in the above equation with such a $\xi$, using \eqref{eq:29.2}-\eqref{eq:29.3}, the assumption $V^+ \in L^{p}_{loc}(\Omega)$ and Fatou's lemma on the right hand side, we get,
$$ 
\int_\Omega u(-\Delta \xi + V^+ \xi) \geq \int_\Omega V^- u \xi .
$$
Since $\xi \geq 0$ was arbitrary, we obtain that  $-\Delta u + Vu \geq 0$ in the sense of distributions. Define the non-negative Radon measure $\mu:= -\Delta u + Vu $.
Then, we get that $u$ is a distribution solution of $-\Delta u + Vu = \mu$ in $\Omega$.  From \eqref{eq:29.3} and \eqref{eq:32} we get that $u$ is non-negative in $\Omega$ and $\inf_{B_0} u \geq 1$. Appealing to  theorem \ref{thm:StrongMax} we obtain that $u>0$ a.e. in $\Omega$. That $u \in W^{1,q}_{loc}(\Omega)$ for $q \in [1, \frac{N}{N-1})$ follows from proposition \ref{prop:RanRan}.
\end{proof}

\begin{theorem}[AAP Principle in ${\cal D^\prime}(\Omega)$]\label{thm:chitra}
Assume $V \in L^1_{loc}(\Omega)$. 
\vspace{-2mm}
\begin{enumerate}
\item[{\bf (i)}]
If ${\cal C}_V (\Omega) \neq \emptyset$ then $Q_V \succeq 0$ in $\Omega$. 
\item[{\bf (ii)}]
Conversely, if  $N \geq 2$, $V^+ \in L^{p}_{loc}(\Omega)$ for some $p>\frac{N}{2}$ and $Q_V\succeq 0$ in $\Omega$, then   ${\cal C}_V (\Omega) \neq \emptyset$.
\item[{\bf (iii)}]
When  $N=1$, we have  $Q_V \succeq 0$ in $\Omega$ iff  ${\cal C}_V (\Omega) \neq \emptyset$.
\end{enumerate}
\end{theorem}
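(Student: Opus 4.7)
The proof assembles Theorem~\ref{AAP-distr} with Theorem~\ref{thm:converse3}. For part~(i), any $\Phi \in \mathcal{C}_V(\Omega)$ is by Definition~\ref{infinitea} a nonzero non-negative distributional supersolution, so $-\Delta\Phi + V\Phi \geq 0$ in $\mathcal{D}'(\Omega)$; Theorem~\ref{AAP-distr} applied with $h\equiv 0$ then yields $Q_V(\xi) \geq 0$ for every $\xi \in W^{1,\infty}_c(\Omega)$, i.e.\ $Q_V \succeq 0$. Part~(ii) is essentially a restatement of Theorem~\ref{thm:converse3}: under its hypotheses, that theorem produces an a.e.\ positive $u \in W^{1,q}_{loc}(\Omega)$ for $q \in [1, N/(N-1))$ solving $-\Delta u + Vu = \mu \geq 0$ in $\mathcal{D}'(\Omega)$, and any such $u$ lies in $\mathcal{C}_V(\Omega)$.

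For part~(iii), the forward direction is again~(i). For the converse with $N=1$, observe that $V \in L^1_{loc}(\Omega)$ automatically yields $V^+ \in L^1_{loc}(\Omega)$ and $1 > 1/2 = N/2$, so the regularity hypothesis of Theorem~\ref{thm:converse3} is trivially met in dimension one. The plan is then to check that the proof of Theorem~\ref{thm:converse3} transfers to $N=1$ with only minor modifications: the Green's function for $-d^2/dx^2 + V^+$ on a bounded interval with Dirichlet data is supplied by elementary Sturm--Liouville theory, the Harnack inequality for nonnegative solutions of a linear second-order ODE with $L^1$ coefficient is trivial from unique continuation of the Cauchy problem, and the $W^{1,q}$ estimate~\eqref{eq:diwali} of Proposition~\ref{prop:train} carries over in the stronger form valid for any $q \in [1,\infty)$. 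A tidier alternative is to simply invoke the one-dimensional treatment of Gesztesy--Zhao~\cite{GeZhao} (Theorems~3.1 and~3.6), which settles the equivalence under the sole assumption $V \in L^1_{loc}(\Omega)$ and is already cited in Section~\ref{s1.1} of the paper.

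The analytic substance has been exhausted in the two preceding subsections: Theorem~\ref{AAP-distr}, whose delicate point is the truncation argument $H_k = T_k(h + V^{-}\Phi)$ combined with the comparison $w_k \le \Phi$ through Kato's inequality, and Theorem~\ref{thm:converse3}, whose core is the Perron-type construction via principal eigenfunctions on an exhaustion together with diagonal extraction based on Harnack's inequality. The present statement therefore reduces to bookkeeping, and the only genuinely new step to write out is the one-dimensional adaptation in part~(iii)---which is precisely the point most likely to invite reader scrutiny, since several of the estimates used for $N \ge 2$ rely on Sobolev exponents that degenerate in dimension one.
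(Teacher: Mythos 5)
Your proposal is correct and, in substance, coincides with the paper's proof: parts (i) and (ii) are exactly the paper's argument (Theorem~\ref{AAP-distr} applied with $h\equiv 0$, and Theorem~\ref{thm:converse3}), and for part (iii) the paper does precisely what you call the tidier alternative, namely invoke Theorem~3.1 of Gesztesy--Zhao \cite{GeZhao}. Your sketched one-dimensional adaptation of Theorem~\ref{thm:converse3} is a plausible additional route, but the claim that the Harnack inequality for $-u''+V^{+}u=0$ with $V^{+}\in L^1_{loc}$ is ``trivial from unique continuation of the Cauchy problem'' glosses a point: unique continuation gives strict positivity of each nontrivial nonnegative solution, whereas the uniform constant in $\sup_K h \le C\,\inf_K h$ over the whole family of nonnegative solutions requires an additional (easy) compactness argument on the two-dimensional solution space of the ODE. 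Since you also fall back on the Gesztesy--Zhao citation, the proof as a whole stands and follows the paper's approach.
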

\begin{proof}
The case $N \geq 2$ follows from theorems \ref{AAP-distr} and \ref{thm:converse3}. When $N=1$ we refer to theorem 3.1 in \cite{GeZhao}.
\end{proof}
\medskip
\section{AAP principle  for $L^1$-subcritical operators with balanced potential}\label{s8}
Our assumptions on $V$ are rather weak and  may prevent the existence of a Green's function for the operator $-\Delta +V$ in $\Omega$. Nevertheless, if the operator is $L^1$-subcritical, within the energy space ${\cal H}_V(\Omega)$ it continues to retain the unique solvability and monotonicity properties that are usually implied by the existence of a nonnegative Green's function. 

\begin{lemma}\label{lem:jug}
Let $V \in L^1_{loc}(\Omega)$ and $Q_V \succeq 0$ in $\Omega$.  Then, $-\Delta + V$ is $L^1$-subcritical in $\Omega$
if and only if there exists a (unique) energy solution (see definition \ref{def:EnergySolution}) $u \in {\cal H}_V(\Omega) $  solving $-\Delta u + V u =f$ for any $f \in  L^{\infty}_{c} (\Omega)$. Indeed, if $f$ is nonnegative, then so  is ${\bf J}(u)$.\end{lemma}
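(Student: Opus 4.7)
The forward direction reduces to a Riesz representation argument in the Hilbert space $({\cal H}_V(\Omega), a_{V,\Omega})$. Given $f \in L^\infty_c(\Omega)$, I would introduce the linear functional
$$L_f : {\cal H}_V(\Omega) \to \mathbb{R}, \qquad L_f(v) := \int_\Omega f\,{\bf J}(v),$$
which is continuous because $L^1$-subcriticality makes ${\bf J}: {\cal H}_V(\Omega) \to L^1_{loc}(\Omega)$ continuous and $f$ has compact support: $|L_f(v)| \leq \|f\|_\infty \int_{{\rm supp}(f)} |{\bf J}(v)| \leq C_f \|v\|_{V,\Omega}$. Riesz representation yields a unique $u \in {\cal H}_V(\Omega)$ with $a_{V,\Omega}(u,v) = L_f(v)$ for all $v$; specializing to $v = \xi \in W_c^{1,\infty}(\Omega)$ (where ${\bf J}(\xi) = \xi$) recovers the energy-solution identity of Definition~\ref{def:EnergySolution}. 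Uniqueness of the energy solution (useful in either direction) is automatic: $Q_V \succeq 0$ makes $a_{V,\Omega}$ a genuine inner product on ${\cal H}_V(\Omega)$, and two energy solutions differ by an element orthogonal to the dense subspace $W_c^{1,\infty}(\Omega)$, hence to all of ${\cal H}_V(\Omega)$. For the sign statement, an energy solution is \emph{a fortiori} an energy supersolution, so when $f \geq 0$ Lemma~\ref{lem:BADAMILK2} immediately gives ${\bf J}(u) \geq 0$.

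For the converse direction, the strategy is a closed-graph argument followed by $L^1$–$L^\infty$ duality. Fix an open set $\Omega_0 \Subset \Omega$ and consider
$$T : L^\infty(\Omega_0) \to {\cal H}_V(\Omega), \qquad T(f) = u_f,$$
where each $f$ is extended by zero outside $\Omega_0$ and $u_f$ is the unique energy solution supplied by the hypothesis (linearity of $T$ follows from uniqueness). To verify that the graph is closed, suppose $f_n \to f$ in $L^\infty(\Omega_0)$ and $u_{f_n} \to u$ in ${\cal H}_V(\Omega)$; for each fixed $\xi \in W_c^{1,\infty}(\Omega)$ the identity $a_{V,\Omega}(u_{f_n},\xi) = \int f_n \xi$ passes to the limit on both sides (continuity of the inner product on the left; $\xi \in L^1(\Omega_0)$ on the right), yielding $u = u_f$ by uniqueness. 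The closed graph theorem then produces a constant $C_{\Omega_0}$ with $\|u_f\|_{V,\Omega} \leq C_{\Omega_0}\,\|f\|_\infty$, and consequently
$$\Bigl|\int f \xi\Bigr| = |a_{V,\Omega}(u_f,\xi)| \leq C_{\Omega_0}\,\|f\|_\infty \sqrt{Q_V(\xi)}, \qquad \forall\,\xi \in W_c^{1,\infty}(\Omega).$$
Taking the supremum over $f \in L^\infty(\Omega_0)$ with $\|f\|_\infty \leq 1$ (choose $f = \mathrm{sign}(\xi)$) gives $\int_{\Omega_0} |\xi| \leq C_{\Omega_0} \sqrt{Q_V(\xi)}$, and varying $\Omega_0$ over an exhaustion of $\Omega$ produces exactly the embedding ${\cal L}_V(\Omega) \hookrightarrow L^1_{loc}(\Omega)$.

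The main obstacle is the converse direction: the hypothesis supplies only per-datum existence of $u_f$ with no a priori quantitative control of $\|u_f\|_{V,\Omega}$ in terms of $\|f\|_\infty$. Extracting this uniform bound is precisely what the closed graph theorem achieves, and it is here that the automatic uniqueness from $Q_V \succeq 0$ (to make $T$ well-defined and its graph closed) together with the Banach structure of both $L^\infty(\Omega_0)$ and ${\cal H}_V(\Omega)$ become indispensable; duality then finishes. The forward direction and the sign statement, by contrast, are routine once the Hilbert-space structure on ${\cal H}_V(\Omega)$ and Lemma~\ref{lem:BADAMILK2} are in place.
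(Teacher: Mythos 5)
Your forward direction, uniqueness argument and sign statement coincide with the paper's own proof: the same Riesz representation argument for the functional $\xi \mapsto \int_\Omega f\xi$, continuous on ${\cal L}_V(\Omega)$ by $L^1$-subcriticality, uniqueness via density of $W^{1,\infty}_c(\Omega)$ in ${\cal H}_V(\Omega)$, and Lemma~\ref{lem:BADAMILK2} for nonnegativity. Your converse, however, is a genuinely different route. The paper argues by direct contradiction: if ${\cal L}_V(\Omega)\not\hookrightarrow L^1_{loc}(\Omega)$, one picks a ball $B\Subset\Omega$ and nonnegative $\xi_n$ with $Q_V(\xi_n)\to 0$ but $\inf_n\int_B\xi_n>0$ (using $Q_V(|\xi_n|)=Q_V(\xi_n)$), and tests the single energy solution $u$ of $-\Delta u+Vu=\chi_B$ against $\xi_n$, so that $\int_B\xi_n=a_{V,\Omega}(u,\xi_n)\to 0$ by Cauchy--Schwarz, a contradiction; this needs solvability only for indicator functions of balls and no machinery beyond the inner product. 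You instead exploit solvability for all data in $L^\infty(\Omega_0)$: the closed graph theorem applied to the solution operator $T:L^\infty(\Omega_0)\to{\cal H}_V(\Omega)$ (well-defined and linear by uniqueness, graph closed by passing to the limit in the energy identity, both spaces complete) yields the uniform bound $\|u_f\|_{V,\Omega}\le C_{\Omega_0}\|f\|_{\infty}$, and the choice $f=\mathrm{sign}(\xi)$ converts this by duality into $\int_{\Omega_0}|\xi|\le C_{\Omega_0}\sqrt{Q_V(\xi)}$, which over an exhaustion gives the embedding. Your steps all check out, so the argument is sound; what it buys is an explicit quantitative bound on the solution operator, at the cost of invoking the closed graph theorem and the full strength of the hypothesis, whereas the paper's proof is more elementary and shows that solvability for a single datum $\chi_B$ per ball already forces $L^1$-subcriticality.
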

\begin{proof}
Let $-\Delta + V$ be $L^1$-subcritical, i.e., ${\cal L}_V(\Omega) \hookrightarrow L^1_{loc} (\Omega)$. Take any $f \in  L^{\infty}_{c} (\Omega)$. 
Define the continuous linear map
$$
   \Phi_f:
    L^1_{loc} (\Omega) \to \mathbb R
    \quad \hbox{ by } \quad
    \Phi_f(h):= \int_{\Omega} f h.
$$ 
Then $\Phi_f$ is continuous on 
${\cal L}_V(\Omega)$, and can therefore be uniquely extended to a continuous linear map 
on ${\cal H}_V(\Omega)$. Using Riesz representation theorem,  we can find a unique 
$u \in {\cal H}_V(\Omega) $ satisfying
\begin{equation} \label{eq:Casar}
    a_{V,\Omega}(u,h) =  \Phi_f(h), 
    \quad
    \forall h \in W^{1,\infty}_c(\Omega).
\end{equation}
Conversely, assume that the operator equation is  solvable in  ${\cal H}_V(\Omega)$  (in the energy sense)  for each given $f \in  L^{\infty}_{c} (\Omega)$. By density of  $W^{1,\infty}_c(\Omega)$ in  ${\cal H}_V(\Omega)$ such a solution is unique. Suppose that ${\cal L}_V(\Omega) \not \hookrightarrow  L^1_{loc}(\Omega)$. Then, we may find a ball $B \Subset \Omega$ and a sequence $\{\xi_n \} \subset W^{1,\infty}_c(\Omega)$ such that $\xi_n \to 0$ in  ${\cal H}_V(\Omega)$, but $\inf_n \int_B |\xi_n| > 0$. By considering $|\xi_n|$ in place of $\xi_n$ and noting that $Q_V(\xi_n)=Q_V(|\xi_n|)$, 
we may assume without loss of generality that $\xi_n \geq 0$. Let $u \in {\cal H}_V(\Omega)$ be  the energy solution of $-\Delta u+ V u = \chi_B$.
It then follows that $ \int_B \xi_n = a_{V,\Omega}(u,\xi_n) \to 0 \hbox{ as } n \to \infty.$
This gives a contradiction, thereby showing the imbedding ${\cal L}_V(\Omega)  \hookrightarrow  L^1_{loc}(\Omega)$.

The assertion that ${\bf J}(u) \geq 0$ whenever $f \geq  0$ follows from lemma  \ref{lem:BADAMILK2}.
\end{proof}

\medskip

The following corollary shows that $L^1-$subcritical operators with balanced potentials  possess a hidden nonnegative Green's function.
\begin{corollary}\label{pomo}
Let $V \in L^1_{loc}(\Omega)$ be such that $Q_V \succeq 0$ in $\Omega$.  Then, 
\begin{enumerate}
\vspace{-2mm}
\item[{\rm \bf (i)}]
If $V$ is balanced in $\Omega$ and  $-\Delta + V$ is $L^1$-subcritical in $\Omega$, then
 given any $f \in  L^{\infty}_{c} (\Omega)$, there exists a unique distributional solution  $u_* \in L^1_{loc}(\Omega) $ obtained as $u_*={\bf J}(u)$ for some $u \in {\cal H}_V(\Omega)$,  solving $-\Delta u_* + V u_* =f$. Indeed, if $f$ is nonnegative, then so is the corresponding solution $u_*$. 
 \item[{\rm \bf (ii)}]
Conversely, if for any   compact subset  $K$ of $\Omega$ with positive measure,  the problem $-\Delta u + V u =\chi_K$ admits a nonnegative distributional solution $u$, then $-\Delta + V$ is $L^1$-subcritical in $\Omega$.
\end{enumerate}
\end{corollary}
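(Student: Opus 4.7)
For part (i), my plan is to combine Lemma~\ref{lem:jug} with Proposition~\ref{prop:EnergyVsDist}. First I would invoke Lemma~\ref{lem:jug} to produce, given $f \in L^\infty_c(\Omega)$, the unique energy solution $u \in {\cal H}_V(\Omega)$ of $-\Delta u + Vu = f$, together with the sign information ${\bf J}(u) \geq 0$ when $f \geq 0$ (Lemma~\ref{lem:BADAMILK2}). Then, using that $V$ is balanced and $-\Delta + V$ is $L^1$-subcritical, I would apply Proposition~\ref{prop:EnergyVsDist} to promote this energy solution to a genuine distributional solution $u_* := {\bf J}(u)$. Uniqueness within the image of ${\bf J}$ follows by running Proposition~\ref{prop:EnergyVsDist} in reverse: two distributional solutions of the form ${\bf J}(u_1)$, ${\bf J}(u_2)$ correspond to two energy solutions $u_1$, $u_2$, which must coincide by Lemma~\ref{lem:jug}; alternatively one can simply invoke the injectivity of ${\bf J}$ from Corollary~\ref{bru}.

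For part (ii), the strategy is to extract the AAP-type inequality of Theorem~\ref{AAP-distr} from the hypothesis and then convert it into the required $L^1_{loc}$ embedding via Cauchy--Schwarz. My plan is as follows: fix a compact $K \subset \Omega$ with positive measure and let $u \geq 0$, $u \in L^1_{loc}(\Omega)$, be the nonnegative distributional solution of $-\Delta u + Vu = \chi_K$ provided by hypothesis. Since $\chi_K \not\equiv 0$ forces $u \not\equiv 0$, and $u$ is evidently a nonnegative distributional supersolution, I would conclude that $u \in {\cal C}_V(\Omega)$ with Riesz datum $h = \chi_K$, so that Theorem~\ref{AAP-distr} applies and gives
\begin{equation*}
   Q_V(\xi) \,\ge\, \int_K \frac{\xi^2}{u}\,dx, \qquad \forall\, \xi \in W^{1,\infty}_c(\Omega),
\end{equation*}
together with $\chi_K/u \in L^1_{loc}(\Omega)$ (so in particular $u > 0$ a.e.\ on $K$). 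Splitting $|\xi| = (|\xi|/\sqrt{u}) \cdot \sqrt{u}$ and applying Cauchy--Schwarz on $K$ then yields
\begin{equation*}
   \left( \int_K |\xi| \right)^{\!2} \,\le\, \left( \int_K \frac{\xi^2}{u} \right)\!\left( \int_K u \right) \,\le\, \|u\|_{L^1(K)}\, Q_V(\xi).
\end{equation*}
Since every compact subset of $\Omega$ lies inside such a $K$, this uniform bound delivers the embedding ${\cal L}_V(\Omega) \hookrightarrow L^1_{loc}(\Omega)$, i.e.\ $L^1$-subcriticality.

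Neither part presents a serious obstacle, because the heavy lifting has already been done earlier in the paper (energy/distribution equivalence in Proposition~\ref{prop:EnergyVsDist}, the AAP inequality in Theorem~\ref{AAP-distr}, unique solvability of $-\Delta + V$ in ${\cal H}_V$ in Lemma~\ref{lem:jug}). The only mildly delicate point I anticipate is verifying in (ii) that the given nonnegative distributional solution really belongs to ${\cal C}_V(\Omega)$ with Riesz datum $\chi_K$: this needs that $u \not\equiv 0$ (clear from $\chi_K \not\equiv 0$) and that $Vu \in L^1_{loc}(\Omega)$, which is built into Definition~\ref{dusera}.
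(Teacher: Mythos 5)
Your proposal is correct and follows essentially the same route as the paper: part (i) is exactly Lemma~\ref{lem:jug} combined with Proposition~\ref{prop:EnergyVsDist} (with the sign assertion from Lemma~\ref{lem:BADAMILK2}), and part (ii) is the paper's argument of applying Theorem~\ref{AAP-distr} to the supersolution with datum $\chi_K$ and then using Cauchy--Schwarz with the weight $\chi_K/u$ to obtain the $L^1_{loc}$ bound. The only cosmetic difference is that the paper cites Theorem~\ref{thm:StrongMax} explicitly for $u>0$ a.e., whereas you let Theorem~\ref{AAP-distr} absorb that point, which is harmless since its proof already invokes the strong maximum principle.
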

\proof  (i) Follows from the above lemma and proposition  \ref{prop:EnergyVsDist}. 

\medskip

(ii) We take any such compact $K$ and the coresponding solution $u$. From  theorem \ref{thm:StrongMax} we have $u>0$ a.e. in $\Omega$ and from theorem \ref{AAP-distr} we obtain that $Q_V \succeq w:=\chi_K/u$ in $\Omega$. Therefore,
$$
\Big( \int_K 1/w \Big)^{\frac{1}{2}}  \sqrt{Q_V(\xi)} \geq  \Big( \int_K 1/w \Big)^{\frac{1}{2}} \Big (\int_K w |\xi|^2 \Big )^{\frac{1}{2}} \geq  \int_K |\xi|, \quad \forall \xi \in W_c^{1,\infty}(\Omega).
$$
 \qed

\begin{remark}\label{11k}
From the above results, it is clear that  if $V$ is balanced and $-\Delta + V$ is $L^1-$subcritical in $\Omega$, there exists no nonzero $u \in {\cal H}_V(\Omega)$ such that  $u_*={\bf J}(u)$ solves in the distributional sense $-\Delta u_* + Vu_*=0$ in $\Omega$. In other words, in this situation, the only distributional solution $u_* \in {\bf J}({\cal H}_V(\Omega))$ to $-\Delta u + V u=0$ in $\Omega$  is the trivial solution.
\end{remark}

\begin{corollary}\label{hasn}
Let  $V \in L^1_{loc}(\Omega)$ and $Q_V \succeq 0$ in $\Omega$. Suppose for some $\epsilon_0 $ chosen to be positive if $-\Delta+V$ is $L^1$-critical in $\Omega$ and zero otherwise, there exists   $f_0 \in L^\infty_c(\Omega)$ such  that the problem $-\Delta u + (V+\epsilon_0)u =f_0$ has no distribution solution in $\Omega$. Then $V$ is not a balanced potential in $\Omega$.
\end{corollary}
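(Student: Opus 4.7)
The plan is to prove the contrapositive: assuming $V$ is balanced in $\Omega$, I will produce a distributional solution to $-\Delta u + (V+\epsilon_0)u = f_0$ for every $f_0 \in L^\infty_c(\Omega)$, contradicting the hypothesis. The strategy is to pass to the shifted potential $\tilde V := V + \epsilon_0$ and then invoke Corollary \ref{pomo}(i); accordingly the two facts I need to verify are that $\tilde V$ is still balanced and that $-\Delta + \tilde V$ is $L^1$-subcritical in $\Omega$.

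Balancedness of $\tilde V$ comes for free. Since $\epsilon_0$ is a nonnegative constant, in particular a nonnegative element of $L^1_{loc}(\Omega)$, Proposition \ref{mea}(ii) immediately gives that $\tilde V = V + \epsilon_0$ is balanced whenever $V$ is. So the remaining task is $L^1$-subcriticality. If $-\Delta + V$ is already $L^1$-subcritical (so $\epsilon_0 = 0$) this is part of the hypothesis and there is nothing to do. Otherwise $\epsilon_0 > 0$, and the identity
$$
Q_{\tilde V}(\xi) \;=\; Q_V(\xi) + \epsilon_0 \int_\Omega \xi^2 \;\geq\; \epsilon_0 \int_\Omega \xi^2
$$
combined with the Cauchy--Schwarz inequality on any compact $K \subset \Omega$ yields
$$
\int_K |\xi| \;\leq\; |K|^{1/2}\Bigl(\int_K \xi^2\Bigr)^{\!1/2} \;\leq\; \frac{|K|^{1/2}}{\sqrt{\epsilon_0}}\,\sqrt{Q_{\tilde V}(\xi)}, \qquad \forall \xi \in W^{1,\infty}_c(\Omega),
$$
so ${\cal L}_{\tilde V}(\Omega) \hookrightarrow L^1_{loc}(\Omega)$, i.e.\ $-\Delta + \tilde V$ is $L^1$-subcritical in $\Omega$.

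With $\tilde V$ balanced and $-\Delta + \tilde V$ $L^1$-subcritical, Corollary \ref{pomo}(i) applies directly: for every $f_0 \in L^\infty_c(\Omega)$ there exists $u \in {\cal H}_{\tilde V}(\Omega)$ such that $u_* := {\bf J}(u)$ is a distributional solution of $-\Delta u_* + \tilde V u_* = f_0$ in $\Omega$. This contradicts the hypothesis of the corollary, so $V$ cannot be balanced. There is no real obstacle in the argument; the only thing to notice is that in the $L^1$-critical case the extra constant $\epsilon_0 > 0$ plays a double role, preserving the balanced condition (via Proposition \ref{mea}(ii)) while simultaneously promoting the quadratic form to one whose pre-energy space embeds in $L^1_{loc}$, exactly as required by Corollary \ref{pomo}(i).
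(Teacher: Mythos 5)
Your proposal is correct and follows essentially the same route as the paper: argue by contraposition, use Proposition \ref{mea}(ii) to see that $V+\epsilon_0$ remains balanced, note that $-\Delta+V+\epsilon_0$ is $L^1$-subcritical (the paper asserts this directly; your Cauchy--Schwarz computation just supplies the easy detail in the critical case), and then apply Corollary \ref{pomo}(i) to solve $-\Delta u+(V+\epsilon_0)u=f_0$, contradicting the hypothesis.
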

\proof Suppose $V$ is balanced in $\Omega$. Then by proposition \ref{mea}(ii), $V+\epsilon_0$ is balanced in $\Omega$. We note that the operator $-\Delta + V+\epsilon_0$  is $L^1$-subcritical in $\Omega$ and apply corollary \ref{pomo}   to conclude that  for any $f \in L^\infty_c(\Omega)$ we obtain a  distributional solution of $-\Delta u + (V+\epsilon_0) u =f$ in $\Omega$. 
\qed
\begin{corollary}\label{husn}
Let  $V \in L^1_{loc}(\Omega)$ and $Q_V \succeq 0$ in $\Omega$. Suppose for some $\epsilon_0 $ chosen to be positive if $-\Delta+V$ is $L^1$-critical in $\Omega$ and zero otherwise, we have ${\mathcal C}_{V+\epsilon_0}(\Omega) = \emptyset$ (see definition \ref{infinitea}). Then, $V$ is not balanced in $\Omega$.
\end{corollary}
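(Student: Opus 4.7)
My plan is to argue by contradiction, essentially bootstrapping Corollary~\ref{pomo}(i) (or, with slightly less work, Corollary~\ref{hasn}) to produce an element of ${\mathcal C}_{V+\epsilon_0}(\Omega)$. Assume $V$ is balanced in $\Omega$. Since $\epsilon_0 \geq 0$ is a constant (hence locally integrable and nonnegative), Proposition~\ref{mea}(ii) gives that $V+\epsilon_0$ is balanced in $\Omega$ as well.

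Next I would verify that $-\Delta + V + \epsilon_0$ is $L^1$-subcritical in $\Omega$. If $\epsilon_0 = 0$, this is the case by the very choice of $\epsilon_0$ in the hypothesis. If $\epsilon_0 > 0$, then for any $\xi \in W^{1,\infty}_c(\Omega)$ and any ball $B \Subset \Omega$, Cauchy--Schwarz gives
\[
\int_B |\xi| \;\leq\; |B|^{1/2}\Big(\int_\Omega \xi^2\Big)^{1/2} \;\leq\; |B|^{1/2}\epsilon_0^{-1/2}\sqrt{Q_{V+\epsilon_0}(\xi)},
\]
so ${\mathcal L}_{V+\epsilon_0}(\Omega) \hookrightarrow L^1_{loc}(\Omega)$, i.e.\ $L^1$-subcriticality holds.

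With both hypotheses (balanced and $L^1$-subcritical) in hand, I can invoke Corollary~\ref{pomo}(i) with $f = \chi_K$ for any compact $K\subset\Omega$ of positive measure: there exists a nonnegative distributional solution $u_\ast \in L^1_{loc}(\Omega)$ of $-\Delta u_\ast + (V+\epsilon_0)u_\ast = \chi_K$ in $\Omega$. Since $\chi_K \not\equiv 0$, necessarily $u_\ast \not\equiv 0$. But then $u_\ast$ is a nonnegative, nontrivial distributional \emph{supersolution} of $-\Delta u + (V+\epsilon_0) u \geq 0$, so $u_\ast \in {\mathcal C}_{V+\epsilon_0}(\Omega)$, contradicting the standing hypothesis ${\mathcal C}_{V+\epsilon_0}(\Omega) = \emptyset$.

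There is no real obstacle here; the entire argument is a short chain through results already in place. The only point worth checking carefully is the $L^1$-subcriticality step above, which relies on $\epsilon_0 > 0$ producing a coercive $L^2$-control on $Q_{V+\epsilon_0}$ and then passing to $L^1_{loc}$ via Cauchy--Schwarz. Once that is observed, both hypotheses of Corollary~\ref{pomo}(i) are met and the desired contradiction is immediate. (Alternatively, one could simply cite Corollary~\ref{hasn} after noting that $\chi_K \in L^\infty_c(\Omega)$ \emph{does} admit a distributional solution by Corollary~\ref{pomo}(i), but the direct route through the cone ${\mathcal C}_{V+\epsilon_0}$ is more natural here.)
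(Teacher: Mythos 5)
Your proof is correct and follows essentially the same route as the paper: assume $V$ balanced, transfer balancedness to $V+\epsilon_0$ via Proposition~\ref{mea}(ii), note $L^1$-subcriticality of $-\Delta+V+\epsilon_0$, and invoke Corollary~\ref{pomo}(i) to produce a nonnegative nontrivial supersolution, contradicting ${\mathcal C}_{V+\epsilon_0}(\Omega)=\emptyset$. The only cosmetic difference is that the paper additionally cites Theorem~\ref{thm:StrongMax} to get a.e.\ positivity of the solution, whereas you correctly observe that nontriviality and nonnegativity already suffice for membership in the cone.
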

\proof Follows by the contrapositive argument as above and noting that if $f \in L^\infty_c(\Omega)$, $f \not \equiv 0$ is chosen nonnegative, the corresponding distributional solution will be positive a.e. in $\Omega$  from corollary \ref{pomo} and theorem \ref{thm:StrongMax}. \qed

\medskip

But, we can show that the  above results imply the existence of a positive distributional solution to the equation $-\Delta u +Vu=0$ on any ``strict" sub-domain of $\Omega$.

\begin{proposition}\label{cor:run}
Assume  $V \in L^1_{loc}(\Omega), V$ is balanced in $\Omega$ and $-\Delta+V$ is  $L^1-$subcritical  in $\Omega$. Then, given any compact set $K \Subset \Omega$ with positive measure, there exists an a.e. positive distribution solution $u$ satisfying $(-\Delta+V)u = 0 $ in $\Omega \setminus K$ .
\end{proposition}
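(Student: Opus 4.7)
The plan is to produce the required solution on $\Omega\setminus K$ by first solving a global problem on $\Omega$ with data supported inside $K$, and then restricting the resulting distributional identity to the complement of $K$. Since $K\Subset\Omega$ has positive measure, the function $f:=\chi_K$ belongs to $L^\infty_c(\Omega)$ and is not identically zero. The $L^1$-subcriticality of $-\Delta+V$ together with the fact that $V$ is balanced are exactly the hypotheses of Corollary \ref{pomo}(i), so I would invoke that corollary to obtain some $\tilde u\in\mathcal H_V(\Omega)$ such that $u:=\mathbf J(\tilde u)\in L^1_{loc}(\Omega)$ satisfies $u\ge0$ a.e.\ and
\[
-\Delta u+Vu\,=\,\chi_K \qquad\text{in }\mathcal D'(\Omega).
\]
In particular, since $V$ is balanced, $Vu\in L^1_{loc}(\Omega)$, and hence $u$ is a bona fide distributional solution in the sense of Definition~\ref{def:DistributionalSolution}.

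Next, I would restrict this equation to the open set $\Omega\setminus K$. Every test function $\xi\in C_c^\infty(\Omega\setminus K)$ extends trivially to an element of $C_c^\infty(\Omega)$, and on such test functions $\int_\Omega \chi_K\,\xi=0$. Consequently
\[
\int_{\Omega\setminus K} u\,\bigl(-\Delta\xi+V\xi\bigr)\,dx \,=\, 0,\qquad \forall\,\xi\in C_c^\infty(\Omega\setminus K),
\]
and since $u\in L^1_{loc}(\Omega\setminus K)$ and $Vu\in L^1_{loc}(\Omega\setminus K)$, this is precisely the statement that $u$ is a distributional solution of $-\Delta u+Vu=0$ in $\Omega\setminus K$.

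Finally, I need to upgrade ``$u\ge 0$ a.e.'' to ``$u>0$ a.e.''. For this I would appeal directly to Proposition~\ref{prop:booth}: it applies to our energy solution $\tilde u$ with data $f=\chi_K\in L^1_{loc}(\Omega)$, $f\ge0$, $f\not\equiv0$, and yields a quasi-continuous representative $\tilde u_*$ of $u=\mathbf J(\tilde u)$ satisfying $\operatorname{Cap}(\{\tilde u_*=0\})=0$. In particular $u>0$ almost everywhere in $\Omega$, and therefore on $\Omega\setminus K$ as well, which completes the construction.

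I do not anticipate a genuine obstacle here: the result essentially repackages Corollary~\ref{pomo}(i), Proposition~\ref{prop:booth}, and the elementary observation that vanishing of the right-hand side on an open set localizes the distributional equation there. The only minor point worth checking is that $Vu\in L^1_{loc}(\Omega)$ (so that Definition~\ref{def:DistributionalSolution} is literally satisfied on $\Omega\setminus K$), and this is guaranteed by the balancedness of $V$ through Remark~\ref{nig}(iii).
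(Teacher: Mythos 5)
Your proposal is correct and follows essentially the same route as the paper: solve $(-\Delta+V)u=\chi_K$ via Corollary \ref{pomo}, observe the equation becomes homogeneous on $\Omega\setminus K$, and obtain a.e.\ positivity from the strong maximum principle. The only cosmetic difference is that you invoke Proposition \ref{prop:booth}, which packages the steps (Proposition \ref{prop:RanRan} plus Theorem \ref{thm:StrongMax}) that the paper cites directly.
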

\begin{proof}
 Since $-\Delta+V$ is $L^1$-subcritical, by corollary \ref{pomo} we can solve $(-\Delta+V) u = \chi_K$ in ${\cal D}^\prime(\Omega)$  with $u \geq 0$. From  proposition  \ref{prop:RanRan} we can find a quasicontinuous representative of $u$, which solves the same PDE as $u$ in the distributional sense, and from theorem \ref{thm:StrongMax}, we get $u>0$ a.e. in $\Omega$.
\end{proof}
\begin{corollary}\label{bindu}
Assume  $V \in L^1_{loc}(\Omega), V$ is balanced in $\Omega$ and $-\Delta+V$ is  $L^1-$subcritical  in $\Omega$. Then, ${\cal C}_V(\Omega)$ contains an infinite set of linearly independent functions.
\end{corollary}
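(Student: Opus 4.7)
The strategy is to produce an infinite family of nonnegative distributional supersolutions whose Riesz measures are mutually singular, so that linear independence follows immediately by applying the operator $-\Delta + V$ and using the disjointness of the supports.

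First I would choose a countable collection $\{K_i\}_{i=1}^\infty$ of pairwise disjoint compact subsets of $\Omega$ each with positive Lebesgue measure — for example, a countable family of disjoint closed balls. For each $i$, applying Corollary~\ref{pomo}(i) to $f_i := \chi_{K_i} \in L^{\infty}_c (\Omega)$ (which uses $L^1$-subcriticality together with balancedness of $V$), I obtain a nonnegative distributional solution $u_i \in L^1_{loc}(\Omega)$ of
$$
  -\Delta u_i + V u_i \,=\, \chi_{K_i} \quad \text{in } {\cal D}'(\Omega),
$$
realized as $u_i = \mathbf{J}(\tilde u_i)$ for some $\tilde u_i \in {\cal H}_V(\Omega)$. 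Because $V$ is balanced and $-\Delta + V$ is $L^1$-subcritical, Remark~\ref{nig}(iii) gives $V u_i \in L^1_{loc}(\Omega)$, so that the equation makes sense in the distributional sense of Definition~\ref{dusera}. Since $\chi_{K_i} \not\equiv 0$, the function $u_i$ is not identically zero, and by the strong maximum principle (Theorem~\ref{thm:StrongMax} or Proposition~\ref{prop:booth}) one even has $u_i > 0$ a.e. Thus $u_i \in {\cal C}_V(\Omega)$ with associated Riesz measure $\mu_i = \chi_{K_i}\,dx$.

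To establish linear independence, suppose that for some finite $n$ and scalars $c_1, \dots, c_n$ we have $\sum_{i=1}^n c_i u_i = 0$ as elements of $L^1_{loc}(\Omega)$. Since $V u_i \in L^1_{loc}(\Omega)$ for each $i$, the distribution $V \bigl(\sum_i c_i u_i \bigr) = \sum_i c_i (V u_i)$ also vanishes in $L^1_{loc}(\Omega)$, and $-\Delta \bigl( \sum_i c_i u_i \bigr) = 0$ in ${\cal D}'(\Omega)$. Adding the two identities,
$$
  0 \,=\, \sum_{i=1}^n c_i \bigl( -\Delta u_i + V u_i \bigr) \,=\, \sum_{i=1}^n c_i \chi_{K_i} \quad \text{in } {\cal D}'(\Omega).
$$
Because the $K_i$ are pairwise disjoint and each has positive measure, testing against a nonnegative $\varphi \in C_c^\infty(\Omega)$ supported near a single $K_j$ forces $c_j = 0$, and running $j$ over $1, \dots, n$ yields $c_1 = \cdots = c_n = 0$. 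This shows $\{u_i\}$ is an infinite linearly independent subset of ${\cal C}_V(\Omega)$.

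The only real subtlety in this plan is the justification that $-\Delta+V$ may be applied term by term to a finite linear combination in ${\cal C}_V(\Omega)$ of the $u_i$'s; this is exactly what is guaranteed by the balanced hypothesis (via the identification $u_i = \mathbf{J}(\tilde u_i)$ and $V u_i \in L^1_{loc}(\Omega)$), and it is the one place where balancedness enters in an essential way. Everything else is a direct consequence of the existence and positivity statement in Corollary~\ref{pomo}.
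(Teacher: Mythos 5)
Your proof is correct and follows essentially the route the paper intends: Corollary~\ref{bindu} is stated as an immediate consequence of Corollary~\ref{pomo}/Proposition~\ref{cor:run}, namely solving $-\Delta u_i + Vu_i=\chi_{K_i}$ for pairwise disjoint compact sets $K_i$ of positive measure and deducing linear independence from the disjointness of the resulting Riesz measures. Your write-up simply makes explicit the details (positivity via the strong maximum principle, $Vu_i\in L^1_{loc}$ from balancedness, applying the operator to a finite combination) that the paper leaves implicit.
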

\begin{proposition}\label{roger}
Assume that  $V \in L^1_{loc}(\Omega)$ is a balanced potential  and $-\Delta+V$ is  $L^1-$ subcritical  in $\Omega$. 
Suppose additionally that 
for some  compact submanifold $\Sigma$ contained in $\Omega$, the restriction map
$$
    {\mathcal L}_V(\Omega) \to L^{1}(\Sigma),
    \quad
    \xi \mapsto \xi |_{\Sigma}
$$ 
is continuous.
 Then there exists an a.e. positive   distributional solution $u \in L^1_{loc}(\Omega)$ solving $-\Delta u +Vu=0$ in $\Omega \setminus \Sigma$.
\end{proposition}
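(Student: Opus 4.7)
The plan is to adapt the argument of Proposition \ref{cor:run} by replacing the source term $\chi_K \in L^\infty_c(\Omega)$ with a bounded linear functional on ${\cal H}_V(\Omega)$ realised as integration over $\Sigma$. The hypothesis that $\xi \mapsto \xi|_\Sigma$ is continuous from ${\cal L}_V(\Omega)$ to $L^1(\Sigma)$ extends this trace uniquely to a continuous linear map $\tilde T : {\cal H}_V(\Omega) \to L^1(\Sigma)$; composing with integration against the surface measure $\sigma_\Sigma$ (finite since $\Sigma$ is compact) yields a continuous linear form
$$
   \Phi(w) := \int_\Sigma \tilde T(w) \, d\sigma_\Sigma
$$
on the Hilbert space ${\cal H}_V(\Omega)$. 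By Riesz representation there is a unique $u_0 \in {\cal H}_V(\Omega)$ satisfying $a_{V,\Omega}(u_0, w) = \Phi(w)$ for every $w$; testing against any $\xi \in W^{1,\infty}_c(\Omega)$ equal to $1$ on $\Sigma$ shows $\Phi \not\equiv 0$, hence $u_0 \not\equiv 0$.

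Set $u := {\bf J}(u_0)$. Since $\Phi$ is nonnegative on nonnegative test functions, $u_0$ is an energy supersolution of $-\Delta w + V w = 0$, so Lemma \ref{lem:BADAMILK2} together with the injectivity of ${\bf J}$ (Corollary \ref{bru}) give $u \geq 0$ and $u \not\equiv 0$. To recover the distributional equation on $\Omega \setminus \Sigma$, fix $\xi \in C^\infty_c(\Omega \setminus \Sigma)$; then $\xi|_\Sigma = 0$, so $\Phi(\xi) = 0$ and hence $a_{V,\Omega}(u_0, \xi) = 0$. Approximating $u_0$ by $\{\xi_n\} \subset W^{1,\infty}_c(\Omega)$ in ${\cal H}_V(\Omega)$, two limits must be combined: the $L^1_{loc}$-convergence $\xi_n \to u$ coming from $L^1$-subcriticality yields $\int_\Omega \xi_n(-\Delta \xi) \to \int_\Omega u(-\Delta \xi)$, while the balanced condition (Remark \ref{nig}(iii)) gives $\int_\Omega V \xi_n \xi \to \int_\Omega V u \xi$. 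Combining these with the integration-by-parts identity $a_{V,\Omega}(\xi_n,\xi) = \int_\Omega \xi_n (-\Delta\xi) + \int_\Omega V \xi_n \xi$, and letting $n \to \infty$,
$$
   \int_\Omega u(-\Delta \xi + V \xi) = 0 \quad \forall\,\xi \in C^\infty_c(\Omega \setminus \Sigma).
$$
For a.e.\ positivity, I observe that $u$ satisfies $-\Delta u + V^+ u \geq 0$ in ${\cal D^\prime}(\Omega)$ (the $V^- u$ contribution and $\Phi$ are both nonnegative on nonnegative tests); since $Vu \in L^1_{loc}(\Omega)$, $-\Delta u$ is a Radon measure, so Proposition \ref{prop:RanRan} yields $u \in W^{1,1}_{loc}(\Omega)$ with a quasi-continuous representative. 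Theorem \ref{thm:StrongMax} then forces $Cap(\{u = 0\}) = 0$, so $u > 0$ a.e.

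The main obstacle is the passage from the energy identity to the distributional form: what is needed is not merely $Vu \in L^1_{loc}(\Omega)$ but the stronger continuity $\int_\Omega V \xi_n \xi \to \int_\Omega V u \xi$ along an arbitrary approximating sequence, a property precisely encoded in the definition of a balanced potential via the commutative diagram following Definition \ref{tri}. A secondary delicate point is that the Riesz measure on the right-hand side is concentrated on $\Sigma$, which is of Lebesgue measure zero, so standard Stampacchia-type regularity does not apply directly to $u$ on $\Omega$; this forces us to carry out the existence step inside ${\cal H}_V(\Omega)$ and only afterwards transfer to the distributional setting on the open set $\Omega \setminus \Sigma$.
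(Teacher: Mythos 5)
Your proposal is correct and follows essentially the same route as the paper: represent the trace functional $\xi \mapsto \int_\Sigma \xi\,d\sigma_\Sigma$ via Riesz representation in ${\cal H}_V(\Omega)$, use the balanced condition (as in Proposition \ref{prop:EnergyVsDist}) to pass from the energy identity to the distributional equation $-\Delta u + Vu = \sigma_\Sigma$ in $\Omega$, and conclude nonnegativity and a.e.\ positivity from Lemma \ref{lem:BADAMILK2} and Theorem \ref{thm:StrongMax}. The extra details you supply (injectivity of ${\bf J}$, the regularity step via Proposition \ref{prop:RanRan}) are consistent with, and merely spell out, the paper's argument.
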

\begin{proof}
 By Riesz representation  argument, for any $h \in L^{\infty}(\Sigma)$, $h\geq 0, h \not \equiv 0$, we obtain an Energy solution $u_* \in {\mathcal H}_V(\Omega) $ such that
$$
a_V(u_*, \xi)=\int_{\Sigma }h \xi d \nu, \quad \forall \xi \in W^{1,\infty}_c(\Omega).
$$
For simplicity we let $h \equiv 1$ on $\Sigma$. We can then proceed as in proposition \ref{prop:EnergyVsDist} to show that $u:={\bf J(u_*)}$ is a nonnegative  distribution solution of $-\Delta u +Vu= \nu $ in $\Omega$.  Therefore, $u \not \equiv 0$ and $-\Delta u +Vu= 0$ in $\Omega \setminus \Sigma$.
\end{proof}

\begin{remark}\label{hungry}
The main point of the above results is that Harnack inequality may fail for the operator  $-\Delta + V$. 
It seems natural to push  the  arguments in proposition \ref{cor:run} further by taking a sequence $\{K_n\}$ of compact sets approaching  the boundary (or point at infinity) of $\Omega$ and show that the corresponding sequence of solutions $\{u_n\}$ converges (up to a subsequence) locally to an a.e. positive function $u$ solving $(-\Delta+V)u=0$ in the distributional sense in the full domain $\Omega$.  Such an argument seems to require Harnack-type inequality for its success (see  \cite{AS}, and  \cite{PT} for further generalisations). 

 \end{remark}
 
 This motivates the following 
 
 {\bf Open problem :}  Assume  $V \in L^1_{loc}(\Omega), V$ is balanced and $-\Delta+V$ is $L^1-$subcritical  in $\Omega$. Does $0 \in {\cal M}_V(\Omega)$ ?
 
\begin{corollary}\label{cor:bun}
Assume  $V \in L^1_{loc}(\Omega), V$ is balanced and $-\Delta+V$ is $L^1-$subcritical  in $\Omega$. Then, there exists an open dense subset $U$ of $\Omega$ and  an a.e. positive distribution solution $u$ satisfying $(-\Delta+V)u = 0 $ in $U$.
\end{corollary}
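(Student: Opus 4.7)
The plan is to reduce the statement directly to Proposition~\ref{cor:run} by making the right choice of the compact set appearing there. Concretely, I would fix a closed ball $\overline B \Subset \Omega$ and, inside this ball, take $K$ to be a fat Cantor-type set---a compact subset $K \subset \overline B$ of positive Lebesgue measure and empty interior. Such sets are standard (for instance, the $N$-fold Cartesian product of a fat Cantor set on a real interval, rescaled and translated to lie in $\overline B$). Since $K \Subset \Omega$ and $|K|>0$, Proposition~\ref{cor:run} applies and delivers an a.e.\ positive distributional solution $u \in L^1_{loc}(\Omega)$ satisfying $(-\Delta+V)u = 0$ in $\mathcal{D}'(\Omega \setminus K)$.

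It then remains only to verify that $U := \Omega \setminus K$ has the asserted topological properties. Since $K$ is compact in $\Omega$, it is closed in $\Omega$, so $U$ is open in $\Omega$. Since $K$ has empty interior in $\mathbb R^N$ (by construction), every non-empty open subset of $\Omega$ meets $U$, so $U$ is dense in $\Omega$. The pair $(U,u)$ therefore realises the conclusion.

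I do not expect any genuine analytic difficulty, because all the work is done by Proposition~\ref{cor:run}; the only extra ingredient is the purely measure-theoretic fact that a compact subset of $\mathbb R^N$ of positive measure and empty interior exists. It is worth stressing what this short argument does \emph{not} do: it does not upgrade the open dense set $U$ to all of $\Omega$. As the preceding Remark~\ref{hungry} and the open problem there suggest, passing from an open dense $U$ to the full domain $\Omega$ would seem to require a Harnack-type inequality that is not available in the generality of balanced potentials, and so that extension is the real obstacle lying beyond the present corollary rather than within it.
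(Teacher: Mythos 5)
Your argument is correct and coincides with the paper's own proof: both take $K$ to be a compact Cantor-like set of positive measure and empty interior, apply Proposition~\ref{cor:run}, and note that $U=\Omega\setminus K$ is open and dense. Nothing further is needed.
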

\begin{proof}
Take a compact $K \subset \Omega$ to be any ``Cantor-like" set with positive measure and empty interior. Then $U:= \Omega \setminus K$ is an open dense subset of $\Omega$ and then we apply proposition \ref{cor:run}.
\end{proof}

\begin{proposition}[AAP Principle in ${\cal H}_V(\Omega)$ for $L^1-$subcritical operator] \label{vicol} 
Let $V \in L^1_{loc}(\Omega)$ be such that $Q_V \succeq 0$ in $\Omega$.  
Then  $-\Delta + V$ is $L^1$-subcritical in $\Omega$ iff there exists $u \in {\cal H}_V(\Omega)$ and $f \in  L^1_{loc} (\Omega)$ with $\inf_B f > 0$ for any ball $B \Subset  \Omega$ such that $-\Delta u+ Vu =f$ in the energy sense in $\Omega$. 
\end{proposition}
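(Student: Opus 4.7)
The plan is to prove each implication directly using Lemma~\ref{lem:jug} and Cauchy--Schwarz in $\mathcal{H}_V(\Omega)$; no balancedness of $V$ or regularity of $V^+$ is needed.

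For the $(\Leftarrow)$ direction I would simply test the assumed energy identity against $|\xi|$ for $\xi \in W^{1,\infty}_c(\Omega)$. By Lemma~\ref{lem:Lip}(ii), $|\xi| \in W^{1,\infty}_c(\Omega)$ with $Q_V(|\xi|) = Q_V(\xi)$, so Cauchy--Schwarz in $\mathcal{H}_V(\Omega)$ yields
$$\int_\Omega f|\xi| \,=\, a_{V,\Omega}(u, |\xi|) \,\leq\, \|u\|_{V,\Omega}\, \sqrt{Q_V(\xi)}.$$
For any ball $B \Subset \Omega$ the hypothesis gives $c_B := \inf_B f > 0$, so
$$\int_B |\xi| \,\leq\, \frac{1}{c_B}\int_\Omega f|\xi| \,\leq\, \frac{\|u\|_{V,\Omega}}{c_B}\,\sqrt{Q_V(\xi)},$$
which is exactly the continuous imbedding $\mathcal{L}_V(\Omega) \hookrightarrow L^1(B)$; since $B$ was arbitrary, $\mathcal{L}_V(\Omega) \hookrightarrow L^1_{loc}(\Omega)$.

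For the $(\Rightarrow)$ direction I would build $f$ and $u$ as weighted sums of compactly supported pieces. Fix an exhaustion $K_n \Subset K_{n+1} \Subset \Omega$ with $\bigcup_n K_n = \Omega$ and set $A_1 := K_1$, $A_n := K_n \setminus K_{n-1}$ for $n \geq 2$. Lemma~\ref{lem:jug} supplies a unique energy solution $u_n \in \mathcal{H}_V(\Omega)$ of $-\Delta u_n + V u_n = \chi_{A_n}$; the Riesz representation argument in its proof also gives $\|u_n\|_{V,\Omega} \leq M_n$, where $M_n$ is the norm of the (now continuous) inclusion $\mathcal{L}_V(\Omega) \hookrightarrow L^1(K_n)$. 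Picking weights $c_n > 0$ with $\sum_n c_n M_n < \infty$ (e.g.\ $c_n := 2^{-n}/(M_n + 1)$), I would set
$$f \,:=\, \sum_{n=1}^\infty c_n\, \chi_{A_n}, \qquad u \,:=\, \sum_{n=1}^\infty c_n\, u_n.$$
Then the series for $u$ converges absolutely in $\mathcal{H}_V(\Omega)$; $f$ is bounded on each $K_N$, so $f \in L^1_{loc}(\Omega)$; and for any ball $B \Subset \Omega$ contained in some $K_N$, $\inf_B f \geq \min_{1 \leq n \leq N} c_n > 0$. Finally, for $\xi \in W^{1,\infty}_c(\Omega)$ with support inside some $K_N$ we have $A_n \cap \operatorname{supp}(\xi) = \emptyset$ for every $n > N$, so
$$a_{V,\Omega}(u,\xi) \,=\, \sum_{n=1}^\infty c_n\, a_{V,\Omega}(u_n,\xi) \,=\, \sum_{n=1}^N c_n \int_{A_n} \xi \,=\, \int_\Omega f\xi,$$
confirming that $u$ is the sought energy solution.

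I do not foresee a serious obstacle: the argument uses only the Hilbert space structure of $\mathcal{H}_V(\Omega)$ and the truncation $\xi \mapsto |\xi|$, both already at our disposal from Section~\ref{s3}. The mildly delicate point is arranging $f$ to have a strictly positive essential infimum on \emph{every} precompact ball while still lying in $L^1_{loc}(\Omega)$; the annular decomposition $\{A_n\}$ together with the summability condition $\sum_n c_n M_n < \infty$ is what reconciles the global assembly of $u$ with local positivity of $f$.
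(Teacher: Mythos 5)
Your proposal is correct and follows essentially the same route as the paper's proof: the forward direction assembles $u$ and $f$ from the energy solutions of Lemma~\ref{lem:jug} on an annular decomposition of an exhaustion with summable weights (the paper normalises by $1/(j^2\|u_j\|_{V,\Omega})$, you by $2^{-n}/(M_n+1)$), and the reverse direction rests on exactly the two facts the paper uses, namely $Q_V(|\xi|)=Q_V(\xi)$ and $\inf_B f>0$, only phrased as a direct estimate instead of the paper's argument by contradiction. The sole cosmetic caveat is to take the exhaustion with $K_n\subset\operatorname{int}K_{n+1}$, so that every ball $B\Subset\Omega$ and every compact support are indeed contained in some $K_N$.
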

\begin{proof}
Let $-\Delta + V$ be $L^1$-subcritical.  
Consider  an exhaustion $\{\Omega_n\}$ of $\Omega$ by open sets such that
$$
    \Omega_n \Subset \Omega, 
    \quad
    \overline{\Omega}_{n} \subset \Omega_{n+1}.
   $$  
Set $K_1 := \overline{\Omega}_1$ and $K_{n+1} := \overline{\Omega}_{n+1} \setminus \Omega_n$ (for $n \geq 1$).
Note that given a compact  set $K \subset \Omega$,  
\begin{equation} \label{eq:FinIntersection1}
   \{ n \in \mathbb N \, : \, K \cap K_n \not = \emptyset \} \hbox{ is a finite set. }
\end{equation}
By lemma  \ref{lem:jug}, there exists  $u_n \in {\cal H}_V(\Omega),  u_n \neq 0$ such that
$$  
     a_{V,\Omega}(u_n ,\xi)= \int_\Omega \chi_{K_n} \xi,
     \quad \;\forall \xi \in W_c^{1, \infty}(\Omega).
       $$

 Setting
$$
   \rho_n :=  \sum_{j=1}^n \frac{1}{j^2} \frac{u_j}{ \|  u_j \|_{V,\Omega}} ,
    \quad
    f_n :=   \sum_{j=1}^n \frac{1}{j^2} \frac{ \chi_{K_j} }{\| u_j \|_{V,\Omega}} ,
$$
we have also
$$
  a_{V,\Omega}( \rho_n , \xi)   = \int_{\Omega} f_n \xi,
    \quad
    \forall \xi \in W_c^{1,\infty} (\Omega) .
$$
We easily see that 

$$\rho_n \to u:= \sum_{j=1}^\infty \frac{1}{j^2} \frac{u_j}{ \|  u_j \|_{V,\Omega}} \hbox{ in } {\cal H}_V(\Omega).$$

Concerning the sequence $\{f_n\}$, from the observation~\eqref{eq:FinIntersection1} we see that
$$
     \sum_{j=1}^n \frac{1}{j^2} \frac{ \chi_{K_j} }{\| u_j \|_{V}} \to 
    f: = \sum_{j=1}^{\infty} \frac{1}{j^2} \frac{ \chi_{K_j} }{\| u_j \|_{V}} 
    \quad
    \hbox{ in } L^1_{loc} (\Omega) .
$$ 
Note that  $\inf_B f>0$ for any ball $B \Subset \Omega$. Therefore, for each $\xi \in W_c^{1,\infty} (\Omega)$ we have $a_{V,\Omega}(\rho_n, \xi) \to a_{V,\Omega}(u,\xi)$ and 
$\int_{\Omega} f_n \xi \to \int_{\Omega} f \xi$. Hence, we get in the limit: 
$$
   a_{V,\Omega}(u, \xi) = \int_\Omega f \xi \quad \forall \xi \in W_c^{1,\infty} (\Omega).
$$

\medskip

Conversely, let $u \in {\cal H}_V(\Omega)$ be  an energy solution of $(-\Delta+V)u=f$  for some  $f\in L^1_{loc}(\Omega)$ with $ \inf_B f>0$ for any ball $B \Subset \Omega$ . Suppose that ${\cal L}_V(\Omega) \not \hookrightarrow  L^1_{loc}(\Omega)$. Then, 
we may find a ball $B \Subset \Omega$ and a sequence $\{\xi_n \} \subset W^{1,\infty}_c(\Omega)$ such that $\xi_n \to 0$ in  ${\cal H}_V(\Omega)$, but $\inf_n \int_B |\xi_n| > 0$. By considering $|\xi_n|$ in place of $\xi_n$ and  noting that $Q_V(\xi_n)=Q_V(|\xi_n|)$, we may assume without loss of generality that $\xi_n \geq 0$. It then follows that 
$$\inf_n \int_\Omega f \xi_n \geq (\inf_B f ) \inf_n \int_B \xi_n>0 \;\hbox{ and }\; a_{V,\Omega}(u,\xi_n) \to 0 \;\hbox{ as }\; n \to \infty.$$
This gives a contradiction to the assumption that $u$ is the energy solution of $-\Delta u+ V u =f$, thereby showing the imbedding ${\cal L}_V(\Omega)  \hookrightarrow  L^1_{loc}(\Omega)$.
\end{proof}

\begin{theorem}[AAP Principle in ${\cal D}^{\prime}(\Omega)$ for $L^1-$subcritical operator] \label{prop:Sona} 
Let $V \in L^1_{loc}(\Omega)$ be such that $Q_V \succeq 0$ in $\Omega$. 
\begin{enumerate}
\item[{\rm \bf (i)}]
Let $V$ be balanced  in $\Omega$.  If  $-\Delta+V$ is $L^1$-subcritical in $\Omega$, then 
 there exists an a.e. positive function $u_{*} \in L^1_{loc}(\Omega)$ (obtained as  $u_*={\bf J}(u)$ for some $u \in  {\cal H}_V(\Omega)$) and $f \in  L^1_{loc} (\Omega)$ with $\inf_B f > 0$ for any ball $B \Subset  \Omega$, solving  $-\Delta u_* + V u_* =f$ in the sense of distributions in $\Omega$.
 \item[{\rm \bf (ii)}]
 Conversely, if there exists a nonnegative function $u \in L^1_{loc}(\Omega)$  and $f \in  L^1_{loc} (\Omega)$ with $\inf_B f > 0$ for any ball $B \Subset  \Omega$, solving  $-\Delta u + V u =f$ in the sense of distributions in $\Omega$, then $-\Delta+V$ is $L^1$-subcritical in $\Omega$.
\end{enumerate}
\end{theorem}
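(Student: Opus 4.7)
The strategy is to transfer the energy-level AAP principle (Proposition~\ref{vicol}) to the distributional level using the equivalence between energy and distributional solutions for balanced potentials (Proposition~\ref{prop:EnergyVsDist}), and to reverse the direction using the general distributional AAP inequality (Theorem~\ref{AAP-distr}) combined with a Cauchy--Schwarz argument.

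\textbf{Part (i).}  I would begin by invoking Proposition~\ref{vicol} to produce $u \in {\cal H}_V(\Omega)$ and a nonnegative $f \in L^1_{loc}(\Omega)$ with $\inf_B f > 0$ on every ball $B \Subset \Omega$ satisfying
\[
a_{V,\Omega}(u,\xi) = \int_\Omega f \xi, \qquad \forall\, \xi \in W^{1,\infty}_c(\Omega).
\]
Because $V$ is balanced and $-\Delta + V$ is $L^1$-subcritical, Proposition~\ref{prop:EnergyVsDist} then tells us $u_{*} := {\bf J}(u) \in L^1_{loc}(\Omega)$ is a distributional solution of $-\Delta u_{*} + V u_{*} = f$. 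Nonnegativity follows from the comparison principle Lemma~\ref{lem:BADAMILK2} (since $f \geq 0$), and the Strong Maximum Principle for energy solutions (Proposition~\ref{prop:booth}, via Theorem~\ref{thm:StrongMax}) yields a quasi-continuous representative whose zero set has zero capacity; hence $u_{*} > 0$ a.e. This settles (i).

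\textbf{Part (ii).}  Given a nonnegative $u \in L^1_{loc}(\Omega)$ that distributionally solves $-\Delta u + V u = f$ with $\inf_B f > 0$ for every ball $B \Subset \Omega$, I would first observe that $u \not\equiv 0$ (otherwise $f \equiv 0$, contradicting the infimum condition), so $u \in {\cal C}_V(\Omega)$. By Theorem~\ref{AAP-distr} applied with $h = f$ and $\Phi = u$, we get
\[
Q_V(\xi) \;\geq\; \int_\Omega \frac{f}{u}\,\xi^2, \qquad \forall\, \xi \in W^{1,\infty}_c(\Omega),
\]
and the strong maximum principle (Theorem~\ref{thm:StrongMax}) ensures $u > 0$ a.e.  Now fix any ball $B \Subset \Omega$ and any $\xi \in W_c^{1,\infty}(\Omega)$. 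Cauchy--Schwarz gives
\[
\int_B |\xi| \;=\; \int_B \sqrt{\tfrac{u}{f}}\,\sqrt{\tfrac{f}{u}}\,|\xi|
\;\leq\; \left(\int_B \tfrac{u}{f}\right)^{1/2}\!\left(\int_B \tfrac{f}{u}\,\xi^2\right)^{1/2}.
\]
Since $\inf_B f > 0$ and $u \in L^1(B)$, the first factor is a finite constant depending only on $B$. Combining with the AAP inequality yields $\int_B |\xi| \leq C_B \sqrt{Q_V(\xi)}$, which is exactly the embedding ${\cal L}_V(\Omega) \hookrightarrow L^1_{loc}(\Omega)$, i.e. $L^1$-subcriticality of $-\Delta + V$.

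\textbf{Main obstacle.}  The nontrivial content is entirely concentrated in part (i): the fact that ${\bf J}(u)$ is a genuine distributional solution (not just an energy solution) requires both the balanced hypothesis and the subcriticality hypothesis in an essential way, through Proposition~\ref{prop:EnergyVsDist}. Part (ii) is a comparatively soft consequence of the already established Theorem~\ref{AAP-distr}, with the Cauchy--Schwarz trick being the only computation, and it only uses $V \in L^1_{loc}(\Omega)$ --- no balanced assumption is needed.
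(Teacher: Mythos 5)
Your proposal is correct and follows essentially the same route as the paper: part (i) is exactly the paper's combination of Proposition~\ref{vicol}, Proposition~\ref{prop:EnergyVsDist}, Lemma~\ref{lem:BADAMILK2} and the strong maximum principle, and part (ii) reproduces the paper's argument (as in Corollary~\ref{pomo}(ii)) of applying Theorem~\ref{AAP-distr} to get $Q_V \succeq f/u$ and then using Cauchy--Schwarz with $u/f \in L^1_{loc}(\Omega)$.
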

\proof 
{\bf (i)} 
follows from proposition \ref{vicol},  proposition \ref{prop:EnergyVsDist}, lemma \ref{lem:BADAMILK2} and theorem \ref{thm:StrongMax}.

\medskip

{\bf (ii)} 
follows in the same way as the proof of corollary \ref{pomo} (ii) by noting that $w:= f/u$ satisfies $1/w \in L^1_{loc}(\Omega)$.  \qed

\section{Equivalence of $L^1$ and $L^2$-subcriticality  notions} \label{s9}
\subsection{Equivalence of $L^1$- and global $L^2$-subcriticality when $V$ is balanced} \label{s9.1}
When $V$ is balanced, it is a rather straightforward affair to prove the equivalence of the notions of $L^1$- and global $L^2$- subcriticalities as the following result shows: 
\begin{proposition}\label{basa}
Assume $V \in L^1_{loc}(\Omega)$ is such that $Q_V \succeq 0$ in $\Omega$. 
\vspace{-2mm}
\begin{enumerate}
\item[{\rm \bf (i)}] 
Let $V$ be balanced in $\Omega$. Then $-\Delta + V$   is  globally $L^2$-subcritical in $\Omega$ if it is $L^1$-subcritical in $\Omega$.
\item[{\rm \bf (ii)}] 
$-\Delta + V$   is $L^1$-subcritical in $\Omega$ if  it is globally $L^2$-subcritical in $\Omega$.
\end{enumerate}
\end{proposition}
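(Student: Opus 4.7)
For part \textbf{(ii)}, the plan is to use a direct Cauchy--Schwarz argument. If $\mathcal{L}_V(\Omega) \hookrightarrow L^2(\Omega; w\,dx)$ for some nonnegative weight $w$ with $w, 1/w \in L^1_{loc}(\Omega)$, then for any compact $K \subset \Omega$ and any $\xi \in W^{1,\infty}_c(\Omega)$,
\begin{equation*}
   \int_K |\xi|
   \,=\, \int_K |\xi|\, w^{1/2}\, w^{-1/2}
   \,\leq\, \Big(\int_K w\,\xi^2\Big)^{1/2} \Big(\int_K 1/w\Big)^{1/2}
   \,\leq\, C_K \sqrt{Q_V(\xi)},
\end{equation*}
which is exactly the continuous embedding $\mathcal{L}_V(\Omega) \hookrightarrow L^1_{loc}(\Omega)$, i.e.\ $L^1$-subcriticality. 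No balancedness is needed here.

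For part \textbf{(i)}, the strategy is to produce the weight $w$ as the Agmon--Allegretto--Piepenbrink quotient $w := f/{\bf J}(u)$ coming from a carefully chosen right-hand side. First I would invoke Proposition~\ref{vicol} (AAP principle in $\mathcal{H}_V(\Omega)$ for $L^1$-subcritical operators): since $-\Delta+V$ is $L^1$-subcritical, there exist an energy solution $u \in \mathcal{H}_V(\Omega)$ and an $f \in L^1_{loc}(\Omega)$ satisfying $\inf_B f > 0$ for every ball $B \Subset \Omega$, such that $-\Delta u + Vu = f$ in the energy sense. Since $V$ is balanced, Proposition~\ref{prop:EnergyVsDist} lets us pass to the distributional equation for the representative ${\bf J}(u) \in L^1_{loc}(\Omega)$, and Lemma~\ref{lem:BADAMILK2} together with the strong maximum principle (Theorem~\ref{thm:StrongMax}) guarantee that ${\bf J}(u) > 0$ a.e.\ in $\Omega$.

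Now the AAP principle in distributional form (Theorem~\ref{AAP-distr}, applied to the positive supersolution ${\bf J}(u)$ with right-hand side $f$) yields $f/{\bf J}(u) \in L^1_{loc}(\Omega)$ together with
\begin{equation*}
   Q_V(\xi) \,\geq\, \int_\Omega \Big(\tfrac{f}{{\bf J}(u)}\Big) \xi^2\,dx,
   \qquad \forall\, \xi \in W^{1,\infty}_c(\Omega).
\end{equation*}
Setting $w := f/{\bf J}(u)$, it remains to check that $1/w \in L^1_{loc}(\Omega)$: given any compact $K \subset \Omega$, cover $K$ by finitely many balls $B_j \Subset \Omega$; since $\inf_{B_j} f > 0$ and ${\bf J}(u) \in L^1_{loc}(\Omega)$, we get $\int_K 1/w \leq \sum_j (\inf_{B_j} f)^{-1} \int_{B_j} {\bf J}(u) < \infty$. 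Hence $w$ is an admissible weight, and the inequality above gives $\mathcal{L}_V(\Omega) \hookrightarrow L^2(\Omega; w\,dx)$, proving global $L^2$-subcriticality.

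The main technical step is the correct assembly of these previously established tools: balancedness is the ingredient that converts the energy solution $u$ into a bona fide positive distributional solution to which the AAP principle of Theorem~\ref{AAP-distr} applies, and the condition $\inf_B f > 0$ (available precisely because we passed through Proposition~\ref{vicol} rather than solving with some arbitrary $f \in L^\infty_c$) is what delivers $1/w \in L^1_{loc}(\Omega)$. No further delicate estimates are needed.
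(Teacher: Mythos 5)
Your proposal is correct and follows essentially the same route as the paper: part (ii) is the identical H\"older/Cauchy--Schwarz estimate, and part (i) builds the weight $w=f/{\bf J}(u)$ from the energy solution with $\inf_B f>0$ and the distributional AAP inequality of Theorem~\ref{AAP-distr}, which is exactly the paper's argument (the paper just cites Theorem~\ref{prop:Sona}(i), whose proof consists of the very chain Proposition~\ref{vicol}, Proposition~\ref{prop:EnergyVsDist}, Lemma~\ref{lem:BADAMILK2} and Theorem~\ref{thm:StrongMax} that you assembled by hand, and leaves the $1/w\in L^1_{loc}(\Omega)$ check as ``easy to see'', which you spelled out correctly).
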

\proof
{\bf (i)} 
By theorem \ref{prop:Sona}, there exists  $u_* \in L^1_{loc}(\Omega) $ obtained as $u_*={\bf J}(u)$ for some $u \in {\cal H}_V(\Omega)$ and   $f \in L^1_{loc}(\Omega)$ with $\inf_B f>0$ for any ball $B \Subset \Omega$,  solving $-\Delta u_* + V u_* =f$ in the distribution sense. Indeed, $u_{*} > 0$ a.e.. By theorem \ref{AAP-distr}, we get $Q_V \succeq f/u_*$ in $\Omega$. This in particular shows that $w:= f/u_* \in L^1_{loc}(\Omega)$. It is easy to see that $1/w \in  L^1_{loc}(\Omega)$.  Therefore,   $-\Delta + V$ is globally $L^2$-subcritical.

\medskip

{\bf (ii)} 
Conversely, let $-\Delta + V$ be globally $L^2$-subcritical with the weight $w$. Then given any compact set $K \subset \Omega$, by an application of H\"older's inequality,
$$
\Big( \int_K 1/w \Big)^{\frac{1}{2}}  \sqrt{Q_V(\xi)} \geq  \Big( \int_K 1/w \Big)^{\frac{1}{2}} \Big (\int_K w |\xi|^2 \Big )^{\frac{1}{2}} \geq  \int_K |\xi|, \quad \forall \xi \in W_c^{1,\infty}(\Omega).
$$
This shows that $-\Delta + V$ is $L^1$-subcritical. \qed
\subsection{Equivalence of $L^1$ and  feeble $L^2$-subcriticality when $V$ is tame in $\Omega$ } \label{s9.2}
In this subsection we assume $V^+ \in L^p_{loc}(\Omega)$ for some $p>\frac{N}{2}$  \;($N \geq 2$). Mainly, we show that the validity of the imbedding ${\cal L}_V(\Omega) \hookrightarrow L^1_{loc}(\Omega)$ is determined (somewhat surprisingly)  by whether or not ${\cal L}_V(\Omega) \hookrightarrow L^1(K)$ for some  compact set $K \subset \Omega$ with positive measure. From this, we deduce the equivalence between the notions of $L^1$- and  feeble $L^2$- subcriticality.

\begin{proposition} \label{prop:AntiMaximum}
Let $\Omega_*$ be a bounded domain with smooth boundary and let $K_1$, $K_2$ be two disjoint  compact sets with positive measure contained in $\Omega_*$. 
Assume that $V^+ \in L^p(\Omega_*)$ for some $p>\frac{N}{2}$  \;($N \geq 2$). Then,  the unique weak solution $\psi_\alpha$ to the problem
\begin{equation}\label{eq:GowdaBis}
     -\Delta u + V^+ u =  \alpha \chi_{K_1}  - \chi_{K_2} , 
     \quad
     u \in H^1_0 (\Omega_*),\; \; \alpha \in \R,
\end{equation}
is  in the space $W^{2,p}(\Omega_*)$ for all $\alpha$ and is positive a.e. in $\Omega$ for all $\alpha >0$ large enough. In particular, $\psi_\alpha  \in C_0(\overline{\Omega}_*)$ and satisfies the equation \eqref{eq:GowdaBis} pointwise a.e. in $\Omega_*$.
\end{proposition}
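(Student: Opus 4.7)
The plan is to introduce $\phi_i \in H_0^1(\Omega_*)$ as the unique weak solution of $-\Delta \phi_i + V^+ \phi_i = \chi_{K_i}$ for $i=1,2$, whose existence and uniqueness are delivered by Corollary~\ref{cor:OdeOde} applied with $W = V^+$ and $f = \chi_{K_i} \in L^\infty(\Omega_*)$. By linearity the unique weak solution of \eqref{eq:GowdaBis} is
\[
   \psi_\alpha \;=\; \alpha\,\phi_1 - \phi_2 ,
\]
so the three claims (existence/uniqueness already handled, $W^{2,p}$ regularity, and positivity for large $\alpha$) are reduced to corresponding statements for $\phi_1$ and $\phi_2$.

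\textbf{Regularity and continuity.} I will first show $\phi_i \in L^\infty(\Omega_*)$ by a Moser/Stampacchia truncation argument applied to $-\Delta \phi_i + V^+ \phi_i = \chi_{K_i}$, which is available because $V^+ \in L^p(\Omega_*)$ with $p > N/2$ and the right-hand side is bounded. Once $\phi_i \in L^\infty$, the product $V^+ \phi_i$ lies in $L^p(\Omega_*)$, and rewriting the equation as
\[
   -\Delta \phi_i \;=\; \chi_{K_i} - V^+ \phi_i \;\in\; L^p(\Omega_*) ,
\]
standard Calderón--Zygmund theory on the smooth bounded domain $\Omega_*$ yields $\phi_i \in W^{2,p}(\Omega_*)$; hence $\psi_\alpha \in W^{2,p}(\Omega_*)$ for every $\alpha \in \mathbb{R}$. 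Since $2 - N/p > 0$ the Sobolev embedding gives $W^{2,p}(\Omega_*) \hookrightarrow C(\overline{\Omega_*})$, and combining with the vanishing trace inherited from $H_0^1(\Omega_*)$ places $\psi_\alpha$ in $C_0(\overline{\Omega_*})$; the equation then holds pointwise a.e.

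\textbf{Positivity for $\alpha$ large.} Let $G(\cdot,\cdot)$ be the Green's function for $-\Delta + V^+$ on $\Omega_*$ provided by Proposition~\ref{prop:Green}. The representation formula there gives
\[
   \phi_i(x) \;=\; \int_{K_i} G(x,y)\,dy , \qquad x \in \Omega_* ,
\]
and since $G>0$ on $\Omega_* \times \Omega_*$, both $\phi_1$ and $\phi_2$ are strictly positive throughout $\Omega_*$. The decisive input is the two-sided estimate $G(x,y) \sim \delta(x)$ uniformly for $y$ in compact subsets of $\Omega_*$ and $x$ near $\partial\Omega_*$, also from Proposition~\ref{prop:Green}: integrating over the compactly contained sets $K_i$, which have positive Lebesgue measure by assumption, yields constants $c_i, C_i > 0$ with $c_i\,\delta(x) \le \phi_i(x) \le C_i\,\delta(x)$ on a boundary neighbourhood of $\partial\Omega_*$. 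Hence $\phi_2/\phi_1 \le C_2/c_1$ near $\partial\Omega_*$, while on any compact subset of $\Omega_*$ the quotient is bounded by continuity and strict positivity of $\phi_1$. Setting $M := \sup_{\Omega_*} \phi_2/\phi_1 < \infty$, for every $\alpha > M$ one has
\[
   \psi_\alpha \;=\; \phi_1\bigl(\alpha - \phi_2/\phi_1\bigr) \;>\; 0 \quad \text{in } \Omega_* ,
\]
which proves the last assertion.

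\textbf{Main obstacle.} The heart of the argument is the sharp two-sided boundary behaviour $\phi_i(x) \sim \delta(x)$: in particular the lower bound on $\phi_1$ is essential, since without it the quotient $\phi_2/\phi_1$ could blow up at $\partial\Omega_*$ even though numerator and denominator both vanish there. This is exactly what the Green's function estimate in Proposition~\ref{prop:Green} provides, so once that result is in hand the positivity conclusion follows cleanly.
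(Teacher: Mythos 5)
Your argument is correct, and it buys the same conclusion as the paper's proof with only a slightly different routing of the key estimates. The paper also works through the Green's function of $-\Delta+V^{+}$, but instead of invoking the boundary asymptotics $G(x,y)\sim\delta(x)$ of Proposition~\ref{prop:Green}(ii) directly, it uses the two-sided comparability $c^{-1}G_0\le G\le c\,G_0$ with the Green's function $G_0$ of the Laplacian, writes $\psi_\alpha(x)=\alpha\int_{K_1}G(x,y)\,dy-\int_{K_2}G(x,y)\,dy$, and sandwiches $\psi_\alpha$ between $\frac{\alpha}{c}u_1-c\,u_2$ and $\alpha c\,u_1$, where $u_1,u_2\in C^{1}(\overline{\Omega}_*)$ solve the pure Laplace problems with data $\chi_{K_1}$, $-\chi_{K_2}$; the crucial boundary lower bound then comes from Hopf's lemma for $u_1$ ($u_1\gtrsim\delta$) combined with $|u_2|\lesssim\delta$, and boundedness of $\psi_\alpha$ (hence $W^{2,p}$ regularity by one application of Calder\'on--Zygmund) falls out of the same sandwich. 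Your version replaces this by the decomposition $\psi_\alpha=\alpha\phi_1-\phi_2$, a Stampacchia $L^\infty$ bound plus Calder\'on--Zygmund for the $W^{2,p}$ claim, and the stated $\delta(x)$-behaviour of $G$ itself for the quotient bound $\sup_{\Omega_*}\phi_2/\phi_1<\infty$; this is cleaner if one takes Proposition~\ref{prop:Green}(ii) at face value, whereas the paper's detour through $G_0$ and Hopf's lemma only needs the (more classical) pointwise comparability of the two Green's functions rather than their boundary asymptotics. Both hinge, as you correctly identify, on a genuine lower bound for the solution with data $\chi_{K_1}$ near $\partial\Omega_*$, so the essential content of the two proofs is the same.
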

\proof 
Let $G, G_0$ be the Green's functions of the operators $- \Delta + V^{+}$ and $-\Delta$ respectively  in $\Omega_*$.  It is well known that there exists a constant $c >0$ such that 
\begin{equation}
      c^{-1} \; G_0(x,y) \leq G(x,y) \leq c \; G_0(x,y) \text{ for all } x,y  \in \Omega_*. \notag
\end{equation} The solution 
$\psi_{\alpha}$ to~\eqref{eq:GowdaBis}
can be represented as follows
\begin{equation} 
      \psi_{\alpha} (x) = \alpha \int_{K_1} G(x,y) dy - \int_{K_2} G(x,y) dy . \notag
\end{equation}

Hence, we deduce 
\begin{equation}
\Big( \frac{\alpha}{c} \Big) \int_{K_1} G_0(x,y) d y
    -  c \int_{K_2} G_0(x,y) d y \leq     \psi_{\alpha} (x) \leq    c \; \alpha \int_{K_1} G_0(x,y) d y
         . \notag
\end{equation}

Therefore,
\begin{equation} \label{divya}
\Big( \frac{\alpha}{c} \Big) u_1(x)
    -  c  \; u_2(x)  \leq     \psi_{\alpha} (x) \leq  \alpha c \; u_1(x)
         \text{ in } \Omega_* 
\end{equation}

where $u_1, u_2 \in H^1_0(\Omega_*)$ solve the following problems:
$$
-\Delta u_1 =  \chi_{K_1} \quad  \text{ and } \quad -\Delta u_2 = -\chi_{K_2}.
$$
We note that  $u_1, u_2 \in C^1(\overline{\Omega}_*)$ and that by Hopf lemma, $\frac{\partial u_1}{\partial \nu}<0$ on $\partial \Omega_*$. Therefore,  we obtain from \eqref{divya} that $\psi_\alpha$ is bounded  in $\Omega_*$  for all $\alpha$ and can be made positive by choosing $\alpha>0$ large enough. By elliptic regularity, $\psi_\alpha \in W^{2,p}(\Omega_*) \subset  C(\overline{\Omega}_*)$ for all $\alpha$.
\qed
\begin{lemma} \label{lem:H2Extension}
Assume that  $p>\frac{N}{2}\;(N \geq 2)$ and $\Omega_* \Subset \Omega$ be a bounded domain with  smooth boundary,  
$u \in W^{2,p} (\Omega_*) \cap H_0^1 (\Omega_*) \cap C_0(\overline{ \Omega}_*)$ be a nonnegative function. Consider the extension
$$
     \tilde u  = 
     \left\{
     \begin{array} {ll}
        u   &\hbox{ in } \Omega_*, \\
        0       &\hbox{ in } \Omega \setminus \Omega_* .
     \end{array}
     \right.
$$
Then, $\gamma (\tilde u)  \in W^{2,1}_c (\Omega)$ for any function  $\gamma \in C^2 (\mathbb R)$ with $\gamma(t) = 0$ for all $t \leq 0$. 
\end{lemma}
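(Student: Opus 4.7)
The plan is to prove the result in two stages: first show $\gamma(u) \in W^{2,1}(\Omega_*)$ via the chain rule, then verify that the extension by zero across $\partial\Omega_*$ lies in $W^{2,1}(\Omega)$. The pivotal observation is that since $\gamma \in C^2(\mathbb{R})$ vanishes identically on $(-\infty,0]$, the values $\gamma(0)$, $\gamma'(0)$, $\gamma''(0)$ are all zero.

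First I would note that $\tilde u \in C(\Omega)\cap L^\infty(\Omega)$ with $\mathrm{supp}(\tilde u)\subseteq\overline{\Omega_*}\Subset\Omega$, so $\gamma(\tilde u)\in C_c(\Omega)$ with support in $\overline{\Omega_*}$. Then, on $\Omega_*$, I would apply the chain rule for Sobolev compositions (valid since $u\in W^{2,p}(\Omega_*)\cap L^\infty$ and $\gamma\in C^2$) to obtain
\[
\partial_i\gamma(u)=\gamma'(u)\,\partial_i u,\qquad
\partial_{ij}\gamma(u)=\gamma''(u)\,\partial_i u\,\partial_j u+\gamma'(u)\,\partial_{ij}u,
\]
and verify each term lies in $L^1(\Omega_*)$: the quadratic term via $|\nabla u|^2\in L^1$, which holds because the assumption $p>N/2$ implies $p\ge 2N/(N+2)$ for $N\ge 2$ and hence $\nabla u\in L^2(\Omega_*)$ by Sobolev embedding of $W^{1,p}$; the linear term via $\gamma'(u)\in L^\infty$ together with $\partial_{ij}u\in L^p\subset L^1$ on the bounded set $\Omega_*$. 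Thus $\gamma(u)\in W^{2,1}(\Omega_*)$.

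Next, since $u\in H_0^1(\Omega_*)\cap C_0(\overline{\Omega}_*)$ vanishes on $\partial\Omega_*$, the continuous composites $\gamma(u)$ and $\gamma'(u)$ both vanish on $\partial\Omega_*$ (using $\gamma(0)=\gamma'(0)=0$), so in the trace sense on $\partial\Omega_*$ we have $\gamma(u)=0$ and $\nabla\gamma(u)=\gamma'(u)\nabla u=0$. Against any test function $\varphi\in C_c^\infty(\Omega)$, a double integration by parts on $\Omega_*$ then yields
\[
\int_{\Omega}\gamma(\tilde u)\,\partial_{ij}\varphi\,dx
=\int_{\Omega_*}\partial_{ij}\gamma(u)\,\varphi\,dx,
\]
with all boundary integrals vanishing thanks to these trace identities. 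This identifies the distributional second derivatives of $\gamma(\tilde u)$ on $\Omega$ with $\partial_{ij}\gamma(u)$ extended by zero, yielding $\gamma(\tilde u)\in W^{2,1}_c(\Omega)$. The main obstacle is exactly this last step, ensuring that the zero-extension does not generate boundary Dirac-type singularities in the second derivatives (which would only place $\gamma(\tilde u)$ in $W^{1,p}$, not $W^{2,1}$); it is resolved precisely by the simultaneous vanishing of $\gamma(u)$ and $\gamma'(u)\nabla u$ on $\partial\Omega_*$, which is the only place where the hypothesis $\gamma\equiv 0$ on $(-\infty,0]$ is used in an essential way.
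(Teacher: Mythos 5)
Your proposal is correct and follows essentially the same route as the paper: chain rule for $\gamma(u)$ on $\Omega_*$, vanishing of the traces of $\gamma(u)$ and of $\gamma'(u)\,\partial_i u$ on $\partial \Omega_*$ (the only place where $\gamma \equiv 0$ on $(-\infty,0]$ is used), and integration by parts against $\partial_{ij}\varphi$ to exclude boundary contributions to the second derivatives of the zero extension. The sole difference is one of detail: where you assert directly that the trace of $\gamma'(u)\,\partial_i u$ vanishes because $\gamma'(u)$ is continuous and zero on $\partial\Omega_*$ while $\partial_i u$ has an $L^p(\partial\Omega_*)$ trace, the paper justifies this by approximating $u$ in $W^{2,p}(\Omega_*)$ by smooth $\phi_n$ and using the trace estimate $\int_{\partial\Omega_*} |\gamma'(\phi_n)\,\partial_i \phi_n| \leq \max_{\partial\Omega_*} |\gamma'(\phi_n)| \int_{\partial\Omega_*} |\partial_i \phi_n| = o_n(1)$.
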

\proof
 Note that  $\gamma(u), \gamma^{\;\prime}(u) \in H^1_0(\Omega_*)$ and hence $\gamma(\tilde{u}) \in H^1_c(\Omega)$. Fix $1 \leq i,j \leq N$. Choose now a sequence $\{\phi_n\} \subset C^{\infty}(\overline{\Omega}_*)$ such that
$$
\phi_n \to u \text{ in } W^{2,p}(\Omega_*).
$$
Since $W^{2,p}(\Omega_*) \hookrightarrow H^1(\Omega_*)$ and $W^{2,p}(\Omega_*) \hookrightarrow  C(\overline{\Omega}_*)$, we additionally  have 
$$
\phi_n \to u \text{ in } H^1(\Omega_*) \cap C(\overline{\Omega}_*).
$$
It is a simple calculation to check that 
\begin{eqnarray}\label{mum}
\gamma(\phi_n)  &\to & \gamma(u)  \text{ in }  H^1(\Omega_*) \cap C(\overline{\Omega}_*), \notag \\
\gamma^{\;\prime }(\phi_n)  &\to & \gamma^{\;\prime}(u)  \text{ in }  H^1(\Omega_*) \cap C(\overline{\Omega}_*) \text{ and } \notag \\
\gamma^{\;\prime}(\phi_n) \partial_i \phi_n  &\to&  \gamma^{\;\prime}(u) \partial_i u \text{ in }  W^{1,1}(\Omega_*). 
\end{eqnarray}
From  the facts that $\phi_n \to 0$ uniformly on $\partial \Omega_*$ and $\partial_i\phi_n \to \partial_i u$ in $W^{1,p}(\Omega_*)$ we obtain using the Trace embedding that
$$
\int_{\partial \Omega_*} |\gamma^{\;\prime}(\phi_n) \partial_i \phi_n | \leq \max_{\partial \Omega_*}|\gamma^{\;\prime}(\phi_n) | \int_{\partial \Omega_*} |\partial_i \phi_n | = o_n(1).
$$
From \eqref{mum}, the above estimate and the Trace embedding, we obtain that $\gamma^{\;\prime}(u) \partial_i u =0$ on $\partial \Omega_*$. That is,
$$ \gamma(u) \in H^1_0(\Omega) \text{ and } \partial_i\gamma(u)= \gamma^{\;\prime} (u) \partial_i u \in W_0^{1,1}(\Omega_*).$$  
Therefore, for  a test function $\xi \in C_c^{\infty} (\Omega)$, 
 $$
    \int_{\Omega} \gamma (\tilde u) \, \partial_{ij} \xi =  \int_{\Omega_*} \gamma (u) \, \partial_{ij} \xi
    = - \int_{\Omega_*} \gamma^{\;\prime} (u) \partial_i u \, \partial_j \xi 
    = \int_{\Omega_*}  \partial_j (\gamma^{\;\prime} (u) \partial_i u) \, \xi .
$$
 This shows that the second distributional derivative of   $\gamma (\tilde u)$ is clearly an $L^1(\Omega)$ function with support in $\Omega_*$. \qed

\begin{proposition}
Assume $V^+ \in L^p_{loc}(\Omega)$ for some $p>\frac{N}{2}$  \;($N \geq 2$). Let $K_1, K_2$ be two disjoint  compact subsets of $\Omega$. Then, there exists 
a nonnegative function $ \Psi \in W^{2,1}_c(\Omega) \cap H_c^1 (\Omega) \cap C_c(\Omega)$   and a constant $C>0$ such that the following inequality is satisfied pointwise a.e. in $\Omega$:
$$
    -\Delta \Psi +V^+\Psi \leq C \chi_{K_1} - \chi_{K_2} .
$$
\end{proposition}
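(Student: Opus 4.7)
The plan is to take the anti-maximum principle solution from Proposition~\ref{prop:AntiMaximum}, compose it with a convex $C^2$ truncation that kills it near $\partial \Omega_*$, and extend by zero. First, I would fix a bounded smooth open set $\Omega_*$ with $K_1 \cup K_2 \subset \Omega_* \Subset \Omega$ and invoke Proposition~\ref{prop:AntiMaximum} to obtain, for every large $\alpha$, the solution $\psi_\alpha \in W^{2,p}(\Omega_*) \cap H^1_0(\Omega_*) \cap C_0(\overline{\Omega_*})$ of $-\Delta u + V^+ u = \alpha \chi_{K_1} - \chi_{K_2}$ that is a.e. positive. By continuity $\psi_\alpha \geq 0$ everywhere on $\overline{\Omega_*}$. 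Splitting $\psi_\alpha = \alpha v_1 - v_2$ with $v_i \in H^1_0(\Omega_*)$ the unique solution of $-\Delta v_i + V^+ v_i = \chi_{K_i}$, a standard strong maximum principle (applicable because $V^+ \in L^p_{loc}(\Omega)$ with $p > N/2$) yields $v_1 > 0$ throughout $\Omega_*$, so $\inf_{K_2} v_1 > 0$. Choosing $\alpha$ large enough I set $\eta := \inf_{K_2} \psi_\alpha > 0$ and $M := \|\psi_\alpha\|_{L^\infty(\Omega_*)} < \infty$.

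Next I would pick $\gamma \in C^2(\mathbb{R})$ convex, nondecreasing, with $\gamma(t) = 0$ for $t \leq 0$ and $\gamma'(\eta) \geq 1$; for instance $\gamma(t) = (t_+)^3/(3\eta^2)$ works. Define $\Psi$ as the zero extension of $\gamma(\psi_\alpha)$ to $\Omega$. Lemma~\ref{lem:H2Extension} immediately gives $\Psi \in W^{2,1}_c(\Omega)$. Moreover, since $\psi_\alpha$ is continuous with $\psi_\alpha|_{\partial \Omega_*} = 0$, the set $\{\psi_\alpha > 0\}$ is a relatively compact open subset of $\Omega_*$, so $\gamma(\psi_\alpha)$ vanishes in a neighborhood of $\partial \Omega_*$; this gives $\Psi \in H^1_c(\Omega) \cap C_c(\Omega)$ as well. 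A pointwise a.e. chain-rule computation on $\Omega_*$ gives
\begin{equation*}
-\Delta \Psi + V^+ \Psi \,=\, -\gamma''(\psi_\alpha)|\nabla \psi_\alpha|^2 + \gamma'(\psi_\alpha)\bigl(\alpha \chi_{K_1} - \chi_{K_2}\bigr) + V^+\bigl(\gamma(\psi_\alpha) - \gamma'(\psi_\alpha)\psi_\alpha\bigr).
\end{equation*}
Convexity of $\gamma$ makes the first term nonpositive, and the supporting-line inequality $\gamma(t) \leq \gamma'(t) t$ for $t \geq 0$ (valid for convex $\gamma$ with $\gamma(0) = 0$), combined with $V^+ \geq 0$, makes the last term nonpositive.

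It remains to dominate the middle term: since $\gamma'$ is nondecreasing, $\gamma'(\psi_\alpha) \leq \gamma'(M)$ everywhere, while $\gamma'(\psi_\alpha) \geq \gamma'(\eta) \geq 1$ on $K_2$. Thus with $C := \alpha\,\gamma'(M)$ the inequality $-\Delta \Psi + V^+ \Psi \leq C \chi_{K_1} - \chi_{K_2}$ holds a.e. in $\Omega_*$; since $\Psi \equiv 0$ on $\Omega \setminus \overline{\Omega_*}$ and $K_1 \cup K_2 \subset \Omega_*$, both sides vanish there, so the inequality in fact holds a.e. on all of $\Omega$. The delicate step is securing the pointwise lower bound $\psi_\alpha \geq \eta > 0$ on $K_2$, since Proposition~\ref{prop:AntiMaximum} only provides positivity a.e.; this upgrade is exactly what the strong maximum principle applied to the auxiliary potential $v_1$ supplies (and can alternatively be obtained from the Green's function representation $v_1(x) = \int_{K_1} G(x,y)\,dy$, using positivity and continuity of $G$ away from the diagonal on the disjoint compact set $K_2 \times K_1$).
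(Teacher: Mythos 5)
Your proposal follows essentially the same route as the paper's proof: the anti-maximum solution $\psi_\alpha$ of Proposition~\ref{prop:AntiMaximum}, the convex truncation $\gamma(t)=(t_+)^3$ (up to your normalization by $\eta$, which simply replaces the paper's final rescaling $\Psi:=c\,\overline\psi$), Lemma~\ref{lem:H2Extension}, and the identical chain-rule sign analysis; your derivation of $\inf_{K_2}\psi_\alpha>0$ via $\psi_\alpha=\alpha v_1-v_2$ only re-proves what the pointwise bound \eqref{divya} in the proof of Proposition~\ref{prop:AntiMaximum} already supplies. One small slip worth fixing: $\{\psi_\alpha>0\}$ is \emph{not} relatively compact in $\Omega_*$ (for large $\alpha$ one has $\psi_\alpha>0$ on all of $\Omega_*$), so $\gamma(\psi_\alpha)$ need not vanish near $\partial\Omega_*$; the membership $\Psi\in H^1_c(\Omega)\cap C_c(\Omega)$ nevertheless holds because $\gamma(\psi_\alpha)\in H^1_0(\Omega_*)\cap C_0(\overline{\Omega}_*)$ and its zero extension is supported in the compact set $\overline{\Omega}_*\subset\Omega$.
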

\proof
Choose a bounded domain $\Omega_* \Subset \Omega$ with smooth boundary such that $K_1$, $K_2$ 
are compactly contained in $\Omega_*$. 
Applying proposition~\ref{prop:AntiMaximum}, we choose a positive function
$\psi := \psi_{\alpha} \in W^{2,p}(\Omega_*) \cap H^1_0 (\Omega_*) \cap C_0(\overline{\Omega}_*)$ solving problem~\eqref{eq:GowdaBis}, and extend it by zero on 
$\Omega \setminus \Omega_*$ (while still referring to this extension as $\psi$). 

Let $\gamma(s) = \chi_{(0,\infty)} s^3$. Then $\gamma \in C^2 (\mathbb R)$ and 
$$
   \gamma\;'' \geq 0 \text{ for all } s \in \R;
   \quad
   \gamma\;'  (s) >0 \hbox{ and } \gamma(s) - \gamma^{\;\prime}(s)s < 0 \hbox{ for } s > 0.
$$
Define $ \overline{\psi} := \gamma(\psi)$. Clearly $\overline \psi \geq 0$ and by Lemma~\ref{lem:H2Extension},   
$\overline \psi \in W^{2,1}_c (\Omega)$. Hence, for a.e. in $\Omega_*$,
\begin{eqnarray}   
  - \Delta \overline{\psi} +  V^+ \overline{\psi}   &= &
  - \gamma^{\;\prime\prime}(\psi)|\nabla \psi |^2 - \gamma^{\; \prime}(\psi ) \Delta \psi+ V^+ \gamma(\psi) \notag \\  
& = &- \gamma^{\; \prime\prime}(\psi)|\nabla \psi|^2 + 
  \gamma^{\; \prime}(\psi) \big(- \Delta \psi + V^+ \psi \big) 
  + V^+ \big(\gamma(\psi)-\gamma^{\;\prime} (\psi)\psi \big) \notag \\
  &\leq&  \gamma^{\;\prime}(\psi) \big(- \Delta \psi+ V^+ \psi \big). \label{kej}
\end{eqnarray}

Since $\psi$ is continuous in $\overline{ \Omega}_*$ and positive in $\Omega_*$, we have for some positive constant $c$,
$$
    \gamma \;'(\psi) \leq c \, \hbox{ in } K_1
    \quad \hbox{ and }
    \gamma\;' (\psi)  \geq \, \frac{1}{c} \hbox{ in } K_2.
$$
Since $\psi$ solves~\eqref{eq:GowdaBis}, we obtain from \eqref{kej},
$$
     - \Delta \overline{\psi} +  V^+ \;\overline{\psi}   \leq \alpha c \; \chi_ {K_1}- \Big(\frac{1}{c}\Big) \chi_ {K_2} \text{ a.e. in } \Omega_*.
$$
Hence, $\Psi:=c \; \overline{\psi}$ is the required function. 
\qed

\medskip

\begin{lemma} [Non-Vanishing of null sequences]\label{prop:Vanishing} 
Assume  $V^{+}  \in L^p_{loc}(\Omega)$ for some $p>\frac{N}{2}$  \;($N \geq 2$), $V^- \in L^1_{loc}(\Omega)$ and $Q_V (\Omega) \succeq 0$. Then, for any sequence  $\{\xi_n \} \subset W_c^{1,\infty} (\Omega)$ such that  $Q_V (\xi_n) \to 0$ we have the following alternative
\vspace{-2mm}
\begin{enumerate}
\item[{\rm \bf (i)}] 
either
$\displaystyle \, 
      \int_{K} |\xi_n| \to 0 \hbox{ for any compact set } K \hbox{ with positive measure}, 
$
\item[{\rm \bf (ii)}] 
or,
$\displaystyle \, 
       \inf_{n } \int_{K} |\xi_n| >0  \hbox{ for any compact set } K \hbox{ with positive measure} .
$
\end{enumerate}
\end{lemma}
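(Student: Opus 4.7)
The plan is to exploit the anti-maximum inequality from the preceding proposition to derive a coupling estimate between the $L^1$-norms of $\xi$ over any two disjoint compact sets of positive measure, with an error term controlled by $\sqrt{Q_V(\xi)}$. Such a coupling rigidly ties the behaviour of $\int_K|\xi_n|$ across different compacts $K$ and forces the stated dichotomy.

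First, I would establish the key inequality: for any pair of disjoint compact sets $K_1, K_2 \subset \Omega$ of positive measure, there exist constants $C_*, D_* > 0$ (depending on $K_1, K_2, V$) with
\[
   \int_{K_2}|\xi| \,\le\, C_* \int_{K_1}|\xi| \,+\, D_* \sqrt{Q_V(\xi)},
   \qquad \forall\, \xi \in W_c^{1,\infty}(\Omega). \qquad (\star)
\]
Invoke the preceding proposition to find a nonnegative $\Psi \in W^{2,1}_c(\Omega) \cap H^1_c(\Omega) \cap C_c(\Omega)$ and $C_0 > 0$ satisfying $-\Delta \Psi + V^+ \Psi \le C_0 \chi_{K_1} - \chi_{K_2}$ a.e.\ in $\Omega$. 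Multiplying by $|\xi| \in W_c^{1,\infty}(\Omega)$ and integrating, the integration by parts is legitimate ($\Psi \in W^{2,1}_c$, $|\xi| \in W_c^{1,\infty}$) and all potential integrals are finite ($V^{\pm} \in L^1_{loc}$, and $\Psi, |\xi|$ bounded with compact support). Rewriting $\int_\Omega \nabla \Psi \cdot \nabla |\xi| + V^+\Psi|\xi|$ as $Q_V(\Psi,|\xi|) + \int_\Omega V^-\Psi|\xi|$ gives
\[
   \int_{K_2}|\xi| \,+\, \int_\Omega V^-\Psi|\xi| \,+\, Q_V(\Psi,|\xi|) \,\le\, C_0 \int_{K_1}|\xi|.
\]
Discarding the nonnegative middle term and applying the Cauchy--Schwarz inequality $|Q_V(\Psi,|\xi|)| \le \sqrt{Q_V(\Psi)}\,\sqrt{Q_V(|\xi|)}$ (justified by mollifying $\Psi$ to $C_c^\infty$ approximants on which $Q_V \succeq 0$ is known, and passing to the limit), together with $Q_V(|\xi|) = Q_V(\xi)$ from Lemma~\ref{lem:Lip}(ii) and the finiteness of $Q_V(\Psi)$, yields $(\star)$ with $D_* = \sqrt{Q_V(\Psi)}$.

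Next, assume alternative (i) fails. Then there exist a compact $K^* \subset \Omega$ of positive measure, $\delta > 0$, and a subsequence (relabelled $\{\xi_n\}$) with $\int_{K^*}|\xi_n| \ge \delta$ for all $n$. For any compact $K \subset \Omega$ of positive measure, choose a further compact $K_0 \subset \Omega$ of positive measure disjoint from $K \cup K^*$ (possible since $\Omega$ is open and $K \cup K^*$ is compact). Apply $(\star)$ to the pairs $(K_0, K^*)$ and $(K, K_0)$ and chain the two estimates: one obtains constants $C', D' > 0$ with
\[
   \delta \,\le\, \int_{K^*}|\xi_n| \,\le\, C' \int_{K}|\xi_n| \,+\, D' \sqrt{Q_V(\xi_n)},
   \qquad \forall\, n.
\]
Since $Q_V(\xi_n) \to 0$, we conclude $\int_K|\xi_n| \ge \delta/(2C')$ for all sufficiently large $n$, hence $\inf_n \int_K|\xi_n| > 0$, proving (ii) along the subsequence.

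The main anticipated obstacle is the clean derivation of $(\star)$: justifying the integration by parts against the $W_c^{1,\infty}$ test function $|\xi|$---which relies on the refined regularity $\Psi \in W^{2,1}_c(\Omega) \cap H^1_c(\Omega) \cap C_c(\Omega)$ furnished by the preceding proposition---and the Cauchy--Schwarz bound on $Q_V(\Psi,|\xi|)$ in the presence of an only locally integrable $V^-$. The remainder of the proof amounts to a routine chaining of $(\star)$ across three well-chosen disjoint compact sets of positive measure.
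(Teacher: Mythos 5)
Your argument is correct in substance and rests on the same mechanism as the paper's proof: both take the auxiliary function $\Psi$ from the preceding proposition, pair the pointwise inequality $-\Delta\Psi+V^{+}\Psi\le C\chi_{K_1}-\chi_{K_2}$ with a nonnegative Lipschitz test function, and control the resulting cross term by the nonnegativity of $Q_V$ on $H^1_c(\Omega)\cap C_c(\Omega)$ (corollary \ref{aba}). The paper exploits that nonnegativity by expanding $0\le Q_V(\xi_n+t\Psi)$ and taking $t>0$ small, which is exactly the discriminant argument hidden in your Cauchy--Schwarz bound $|a_V(\Psi,|\xi|)|\le\sqrt{Q_V(\Psi)}\,\sqrt{Q_V(\xi)}$, so the two computations are essentially equivalent. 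What your packaging buys is the uniform coupling estimate $(\star)$, valid for every $\xi$, together with the chaining through a third compact set to remove the disjointness restriction; this yields directly the form in which the lemma is actually used (corollary \ref{knock}) and is arguably cleaner than the paper's contradiction argument. The one point where you fall short of the literal statement is the last step: you obtain $\int_K|\xi_n|\ge\delta/(2C')$ only for large $n$ and only along the subsequence on which (i) fails, not $\inf_n\int_K|\xi_n|>0$ for the whole sequence. However, the paper's own proof has the same scope---its contradiction hypothesis is vanishing on $K_1$ and $\inf>0$ on $K_2$ for the \emph{same} sequence---and the full-sequence dichotomy as literally written cannot be proved at all (interleaving a genuine null sequence with $\phi/n$ defeats both alternatives simultaneously), so this residual discrepancy reflects the looseness of the lemma's formulation rather than a gap specific to your argument.
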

\begin{proof} 
We argue by contradiction.  Let $K_1, K_2$ be two disjoint compact subsets of $\Omega$ with positive measure such that we can find a sequence $\{\xi_n\}$ satisfying
$$
Q_V (\xi_n) \to 0,\; \int_{K_1} |\xi_n | \to 0 \quad  \hbox{ and }   \quad  \inf_{n } \int_{K_2} |\xi_n| >0.
$$
Since $Q_V(\xi_n)=Q_V(|\xi_n|)$, without loss of generality we may assume that $\xi_n$ is nonnegative for all $n$.
Consider the function $\Psi$ given by the previous proposition for the pair $K_1,K_2$. Let $\Omega_n \Subset \Omega$ contain $K_1,K_2$ and the supports of $\Psi$ and $\xi_n$. Then, by corollary \ref{aba} we can expand for any $t \in \R$,

\begin{eqnarray}
\|T(\xi_n + t \Psi) \|_{V,\Omega_n} &=& \int_{\Omega} \Big\{ |\nabla(\xi_n + t \Psi)|^2 + V (\xi_n + t \Psi)^2 \Big\} \notag \\
     &=& Q_V(\xi_n) + t^2 \|T(\Psi) \|_{V,\Omega_n} + 2 t \int_{\Omega} \xi_n ( - \Delta \Psi + V \Psi)  \nonumber\\
     &\leq& 
     o_n(1) + t^2  \|T(\Psi) \|_{V,\Omega_n} + 2 t \int_{\Omega_n} \xi_n ( - \Delta \Psi + V^+ \Psi)  \label{eq:Piscataway}
\end{eqnarray}
By the property of $\Psi$, we note that:
$$
   \int_{K_1} \xi_n ( - \Delta \Psi + V^+ \Psi)   \leq  o_n(1),
\quad
\sup_n   \int_{K_2} \xi_n ( - \Delta \Psi + V^+ \Psi)   < 0,
$$
and
$$
  \int_{\Omega_*  \setminus (K_1 \cup K_2)} \xi_n ( - \Delta \Psi + V^+ \Psi)   \leq 0 .
$$
Hence, for $t>0$ small enough and a fixed large  $n$, we deduce that the right-hand side of ~\eqref{eq:Piscataway} is strictly negative, a contradiction.
\end{proof}

\medskip

The above proposition implies the following alternative.
\begin{corollary}\label{knock}
 Let  $V^+  \in L^p_{loc}(\Omega)$ for some $p>\frac{N}{2}$  \;($N \geq 2$), $V^- \in L^1_{loc}(\Omega)$ and $Q_V \succeq 0$ in $\Omega$. 
Then, 
\begin{enumerate}
\item[{\rm \bf (i)}]
either  ${\cal L}_V(\Omega)  \hookrightarrow L^1_{loc} (\Omega)$,
 \item[{\rm \bf (ii)}]
or, the restriction map ${\cal L}_V(\Omega)  \to L^1 (K)$ is discontinuous for any compact set $K$ with positive measure. 
\end{enumerate}
\end{corollary}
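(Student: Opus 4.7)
The plan is to derive the corollary from Lemma~\ref{prop:Vanishing} by contrapositive: if the restriction map ${\cal L}_V(\Omega) \to L^1(K_0)$ is continuous for even a single compact set $K_0 \subset \Omega$ of positive measure, then the global embedding ${\cal L}_V(\Omega) \hookrightarrow L^1_{loc}(\Omega)$ must hold. I will assume the global embedding fails, extract a sequence witnessing that failure on some compact set $K'$, and then propagate the non-convergence from $K'$ to $K_0$ using the dichotomy supplied by the Lemma.

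First I would fix a compact set $K_0 \subset \Omega$ of positive measure for which the restriction ${\cal L}_V(\Omega) \to L^1(K_0)$ is assumed continuous, and suppose ${\cal L}_V(\Omega) \not\hookrightarrow L^1_{loc}(\Omega)$. This assumption supplies some compact $K' \subset \Omega$ of positive measure on which the restriction map is discontinuous, and by rescaling an appropriate test-function sequence I would produce $\{\xi_n\} \subset W_c^{1,\infty}(\Omega)$ satisfying $Q_V(\xi_n) \to 0$ while $\int_{K'}|\xi_n| \geq 1$ for every $n$.

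Next I would apply Lemma~\ref{prop:Vanishing} to the sequence $\{\xi_n\}$. Alternative (i) of that lemma would force $\int_{K'}|\xi_n| \to 0$, which is ruled out by construction; hence alternative (ii) must hold, giving $\inf_n \int_K |\xi_n| > 0$ for \emph{every} compact set $K \subset \Omega$ of positive measure. Specialising to $K = K_0$ then yields a $Q_V$-null sequence whose $L^1(K_0)$-norm stays bounded away from zero, contradicting the continuity of the restriction map ${\cal L}_V(\Omega) \to L^1(K_0)$. This completes the argument.

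The entire analytic content has already been absorbed into Lemma~\ref{prop:Vanishing}, whose proof is driven by the anti-maximum-principle function $\Psi$ constructed via Proposition~\ref{prop:AntiMaximum} and Lemma~\ref{lem:H2Extension}. Consequently the proof of Corollary~\ref{knock} is a direct translation of that dichotomy into the language of restriction maps, and there is no further technical obstacle; the only care needed is to verify that the $\{\xi_n\}$ produced from the discontinuity on $K'$ indeed satisfies the hypotheses of the Lemma (namely $Q_V(\xi_n) \to 0$), which is immediate from the definition of continuity failure.
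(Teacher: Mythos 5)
Your argument is correct and is exactly the intended deduction: the paper states Corollary~\ref{knock} as an immediate consequence of Lemma~\ref{prop:Vanishing}, and your contrapositive (normalize a sequence witnessing failure of the $L^1_{loc}$ embedding so that $Q_V(\xi_n)\to 0$ and $\int_{K'}|\xi_n|=1$, then invoke the dichotomy to contradict continuity of the restriction to $L^1(K_0)$) is the same route, with the rescaling legitimized by the positive definiteness of $Q_V$ from Lemma~\ref{lem:Lip}(iii).
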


\medskip

Finally, we can show the equivalence of the two notions of criticality when $V$ is tame: 

\begin{proposition} \label{prop:Sonata}
Assume $V \in L^1_{loc}(\Omega)$ and $Q_V \succeq 0$ in $\Omega$.
\vspace{-2mm}
\begin{enumerate}
 \item[{\rm \bf (i)}] 
 Let $V^+  \in L^p_{loc}(\Omega)$ for some $p>\frac{N}{2}$  \;($N \geq 2$).  Then,  $-\Delta + V$ is $L^1$-subcritical if it is feebly $L^2$-subcritical.
 \item[{\rm \bf (ii)}] 
 Suppose $V $ is tame in $\Omega$. Then,  $-\Delta + V$ is globally $L^2$-subcritical in $\Omega$  if  and only if 
the restriction map ${\cal L}_V(\Omega)   \to L^1 (K)$ is continuous for some compact set $K$ with positive measure.
\item[{\rm \bf (iii)}] 
Suppose $V $ is tame in $\Omega$. Then,  $-\Delta + V$ is globally $L^2$-subcritical in $\Omega$  if and only if $-\Delta + V$ is feebly $L^2$-subcritical in $\Omega$. 
\end{enumerate}
\end{proposition}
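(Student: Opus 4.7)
The plan is to combine the dichotomy from Corollary \ref{knock} (available once $V^+ \in L^p_{loc}$ for some $p > N/2$) with Proposition \ref{basa} (which equates $L^1$- and global $L^2$-subcriticalities for balanced potentials). With these two tools in hand, the statement becomes essentially bookkeeping.

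For part \textbf{(i)}, feeble $L^2$-subcriticality gives ${\cal L}_V(\Omega) \hookrightarrow L^2(K)$ for some compact $K$ of positive measure; since $K$ has finite measure, H\"older's inequality makes the restriction map ${\cal L}_V(\Omega) \to L^1(K)$ continuous. Corollary \ref{knock} then excludes its second alternative, and we conclude ${\cal L}_V(\Omega) \hookrightarrow L^1_{loc}(\Omega)$, i.e., $L^1$-subcriticality. Note that this step only requires $V^+ \in L^p_{loc}$, not balancedness.

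For part \textbf{(ii)}, the ``only if'' direction uses Proposition \ref{basa}(ii): global $L^2$-subcriticality implies $L^1$-subcriticality, and hence continuity of ${\cal L}_V(\Omega) \to L^1(K)$ for every compact $K \subset \Omega$. Conversely, suppose the restriction to $L^1(K)$ is continuous for some $K$ of positive measure. Since $V$ is tame we have $V^+ \in L^p_{loc}$ with $p > N/2$, so Corollary \ref{knock} again rules out its second alternative and yields $L^1$-subcriticality. Because $V$ is tame (in particular balanced), Proposition \ref{basa}(i) then upgrades this to global $L^2$-subcriticality.

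Part \textbf{(iii)} follows from what has already been assembled. The forward implication ``global $\Rightarrow$ feeble'' is among the elementary implications listed right after Definitions~\ref{def:Critical}--\ref{def:weak Critical}. For the converse, feeble $L^2$-subcriticality yields $L^1$-subcriticality by part (i), and then Proposition \ref{basa}(i) promotes this to global $L^2$-subcriticality using balancedness (guaranteed by tameness).

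The main (already resolved) obstacle sits in Corollary \ref{knock}, which in turn rests on the non-vanishing Lemma \ref{prop:Vanishing} and the anti-maximum/truncation construction underlying Proposition \ref{prop:AntiMaximum}. Once that dichotomy is in place, the present proposition is straightforward; the only care needed is tracking which hypothesis---$V^+ \in L^p_{loc}$ alone versus full tameness---is invoked at each step, since part (i) does not require balancedness whereas parts (ii)--(iii) do.
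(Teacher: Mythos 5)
Your proposal is correct and follows essentially the same route as the paper: part (i) via H\"older on the finite-measure set $K$ plus the dichotomy of Corollary \ref{knock}, and parts (ii)--(iii) by combining Corollary \ref{knock} with Proposition \ref{basa} (tameness supplying both $V^+\in L^p_{loc}$, $p>\tfrac N2$, and balancedness). Your bookkeeping of which hypothesis is needed where matches the paper's usage exactly.
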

\begin{proof}
{\rm \bf (i)} 
Let  $Q_V \succeq c \chi_K$  in $\Omega$ for some constant $c>0$ and some compact set $K \subset \Omega$ with positive measure.   By an application of H\"older's inequality,
$$
 c^{-\frac{1}{2}}|K|^{\frac{1}{2}}\sqrt{Q_V(\xi)} \geq |K|^{\frac{1}{2}}\Big (\int_K  \xi^2 \Big )^{\frac{1}{2}}  \geq \int_K |\xi|, \quad \forall \xi \in W_c^{1,\infty}(\Omega).
$$
Appealing to corollary \ref{knock}, we obtain that ${\cal L}_V(\Omega) \hookrightarrow L^1_{loc}(\Omega)$.

\medskip

{\rm \bf (ii)} 
follows from  corollary \ref{knock} and proposition \ref{basa}.

\medskip

{\rm \bf (iii)} 
follows from (i) and (ii) above.
\end{proof}

\section{AAP Principle for subcritical and critical operators with tame potentials  } \label{s10}

\begin{remark}\label{host}
In view of the  proposition \ref{prop:Sonata} we will simply use the terminology ``subcritical" (``critical") instead of $L^1$, globally $L^2$ or feebly $L^2$-subcritical (critical) whenever   $V$ is tame.  
\end{remark}

\medskip

We can make more precise the distributional AAP Principle given  in theorem \ref{thm:chitra}  by considering separately the two subclasses of non-negative operators, namely the class of sub-critical and the critical operators. Indeed, we  formulate this in lemmas \ref{lem:adiga1} and \ref{lem:char crit} in terms of the Riesz measures. For tame potentials, a neat version is given in theorems \ref{fen} and \ref{fenu}. Since we will use the equivalence between the two criticalities stated in proposition \ref{prop:Sonata}, we will often  restrict $V$ to be tame in $\Omega$.

\subsection{Distributional AAP Principle for subcritical operators in terms of Riesz measures} \label{s10.1}

 In the light of  proposition \ref{prop:Sonata}, we first restate the results in section \ref{s8} when $V$ is tame in $\Omega$:
\begin{theorem}\label{thm:AAP-sub}
Assume $V \in L^1_{loc}(\Omega), V$ is tame in $\Omega$. Then $-\Delta + V$ is subcritical in $\Omega$  if and only if there exists a non-negative distribution solution $u_*$ of the equation $-\Delta u+ Vu=f$ where $f$ is some nonnegative locally integrable function satisfying  ${\it \inf_K f > 0}$ for some compact set $K \subset  \Omega$ with positive measure.
\end{theorem}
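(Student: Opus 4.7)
The plan is to use the two main technical results already in the paper --- Theorem \ref{AAP-distr} (the distributional AAP inequality) and the equivalence of the three notions of subcriticality for tame potentials (Proposition \ref{prop:Sonata}) --- together with a short measure-theoretic localization.

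For the forward implication, I would simply observe that since $V$ is tame it is in particular balanced, and Remark \ref{host} / Proposition \ref{prop:Sonata} identify subcriticality with $L^1$-subcriticality. Theorem \ref{prop:Sona}(i) then directly produces a pair $(u_*,f)$ with $u_* = {\bf J}(u) \geq 0$ a distributional solution of $-\Delta u_* + V u_* = f$ and $\inf_B f > 0$ for every ball $B \Subset \Omega$. Taking $K = \overline{B}$ for any such ball gives a compact set of positive measure on which $\inf_K f > 0$, which is exactly the weaker conclusion required.

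The reverse implication is the substantive direction. Suppose $u_* \geq 0$ is a nonnegative distributional solution of $-\Delta u + V u = f$ in $\Omega$ with $\inf_K f > 0$ on some compact $K$ of positive measure. Then $u_* \not\equiv 0$ (else $f \equiv 0$), so the strong maximum principle (Theorem \ref{thm:StrongMax}) applied to $-\Delta u_* + V^+ u_* = f + V^- u_* \geq 0$ forces $u_* > 0$ a.e. in $\Omega$. Hence $u_* \in \mathcal{C}_V(\Omega)$ and Theorem \ref{AAP-distr} yields the weighted inequality
\begin{equation*}
   Q_V(\xi) \;\geq\; \int_\Omega \frac{f}{u_*}\,\xi^2\,dx
   \qquad \forall\, \xi \in W^{1,\infty}_c(\Omega).
\end{equation*}
To convert this into feeble $L^2$-subcriticality, I would exploit that $u_* \in L^1_{loc}(\Omega)$ (part of the very definition of a distributional solution, Definition \ref{dusera}). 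Chebyshev's inequality gives $|\{x \in K : u_*(x) > M\}| \leq \|u_*\|_{L^1(K)}/M$, so for $M$ large the set $E := \{u_* \leq M\} \cap K$ has measure at least $|K|/2$; by inner regularity of Lebesgue measure there is a compact $K' \subset E$ with $|K'| > 0$, on which $f/u_* \geq (\inf_K f)/M =: c > 0$. Restricting the integral to $K'$ yields $Q_V(\xi) \geq c \int_{K'} \xi^2$, i.e., feeble $L^2$-subcriticality. Proposition \ref{prop:Sonata} (where the tameness hypothesis $V^+ \in L^p_{loc}$ with $p > N/2$ is used) then upgrades this to full subcriticality.

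The only delicate point is the last localization step: the weight $f/u_*$ appearing in the AAP inequality is a priori only locally integrable and need not be bounded below, so one must carve out a compact subset of $K$ on which $u_*$ is bounded above in order to obtain a uniform positive lower bound. This obstacle is resolved entirely by the built-in local integrability of $u_*$; everything else is bookkeeping that reduces the statement to already established results.
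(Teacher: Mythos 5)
Your proposal is correct and follows essentially the same route as the paper: the forward direction via Proposition \ref{prop:Sonata} and Theorem \ref{prop:Sona}, and the converse by feeding the supersolution into Theorem \ref{AAP-distr} and extracting a compact set of positive measure on which the weight $f/u_*$ is bounded below, then invoking Proposition \ref{prop:Sonata} again. Your Chebyshev/inner-regularity step is merely a more explicit version of the paper's observation that $\{f/u_*\geq c\}\cap K$ has positive measure for some $c>0$, so there is no substantive difference.
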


\begin{proof}
The ``only if" part  follows from propositions \ref{basa}, \ref{prop:Sonata} and theorem \ref{prop:Sona}.  

Conversely, if such  $u_*$ and $f$ exist, by theorem \ref{AAP-distr}, we get $Q_V \succeq f/ u_*$  in $\Omega$. In particular 
$f/ u_*\in L^1_{loc}(\Omega)$. Since $f/u_* \not \equiv 0$ in $K$, for some $c>0$ we also have that 
$\{f/ u_*\geq c \}\cap K $
 has positive measure. Then, $Q_V \succeq c \chi_{\tilde{K}}$ in $\Omega$ for any compact  $\tilde{K} \subset \{f/ u_*\geq c\} \cap K$ with positive measure.  Therefore, $-\Delta + V$ is subcritical by proposition \ref{prop:Sonata}.
\end{proof}

Next, we show the following ``solvability" property for a subcritical operator based on proposition \ref{prop:Sonata}.
\begin{lemma}\label{madura}
Let $V \in L^1_{loc}(\Omega)$ and $V$ tame in $\Omega$.  If the problem $-\Delta u +Vu = \chi_K$ admits a nonnegative distribution solution  for some compact $K \subset \Omega$ with positive measure, then the problem $-\Delta u +Vu=f$ admits a distribution solution for any $f \in L^{\infty}_c(\Omega)$.
\end{lemma}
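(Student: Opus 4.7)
The plan is to show that the single-compact hypothesis already forces $-\Delta+V$ to be $L^1$-subcritical in $\Omega$, after which corollary \ref{pomo}(i) immediately gives the desired solvability for any $f\in L^\infty_c(\Omega)$.

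First I would apply theorem \ref{AAP-distr} to the given nonnegative distributional solution $u$ of $-\Delta u+Vu=\chi_K$ (which automatically belongs to ${\cal C}_V(\Omega)$). This produces
$$
Q_V(\xi)\,\geq\,\int_K\frac{\xi^{2}}{u}\,dx\qquad\forall\,\xi\in W^{1,\infty}_c(\Omega),
$$
and in particular $\chi_K/u\in L^1_{loc}(\Omega)$. Writing $|\xi|=(|\xi|/\sqrt{u})\sqrt{u}$ on $K$ and using Cauchy--Schwarz, together with $u\in L^1_{loc}(\Omega)$, yields
$$
\int_K|\xi|\,dx\,\leq\,\Big(\int_K u\,dx\Big)^{1/2}\sqrt{Q_V(\xi)},
$$
so the restriction map ${\cal L}_V(\Omega)\to L^1(K)$ is continuous.

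Next I would invoke corollary \ref{knock}. Because $V$ is tame, $V^+\in L^p_{loc}(\Omega)$ for some $p>N/2$, and the hypotheses of that corollary are met. The corollary's dichotomy combined with the continuity just established rules out alternative (ii) and forces the global embedding ${\cal L}_V(\Omega)\hookrightarrow L^1_{loc}(\Omega)$; that is, $-\Delta+V$ is $L^1$-subcritical in $\Omega$.

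Finally, tameness of $V$ also supplies the balanced property, so corollary \ref{pomo}(i) applies: for every $f\in L^\infty_c(\Omega)$ there is a distributional solution $u_\ast={\bf J}(u_f)\in L^1_{loc}(\Omega)$, with $u_f\in{\cal H}_V(\Omega)$, of $-\Delta u_\ast+Vu_\ast=f$. The only nontrivial step is the promotion of $L^1$-control on a single $K$ to full $L^1_{loc}$-control, which is precisely the content of corollary \ref{knock} and is why the hypothesis $V^+\in L^p_{loc}$ with $p>N/2$ built into tameness is essential; without it, local $L^1$-control on $K$ cannot be propagated to all of $\Omega$.
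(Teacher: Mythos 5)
Your argument is correct and is essentially the paper's own route: the paper proves the lemma by citing theorem \ref{thm:AAP-sub} (whose converse direction, together with proposition \ref{prop:Sonata}, consists precisely of your steps — theorem \ref{AAP-distr} giving $Q_V \succeq \chi_K/u$, then the dichotomy of corollary \ref{knock} upgrading one continuous restriction map to the full embedding ${\cal L}_V(\Omega)\hookrightarrow L^1_{loc}(\Omega)$) and then concludes with corollary \ref{pomo}. Your only variation is the direct weighted Cauchy--Schwarz estimate $\int_K|\xi|\le\big(\int_K u\big)^{1/2}\sqrt{Q_V(\xi)}$, which uses $u\in L^1(K)$ and bypasses the level-set step $\{f/u\ge c\}$ appearing in the proof of theorem \ref{thm:AAP-sub}; this is a harmless streamlining, not a different method.
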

\begin{proof}
By theorem  \ref{thm:AAP-sub}, $-\Delta+V$ is subcritical in $\Omega$ and the solvability for any $f \in L^{\infty}_c(\Omega)$ follows from  corollary \ref{pomo}.
\end{proof}

Finally, we show an AAP Principle for subcritical operators in terms of the Riesz measures. We note that, in contrast with results in section \ref{s8},  the solutions need not lie in the space ${\cal H}_V(\Omega)$. 

\begin{lemma}\label{lem:adiga1}
Let $V \in L^1_{loc}(\Omega)$ and $Q_V \succeq 0$ in $\Omega$. 
\begin{enumerate}
\item[{\rm \bf (i)}] 
Assume $V^+ \in L^p_{loc}(\Omega)$ for some $p>\frac{N}{2}$  \;($N \geq 2$). Then, $-\Delta+V$ is  feebly $L^2$-subcritical in $\Omega$ if  ${\cal M}_V (\Omega) \neq \{0\}$ (see definition \ref{reisz}).
\item[{\rm \bf (ii)}]
Conversely, assume $V$ is balanced. Then ${\cal M}_V (\Omega) \neq \{0\}$ if $-\Delta+V$ is   $L^1$-subcritical in $\Omega$.
\end{enumerate}
\end{lemma}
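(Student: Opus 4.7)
The overall strategy is to reduce both implications to distributional AAP principles proved earlier. For \textbf{part (ii)}, which is the easier direction, the plan is to apply Theorem \ref{prop:Sona}(i): since $V$ is balanced and $-\Delta+V$ is $L^1$-subcritical, that theorem supplies some $u\in\mathcal{H}_V(\Omega)$ such that $u_*:=\mathbf{J}(u)\in L^1_{loc}(\Omega)$ satisfies $-\Delta u_*+Vu_*=f$ in $\mathcal{D}'(\Omega)$, with $f\in L^1_{loc}(\Omega)$ nonnegative and $\inf_B f>0$ on every ball $B\Subset\Omega$. The strong maximum principle (Theorem \ref{thm:StrongMax}) forces $u_*>0$ quasi-a.e., so $u_*\in\mathcal{C}_V(\Omega)$ and its Riesz measure $f\,dx$ is nonzero; hence $\mathcal{M}_V(\Omega)\neq\{0\}$.

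For \textbf{part (i)}, let $u\in\mathcal{C}_V(\Omega)$ have Riesz measure $\mu\neq 0$. Under $V^+\in L^p_{loc}(\Omega)$ with $p>N/2$, Harnack's inequality for $-\Delta+V^+$ applies, so $u$ has a locally bounded, strictly positive continuous representative. The plan is to produce a nonnegative nonzero $h\in L^1_{loc}(\Omega)$ and some $\tilde u\in\mathcal{C}_V(\Omega)$ satisfying $-\Delta\tilde u+V\tilde u\geq h$ in $\mathcal{D}'(\Omega)$. Theorem \ref{AAP-distr} then gives $Q_V(\xi)\geq\int_\Omega(h/\tilde u)\xi^2$, and since $\tilde u$ is locally bounded and $h\not\equiv 0$, one can choose $c>0$ and a compact $K$ of positive Lebesgue measure with $h/\tilde u\geq c$ on $K$, yielding $Q_V\succeq c\chi_K$ and thus feeble $L^2$-subcriticality. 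If the Lebesgue absolutely continuous part $g\,dx$ of $\mu$ is nonzero, the choice $\tilde u=u$, $h=g$ works immediately, since $\mu\geq g\,dx$ as Borel measures translates directly into $-\Delta u+Vu\geq g$ in $\mathcal{D}'(\Omega)$.

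The main obstacle is the purely singular case $g\equiv 0$, where no such $h$ exists for $u$ itself and one must construct an auxiliary supersolution whose Riesz measure carries a nonzero absolutely continuous component. My plan here is to localize: fix a bounded smooth $\Omega_*\Subset\Omega$ with $\mu(\Omega_*)>0$ and a compact $K\Subset\Omega_*$ of positive Lebesgue measure, then use Proposition \ref{prop:train} to obtain a nonnegative very weak solution $w\in W^{1,q}_0(\Omega_*)$ of $-\Delta w+V^+w=V^-u+\chi_K$ in $\Omega_*$, $w=0$ on $\partial\Omega_*$ (noting $V^-u\in L^1_{loc}$ since $Vu\in L^1_{loc}$), and glue $w$ on $\Omega_*$ with $u$ outside to define the candidate $\tilde u$. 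The technical heart, and the main difficulty I anticipate, will be verifying via a Hopf-type analysis at $\partial\Omega_*$ that this $\tilde u$ is a distributional supersolution globally on $\Omega$ with Riesz measure dominating $\chi_K\,dx$, which in turn will rest on a comparison of the form $w\leq u$ on $\Omega_*$. Once that is in place, the argument of the previous paragraph applies to $\tilde u$ and concludes the proof.
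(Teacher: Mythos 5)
Part (ii) of your proposal is fine and is essentially the paper's route: the paper cites Corollary \ref{pomo} (solve $-\Delta u+Vu=\chi_K$ in the energy space and pass to ${\bf J}(u)$), while you invoke Theorem \ref{prop:Sona}(i); both rest on the same solvability results of \S\ref{s8}, and the nonzero Riesz measure $f\,dx$ indeed shows ${\cal M}_V(\Omega)\neq\{0\}$. In part (i), your first case (nonzero absolutely continuous part $g$) also works, although the appeal to Harnack to get a \emph{locally bounded} representative of $u$ is both false in general (a supersolution with measure data, e.g.\ with an atom, is unbounded; weak Harnack only gives lower bounds) and unnecessary: since $u$ is finite a.e.\ and $g/u>0$ on a set of positive measure, one picks $c>0$ and a compact $\tilde K\subset\{g/u\ge c\}$ exactly as the paper does.

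The genuine gap is in the purely singular case, which is the case the hypothesis actually forces you to handle. Your auxiliary problem $-\Delta w+V^+w=V^-u+\chi_K$ in $\Omega_*$ cannot be compared with $u$: the difference satisfies $-\Delta(w-u)+V^+(w-u)=\chi_K\,dx-\mu$ in $\Omega_*$, and when $\mu$ is singular there is no reason (and it is generally false) that $\mu\ge\chi_K\,dx$, so Kato/weak maximum principle does not give $w\le u$. Moreover, even granting $w\le u$, gluing $w$ (which vanishes on $\partial\Omega_*$) with $u>0$ outside produces a jump discontinuity across $\partial\Omega_*$; the distributional Laplacian of such a function contains a dipole layer on $\partial\Omega_*$, which is not a Radon measure, let alone of the right sign, so $\tilde u$ is not a distributional supersolution and no Hopf-type boundary analysis can rescue it (the standard gluing for supersolutions requires the \emph{inner} function to dominate the outer one near the interface, i.e.\ taking a minimum, which is the opposite of your configuration). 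The paper's device avoids both problems: it does not add $\chi_K$ to the data but keeps the data a restricted piece $\tilde\mu=\mu\llcorner(\Omega_1\setminus B_1)$ of $\mu$ itself, and instead perturbs the \emph{potential} to $V^++\chi_B$; then $u$ is a supersolution of the very same problem, Kato's inequality yields $0\le\theta_n\le u$ on the exhausting domains $\Omega_n$, and the difference $z_n=u-\theta_n$ satisfies $-\Delta z_n+Vz_n\ge(V^-+\chi_B)\theta_n$ in ${\cal D}'(\Omega_n)$ with an a.e.\ positive, locally integrable right-hand side on $B$ (positivity of $\theta_n$ on $B$ coming from Harnack for the regular operator $-\Delta+V^++\chi_B$ away from ${\rm supp}\,\tilde\mu$). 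Theorem \ref{AAP-distr} then gives $Q_V\succeq(V^-+\chi_B)(\theta_n/z_n)$ on each $\Omega_n$, and the monotonicity $\theta_n\uparrow$, $z_n\downarrow$ makes the weight uniformly bounded below by $c\chi_{\tilde K}$, which is what lets the inequality pass to all of $\Omega$ — a step your global gluing was meant to shortcut but cannot, as it stands.
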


\begin{proof}  
{\bf (i)} 
We recall that ${\cal M}_V (\Omega) \neq \emptyset$  by theorem \ref{thm:converse3}. 
 Let $\mu \in {\cal M}_V (\Omega)$, $\mu \not \equiv 0$ and $u \in L^1_{loc}(\Omega)$ be a nonnegative function solving $-\Delta u + Vu = \mu$ in ${\cal D}'(\Omega)$. From proposition \ref{prop:RanRan}, $u \in W^{1,1}_{loc}(\Omega)$ and from theorem  \ref{thm:StrongMax}, $u>0$ a.e. in $\Omega$. Take an exhaustion $\{\Omega_n\}_{n=1}^{\infty}$ of $\Omega$ as follows:
$$ 
    \Omega_n \hbox{ an open bounded set, } \Omega_1 \Subset \Omega_n \Subset \Omega_{n+1} \Subset \Omega \hbox{ and  } \mu(\Omega_1)>0.
$$

Choose  balls $B \Subset B_1 \Subset \Omega_1$ such that $\mu(\Omega_1\setminus B_1)>0$. Let $\tilde{\mu}= \mu \llcorner (\Omega_1\setminus B_1) $.  Let $\theta_n \in W^{1,1}_0(\Omega_n)$ be a very weak solution (refer proposition \ref{prop:train})  of 

$$-\Delta \theta_n + (V^+ + \chi_B)\theta_n = \tilde{\mu} \hbox{ in } \Omega_n, \; \theta_n=0 \hbox{ on } \partial \Omega_n.$$ 
Then, by remark \ref{phat},   $\theta_n$ solves the above equation in the distributional sense.
  By using the distributional Kato's inequality as in the claim  of theorem \ref{AAP-distr}, we obtain that $\theta_n \geq 0$ in $\Omega_n $ for all $n$ as well as
$$ \theta_n \leq \theta_{n+1} \leq u \; \hbox{ in } \Omega_n \hbox{ for all  } n \geq 1. $$ Since $\theta_n \in W^{1,1}_0(\Omega_n)$ is nontrivial, we may find a quasi-continuous representative  of $\theta_n$ (which we denote again by $\theta_n$) that is positive q.a.e. by theorem \ref{thm:StrongMax}.
By elliptic regularity and Harnack principle, $\theta_n$ is continuous and positive outside the support of $\tilde{\mu}$ and in particular on $B$.
Define $z_n:=u-\theta_n$. Then,

$$ -\Delta z_n + Vz_n \geq (V^- + \chi_B) \theta_n  \hbox{ in  }  {\cal D}^{\prime}(\Omega_n), \quad 0 \leq z_{n+1} \leq z_{n} \hbox{ in } \Omega_n.$$ 
Clearly $z_n$ is nontrivial and from  theorem \ref{thm:StrongMax} again, we get $z_n>0$ a.e. in $\Omega_n, n \geq 1.$
Appealing to theorem \ref{AAP-distr}, we get 
\begin{equation} \label{eat}
Q_V \succeq (V^- +\chi_B)(\theta_n/z_n) \hbox{ in } \Omega_n.
\end{equation} 
We see that   $w:= (V^- + \chi_B)(\theta_1/z_1 )$ is an a.e. positive function on $B$.  Therefore,  for some $c>0$, we  have that  the set $\{w \geq c \}\cap B $
 has positive measure.  We  choose a compact set  $\tilde{K} \subset \{w \geq c\} \cap B$ with positive measure.   

We note from the monotonicity of $\theta_n$ and $z_n$  that  
\begin{equation}\label{et}
 (V^- + \chi_B)(\theta_n/z_n) \geq (V^- + \chi_B)(\theta_1/z_1 )\chi_{\Omega_1} \hbox{ in }\Omega_n,  \; \forall n \; \geq 1. 
\end{equation}
 Then, from \eqref{eat}, \eqref{et}  and the choice of $\tilde{K}$ we get that, 
 $$Q_V \succeq c \chi_{\tilde{K}} \text{ in } \Omega_n,  \; \forall \; n \geq 1.$$ 

Therefore,  $-\Delta+V$ is feebly $L^2$-subcritical in $\Omega$. \\
(ii)  follows from corollary \ref{pomo}. 
\end{proof}
As a consequence of the above lemma, we have the following result for tame potentials.
\begin{theorem}[AAP principle in ${\cal D^\prime}(\Omega)$ for subcritical operators]\label{fen}
Suppose that $V \in L^1_{loc}(\Omega)$  is tame in $\Omega$.  Then $-\Delta+V$ is subcritical in $\Omega$ if and only if  ${\cal M}_V (\Omega) \neq \{0\}$.
\end{theorem}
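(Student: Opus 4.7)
The plan is to deduce Theorem \ref{fen} by gluing together Lemma \ref{lem:adiga1} with the equivalence of the three notions of subcriticality for tame potentials recorded in Proposition \ref{prop:Sonata} (and Remark \ref{host}). No new analytic input should be needed; the work is to check that ``tame'' is exactly the hypothesis that activates both halves of Lemma \ref{lem:adiga1}.

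For the forward direction, I would assume that $-\Delta + V$ is subcritical in $\Omega$. Because $V$ is tame, Remark \ref{host} identifies this with $L^1$-subcriticality. Since by Definition \ref{chalka} a tame potential is in particular balanced, Lemma \ref{lem:adiga1}(ii) applies directly and produces an element of ${\cal M}_V(\Omega) \setminus \{0\}$.

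For the converse, suppose ${\cal M}_V(\Omega) \neq \{0\}$. Tameness guarantees $V^+ \in L^p_{loc}(\Omega)$ for some $p > N/2$, so the hypotheses of Lemma \ref{lem:adiga1}(i) are satisfied and $-\Delta + V$ is feebly $L^2$-subcritical in $\Omega$. Finally, Proposition \ref{prop:Sonata}(i) (whose only standing assumption is $V^+ \in L^p_{loc}$ for some $p > N/2$, again ensured by tameness) promotes feeble $L^2$-subcriticality to $L^1$-subcriticality, which by Remark \ref{host} is the same as subcriticality.

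There is no real obstacle here: the theorem is a clean repackaging of Lemma \ref{lem:adiga1} once one knows that the three notions of subcriticality coincide for tame potentials. The only point requiring attention is that the two directions of Lemma \ref{lem:adiga1} invoke two different regularity features of $V$, namely ``balanced'' in part (ii) and ``$V^+ \in L^p_{loc}$ for some $p > N/2$'' in part (i); the definition of tameness is designed precisely to carry both conditions simultaneously, so each implication follows without further bookkeeping.
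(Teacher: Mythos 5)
Your proof is correct and is essentially the paper's own argument: the paper derives Theorem \ref{fen} directly from Lemma \ref{lem:adiga1} together with the equivalence of the criticality notions for tame potentials (Proposition \ref{prop:Sonata}, Remark \ref{host}), exactly as you do. Your observation that tameness is precisely what supplies ``balanced'' for part (ii) and ``$V^+ \in L^p_{loc}$, $p>\frac{N}{2}$'' for part (i) is the only point needing attention, and you handled it correctly.
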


\subsection{Distributional AAP Principle for Critical operators in terms of Riesz measures} \label{s10.2}
 
 We state the following AAP Principle for critical operators, which follows directly from lemma \ref{lem:adiga1} and using the fact  
 ${\cal M}_V (\Omega)\neq \emptyset$ (see theorem \ref{thm:converse3}):
 
\begin{lemma}[AAP Principle in ${\cal D^\prime}(\Omega)$ for Critical Operators]\label{lem:char crit}
Let $V \in L^1_{loc}(\Omega)$ such that $Q_V \succeq 0$ in $\Omega$. 
\begin{enumerate}
 \item[{\rm \bf (i)}] 
 Assume $V^+ \in L^p_{loc}(\Omega)$ for some $p>\frac{N}{2}$  \;($N \geq 2$). Then,  ${\cal M}_V (\Omega) = \{0\}$ if  $-\Delta+V$ is  feebly $L^2$-critical in $\Omega$.
\item[{\rm \bf (ii)}] 
Conversely, assume $V$ is balanced. Then  $-\Delta+V$ is   $L^1$-critical in $\Omega$ if ${\cal M}_V (\Omega) = \{0\}$.
\end{enumerate}
\end{lemma}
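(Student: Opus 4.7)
The plan is to derive both statements directly as contrapositives of Lemma~\ref{lem:adiga1}, since each implication there is precisely the reverse of what is asserted here. No new machinery is needed; we only need to verify that contrapositives translate correctly between the ``subcritical/not subcritical'' language of Lemma~\ref{lem:adiga1} and the ``critical'' language of Definitions~\ref{def:frozen}(ii) and \ref{def:weak Critical}(ii).

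For part (i), assume $V^+ \in L^p_{loc}(\Omega)$ for some $p > N/2$ and that $-\Delta + V$ is feebly $L^2$-critical in $\Omega$. By Definition~\ref{def:weak Critical}(ii) this means $Q_V \succeq 0$ in $\Omega$ but $-\Delta + V$ is not feebly $L^2$-subcritical. The hypothesis on $V^+$ together with Theorem~\ref{thm:converse3} ensures ${\mathcal C}_V(\Omega) \neq \emptyset$, hence ${\mathcal M}_V(\Omega) \neq \emptyset$. Lemma~\ref{lem:adiga1}(i) asserts that, under this same $L^p_{loc}$ hypothesis, ${\mathcal M}_V(\Omega) \neq \{0\}$ would imply feeble $L^2$-subcriticality of $-\Delta + V$. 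Contrapositively, the assumed feeble $L^2$-criticality forces ${\mathcal M}_V(\Omega) = \{0\}$, which is the desired conclusion.

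For part (ii), assume $V$ is balanced in $\Omega$ and ${\mathcal M}_V(\Omega) = \{0\}$. Suppose for contradiction that $-\Delta + V$ were $L^1$-subcritical in $\Omega$. Then Lemma~\ref{lem:adiga1}(ii), whose hypotheses are exactly met, would yield ${\mathcal M}_V(\Omega) \neq \{0\}$, contradicting our assumption. Hence $-\Delta + V$ is not $L^1$-subcritical, and since $Q_V \succeq 0$ by standing hypothesis, Definition~\ref{def:frozen}(ii) classifies it as $L^1$-critical.

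There is no substantive obstacle: the content of the present lemma is a repackaging of Lemma~\ref{lem:adiga1} by contrapositive, once one confirms that the relevant ``critical'' classes are simply the complements of their ``subcritical'' counterparts within the cone $\{V : Q_V \succeq 0\}$. All the genuine analytic work—monotone approximation by truncated Dirichlet problems, the strong maximum principle (Theorem~\ref{thm:StrongMax}), the AAP inequality in distributions (Theorem~\ref{AAP-distr}), and the existence of a distributional supersolution (Theorem~\ref{thm:converse3})—has already been carried out in proving Lemma~\ref{lem:adiga1}.
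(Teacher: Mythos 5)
Your proof is correct and follows essentially the same route as the paper, which derives the lemma directly as the contrapositive of Lemma~\ref{lem:adiga1} together with the nonemptiness of ${\cal M}_V(\Omega)$ from Theorem~\ref{thm:converse3}. You merely spell out the contrapositive bookkeeping that the paper leaves implicit.
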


\medskip

The above lemma implies the following result for tame potentials.
\begin{theorem}[AAP principle in ${\cal D^\prime}(\Omega)$ for critical operators]\label{fenu}
Suppose that $V \in L^1_{loc}(\Omega)$  is tame in $\Omega$.  Then $-\Delta+V$ is critical in $\Omega$ if and only if  ${\cal M}_V (\Omega) = \{0\}$. That is, any nonnegative super solution of $-\Delta + V$ in $\Omega$ is infact a solution.
\end{theorem}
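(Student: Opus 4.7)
The plan is to obtain Theorem \ref{fenu} as a direct synthesis of Lemma \ref{lem:char crit} with the equivalence of criticality notions for tame potentials given in Proposition \ref{prop:Sonata}. Since a tame potential satisfies both hypotheses of Lemma \ref{lem:char crit} (namely $V$ is balanced, and $V^+\in L^p_{loc}(\Omega)$ for some $p>N/2$), both directions of that lemma are simultaneously at our disposal; the only real work is reconciling the two different criticality notions that appear in (i) and (ii) of the lemma.

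For the forward implication, I would assume $-\Delta+V$ is critical in $\Omega$. Since $V$ is tame, Proposition \ref{prop:Sonata}(iii) (and the convention in Remark \ref{host}) tells us that $L^1$-criticality, feeble $L^2$-criticality, and global $L^2$-criticality all coincide, so in particular $-\Delta+V$ is feebly $L^2$-critical. Applying Lemma \ref{lem:char crit}(i), whose hypothesis $V^+\in L^p_{loc}(\Omega)$ holds by tameness, yields ${\cal M}_V(\Omega)=\{0\}$.

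For the reverse implication, I would assume ${\cal M}_V(\Omega)=\{0\}$. Since $V$ is balanced, Lemma \ref{lem:char crit}(ii) immediately gives that $-\Delta+V$ is $L^1$-critical in $\Omega$, and hence critical in the sense of Remark \ref{host}, again by virtue of Proposition \ref{prop:Sonata}(iii) applied to the tame potential $V$.

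The final assertion (every nonnegative supersolution is a solution) is a tautological reformulation: if $u\in{\cal C}_V(\Omega)$ has Riesz measure $\mu:=-\Delta u+Vu$, then ${\cal M}_V(\Omega)=\{0\}$ forces $\mu=0$, i.e.\ $u$ is a distributional solution. No obstacle is expected: all the heavy lifting (the strong maximum principle arguments, the monotone iterations, and the non-vanishing lemma that powers Proposition \ref{prop:Sonata}) has already been carried out in Sections \ref{s7}--\ref{s9}. The only point requiring mild care is to verify that Proposition \ref{prop:Sonata}(iii), stated for subcriticality, transfers to criticality; but this is automatic since in all three notions criticality is defined as the negation of subcriticality within the class $\{V:Q_V\succeq 0\}$.
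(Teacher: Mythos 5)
Your proposal is correct and follows essentially the same route as the paper: the paper obtains Theorem \ref{fenu} directly from Lemma \ref{lem:char crit}, using (as you do) the identification of the $L^1$, feeble $L^2$, and global $L^2$ criticality notions for tame potentials from Proposition \ref{prop:Sonata} together with the terminological convention of Remark \ref{host}. Your treatment of the final assertion as a restatement of ${\cal M}_V(\Omega)=\{0\}$ matches the paper as well.
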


By applying theorems \ref{thm:converse3},  \ref{fenu} and \ref{thm:StrongMax}, we obtain
\begin{corollary}\label{cor:cash}
Assume  $V \in L^1_{loc}(\Omega)$ is tame in $\Omega$ and $-\Delta +V$ is critical in $\Omega$. 
Then there exists an a.e.  positive distribution solution $u$ satisfying $(-\Delta+V)u = 0 $ in $\Omega$. 
\end{corollary}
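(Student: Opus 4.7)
The plan is to combine the existence result of Theorem \ref{thm:converse3} with the characterization of criticality in Theorem \ref{fenu}, using the strong maximum principle to guarantee strict positivity.

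First, since $V$ is tame in $\Omega$, we have $V^+ \in L^p_{loc}(\Omega)$ for some $p > N/2$, and since $-\Delta+V$ is critical we have $Q_V \succeq 0$ in $\Omega$. Thus the hypotheses of Theorem \ref{thm:converse3} are satisfied, yielding a non-negative Radon measure $\mu$ on $\Omega$ together with a function $u \in W^{1,q}_{loc}(\Omega)$ (for every $1 \le q < N/(N-1)$), with $u > 0$ a.e.\ in $\Omega$, which solves $-\Delta u + Vu = \mu$ in the sense of distributions. In particular, $u \in {\cal C}_V(\Omega)$ and $\mu \in {\cal M}_V(\Omega)$.

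Next, because $V$ is tame and $-\Delta + V$ is critical in $\Omega$, Theorem \ref{fenu} forces ${\cal M}_V(\Omega) = \{0\}$. Consequently the Riesz measure $\mu$ of the supersolution $u$ produced above must vanish, i.e.\ $\mu \equiv 0$. Therefore $u$ is a distributional solution of $(-\Delta + V)u = 0$ in $\Omega$.

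Finally, positivity follows either directly from Theorem \ref{thm:converse3} (which already yields $u > 0$ a.e.) or, equivalently, by invoking the strong maximum principle (Theorem \ref{thm:StrongMax}) applied to the nonnegative distributional supersolution $u$ of $-\Delta u + V^+ u \ge V^- u \ge 0$ in the connected components of $\Omega$: since $u \not\equiv 0$, we conclude $\operatorname{Cap}(\{u = 0\}) = 0$, and in particular $u > 0$ a.e.\ in $\Omega$. There is really no hard step here; the only subtlety worth checking carefully is that the hypotheses of Theorem \ref{thm:converse3} and Theorem \ref{fenu} are both available simultaneously under the tameness assumption, which indeed they are by definition \ref{chalka}.
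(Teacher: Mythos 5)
Your proposal is correct and follows exactly the route the paper intends: the corollary is stated as a direct consequence of Theorem \ref{thm:converse3} (producing an a.e.\ positive supersolution with Riesz measure $\mu \in {\cal M}_V(\Omega)$), Theorem \ref{fenu} (criticality for tame $V$ forces ${\cal M}_V(\Omega)=\{0\}$, hence $\mu=0$), and Theorem \ref{thm:StrongMax} for positivity. No gaps; nothing further is needed.
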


\medskip

The above corollary should be viewed together with the proposition \ref{cor:run}. We have then  the following result for those nonnegative operators which extend as nonnegative operators to a bigger domain:
\begin{proposition}\label{4cr}
 Assume that  $\overline{\Omega} \neq \R^N$. Let  $V \in L^1_{loc}(\Omega)$  be tame in $\Omega$.
If $-\Delta + V$ is subcritical in $\Omega$ we assume additionally that there exist an open set $\tilde{\Omega}$ and $\tilde{V} \in L^1_{loc}(\tilde{\Omega})$ such that
\begin{enumerate}
\item[(i)] 
$\tilde{\Omega} \setminus \Omega$ contains a compact set $K$ with positive measure
\item[(ii)]  
$\tilde{V}$ extends $V$ and $\tilde{V}$ is tame in $\tilde{\Omega}$ (in particular $Q_{\tilde{V}} \succeq 0$ in $\tilde{\Omega}$).
\end{enumerate}
Then  there exists an a.e. positive distribution solution $u$ satisfying $(-\Delta+V)u = 0 $ in $\Omega$.
\end{proposition}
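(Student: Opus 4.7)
\textbf{Proof plan for Proposition \ref{4cr}.} The strategy is to split into two cases according to whether $-\Delta+V$ is critical or subcritical in $\Omega$, and in the second case pass to the extended operator $-\Delta + \tilde V$ on $\tilde\Omega$.

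\emph{Case 1: $-\Delta+V$ is critical in $\Omega$.} Since $V$ is tame in $\Omega$, Corollary \ref{cor:cash} applies directly and produces an a.e.\ positive distributional solution $u$ of $(-\Delta+V)u = 0$ in $\Omega$. No use of the extended data is required here, which is why the additional hypothesis in the statement is only imposed in the subcritical case.

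\emph{Case 2: $-\Delta+V$ is subcritical in $\Omega$.} By hypothesis there exists a tame potential $\tilde V \in L^1_{loc}(\tilde\Omega)$ extending $V$, with $Q_{\tilde V}\succeq 0$ in $\tilde\Omega$, together with a compact set $K \subset \tilde\Omega\setminus\Omega$ of positive Lebesgue measure. Since $\tilde V$ is tame in $\tilde\Omega$, the operator $-\Delta+\tilde V$ is either subcritical or critical in $\tilde\Omega$, and I handle each subcase with a result already proved in the paper:
\begin{itemize}
\item If $-\Delta+\tilde V$ is critical in $\tilde\Omega$, Corollary \ref{cor:cash} yields an a.e.\ positive distributional solution $\tilde u$ of $(-\Delta+\tilde V)\tilde u=0$ in $\tilde\Omega$. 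Its restriction $u := \tilde u\!\mid_{\Omega}$ is then an a.e.\ positive distributional solution of $(-\Delta+V)u = 0$ in $\Omega$, since $\tilde V = V$ on $\Omega$.
\item If $-\Delta+\tilde V$ is subcritical in $\tilde\Omega$, then (since $\tilde V$ is balanced in $\tilde\Omega$, being tame) Proposition \ref{cor:run}, applied to the compact set $K$ of positive measure, produces an a.e.\ positive distributional solution $\tilde u$ of $(-\Delta+\tilde V)\tilde u=0$ in $\tilde\Omega\setminus K$. Since $K \subset \tilde\Omega\setminus\Omega$ gives $\Omega \subset \tilde\Omega\setminus K$, the restriction $u := \tilde u\!\mid_{\Omega}$ is again an a.e.\ positive distributional solution of $(-\Delta+V)u = 0$ in $\Omega$.
\end{itemize}

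\emph{Main obstacle.} There is really no hard analytic step left: once the critical/subcritical dichotomy for the extended operator $-\Delta+\tilde V$ on $\tilde\Omega$ is invoked, Corollary \ref{cor:cash} and Proposition \ref{cor:run} do all the work. The one point that must be handled carefully is the restriction step: one has to check that a distributional solution on the larger open set $\tilde\Omega$ (or on $\tilde\Omega\setminus K$) restricts to a distributional solution on $\Omega$, and that a.e.\ positivity on the larger set persists on $\Omega$. Both are immediate: any $\varphi\in C_c^\infty(\Omega)$ lies in $C_c^\infty(\tilde\Omega\setminus K)$ since $\Omega \subset \tilde\Omega\setminus K$, and $\tilde V\varphi = V\varphi$ on $\Omega$, so the test of the equation against $\varphi$ reads identically in both frames; positivity a.e.\ in $\tilde\Omega$ (or $\tilde\Omega\setminus K$) trivially gives positivity a.e.\ in $\Omega$. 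The assumption $\overline\Omega\neq\mathbb R^N$ is used only to make room for the extension $\tilde\Omega\supsetneq\Omega$ containing an extra compact set $K$ of positive measure.
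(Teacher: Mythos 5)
Your proposal is correct and follows essentially the same route as the paper: the critical case is settled by Corollary \ref{cor:cash}, and in the subcritical case one passes to the tame extension $-\Delta+\tilde V$ on $\tilde\Omega$, applying Corollary \ref{cor:cash} if it is critical there and Proposition \ref{cor:run} with the compact set $K$ if it is subcritical, then restricting to $\Omega\subset\tilde\Omega\setminus K$. Your added remarks on the restriction step and persistence of a.e.\ positivity are fine but not needed beyond what the paper does implicitly.
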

\begin{proof} If $-\Delta + V$ is critical in $\Omega$, by the above corollary, we are done. Otherwise, we consider the extended operator $-\Delta + \tilde{V}$.  If $-\Delta + \tilde{V}$ is subcritical in $\tilde{\Omega}$  we  apply proposition \ref{cor:run} with the compact set $K$. If it is critical, then the result follows again  from the above corollary.
\end{proof}

\begin{remark} \label{owa}
 There are well-known examples where the above assumptions don't hold, for example  the boundary Hardy operator on a ball can't be extended to a bigger domain as any such extension  will not be locally integrable in the bigger domain.
 \end{remark}

\begin{remark}\label{sore}
To obtain  the above conclusion for any nonnegative operator under the assumptions that $V$ is locally bounded or $V \in L^p_{loc}(\Omega)$ for $p>\frac{N}{2}\; (N \geq 2)$,  see for example  theorem 2.3 in \cite{PT} and \cite{PinTint}.
\end{remark}

The following is a restatement of the results given in proposition \ref{cor:run}, corollary \ref{cor:cash} and theorem \ref{AAP-distr}.
\begin{corollary}\label{5cr}
Assume  $V \in L^1_{loc}(\Omega)$ and $V$ tame in $\Omega$. Then $Q_V \succeq 0$ in $\Omega$  iff  for any compact $K \subset \Omega$ with positive measure, there exists an a.e. positive distribution solution $u$ to the  problem $-\Delta u + Vu=0$  in $\Omega \setminus K$.
\end{corollary}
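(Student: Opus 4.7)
The plan is to assemble this corollary from three results already proved in the excerpt: Proposition \ref{cor:run} and Corollary \ref{cor:cash} handle the ``only if'' direction through the subcritical/critical dichotomy, while Theorem \ref{AAP-distr} yields the ``if'' direction by a simple localization argument.

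For the forward implication, assuming $V$ tame and $Q_V \succeq 0$ in $\Omega$, the operator $-\Delta + V$ is by Definition \ref{def:frozen} either $L^1$-subcritical or $L^1$-critical in $\Omega$, which, via Remark \ref{host}, matches the usage ``subcritical'' or ``critical''. If subcritical, Proposition \ref{cor:run} is applicable (tame implies balanced by Definition \ref{chalka}) and delivers, for any compact $K \subset \Omega$ of positive measure, an a.e. positive distribution solution of $(-\Delta + V)u = 0$ on $\Omega \setminus K$. If critical, Corollary \ref{cor:cash} provides an a.e. positive distribution solution on all of $\Omega$; its restriction to $\Omega \setminus K$ remains a.e. positive and still solves the equation in the distributional sense, giving the required $u$.

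For the reverse implication, I would fix an arbitrary $\xi \in W_c^{1,\infty}(\Omega)$ and produce a closed ball $K \subset \Omega$ of positive measure disjoint from $\mathrm{supp}(\xi)$. This is possible because no nonempty open $\Omega \subset \mathbb{R}^N$ coincides with a compact subset of itself, so one can first take an open $U$ with $\mathrm{supp}(\xi) \Subset U \Subset \Omega$ and then choose $K$ to be a small closed ball inside $\Omega \setminus \overline{U}$. By hypothesis, there exists an a.e. positive distribution solution $u$ of $-\Delta u + Vu = 0$ on the open set $\Omega \setminus K$, and thus $u \in {\cal C}_V(\Omega \setminus K)$. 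Since $\xi \in W_c^{1,\infty}(\Omega \setminus K)$, Theorem \ref{AAP-distr} applied on $\Omega \setminus K$ with right-hand side $h \equiv 0$ gives $Q_V(\xi) \geq 0$. As $\xi$ was arbitrary in $W_c^{1,\infty}(\Omega)$, this yields $Q_V \succeq 0$ in $\Omega$.

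The argument has no genuine obstacle, since the substantial analytic content is already encoded in Proposition \ref{cor:run}, Corollary \ref{cor:cash}, and Theorem \ref{AAP-distr}; the only point requiring care is the topological observation that $\mathrm{supp}(\xi)$ admits a nonempty complement in $\Omega$ large enough to contain a closed ball, allowing us to invoke the hypothesis with a suitable $K$.
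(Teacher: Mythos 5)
Your proposal is correct and follows essentially the same route as the paper: the forward direction is exactly the subcritical/critical dichotomy handled by Proposition \ref{cor:run} and Corollary \ref{cor:cash}, and the converse is the paper's own localization argument (pick a compact $K$ of positive measure disjoint from $\mathrm{supp}(\xi)$, invoke the hypothesis on $\Omega\setminus K$, and apply Theorem \ref{AAP-distr} with $h\equiv 0$). No gaps; your extra care about finding a closed ball in $\Omega\setminus\mathrm{supp}(\xi)$ just makes explicit a step the paper leaves implicit.
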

\proof The ``only if" part follows from corollaries \ref{pomo} and \ref{cor:cash}. To show the converse, let $\xi  \in W_c^{1,\infty}(\Omega)$. Choose a compact set $K \subset \Omega$ with positive measure disjoint from the support of $\xi$. Then, by assumption, there exists an a.e. positive distribution solution $u$ to the  problem $-\Delta u + Vu=0$  in $\Omega \setminus K$. Hence by theorem \ref{AAP-distr}, $Q_V \succeq 0$ on $\Omega \setminus K$. In particular, $Q_V(\xi) \geq 0$.  \qed\\

We recall here the definition of a ``null" sequence:
\begin{definition}\label{null}
Let $Q_V \succeq 0$ in $\Omega$. A sequence $\{\xi_n\} \subset W^{1,\infty}_c(\Omega)$ is called a null sequence for $Q_V$ if $Q_V(\xi_n) \to 0$ and there exists a compact set $K \subset \Omega$ with positive measure such that $\int_K |\xi_n| =1$ for all $n$.
\end{definition}

The following proposition just restates the result in proposition \ref{prop:Sonata}.
\begin{proposition}\label{om}
Let $V \in L^1_{loc}(\Omega)$ be tame in $\Omega$ and  $Q_V \succeq 0$ in $\Omega$. Then $-\Delta+V$ is critical  in $\Omega$ iff $Q_V$ has a null sequence.
\end{proposition}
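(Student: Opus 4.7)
The plan is to argue that existence of a null sequence is simply a normalized reformulation of the failure of the imbedding ${\cal L}_V(\Omega)\hookrightarrow L^1_{loc}(\Omega)$, which for tame $V$ characterises criticality via Proposition~\ref{prop:Sonata} and Remark~\ref{host}. The non-trivial ingredient is Lemma~\ref{prop:Vanishing}, which upgrades ``non-vanishing on one ball'' to ``non-vanishing on every compact set of positive measure''; this is what allows an arbitrary compact $K$ with $|K|>0$ to serve in the definition of a null sequence. I expect the tameness hypothesis on $V$ (giving $V^+\in L^p_{loc}$ for some $p>N/2$) to be invoked precisely to activate Lemma~\ref{prop:Vanishing}; there is no other real obstacle.

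For the ``only if'' direction, suppose $-\Delta+V$ is critical in $\Omega$. By Remark~\ref{host}, this is equivalent to $L^1$-criticality, i.e. ${\cal L}_V(\Omega)\not\hookrightarrow L^1_{loc}(\Omega)$. So I can pick a ball $B\Subset\Omega$ and a sequence $\{\xi_n\}\subset W^{1,\infty}_c(\Omega)$ with $Q_V(\xi_n)\to 0$ yet $\int_B|\xi_n|\geq \alpha>0$ (after passing to a subsequence). Now apply Lemma~\ref{prop:Vanishing}: case~(i) of that lemma is ruled out by the lower bound on $\int_B|\xi_n|$, so case~(ii) holds and $\inf_n \int_K|\xi_n|>0$ for every compact $K\subset\Omega$ of positive measure. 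Fix such a $K$ and set $c_n:=\int_K|\xi_n|\geq \beta>0$, $\tilde\xi_n:=\xi_n/c_n$. Then $\tilde\xi_n\in W^{1,\infty}_c(\Omega)$, $\int_K|\tilde\xi_n|=1$, and $Q_V(\tilde\xi_n)=Q_V(\xi_n)/c_n^2\leq Q_V(\xi_n)/\beta^2\to 0$, so $\{\tilde\xi_n\}$ is a null sequence for $Q_V$.

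For the ``if'' direction, suppose a null sequence $\{\xi_n\}$ exists, with $Q_V(\xi_n)\to 0$ and $\int_K|\xi_n|=1$ for some compact $K\subset\Omega$ of positive measure. If $-\Delta+V$ were subcritical, then by Remark~\ref{host} it would be $L^1$-subcritical, i.e.\ ${\cal L}_V(\Omega)\hookrightarrow L^1_{loc}(\Omega)$, giving a constant $C_K$ with
\[
1=\int_K|\xi_n|\leq C_K\sqrt{Q_V(\xi_n)}\longrightarrow 0,
\]
a contradiction. Hence $-\Delta+V$ is critical in $\Omega$, completing the equivalence.
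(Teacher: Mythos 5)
Your proof is correct and follows essentially the same route as the paper, which simply observes that the proposition restates Proposition~\ref{prop:Sonata} (whose content rests on Lemma~\ref{prop:Vanishing} and Corollary~\ref{knock}): for tame $V$, criticality is exactly the failure of the imbedding ${\cal L}_V(\Omega)\hookrightarrow L^1_{loc}(\Omega)$, which after normalization is precisely the existence of a null sequence. Incidentally, Lemma~\ref{prop:Vanishing} is not strictly needed in your ``only if'' direction (you could normalize directly on the compact set where the imbedding fails), but invoking it is harmless and yields the stronger fact that \emph{any} compact set of positive measure can serve as the set $K$ in the null sequence.
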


The following  results show that being a critical operator is an ``unstable" state:

\begin{lemma}\label{nin}
Let $ U \subset \Omega \subset \tilde{\Omega}$ be  open sets  such that $\tilde{\Omega}\setminus \Omega$ and $\Omega \setminus U$  contain a compact set  with positive measure. Let $V \in L^1_{loc}(\tilde{\Omega})$ and  $V^+ \in L^p_{loc}(\tilde{\Omega})$ for some $p>\frac{N}{2}  \;(N \geq 2)$. If  $Q_V \succeq 0$ in $\Omega$, then $-\Delta+V$ is $L^1$-subcritical in $U$. If $ -\Delta+V$ is $L^1$-critical in $\Omega$ then it is supercritical in $\tilde{\Omega}$.
\end{lemma}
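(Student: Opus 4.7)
The plan is to prove both assertions via a single contradiction argument built on Lemma \ref{prop:Vanishing} (Non-Vanishing of Null Sequences), using the compact set of positive measure in $\Omega \setminus U$ (respectively $\tilde\Omega \setminus \Omega$) as an obstruction to any candidate null sequence supported in the smaller open set.

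First I address $L^1$-subcriticality in $U$. Assume for contradiction that $-\Delta+V$ is not $L^1$-subcritical in $U$. Since $V^+ \in L^p_{loc}(U)$, $V^- \in L^1_{loc}(U)$, and $Q_V \succeq 0$ in $U$ (inherited from $\Omega$), Corollary \ref{knock} applied with $U$ in place of $\Omega$ forces the restriction map $\mathcal{L}_V(U) \to L^1(K)$ to be discontinuous for every compact set $K \subset U$ of positive measure. Picking one such $K_1 \subset U$ of positive measure, we extract a sequence $\{\xi_n\} \subset W^{1,\infty}_c(U)$ with $Q_V(\xi_n) \to 0$ and $\int_{K_1}|\xi_n| = 1$. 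Let $K_2 \subset \Omega \setminus U$ be the compact set of positive measure supplied by hypothesis. Since each $\xi_n$ has compact support in $U$ and $K_2 \cap U = \emptyset$, we have $\int_{K_2}|\xi_n| = 0$ for every $n$.

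Now view $\{\xi_n\}$ as a sequence in $W^{1,\infty}_c(\Omega)$ and invoke Lemma \ref{prop:Vanishing} in $\Omega$, whose hypotheses ($V^+ \in L^p_{loc}(\Omega)$, $V^- \in L^1_{loc}(\Omega)$, $Q_V \succeq 0$ in $\Omega$) are all in force. The dichotomy there asserts that either $\int_K |\xi_n| \to 0$ for every compact $K$ of positive measure, which contradicts $\int_{K_1}|\xi_n| = 1$, or else $\inf_n \int_K |\xi_n| > 0$ for every such $K$, which contradicts $\int_{K_2}|\xi_n| = 0$. Either branch yields a contradiction, so $-\Delta + V$ must be $L^1$-subcritical in $U$.

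The second assertion then follows by contrapositive. Suppose $-\Delta+V$ is not supercritical in $\tilde\Omega$, that is, $Q_V \succeq 0$ in $\tilde\Omega$. Apply the first assertion with $\tilde\Omega$ in the role of $\Omega$ and $\Omega$ in the role of $U$, which is legitimate since $\tilde\Omega \setminus \Omega$ contains a compact set of positive measure by hypothesis and $V^+ \in L^p_{loc}(\tilde\Omega)$. The conclusion is that $-\Delta + V$ is $L^1$-subcritical in $\Omega$, which rules out $L^1$-criticality in $\Omega$. There is no serious obstacle in this proof: all the analytic content is concentrated in Lemma \ref{prop:Vanishing}, and the geometric hypothesis on the compact sets outside $U$ (respectively $\Omega$) is precisely the structural ingredient that forces both horns of the dichotomy to fail simultaneously.
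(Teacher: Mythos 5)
Your proof is correct. Both halves go through: the null-sequence contradiction for the first assertion is sound (the hypotheses of Corollary \ref{knock} and Lemma \ref{prop:Vanishing} are verified on $U$ and $\Omega$ respectively, the test functions supported in $U$ vanish identically on the compact set $K_2\subset\Omega\setminus U$ while carrying unit mass on $K_1\subset U$, so both horns of the dichotomy fail), and the second assertion is exactly the contrapositive of the first applied to the pair $(\Omega,\tilde\Omega)$. The paper argues differently on the surface: it fixes $K\subset\Omega\setminus U$ compact of positive measure, observes $Q_{V+\chi_K}\succeq\chi_K$ in $\Omega$, invokes Proposition \ref{prop:Sonata}(i) to conclude that the perturbed operator $-\Delta+V+\chi_K$ is $L^1$-subcritical in $\Omega$, and then restricts to $U$, where $Q_{V+\chi_K}$ coincides with $Q_V$ on $W^{1,\infty}_c(U)$; for the second assertion it runs a short two-case argument rather than your single contrapositive. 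Since Proposition \ref{prop:Sonata}(i) is itself proved via Corollary \ref{knock}, i.e. via Lemma \ref{prop:Vanishing}, the analytic core of the two proofs is identical; what you do differently is bypass the potential-perturbation trick and the packaged feeble-$L^2$/$L^1$ equivalence, working directly with a null sequence. The paper's version is shorter given the machinery already in place and illustrates the stability of subcriticality under adding $\chi_K$, while yours is more self-contained, makes the role of the compact set in $\Omega\setminus U$ completely transparent, and streamlines the second half. One cosmetic gap only: when you "extract" a sequence with $\int_{K_1}|\xi_n|=1$ and $Q_V(\xi_n)\to 0$ from discontinuity of the restriction map, you should note the standard normalization (unboundedness gives $\int_{K_1}|\xi_n|>n\sqrt{Q_V(\xi_n)}$, then divide by $\int_{K_1}|\xi_n|>0$); this is routine and does not affect correctness.
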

\begin{proof} 
 Fix a compact set $K \subset \Omega \setminus U$ of positive measure. We obviously have  $Q_{V+ \chi_K} \succeq \chi_K$ in $\Omega$. 
 By proposition \ref{prop:Sonata}(i) we have that $-\Delta+V+\chi_K$ is  $L^1$-subcritical in $\Omega$ and hence  we obtain that $-\Delta+V$ is  $L^1$-subcritical in $U$. 
 
 Suppose $Q_V \succeq 0$ in $\tilde{\Omega}$. If $-\Delta + V$ is $L^1$-critical in $\tilde{\Omega}$, by the same arguments as above we obtain that $-\Delta + V$ is $L^1$-subcritical in $\Omega$, a contradiction.  If $-\Delta + V$ is $L^1$-subcritical in $\tilde{\Omega}$ so will  it  be in $\Omega$, again the same contradiction. Thus,  $-\Delta+V$ is   supercritical in $\tilde{\Omega}$. 
\end{proof}

\medskip

\begin{corollary}\label{cor:fun}
Let $V, U,\Omega,\tilde{\Omega}$ be as in the above lemma. 
If $V$ is balanced in $\Omega$, then $-\Delta+V$ is subcritical in $U$. 
If additionally $ -\Delta+V$ is critical in $\Omega$ then it is supercritical in $\tilde{\Omega}$.
\end{corollary}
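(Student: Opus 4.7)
The plan is to reduce Corollary \ref{cor:fun} directly to Lemma \ref{nin}, since the balanced hypothesis on $V$ is essentially a strengthening of the positivity hypothesis $Q_V \succeq 0$ that appears in the lemma. The phrase ``as in the above lemma'' encodes the geometric setup ($U\subset\Omega\subset\tilde\Omega$ with positive-measure compact pieces in $\Omega\setminus U$ and $\tilde\Omega\setminus\Omega$), the integrability $V\in L^1_{loc}(\tilde\Omega)$, and the extra regularity $V^+\in L^p_{loc}(\tilde\Omega)$ for some $p>N/2$. The only remaining condition of Lemma \ref{nin} is $Q_V\succeq 0$ in $\Omega$, and this is built into part (i) of Definition \ref{tri} for balanced potentials.

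For the first assertion, I invoke Lemma \ref{nin} directly: under the inherited hypotheses plus $Q_V\succeq 0$ in $\Omega$, it yields that $-\Delta+V$ is $L^1$-subcritical in $U$. To upgrade this to plain ``subcriticality,'' I observe that $V$ balanced in $\Omega$ together with $V^+\in L^p_{loc}(\Omega)$ makes $V$ tame in $\Omega$ in the sense of Definition \ref{chalka}, and that $V^+\in L^p_{loc}(U)$ together with $Q_V\succeq 0$ in $U$ (inherited from $\Omega$) lets me apply Proposition \ref{prop:Sonata}(i) to identify $L^1$-subcriticality with feeble $L^2$-subcriticality in $U$. Via Remark \ref{host} this justifies writing simply ``$-\Delta+V$ is subcritical in $U$.''

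For the second assertion, the hypothesis ``$-\Delta+V$ is critical in $\Omega$'' is unambiguous precisely because $V$ is tame in $\Omega$, and in particular it forces $L^1$-criticality. Thus the second conclusion of Lemma \ref{nin} applies verbatim and gives that $-\Delta+V$ is supercritical in $\tilde\Omega$.

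There is no substantive obstacle: the entire argument is bookkeeping. The only point worth spelling out carefully is the terminological one, namely that the balanced plus $V^+\in L^p_{loc}$ hypotheses in $\Omega$ place us in the tame regime where ``subcritical'' and ``critical'' are unambiguous, so that the $L^1$-conclusions supplied by Lemma \ref{nin} coincide with the statement one wishes to record.
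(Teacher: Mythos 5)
Your argument is correct and is essentially the paper's own proof, which consists precisely of citing Lemma \ref{nin} together with Proposition \ref{prop:Sonata}; you have merely spelled out the bookkeeping. One small citation point: the upgrade from $L^1$-subcriticality to the unqualified ``subcritical'' of Remark \ref{host} rests on the tame-potential equivalences in Proposition \ref{prop:Sonata}(ii)--(iii) (i.e.\ on $V$ being balanced, via Proposition \ref{basa}), not on part (i), which gives only the implication from feeble $L^2$-subcriticality to $L^1$-subcriticality.
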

\proof Follows from the above lemma and proposition \ref{prop:Sonata}.  \qed

\medskip

\begin{proposition}\label{but}
Assume $V \in L^{1}_{loc}(\Omega)$ is such that  $Q_V \succeq 0$  in $\Omega$.
\begin{enumerate}
\item[{\rm \bf (i)}] 
Take any nonnegative $w \in L^1_{loc}(\Omega)$ such that $w \not \equiv 0$. Then $- \Delta + V+w$ is a feebly $L^2$-subcritical operator in $\Omega$. If $- \Delta + V$ is a feebly $L^2$-critical operator in $\Omega$, then  $- \Delta + V-w$ is supercritical in $\Omega$.
\item[{\rm \bf (ii)}] 
If  $V$ is tame in $\Omega$ and $w \in L^{\infty}_{loc}(\Omega), w \not \equiv 0$ and is nonnegative, in the statement (i) above we can replace ``feebly $L^2$-critical (subcritical)" by critical (subcritical).
\end{enumerate}
\end{proposition}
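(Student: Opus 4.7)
For part (i), the plan is to observe that any nonnegative nonzero $w \in L^1_{loc}(\Omega)$ dominates $c\chi_K$ for some constant $c>0$ and some compact set $K\subset\Omega$ of positive measure. Indeed, since $w\not\equiv 0$, there exists $c>0$ such that $\{w\geq c\}$ has positive measure, and inner regularity of the Lebesgue measure yields a compact subset $K\subset\{w\geq c\}$ of positive measure. Then for any $\xi\in W^{1,\infty}_c(\Omega)$,
$$
  Q_{V+w}(\xi)=Q_V(\xi)+\int_\Omega w\,\xi^2 \,\geq\, c\int_K \xi^2,
$$
i.e.\ $Q_{V+w}\succeq c\chi_K$ in $\Omega$. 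This gives ${\cal L}_{V+w}(\Omega)\hookrightarrow L^2(K)$ via H\"older, so $-\Delta+V+w$ is feebly $L^2$-subcritical. For the second assertion of (i), I would argue by contradiction: assume $-\Delta+V$ is feebly $L^2$-critical but that $Q_{V-w}\succeq 0$ in $\Omega$. Since $V-w\in L^1_{loc}(\Omega)$ and $w\geq 0$, $w\not\equiv 0$, the first half of (i) applied with the potential $V-w$ in place of $V$ shows that $-\Delta+(V-w)+w=-\Delta+V$ is feebly $L^2$-subcritical, contradicting the assumed criticality.

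For part (ii), assume $V$ is tame in $\Omega$ and $w\in L^\infty_{loc}(\Omega)$ nonnegative, $w\not\equiv 0$. The plan is to verify that $V+w$ is again tame so that Proposition \ref{prop:Sonata} upgrades feeble $L^2$-subcriticality to full subcriticality. Balancedness of $V+w$ follows from Proposition \ref{mea}(ii); local $L^p$-integrability of $(V+w)^+\leq V^++w$ is immediate since $w\in L^\infty_{loc}\subset L^p_{loc}$. Thus $V+w$ is tame, and by Proposition \ref{prop:Sonata} the feeble $L^2$-subcriticality given by (i) coincides with subcriticality, so $-\Delta+V+w$ is subcritical. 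For the second statement of (ii), tameness of $V$ alone allows Proposition \ref{prop:Sonata} to identify the three notions of criticality for $-\Delta+V$; suppose for contradiction that $-\Delta+V$ is critical but $Q_{V-w}\succeq 0$ in $\Omega$. Then, as in (i), $-\Delta+V$ would be feebly $L^2$-subcritical, hence (since $V$ is tame) subcritical, contradicting criticality.

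There is no serious obstacle: the whole argument reduces to (a) extracting a compact set $K$ with $w\geq c\chi_K$, which is a standard measure-theoretic observation; (b) recognizing that Proposition \ref{mea}(ii) preserves balancedness under adding nonnegative potentials; and (c) invoking Proposition \ref{prop:Sonata} to pass between the various criticality notions in the tame setting. The only point to be careful about is that when we apply (i) to the potential $V-w$ in the contradiction step, we do \emph{not} need $V-w$ to be tame or balanced, only that it lies in $L^1_{loc}(\Omega)$ and satisfies $Q_{V-w}\succeq 0$; the conclusion that $-\Delta+V=-\Delta+(V-w)+w$ is feebly $L^2$-subcritical does not require any regularity of $V-w$ beyond what is already in the hypotheses.
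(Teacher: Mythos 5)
Your proposal is correct and follows essentially the same route as the paper: extract $c>0$ and a compact $K\subset\{w\geq c\}$ of positive measure, note $Q_{V+w}\succeq c\chi_K$ to get feeble $L^2$-subcriticality, and for (ii) combine Proposition \ref{mea}(ii) (tameness of $V+w$) with Proposition \ref{prop:Sonata}. The only cosmetic difference is that you prove the second assertion of (i) by contradiction, reusing the first half, whereas the paper directly exhibits a test function $\xi_0$ with $Q_{V-w}(\xi_0)<0$; the two arguments are contrapositives of the same computation.
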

\proof
{\bf (i)} We note that  we may find a  compact set $K$ of positive measure and a $c>0$ such that $K \subset \{w \geq c\}$. We then see that,
$$Q_{V+w} (\xi) = Q_{V}(\xi)+ \int_\Omega w \xi^2 \geq c \int_\Omega \chi_K  \xi^2\; \;\forall \; \xi \in W_c^{1,\infty}(\Omega).
$$
Hence, $- \Delta + V$ is feebly $L^2$-subcritical.

 If the operator $-\Delta + V$ is feebly $L^2$-critical, there exists $\xi_{0} \in W_c^{1,\infty} (\Omega)$ such that
$$
     \int_{\Omega}  |\nabla \xi_{0} |^2 + V \xi_{0}^2   < c \int_\Omega \chi_K  \xi_0^2 \leq \int_{\Omega} w \xi_{0}^2.
$$
That is, 
$Q_{V-w}(\xi_0)<0.$

\medskip

{\bf (ii)} 
follows from the fact that $V+w$ is tame in $\Omega$ (by proposition \ref{mea}(ii)) and proposition \ref{prop:Sonata}. 
\qed

\subsection{Characterisation of subcritical/critical operators in terms of dimension of ${\cal C}_V(\Omega)$ when $V$ is tame in  $\Omega$}\label{s10.3}
In this subsection we improve the results in  section 10.3 of \cite{MPJDE} to the class of tame potentials. 
\begin{theorem} \label{thm:Simplicity}
Let $V$ be tame in $\Omega$ and assume that  $ -\Delta+V$ is critical in $\Omega$. 
If $\Phi_1 $, $\Phi_2$ are two non-negative supersolutions of $-\Delta+V$ in $\Omega$, then $\Phi_2 = t \Phi_1$ for some $t \geq 0$.
\end{theorem}

\proof The result follows easily  if one of $\Phi_1,\Phi_2$ is the trivial solution. Let us then assume that both are nontrivial supersolutions. 
By the well-known result (cf. proposition 5.9 in \cite{PoCo}), we get that $\Phi:= \min\{\Phi_1, \Phi_2 \}$ is also a supersolution
of  $-\Delta+V$  and hence by criticality of the operator (theorem \ref{fenu}), 
$$
( -\Delta+V)   \Phi_1 = 0 \quad \hbox{ and } \quad  (-\Delta+V) \Phi = 0 \quad \text{ in } \Omega.
$$
Therefore, the non-negative function $\Phi_1 - \Phi$ satisfies $(-\Delta+V)(\Phi_1 - \Phi) = 0$ and 
the strong maximum principle in Theorem \ref{thm:StrongMax}  gives
that either, 
$$
    \Phi_1 >   \min\{ \Phi_1, \Phi_2 \}  \, \,  a.e.  \quad \hbox{ or }   \quad 
    \Phi_1 \equiv \min\{ \Phi_1, \Phi_2 \} \, \, a.e.
$$  
That is, the set of all non-negative supersolutions is  linearly ordered with respect to the pointwise  order ``$\leq$" on $\Omega$. 
Without loss of generality,  we can assume $\Phi_1 \leq  \Phi_2$ a.e. in $\Omega$. 
Consider  the set
$$
   T := \{ t \in \mathbb R  \, : \, \Phi_1 + t \Phi_2 \geq 0  \;\;\text{a.e. in} \;\; \Omega\}.
$$
We note that $T $ is an interval which is not empty (since $0 \in T$) and bounded below (since $\Phi_1 , \Phi_2 \not\equiv 0$, we have $-2 \not \in T$). Hence, $t_0 := \inf T$ is finite and by definition 
\begin{equation} \label{eq:Sole}
       \Phi_1 + t_0 \Phi_2  \geq 0,\quad  \text{i.e.,} \quad  t_0 \in T.
\end{equation}
Now, for any $\varepsilon >0$ 
the nonnegative supersolutions $\varepsilon \Phi_2$ and  $\Phi_1 + t_0 \Phi_2$ are comparable and  hence 
 either  
$   \Phi_1 + t_0 \Phi_2  \geq \varepsilon \Phi_2 $\,  a.e.,
    or   
  $ \Phi_1 + t_0 \Phi_2  \leq  \varepsilon \Phi_2$  \, a.e..
The first alternative   contradicts the minimal property of $t_0$.  Hence, we deduce that 
$
    \Phi_1 + t_0 \Phi_2  \leq  \varepsilon \Phi_2 \quad  a.e. 
    \quad
    \forall \varepsilon >0.
$
Letting $\varepsilon \to 0$  we deduce that $ \Phi_1 + t_0 \Phi_2  \leq 0$ in $\Omega$, which combined with \eqref{eq:Sole} shows that $ \Phi_1 + t_0 \Phi_2  \equiv 0$ in $\Omega$. \qed

We can now state the following result which characterises critical operators in terms of the dimension of the cone of non-negative supersolutions ${\cal C}_V(\Omega)$:

\begin{theorem} 
Let $V \in L^{1}_{loc}(\Omega)$ be tame in $\Omega$. Then 
\vspace{-2mm}
\begin{enumerate}
\item[{\bf (i)}]
$-\Delta + V$ is subcritical in $\Omega$ iff
${\cal C}_V(\Omega)$  contains an infinite set of linearly independent functions.
\item[{\bf (ii)}]
$-\Delta + V$ is critical in $\Omega$ iff
${\cal C}_V(\Omega)= \{tu_0: t >0 \} $
for some nonnegative  nontrivial   $ u_0 \in L^1_{loc}(\Omega)$ solving $-\Delta u_0 +Vu_0=0$ in ${\mathcal D}^{\prime}(\Omega)$.
\end{enumerate}
\end{theorem}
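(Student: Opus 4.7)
The plan is to reduce both parts of Theorem \ref{rine} to results already in hand, using Lemma \ref{min} together with Corollary \ref{sir} as the crucial bridge that converts ``two linearly independent elements of ${\cal C}_V(\Omega)$'' into ``a nontrivial nonnegative lower bound $w$ for $Q_V$''. Before starting, I note that the hypothesis $V^{+}\in L^{\infty}_{loc}(\Omega)$ has two convenient consequences: by Remark \ref{gia}(i) it makes $V$ strongly balanced (hence balanced) as soon as $Q_V\succeq 0$, and it trivially puts $V^{+}$ in $L^{p}_{loc}(\Omega)$ for every $p>N/2$. Thus whenever $Q_V\succeq 0$ the potential $V$ is tame in the sense of Definition \ref{chalka}, so the full machinery of \S\ref{s8}--\S\ref{s10.2} (Corollary \ref{bindu}, Proposition \ref{prop:Sonata}, and Theorems \ref{fen}, \ref{fenu}) is available.

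For (i), the forward implication is immediate: if $-\Delta+V$ is subcritical then $V$ is tame and Corollary \ref{bindu} directly produces infinitely many linearly independent elements of ${\cal C}_V(\Omega)$. For the converse, I would pick any two linearly independent $u_1,u_2\in{\cal C}_V(\Omega)$; then Lemma \ref{min} gives
\[
   Q_V\;\succeq\; w \;:=\; \tfrac{1}{4}\Bigl|\tfrac{\nabla u_1}{u_1}-\tfrac{\nabla u_2}{u_2}\Bigr|^{2}\in L^{1}_{loc}(\Omega),
\]
and Corollary \ref{sir} ensures $w\not\equiv 0$. Choosing $c>0$ and a compact set $K\subset\{w\ge c\}$ of positive measure gives $Q_V\succeq c\chi_K$, so $-\Delta+V$ is feebly $L^{2}$-subcritical. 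Since $V^{+}\in L^{p}_{loc}$ for $p>N/2$, Proposition \ref{prop:Sonata}(i) upgrades this to $L^{1}$-subcriticality, which is what ``subcritical'' means here by Remark \ref{host}.

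For (ii), I would argue as follows. If $-\Delta+V$ is critical, then $V$ is tame and Theorem \ref{fenu} gives ${\cal M}_V(\Omega)=\{0\}$, so every element of ${\cal C}_V(\Omega)$ is in fact a distributional solution of $-\Delta u+Vu=0$. Theorem \ref{thm:converse3} supplies one such element $u_0\not\equiv 0$; if some $u\in{\cal C}_V(\Omega)$ were linearly independent from $u_0$, part (i) just proved would make $-\Delta+V$ subcritical, a contradiction. Hence every $u\in{\cal C}_V(\Omega)$ equals $tu_0$ for some scalar $t$, and positivity $t>0$ is forced by $u,u_0\ge 0$ with $u\not\equiv 0$. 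Conversely, if ${\cal C}_V(\Omega)=\{tu_0:t>0\}$, then ${\cal C}_V(\Omega)\neq\emptyset$, so Theorem \ref{thm:chitra}(i) yields $Q_V\succeq 0$; the absence of an infinite linearly independent family in ${\cal C}_V(\Omega)$ and part (i) together rule out subcriticality, leaving criticality as the only possibility.

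The only substantive obstacle is really already absorbed into Lemma \ref{min} and Corollary \ref{sir}: once one has the quantitative ``improvement of $Q_V$ by two supersolutions'', Theorem \ref{rine} is a packaging of the distributional AAP characterisations of \S\ref{s10.1}--\S\ref{s10.2}. The one mild subtlety I would flag in writing is the need to invoke Proposition \ref{prop:Sonata}(i) rather than just a direct argument in the converse of (i), since the $L^{2}$-lower bound obtained from $w$ is only feeble; the tameness of $V$ (guaranteed by $V^{+}\in L^{\infty}_{loc}$) is precisely what licenses this upgrade.
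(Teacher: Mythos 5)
Your proposal is correct and follows essentially the same route as the paper: part (i) via Corollary \ref{bindu} in one direction and Lemma \ref{min} with Corollary \ref{sir} (plus the feeble-to-$L^1$ upgrade of Proposition \ref{prop:Sonata}) in the other, and part (ii) by combining Theorems \ref{thm:converse3} and \ref{fenu} with part (i). Your explicit remarks on why $V^{+}\in L^{\infty}_{loc}(\Omega)$ makes $V$ tame merely spell out what the paper leaves implicit.
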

\proof 
{\bf (i)} 
The ``only if" part follows from  corollary \ref{bindu}. The converse follows from  theorem \ref{fenu}  and theorem \ref{thm:Simplicity} above. 

\smallskip

{\bf (ii)}
The  ``if" part follows from (i). For the converse, we note that ${\cal C}_V(\Omega) \neq \emptyset$ by theorem \ref{thm:converse3}. By theorem  \ref{fenu} any element of ${\cal C}_V(\Omega)$ is a distributional solution of $-\Delta u +Vu=0$ in $\Omega$. 
Conclusion follows  from  theorem \ref{thm:Simplicity} above.  
\qed

\section{Examples and applications} \label{s11}
\subsection{Non-negative operators} \label{s11.1}

\begin{lemma}\label{levy}
Let $u \in W^{2,1}_{loc}(\Omega)$ be such that $\inf_B u >0$ for any ball $B \Subset \Omega$ and $f \in L^1_{loc}(\Omega)$ be a nonnegative function. Let $V:= (\Delta u+f)/u$. Then, $-\Delta + V$ is a non-negative operator.
\end{lemma}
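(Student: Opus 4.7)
The plan is to observe that the hypothesis $V=(\Delta u+f)/u$ is tailored precisely so that $u$ itself is a nonnegative distributional supersolution of $-\Delta w+Vw=f$, and then to invoke the easy direction of the AAP Principle (Theorem~\ref{thm:chitra}(i), or rather its sharpening Theorem~\ref{AAP-distr}). The lemma is essentially a reverse-engineering of the supersolution clause in Definition~\ref{infinitea}.

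First I would check that $V\in L^1_{loc}(\Omega)$: since $u\in W^{2,1}_{loc}(\Omega)$ gives $\Delta u\in L^1_{loc}(\Omega)$, and $f\in L^1_{loc}(\Omega)$ by hypothesis, while $1/u$ is locally bounded because $\inf_B u>0$ for every ball $B\Subset\Omega$, the product $V=(\Delta u+f)/u$ indeed lies in $L^1_{loc}(\Omega)$. The same reasoning gives $Vu=\Delta u+f\in L^1_{loc}(\Omega)$, which is the integrability clause required in Definition~\ref{def:DistributionalSolution}.

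Next I would verify that $u\in{\cal C}_V(\Omega)$. Because $u\in W^{2,1}_{loc}(\Omega)$, its distributional Laplacian coincides with its a.e. pointwise Laplacian, so for any $\xi\in C^2_c(\Omega)$ one has $\int_\Omega u(-\Delta\xi)\,dx=\int_\Omega(-\Delta u)\xi\,dx$. Adding $\int_\Omega Vu\,\xi\,dx$ to both sides,
\[
\int_\Omega u\bigl\{-\Delta\xi+V\xi\bigr\}\,dx=\int_\Omega(-\Delta u+Vu)\xi\,dx=\int_\Omega f\xi\,dx\ \ge\ 0
\]
whenever $\xi\geq 0$. Combined with $u>0$ a.e. (forced by the local positivity hypothesis), this shows $u\geq 0$, $u\not\equiv0$, so $u\in{\cal C}_V(\Omega)$.

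The conclusion then drops out of Theorem~\ref{thm:chitra}(i): once ${\cal C}_V(\Omega)$ is non-empty, $Q_V\succeq 0$ in $\Omega$, i.e. $-\Delta+V$ is non-negative. As a bonus, Theorem~\ref{AAP-distr} applied with $\Phi=u$ and $h=f$ even yields the sharper bound $Q_V(\xi)\geq \int_\Omega (f/u)\xi^2\,dx$, and in particular $f/u\in L^1_{loc}(\Omega)$. There is essentially no obstacle to overcome: the only care point is the justification of the integration by parts $\int u(-\Delta\xi)=\int(-\Delta u)\xi$, which is standard for $W^{2,1}_{loc}$ functions and cutoffs $\xi\in C_c^2(\Omega)$.
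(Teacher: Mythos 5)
Your proposal is correct and follows essentially the same route as the paper: verify that $V\in L^1_{loc}(\Omega)$ and that $u$ solves $-\Delta u+Vu=f\ge 0$ in ${\cal D}'(\Omega)$, so $u\in{\cal C}_V(\Omega)$, and then conclude $Q_V\succeq 0$ from the AAP direction (the paper cites Theorem~\ref{AAP-distr} directly, which is exactly the ingredient behind Theorem~\ref{thm:chitra}(i) that you invoke). Your extra care with the integration by parts for $W^{2,1}_{loc}$ functions and the bonus bound $Q_V\succeq f/u$ are consistent with, and slightly more detailed than, the paper's one-line proof.
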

\proof Clearly, $V $ is locally integrable in $\Omega$ and $u$ solves $-\Delta u +Vu =f$ in ${\mathcal D}^\prime(\Omega)$. Thus, the associated quadratic form satisfies $Q_V \succeq 0$ in $\Omega$ by theorem \ref{AAP-distr}.
\qed

\medskip

In the following example we show that it is possible to choose in this manner $V \in L^1(\Omega)$ but $V \not \in L^p(\Omega)$ for any $p>1$.
\begin{example}\label{bump}
Denote by $I$ the unit interval $[0,1]$ and  by $I^N$ the cube $[0,1]^N$. Let $f \in L^\infty(I^N)$ be a nonnegative function. Choose a  function $\rho$ on $I$ such that $\rho \in L^1(I)$. Define $w: I^N \to \R$ by 
$$
w(x)=w(x_1,x_2,\cdot \cdot \cdot,x_N) := \int_0^{x_1} \Big(\int_0^t \rho(s)ds \Big)  dt.
$$
Clearly $w \in W^{2,1}(I^N)\cap C(I^N)$  and 
$$
\Delta w (x) = \frac{\partial^2 w}{\partial x_1^2}(x)= \rho(x_1) \; \text{ a.e. in } \; I^N.
$$
Choosing $k>0$ large enough so that $w+k>0$ in $I^N$ and letting $u:=w +  k$, we see that $u \in W^{2,1}(I^N)$ is a positive continuous function  which solves $-\Delta u + Vu=f$ in ${\mathcal D}^\prime(I^N)$ where 
\begin{equation}\label{fol}
V(x_1,x_2,\cdot \cdot \cdot,x_N):= (\rho(x_1)+f(x))/(w(x)+k).
\end{equation}
Clearly, if we further choose $\rho \not \in L^p(I)$ for any $p>1$, we get that $V  \in L^1(I^N)$ but $V \not \in L^p(I^N)$ for any $p>1$. By the above lemma $Q_V \succeq 0$ in $\Omega$.
\end{example}

\medskip

\begin{example}\label{ma}
Let $\Omega:= I^N, N \geq 3$.  Choose $f \equiv 1$ and $\rho \in W^{1,1}(I)$ such that 
\vspace{-2mm}
\begin{enumerate}
 \item[(i)] $\rho + 1 <0$, 
  \item[(ii)] 
  $\rho^{\; \prime} \not \in L^p(I)$  for any $p>1$.
\end{enumerate}
Let $V$  be the  negative singular potential  as given in \eqref{fol}. Thus, $Q_V \succeq 0$ in $\Omega$. We choose the vector field
$$\vec {\bf {\Gamma}} := \frac{1}{2\sqrt{N}} (\sqrt{-V},\sqrt{-V}, \cdot \cdot \cdot, \sqrt{-V}).$$
Let $\sigma := div \; \vec {\bf {\Gamma}}$.  We note that
 \begin{equation}\label{agm}
 \sigma (x_1,x_2, \cdot \cdot \cdot, x_N)  = \frac{\sqrt{-V(x)}}{4 \sqrt{N} (\rho(x_1)+1)} \Big( V \int_0^{x_1} \rho(t) dt - \rho^{\;\prime}(x_1) \Big). \notag
 \end{equation}
 From the above equation and the properties of $\rho$, we see that $\sigma$ has the same integrability as $\rho^{\;\prime}$ i.e., $\sigma \in L^p(I^N)$ only for $p=1$.
We check that $Q_{-4|\vec {\bf {\Gamma}} |^2} = Q_V \succeq 0$ in $\Omega$. 
Therefore,  from theorem 4.1 (iii) in \cite{JayeMazyaVerb} we obtain that 
\begin{equation}\label{dar}
\Big|\int_\Omega \sigma \xi^2 \Big|  \leq \int_\Omega |\nabla \xi|^2, \;\;\forall \xi \in W^{1,\infty}_c(\Omega).
\end{equation}
That is,  $\sigma$ is form bounded,
$Q_\sigma \succeq 0$ in $\Omega$ and the identity map\\
$$id : \Big(W^{1,\infty}_c(\Omega), \| \cdot \|_{H^1_0(\Omega)}\Big)  \to  {\cal L}_\sigma(\Omega) \;\;\text{ is continuous. }\;\;$$
Fix $t \in (0,1)$. If we consider the potential $t \sigma$, we obtain from \eqref{dar} that the above map is a homeomorphism onto ${\cal L}_{t\sigma}(\Omega)$. That is, the corresponding energy space ${\cal H}_{t \sigma}(\Omega)$ can be identified with $H^1_0(\Omega)$. By corollary \ref{yid}, we obtain that $\rho^{\;\prime} w \in L^1_{loc}(\Omega)$ for any $w \in H^1_{loc}(\Omega)$ and hence from proposition \ref{inz}  we obtain that $t \sigma$ is a balanced potential in $\Omega$.
 \end{example}

\subsection{Subcritical operators} \label{s11.2}
\begin{lemma}\label{lev}
Let $\Phi \in W^{2,1}_{loc}(\Omega)$ be such that $\inf_B \Phi >0$ for any ball $B \Subset \Omega$ and $f \in L^1_{loc}(\Omega)$ be a nonnegative function such that $\inf_{K}f>0$ for some compact   $K \Subset \Omega$ with positive measure. Let $V:= (\Delta \Phi+f)/\Phi$. Then, $-\Delta + V$ is a feebly $L^2-$ subcritical 
 operator.
\end{lemma}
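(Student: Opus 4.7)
The plan is to exhibit $\Phi$ as a distributional supersolution of $-\Delta u + Vu = f$ in $\mathcal{C}_V(\Omega)$, apply the distributional AAP principle (Theorem~\ref{AAP-distr}) to get a weighted Poincaré-type inequality $Q_V \succeq f/\Phi$, and then carve out a compact subset of $K$ on which $f/\Phi$ is bounded below by a positive constant.

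First I would check that the setup makes sense distributionally. Since $\Phi \in W^{2,1}_{loc}(\Omega)$ we have $\Delta\Phi \in L^1_{loc}(\Omega)$; combined with $\inf_B \Phi > 0$ on every ball $B \Subset \Omega$ (hence $1/\Phi \in L^\infty_{loc}(\Omega)$) this gives $V \in L^1_{loc}(\Omega)$, and moreover $V\Phi = \Delta\Phi + f \in L^1_{loc}(\Omega)$. By construction, $\Phi \geq 0$, $\Phi \not\equiv 0$, and $-\Delta\Phi + V\Phi = f \geq 0$ in $\mathcal{D}'(\Omega)$, so $\Phi \in \mathcal{C}_V(\Omega)$ with Riesz measure $f$. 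Applying Theorem~\ref{AAP-distr} with $h=f$ yields
$$
Q_V(\xi) \;\geq\; \int_{\Omega} \frac{f}{\Phi}\,\xi^2\,dx, \qquad \forall\, \xi \in W_c^{1,\infty}(\Omega),
$$
and in particular $f/\Phi \in L^1_{loc}(\Omega)$.

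Next I would localize this inequality to produce a genuine $L^2$-embedding on a compact set with positive measure. Since $\Phi \in L^1_{loc}(\Omega)$, the function $\Phi$ is finite a.e. on $K$, so the measurable sets $K \cap \{\Phi \leq M\}$ exhaust $K$ as $M \to \infty$. Choose $M>0$ so that $|K \cap \{\Phi \leq M\}| > 0$, and then by inner regularity of Lebesgue measure extract a compact set $K_0 \subset K \cap \{\Phi \leq M\}$ with $|K_0| > 0$. On $K_0$ we have the pointwise lower bound
$$
\frac{f}{\Phi} \;\geq\; \frac{\inf_K f}{M} \;=:\; c_0 \;>\; 0,
$$
using the hypotheses $\inf_K f > 0$ and $\Phi \leq M$ on $K_0$. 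Combining with the previous inequality gives $Q_V(\xi) \geq c_0 \int_{K_0} \xi^2$ for every $\xi \in W_c^{1,\infty}(\Omega)$, which is exactly the imbedding $\mathcal{L}_V(\Omega) \hookrightarrow L^2(K_0)$. Since also $Q_V \succeq 0$ in $\Omega$ (this was already shown in Lemma~\ref{levy} applied verbatim), this is the definition of feeble $L^2$-subcriticality.

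The only subtle point is that $W^{2,1}_{loc}$ does not embed into $L^\infty_{loc}$ when $N \geq 3$, so we cannot simply bound $\Phi$ from above on all of $K$; this is why the measure-theoretic truncation argument (choosing $M$ and using inner regularity) is needed. Everything else is a clean application of the distributional AAP principle already proved.
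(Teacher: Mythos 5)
Your proof is correct and follows essentially the same route as the paper: exhibit $\Phi$ as a nonnegative distributional supersolution with data $f$, invoke Theorem~\ref{AAP-distr} to get $Q_V \succeq f/\Phi$, and conclude feeble $L^2$-subcriticality from the positivity of $f/\Phi$ on $K$. The only difference is that you spell out the truncation/inner-regularity step (choosing $M$ and a compact $K_0 \subset K \cap \{\Phi \leq M\}$), which the paper leaves implicit in ``$f/\Phi \not\equiv 0$ in $K$'' but carries out explicitly in the analogous argument of Theorem~\ref{thm:AAP-sub}.
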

\proof We note that $-\Delta \Phi + V\Phi =f $ in ${\cal D^\prime}(\Omega)$. Since $\Phi,f$ are nonnegative, by theorem \ref{AAP-distr} we obtain that $Q_V \succeq f/\Phi$ in $\Omega$. The conclusion follows by noting that $f/\Phi \not \equiv 0$ in $K$.
\qed

\medskip

\begin{corollary}\label{lee}
Let $\Phi,f$ be as in the above  lemma with the additional assumptions: $(\Delta \Phi)^+$ and $f$ belong to $L^{\infty}_{loc}(\Omega)$. Take $V$ as in the above lemma.  Then, $V$ is a tame  potential  in $\Omega$ and given any $g \in L^{\infty}_c(\Omega)$, there exists a  unique $u \in {\mathcal H}_V(\Omega)$ solving $-\Delta u + Vu=g$ in ${\cal D^\prime}(\Omega)$.
\end{corollary}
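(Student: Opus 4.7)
\textbf{Proof plan for Corollary \ref{lee}.} The plan is to verify the two hypotheses of the tameness definition and then invoke the existence/uniqueness machinery already developed for $L^1$-subcritical operators with balanced potentials.

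\emph{Step 1 (tameness).} First I would check that $V\in L^1_{loc}(\Omega)$: the assumption $\Phi\in W^{2,1}_{loc}(\Omega)$ gives $\Delta\Phi\in L^1_{loc}(\Omega)$, while $f\in L^\infty_{loc}(\Omega)$ and the pointwise lower bound $\inf_B\Phi>0$ on each ball $B\Subset\Omega$ yields $1/\Phi\in L^\infty_{loc}(\Omega)$; multiplying, $V=(\Delta\Phi+f)/\Phi\in L^1_{loc}(\Omega)$. Next, the hypothesis $(\Delta\Phi)^+\in L^\infty_{loc}(\Omega)$ gives
\[
V^{+}\;\le\;\frac{(\Delta\Phi)^{+}+f}{\Phi}\;\in\;L^\infty_{loc}(\Omega),
\]
so in particular $V^{+}\in L^p_{loc}(\Omega)$ for every $p>N/2$. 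Since $V^{+}\in L^\infty_{loc}(\Omega)$, Remark \ref{gia}(i) (together with the non-negativity of $Q_V$ from Lemma \ref{lev}) implies $V$ is strongly balanced, hence balanced. Combined with the local integrability of $V^{+}$ at the exponent $p>N/2$, Definition \ref{chalka} shows $V$ is tame in $\Omega$.

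\emph{Step 2 (subcriticality).} From Lemma \ref{lev}, $-\Delta+V$ is feebly $L^2$-subcritical in $\Omega$. Because $V$ is tame, Proposition \ref{prop:Sonata}(i) upgrades this to $L^1$-subcriticality, so ${\cal L}_V(\Omega)\hookrightarrow L^1_{loc}(\Omega)$.

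\emph{Step 3 (existence and uniqueness).} Given $g\in L^\infty_c(\Omega)$, Lemma \ref{lem:jug} produces a unique energy solution $u\in\mathcal{H}_V(\Omega)$ of $-\Delta u+Vu=g$, i.e.
\[
a_{V,\Omega}(u,\xi)=\int_\Omega g\,\xi,\qquad\forall\,\xi\in W^{1,\infty}_c(\Omega).
\]
Since $V$ is balanced and $-\Delta+V$ is $L^1$-subcritical, Proposition \ref{prop:EnergyVsDist} applies and gives that $\mathbf{J}(u)$ is a distributional solution of the same equation. Finally, Corollary \ref{bru} tells us that the embedding $\mathbf{J}:\mathcal{H}_V(\Omega)\to L^1_{loc}(\Omega)$ is injective, so we may identify $u$ with its distributional representative $\mathbf{J}(u)$, and uniqueness of the energy solution in $\mathcal{H}_V(\Omega)$ transfers verbatim to uniqueness in $\mathcal{H}_V(\Omega)$ viewed as a subset of $L^1_{loc}(\Omega)$.

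\emph{Expected obstacle.} There is no genuine obstacle: the statement is essentially an assembly of Lemma \ref{lev}, Remark \ref{gia}(i), Proposition \ref{prop:Sonata}(i), Lemma \ref{lem:jug}, Proposition \ref{prop:EnergyVsDist} and Corollary \ref{bru}. The only mildly delicate point is Step 1's verification that $V^{+}\in L^\infty_{loc}(\Omega)$, which requires combining the boundedness of $(\Delta\Phi)^{+}$ and $f$ with the local positive lower bound on $\Phi$; once that is done, tameness and the solvability/uniqueness statements follow from the cited results.
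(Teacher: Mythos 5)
Your argument is correct and follows essentially the same route as the paper: tameness via Lemma \ref{lev} together with Remark \ref{gia}(i) (using $V^{+}\le((\Delta\Phi)^{+}+f)/\Phi\in L^\infty_{loc}$), then $L^1$-subcriticality via Proposition \ref{prop:Sonata}, then solvability. The only cosmetic difference is that the paper cites Corollary \ref{pomo} for the existence/uniqueness step, whereas you unpack that corollary into its ingredients (Lemma \ref{lem:jug}, Proposition \ref{prop:EnergyVsDist}, Corollary \ref{bru}), which is the same proof in expanded form.
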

\proof
We note that $V$ is tame in $\Omega$ by the above lemma and remark \ref{gia} (i). The proof of rest of the assertions follows from the above lemma, proposition \ref{prop:Sonata} and corollary \ref{pomo}.
\qed

\medskip

\begin{corollary}\label{loe}
 Take any $\Phi \in W^{2,1}_{loc}(\Omega)$  and $f \in L^{\infty}_{loc}(\Omega)$ satisfying
$$
\inf_B \Phi ,\; \inf_B f >0\;\text{ for any ball }\; B \Subset \Omega \;\text{ and } \; \Delta \Phi \in {\mathcal V}(\Omega) \; \text{(see definition \ref{uno})}.
$$ 

Define $V:= (\Delta \Phi+f)/\Phi$.  Then,  $V$ is balanced in $\Omega$ and  given any $g \in L^{\infty}_c(\Omega)$, there exists a  unique $u \in {\mathcal H}_V(\Omega)$ solving $-\Delta u + Vu=g$ in ${\cal D^\prime}(\Omega)$.
\end{corollary}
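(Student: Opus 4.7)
The strategy is to apply the distributional AAP principle of Theorem~\ref{AAP-distr} to $\Phi$ itself, use the structural hypothesis $\Delta\Phi \in \mathcal{V}(\Omega)$ together with Proposition~\ref{paint} to obtain strong balancedness, and then invoke the $L^1$-subcritical solvability theory of Section~\ref{s8}.

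First, $\Phi \in W^{2,1}_{loc}(\Omega)$ gives $\Delta\Phi \in L^1_{loc}(\Omega)$; combined with $f \in L^\infty_{loc}(\Omega)$ and $1/\Phi \in L^\infty_{loc}(\Omega)$ (from $\inf_B \Phi > 0$), this yields $V \in L^1_{loc}(\Omega)$ and $V\Phi = \Delta\Phi + f \in L^1_{loc}(\Omega)$. Integrating by parts twice (legal since $\Phi \in W^{2,1}_{loc}$) shows $\Phi$ is a distributional solution of $-\Delta u + Vu = f$ with $f \geq 0$, so $\Phi \in \mathcal{C}_V(\Omega)$, and Theorem~\ref{AAP-distr} then delivers
\[
Q_V(\xi) \;\geq\; \int_\Omega \frac{f}{\Phi}\,\xi^2\,dx \qquad \forall \xi \in W^{1,\infty}_c(\Omega).
\]
From this $L^1$-subcriticality is immediate: on any ball $B \Subset \Omega$, Cauchy--Schwarz gives $\int_B |\xi| \leq (\int_B \Phi/f)^{1/2}\sqrt{Q_V(\xi)}$, and $\Phi/f \in L^1_{loc}(\Omega)$ since $\Phi \in L^1_{loc}$ while $1/f \in L^\infty_{loc}$. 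Hence $\mathcal{L}_V(\Omega) \hookrightarrow L^1_{loc}(\Omega)$.

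Next I verify $V \in \mathcal{V}(\Omega)$. Fix a cover $\{U_j\}$ witnessing $\Delta\Phi \in \mathcal{V}(\Omega)$. On every $U_j$ where $(\Delta\Phi)^+$ is bounded, the pointwise bound $V^+ \leq ((\Delta\Phi)^+ + f)/\Phi$ combined with $f \in L^\infty(U_j)$ and $1/\Phi \in L^\infty(U_j)$ gives $V^+ \in L^\infty(U_j)$. On the remaining $U_j$, one has $U_j \Subset O_j$ with $(\Delta\Phi)^-$ bounded on $O_j$, and since $f \geq 0$ the bound $V^- \leq (\Delta\Phi)^-/\Phi$ together with $1/\Phi \in L^\infty(O_j)$ gives $V^- \in L^\infty(O_j)$. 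Thus $V \in \mathcal{V}(\Omega)$ with the same cover, so Proposition~\ref{paint} yields that $V$ is strongly balanced (in particular balanced) in $\Omega$.

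Finally, with $V$ balanced and $-\Delta + V$ being $L^1$-subcritical in $\Omega$, Corollary~\ref{pomo}(i) produces for each $g \in L^\infty_c(\Omega)$ a unique distribution solution $u_* = \mathbf{J}(u) \in L^1_{loc}(\Omega)$ of $-\Delta u + Vu = g$ coming from an energy solution $u \in \mathcal{H}_V(\Omega)$; Corollary~\ref{bru} (injectivity of $\mathbf{J}$) together with Proposition~\ref{prop:EnergyVsDist} then makes the identification between $u$ and its distributional counterpart canonical. The only spot requiring care is the verification that $V \in \mathcal{V}(\Omega)$: once the pointwise comparisons of $V^\pm$ with $(\Delta\Phi)^\pm$ are in hand, matching them against the given cover for $\Delta\Phi$ is routine, but one must keep track of the local lower bound on $\Phi$ throughout.
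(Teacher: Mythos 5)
Your proposal is correct and follows essentially the same route as the paper: the paper obtains the weighted inequality $Q_V \succeq f/\Phi$ by applying Theorem~\ref{AAP-distr} to $\Phi$ (via Lemma~\ref{lev}), deduces $L^1$-subcriticality (via Proposition~\ref{basa}(ii), which is exactly your Cauchy--Schwarz step with $\Phi/f \in L^1_{loc}$), places $V$ in ${\mathcal V}(\Omega)$ by Proposition~\ref{reshma}(i) with $h_1 = 1/\Phi$, $h_2 = f/\Phi$ (your hand verification with the same cover is precisely the content of that proposition), and concludes strong balancedness from Proposition~\ref{paint} and solvability/uniqueness from Corollary~\ref{pomo}. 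The only difference is that you unpack those cited lemmas rather than invoking them, which changes nothing of substance.
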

\proof
We note that $-\Delta+V$ is globally $L^2$-subcritical in $\Omega$ by following the proof of lemma \ref{lev}. Thus, the operator is $L^1$-subcritical in $\Omega$. Since  $\Delta \Phi  \in {\cal V}(\Omega)$ we get that $V \in {\cal V}(\Omega)$   by proposition \ref{reshma}(i). Thus $V$ is strongly balanced in $\Omega$ by proposition  \ref{paint}. The proof of solvability follows from  corollary \ref{pomo}.
\qed

\medskip

\begin{remark}\label{waz}
Let $\Omega$ be a bounded domain in $\R^N, \;N \geq 3$ and $\Gamma_N$ denote the fundamental solution of $-\Delta$ in $\R^N$. Assume $g \in L^1(\Omega)\cap L^r_{loc}(\Omega)\cap {\cal V}(\Omega)$ for some $r>1$ and satisfies
$$
\Psi(x):= -\int_{\R^N}  \Gamma_N (y-x) g(y) \chi_\Omega(y) dy \;\text{ is  bounded below in } \Omega.
$$
This can be achieved for example, by taking $g^+ \in L^p (\Omega)$ for some $p>\frac{N}{2}$. Then letting $\Phi = \Psi- \inf_\Omega \Psi +1$ we can satisfy the conditions in the last corollary.
\end{remark}

\medskip

\begin{lemma}\label{obv}
Let $W \in L^{1}_{loc}(\R^N)$ be a nonnegative function.
\begin{enumerate}
\item[{\rm \bf (i)}] 
Let $K \subset \R^N$ be a compact set with positive measure. Then, $-\Delta + W$ 
is  subcritical in $\R^N \setminus K$, $N\geq 1$.
\item[{\rm \bf (ii)}] 
$-\Delta + W$ is  subcritical in $\R^N$ if $ N \geq 3.$
\end{enumerate}
\end{lemma}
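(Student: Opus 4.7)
The plan is to observe first that $Q_W\succeq 0$ is immediate from $W\geq 0$, and that the trivial lower bound $Q_W(\xi)\geq\|\nabla\xi\|_{L^2(\R^N)}^2$ holds for every $\xi\in W_c^{1,\infty}$. Moreover $W$ itself is strongly balanced (hence balanced) in any open set by Remark \ref{gia} (i), since $W^-\equiv 0\in L^\infty_{loc}$. So in both (i) and (ii), proving $L^1$-subcriticality (Definition \ref{def:frozen}) will reduce to establishing, for every compact $\tilde K$ lying in the relevant domain, a bound
$$
   \int_{\tilde K}|\xi|\,\leq\, C(\tilde K)\,\|\nabla\xi\|_{L^2(\R^N)}\qquad\forall\,\xi\in W_c^{1,\infty}(\Omega),
$$
with $\Omega=\R^N\setminus K$ in case (i) and $\Omega=\R^N$ in case (ii). Proposition \ref{basa} (i) will then upgrade this automatically to global $L^2$-subcriticality.

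For part (ii) I would simply invoke the global Sobolev embedding $\mathcal{D}_0^{1,2}(\R^N)\hookrightarrow L^{2^*}(\R^N)$, valid because $N\geq 3$, with $2^*=2N/(N-2)$. H\"older's inequality against $\chi_{\tilde K}$ then yields $\int_{\tilde K}|\xi|\leq|\tilde K|^{1-1/2^*}\|\xi\|_{L^{2^*}}\leq c_N\,|\tilde K|^{1-1/2^*}\|\nabla\xi\|_{L^2}$, which is the desired bound on all of $\R^N$. Note that $K$ plays no role here; the argument goes through even for $K=\emptyset$.

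For part (i), the compact set $K$ of positive measure will play the role of a Poincar\'e anchor. In the case $N=1$, I would fix any $x_0\in K$; since every $\xi\in W_c^{1,\infty}(\R\setminus K)$ vanishes at $x_0$, the fundamental theorem of calculus combined with Cauchy-Schwarz gives $|\xi(x)|\leq|x-x_0|^{1/2}\|\xi'\|_{L^2}$, and integration over $\tilde K$ yields the desired estimate. For $N\geq 2$, given a compact $\tilde K\subset\R^N\setminus K$, I would fix an open ball $B_R$ containing $K\cup\tilde K$; extending $\xi\in W_c^{1,\infty}(\R^N\setminus K)$ by zero gives a function in $H^1(B_R)$ vanishing a.e.\ on $K\subset B_R$, a set of positive Lebesgue measure. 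The classical Poincar\'e inequality for $H^1(B_R)$-functions vanishing on a subset of positive measure (proved by Rellich-Kondrachov compactness and a contradiction argument) then yields $\|\xi\|_{L^2(B_R)}\leq C(B_R,K)\|\nabla\xi\|_{L^2}$, and a H\"older estimate on $\tilde K$ finishes the proof. The only delicate sub-case is $N=2$, where no Sobolev-into-$L^2$ embedding is available on all of $\R^2$ and the positivity $|K|>0$ is genuinely essential; this is the main obstacle, although the Poincar\'e-type inequality we need is entirely standard.
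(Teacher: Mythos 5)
Your proof is correct, but it follows a genuinely different route from the paper's. You verify the defining embedding ${\cal L}_W(\Omega)\hookrightarrow L^1_{loc}(\Omega)$ by hand, using only the trivial bound $Q_W(\xi)\ge\|\nabla\xi\|_{L^2}^2$: for (ii) the Sobolev embedding ${\cal D}_0^{1,2}(\R^N)\hookrightarrow L^{2^*}(\R^N)$ plus H\"older, and for (i) a Poincar\'e inequality on a large ball for $H^1$-functions vanishing on the positive-measure set $K$ (with the fundamental theorem of calculus when $N=1$), after which Remark \ref{gia}(i) and Proposition \ref{basa}(i) upgrade $L^1$-subcriticality to the global $L^2$ notion. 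The paper instead first reduces to the free Laplacian via $Q_W\succeq Q_0$ and then invokes its own developed machinery: part (i) is deduced from Corollary \ref{cor:fun} (whose engine is Lemma \ref{nin} together with the feeble-$L^2$/$L^1$ equivalence of Proposition \ref{prop:Sonata}), and part (ii) from the AAP principle, Theorem \ref{thm:AAP-sub}, applied to the explicit Aubin--Talenti function $\Phi_N$ solving $-\Delta\Phi_N=\Phi_N^{(N+2)/(N-2)}$, i.e.\ a positive solution with a positive right-hand side. What your approach buys is self-containedness and elementarity -- only classical Sobolev/Poincar\'e facts are needed, and it makes transparent exactly where $|K|>0$ enters (the case $N=2$, where no Sobolev-into-$L^2$ embedding is available); what the paper's approach buys is brevity given the theory already built, and it serves the expository purpose of the examples section by showing the abstract criticality results in action. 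One small caveat on your closing remark: for the statement's validity (as opposed to your particular method) positive measure is not truly optimal -- positive capacity of $K$ would do -- but since the lemma assumes $|K|>0$ this does not affect correctness.
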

\begin{proof}
We remark that $Q_W \succeq Q_0$ in any $\Omega \subset \R^N$. Therefore, both $L^1$ and $L^2$ subcriticality of $-\Delta +W$ in $\Omega$ follow from the subcriticality of 
$-\Delta$ there.

\medskip

{\bf (i)} 
Since  $Q_0 \succeq 0$ in $\R^N$,  from corollary \ref{cor:fun} we obtain that $-\Delta$ is subcritical in $\R^N \setminus K$, $N \geq 1$.

\medskip

{\bf (ii)} 
It is enough to show $-\Delta$ is $L^2-$ subcritical in $\R^N$ if $N \geq 3$. Choose 
$$\Phi_N(x)=(N(N-2))^{\frac{N-2}{4}}(1+|x|^2)^{\frac{2-N}{2}}.$$ 
Then $\Phi_N$ is a non-negative locally integrable function solving $-\Delta \Phi_N= (\Phi_N)^{\frac{N+2}{N-2}} $ in ${\cal D^\prime}(\R^N)$. The assertion now follows from  theorem \ref{thm:AAP-sub}.\\
\end{proof}
\begin{corollary}\label{nandi}
Let $N \geq 1$ and $W \in L^{1}_{loc}(\R^N)$ be a nonnegative function. Then, given any  compact set $K \subset \R^N$ of positve measure, there exists an a.e. positive  distribution solution $u$ of $-\Delta u + W u =\chi_K$ in $\R^N$.
\end{corollary}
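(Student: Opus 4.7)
The plan is to combine the subcriticality established in Lemma \ref{obv} with the solvability result of Corollary \ref{pomo}(i). First, since $W \geq 0$, its negative part $W^-$ vanishes identically and hence lies in $L^\infty_{loc}(\R^N)$; by Remark \ref{gia}(i), $W$ is strongly balanced (and thus balanced in the sense of Definition \ref{tri}) in $\R^N$ and in every open subset thereof. Also $Q_W \succeq Q_0 \succeq 0$ in $\R^N$ for $N \geq 3$ and in the settings of Lemma \ref{obv} more generally.

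For $N \geq 3$, Lemma \ref{obv}(ii) asserts that $-\Delta + W$ is $L^1$-subcritical in $\R^N$. Applying Corollary \ref{pomo}(i) to $f = \chi_K \in L^\infty_c(\R^N)$, I obtain a nonnegative distributional solution $u = {\bf J}(v) \in L^1_{loc}(\R^N)$ of $-\Delta u + W u = \chi_K$; it is nontrivial since $\chi_K \not\equiv 0$. Following the argument of Proposition \ref{prop:booth}, Proposition \ref{prop:RanRan} gives $u \in W^{1,q}_{loc}(\R^N)$ for $q \in [1, \frac{N}{N-1})$, so $u$ admits a quasi-continuous representative. Applying the Strong Maximum Principle (Theorem \ref{thm:StrongMax}) on each ball $B_R(0)$ (nontrivial on $B_R$ as soon as $K \subset B_R$) yields $\mathrm{Cap}(\{u = 0\} \cap B_R) = 0$; letting $R \to \infty$ gives $u > 0$ a.e. in $\R^N$.

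For $N \in \{1,2\}$, Lemma \ref{obv}(ii) is unavailable and only Lemma \ref{obv}(i) applies: $-\Delta + W$ is subcritical in $\R^N \setminus K'$ for any compact $K' \subset \R^N$ of positive measure. The plan here is to take an exhaustion of $\R^N$ by balls $B_n \supset K$, solve the Dirichlet problem $-\Delta u_n + W u_n = \chi_K$ in $B_n$ with $u_n \in H^1_0(B_n)$ via Corollary \ref{cor:OdeOde}, and use the distributional Kato inequality (as in the proof of Theorem \ref{AAP-distr}) to obtain the monotonicity $0 \leq u_n \leq u_{n+1}$. The main obstacle will be showing that $\{u_n\}$ is bounded in $L^1_{loc}(\R^N)$ — a delicate point in low dimensions — after which monotone convergence produces a nontrivial distributional limit $u$ on $\R^N$, with a.e. positivity following by the same Strong Maximum Principle argument as in the high-dimensional case. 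Controlling $u_n$ on compacts disjoint from $K$ would rely on the subcriticality of $-\Delta + W$ on $\R^N \setminus K'$ furnished by Lemma \ref{obv}(i), which provides test-function inequalities analogous to those underlying Corollary \ref{pomo}(ii).
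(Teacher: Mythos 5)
For $N \geq 3$ your argument is, in substance, exactly the paper's: the stated proof is the chain Lemma \ref{obv}(ii) (subcriticality in $\R^N$), Remark \ref{gia}(i) ($W^-\equiv 0$, hence $W$ strongly balanced), and Proposition \ref{cor:run}, whose own proof is precisely the unpacking you give — Corollary \ref{pomo}(i) to solve $-\Delta u + Wu=\chi_K$ with $u={\bf J}(v)\geq 0$, Proposition \ref{prop:RanRan} for the quasi-continuous $W^{1,q}_{loc}$ representative, and Theorem \ref{thm:StrongMax} (applied, as you do, on bounded connected sets exhausting $\R^N$) for a.e. positivity. That branch is complete and correct.

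The genuine gap is your low-dimensional branch, and it is not a gap you can close as stated. You never prove the local $L^1$ bound for the increasing sequence $\{u_n\}$, and no such bound holds in general: take $W\equiv 0$ and $N=2$ (or $N=1$). A nonnegative distributional solution of $-\Delta u=\chi_K$ on all of $\R^2$ would be a nonnegative superharmonic function, hence constant by the Liouville property invoked in Lemma \ref{dumb}, forcing $\chi_K\equiv 0$; for $N=1$ the same conclusion follows from concavity. So for $W\equiv 0$ the full-space statement itself fails in dimensions $1$ and $2$, your monotone sequence $u_n$ must blow up locally, and the exhaustion scheme cannot converge. Note also that Lemma \ref{obv}(i) only gives subcriticality in $\R^N\setminus K'$, so feeding it into Proposition \ref{cor:run} (or into estimates "analogous to Corollary \ref{pomo}(ii)") can only produce a positive solution of the equation in $\R^N\setminus K'$, never in $\R^N$; this is also why the paper's own one-line citation of Lemma \ref{obv}, Remark \ref{gia}(i) and Proposition \ref{cor:run} really only delivers the full-space conclusion for $N\geq 3$. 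If one wants a low-dimensional statement one must add hypotheses — e.g.\ $W\gneq 0$ together with enough local integrability ($W\in L^p_{loc}$, $p>N/2$, i.e.\ tameness) so that Proposition \ref{but}(i) and Proposition \ref{prop:Sonata} upgrade feeble $L^2$-subcriticality to $L^1$-subcriticality of $-\Delta+W$ in $\R^N$, after which the $N\geq 3$ argument applies verbatim.
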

\begin{proof}
Follows from the above lemma,  remark \ref{gia}(i) and corollary \ref{cor:run}.
\end{proof}
In the following proposition, we generalise the example \ref{swamBis} (iii).

\begin{proposition} \label{sandy}
Let $V_0 $ be  a potential in $\Omega$  such that
$$ - \; C|x|^{-\beta}  \leq V_0(x) \leq  C \;\text{ for some } \; C>0, \beta \in (0, N).$$
Assume  $Q_{V_0} \succeq 0$ in $\Omega$.
 Let $S:=\{x_i\}  \subset \Omega$ be a countable   set and  $ \alpha_i $ be nonnegative numbers  such that  $$ 0 < \alpha := \sum_i \alpha_i  \leq 1.$$  Define   
 $$V(x)= \sum_i \alpha_i V_0(x-x_i).
 $$ 
 \begin{enumerate}
\item[{\rm \bf (i)}] 
$V$ is tame in $\Omega$ and there exists an a.e. positive distributional super solution to $-\Delta+V$ in $\Omega$.
\item[{\rm \bf (ii)}]  
Let $V_1 $  be a tame potential in $\Omega$ such that 
$$
V  \leq V_1,  \; V_1 \not \equiv V.
$$
Then, given any $f \in L^{\infty}_{c}(\Omega)$, there exists a  distribution solution $u $ of 
$$-\Delta u +  V_1 u =f \hbox{ in }\Omega.$$
  In fact, given any compact set $K$ in $\Omega$ with positive measure, there exists an a.e. positive  distribution solution $u$ of $$-\Delta u +  V_1 u =0 \; \text{ in } \; \Omega \setminus K. $$
\item[{\rm \bf (iii)}] 
If  $ \alpha <1$, both the assertions in (ii)  above hold for $V$.
\item[{\rm \bf (iv)}]  
If $\alpha=1$, given any compact set $K$ in $\Omega$ of positive measure and any
$f \in L^{\infty}_{c}(\Omega \setminus K)$,
 there exists a  distribution solution $u $ of 
$$-\Delta u + V u =f \hbox{ in }\Omega\setminus K .$$ 
As before, there exists an a.e. positive  distribution solution $u$ of $-\Delta u + V u =0$ in $\Omega \setminus K$. 
\end{enumerate}
\end{proposition}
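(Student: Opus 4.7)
The plan is to establish part (i) first, since it supplies the quadratic-form inequality used throughout, and then deduce (ii)--(iv) from the existing machinery of the paper.

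For part (i), $V \in L^1_{loc}(\Omega)$ follows from $|V_0(\cdot - x_i)| \le C + C|x-x_i|^{-\beta}$ with $\beta < N$ and $\alpha < \infty$. The bound $V_0 \le C$ yields $V^+ \le C\alpha$, so $V^+ \in L^\infty(\Omega) \subset L^p_{loc}(\Omega)$ for every $p > N/2$. The essential step is $Q_V \succeq 0$: writing $\eta_i(y) := \xi(y+x_i)$ and using translation invariance of the Dirichlet integral together with the hypothesis $Q_{V_0} \succeq 0$, one obtains $\int |\nabla \xi|^2 \ge -\int V_0(\cdot - x_i)\xi^2$ for each $i$; averaging with weights $\alpha_i$ and using $\alpha \le 1$ gives
\[
   Q_V(\xi) \, \ge \, (1-\alpha) \int_\Omega |\nabla \xi|^2 \, \ge \, 0,
   \qquad \forall\, \xi \in W^{1,\infty}_c(\Omega).
\]
Strong balancedness is then immediate from Remark \ref{gia}(i), so $V$ is tame; Theorem \ref{thm:converse3} supplies the a.e.\ positive distributional supersolution.

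Parts (ii) and (iii) are direct applications of the existing tools. For (ii), set $w := V_1 - V \ge 0$, $w \not\equiv 0$; Proposition \ref{but}(i) yields that $-\Delta + V_1 = -\Delta + V + w$ is feebly $L^2$-subcritical, and the tameness of $V_1$ upgrades this via Proposition \ref{prop:Sonata} to subcriticality. Corollary \ref{pomo}(i) then produces the distributional solution for any $f \in L^\infty_c(\Omega)$, and Proposition \ref{cor:run} the a.e.\ positive solution of $(-\Delta + V_1)u = 0$ in $\Omega \setminus K$. For (iii), the inequality $Q_V(\xi) \ge (1-\alpha) \int |\nabla \xi|^2$ from step (i) gives a continuous inclusion of $\mathcal{L}_V(\Omega)$ into $\mathcal{L}_0(\Omega)$; since $-\Delta$ is subcritical in $\Omega$ (Lemma \ref{obv}(ii), in dimension $N \ge 3$), the operator $-\Delta + V$ is itself subcritical in $\Omega$, so the same two citations give both conclusions for $V$.

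Part (iv) is the delicate case, since when $\alpha = 1$ the operator may well be critical on $\Omega$. I would mimic the proof of Lemma \ref{nin}: $Q_{V+\chi_K} \succeq \chi_K$ in $\Omega$, so $-\Delta + V + \chi_K$ is feebly $L^2$-subcritical; since $V + \chi_K$ is tame (Proposition \ref{mea}(ii) together with the locally bounded perturbation), Proposition \ref{prop:Sonata} promotes this to $L^1$-subcriticality in $\Omega$. Testing with $\xi \in W^{1,\infty}_c(\Omega \setminus K)$, for which $\chi_K \xi = 0$, transfers $L^1$-subcriticality of $-\Delta + V$ itself to $\Omega \setminus K$, and local tameness of $V$ makes Corollary \ref{pomo}(i) applicable there, yielding the distributional solution for any $f \in L^\infty_c(\Omega \setminus K)$. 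For the a.e.\ positive solution of $(-\Delta + V)u = 0$ in $\Omega \setminus K$, I would dichotomize on the criticality of $-\Delta + V$ in $\Omega$: if subcritical, Proposition \ref{cor:run} applied in $\Omega$ with compact set $K$ provides exactly such a solution; if critical, Corollary \ref{cor:cash} gives an a.e.\ positive distributional solution of $(-\Delta + V)u = 0$ on the whole of $\Omega$, which restricts to the required one.

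The principal obstacle is the translation step in part (i): $V_0$ is only defined on $\Omega$, so the translates $\xi(\cdot + x_i)$ may exit $\Omega$ and the hypothesis $Q_{V_0} \succeq 0$ must be read in a translation-compatible way (transparent for $\Omega = \R^N$ as in Example \ref{swamBis}(iii), and otherwise requiring some care about the extension of $V_0$ and the location of the $x_i$). Every subsequent part rests on the resulting estimate $Q_V(\xi) \ge (1-\alpha) \int |\nabla \xi|^2$, so once (i) is secured the rest is bookkeeping with already-proved results.
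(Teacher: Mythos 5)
Your proposal is correct and follows essentially the same route as the paper: the weighted-translation argument plus Remark \ref{gia}(i) and Theorem \ref{thm:converse3} for (i), Proposition \ref{but}(i) with Proposition \ref{prop:Sonata}, Corollary \ref{pomo} and Proposition \ref{cor:run} for (ii), the coercivity estimate $Q_V(\xi)\ge(1-\alpha)\int_\Omega|\nabla\xi|^2$ (which the paper records in its proof of (iii)) for (iii), and the Lemma \ref{nin}/Corollary \ref{cor:fun} mechanism for (iv). Your subcritical/critical dichotomy via Corollary \ref{cor:cash} in (iv) is a legitimate way of filling in the paper's terse citation of Proposition \ref{cor:run} there, and the translation-compatibility caveat you flag in (i) is an implicit assumption the paper shares, since it argues exactly as in Example \ref{swamBis}(iii).
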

\begin{proof}  
{\bf (i)} 
We note that $V$ is bounded above  in $\Omega$ and by the pointwise estimate on $V_0$, it is also locally integrable in $\Omega$.  Let $\{ \beta_i \}$ be any sequence of nonnegative numbers  such that  $  \sum_i \beta_i  \leq 1.$  Since $Q_{V_0} \succeq 0$ in $\Omega$, we obtain forall $\xi \in C_c^{\infty}(\Omega)$,
\begin{eqnarray}\label{kab}
\int_{\Omega} |\nabla \xi|^2 \geq \sum_i  \beta_i \int_{\Omega} |\nabla \xi(\cdot+x_i)|^2 \geq -\sum_i \beta_i \int_{\Omega} V_0 \; \xi^2(\cdot+x_i) . 
\end{eqnarray}

Taking $\beta_i=\alpha_i$ in the above inequality,  we get that $Q_V \succeq 0$ in $\Omega$ and hence by remark \ref{gia}(i), $V$ is tame in $\Omega$. Existence of a positive super solution follows from theorem \ref{thm:converse3}. 

\medskip

{\bf (ii)} 
From proposition \ref{but}(i), we get that $-\Delta+V_1$ is feebly $L^2$-subcritical in $\Omega$ and since $V_1$ is tame in $\Omega$, it is subcritical in $\Omega$. We  can  appeal to corollary \ref{pomo} and proposition \ref{cor:run}.

\medskip

{\bf (iii)} 
Let $\alpha <1$. Now, by the equation \eqref{kab} above,
\begin{eqnarray}\label{lea}
 \int_{\Omega} |\nabla \xi|^2 + V \xi^2 &=& (1-\alpha) \int_{\Omega} |\nabla \xi|^2 + \alpha \Big(\int_{\Omega} |\nabla \xi|^2 + \frac{1}{\alpha} \sum_i \alpha_i V_0(\cdot -x_i) \xi^2 \Big)  \notag \\
 &\geq &   (1-\alpha) \int_{\Omega} |\nabla \xi|^2 . \notag
\end{eqnarray}
Thus, ${\cal L}_V(\Omega) \hookrightarrow {\cal D}_{0}^{1,2} (\Omega)$.  By Sobolev imbedding,  $-\Delta+V$ is subcritical in $\Omega$. The assertions about the existence of distribution solutions  follow again from corollary \ref{pomo} and proposition \ref{cor:run}. 

\medskip

{\bf  (iv)}  follows from  corollaries \ref{cor:fun}, \ref{pomo} and proposition \ref{cor:run}. 
\end{proof}

\medskip

\begin{remark}
\begin{enumerate}
\item[{\rm \bf (i)}]
By taking $N \geq 3, \; \Omega=\R^N$ and $V_0 = -\frac{(N-2)^2}{4} |x|^{-2}$\; (or any of its radial improvements in bounded domains), we see that the above proposition gives corresponding statements about the multi-polar Hardy operator with poles in the set $S$. These poles are allowed to be dense in some portion or all of $\Omega$.
\item[{\rm \bf (ii)}]
For the multi-polar Hardy potential having only  finitely many poles but where $\alpha_i$  can be any real number less than $\frac{(N-2)^2}{4}$ , the problem of whether ${\cal L}_V(\Omega) \hookrightarrow {\cal D}_{0}^{1,2} (\R^N)$ is more involved and is treated in \cite{Ter}.
\item[{\rm \bf (iii)}]
A similar construction can be made when the ``mother potential" $V_0$ has a higher dimensional  singular set on a plane by a weighted sum of countable translations of $V_0$  perpendicular to the plane.
\end{enumerate}
\end{remark}
\medskip

\subsection{Critical operators}\label{s11.3}
Let $\Omega$ be a bounded domain and $V \in L^{1}_{loc}(\Omega)$.  Define the subspace and the functional
$$ H(V,\Omega):= \{u \in H^1_0(\Omega) : Vu^2 \in L^1(\Omega) \} ; $$  $$ I_V: H(V,\Omega) \to \R \hbox{ by } I_V (u) := \int_{\Omega} \Big\{|\nabla u|^2+Vu^2\Big\}.$$
Consider the corresponding ``first eigenvalue"
$$ \lambda_1 :=\inf\Big\{ I_V(u): u \in H(V,\Omega) \hbox{ and } \int_{\Omega} u^2=1 \Big \}.$$

The following are well-known examples of critical Schr\"odinger operators:
\begin{lemma}\label{fall}
Let  $V^+ \in L^{\frac{N}{2}}(\Omega)$  \;($N \geq 3$).  Assume that $\lambda_1>-\infty$ and is achieved by some element in $H(V,\Omega)$. Then, $- \Delta +V - \lambda_1$ is both feebly $L^2-$critical  and $L^1-$ critical in $\Omega$. 
\end{lemma}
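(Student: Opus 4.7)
Set $W := V - \lambda_1 \in L^1_{loc}(\Omega)$; the strategy is to use the minimizer $\phi$ to produce, via $H_0^1$-approximation by test functions, a single sequence in $C_c^\infty(\Omega)$ that is simultaneously a null sequence for $Q_W$ and converges to $\phi$ in $L^2$. This sequence will block both $L^1$- and feeble $L^2$-subcriticality in one stroke.

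First I would verify $Q_W \succeq 0$ and extract the ground state. For $\xi \in C_c^\infty(\Omega) \subset H(V,\Omega)$ one has $I_V(\xi) \geq \lambda_1 \|\xi\|_2^2$ by the very definition of $\lambda_1$, so $Q_W(\xi) \geq 0$, which extends to $W_c^{1,\infty}(\Omega)$ by Lemma~\ref{lem:Lip}(i). Choose a minimizer $\phi \in H(V,\Omega)$ with $\|\phi\|_2 = 1$ and $I_V(\phi) = \lambda_1$; replacing $\phi$ by $|\phi|$ (also a minimizer, since $I_V(|\phi|)=I_V(\phi)$) I may take $\phi \geq 0$. Evaluating the inequality $I_V(\phi + t\xi) \geq \lambda_1 \|\phi + t\xi\|_2^2$ at first order in $t$ for $\xi \in C_c^\infty(\Omega)$ yields the Euler--Lagrange equation $-\Delta \phi + W\phi = 0$ in $\mathcal{D}'(\Omega)$, and Theorem~\ref{thm:StrongMax} provides a quasi-continuous representative of $\phi$ that is positive quasi-everywhere in $\Omega$.

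Next I build the approximating sequence. Pick $\xi_n \in C_c^\infty(\Omega)$ with $\xi_n \to \phi$ in $H_0^1(\Omega)$ and, after extracting a subsequence, pointwise a.e. Then $\int_\Omega |\nabla \xi_n|^2 \to \int_\Omega |\nabla \phi|^2$ and $\|\xi_n\|_2 \to 1$; the assumption $V^+ \in L^{N/2}(\Omega)$ combined with the Sobolev embedding $H_0^1 \hookrightarrow L^{2N/(N-2)}$ and the factorization $\xi_n^2 - \phi^2 = (\xi_n - \phi)(\xi_n + \phi)$ gives, via H\"older, $\int_\Omega V^+ \xi_n^2 \to \int_\Omega V^+ \phi^2$. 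Fatou's lemma yields $\liminf \int_\Omega V^- \xi_n^2 \geq \int_\Omega V^- \phi^2$, and consequently $\limsup I_V(\xi_n) \leq I_V(\phi) = \lambda_1$. Combined with the lower bound $I_V(\xi_n) \geq \lambda_1 \|\xi_n\|_2^2 \to \lambda_1$ coming from the definition of $\lambda_1$, the squeeze forces $I_V(\xi_n) \to \lambda_1$, whence $Q_W(\xi_n) = I_V(\xi_n) - \lambda_1 \|\xi_n\|_2^2 \to 0$.

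Both contradictions now fall out routinely. Fix a compact $K \subset \Omega$ with $|K| > 0$; since $\phi > 0$ q.a.e., $\int_K \phi^2 > 0$ and $\int_K \phi > 0$. If $-\Delta + W$ were feebly $L^2$-subcritical, then $\int_K \xi^2 \leq C_K\, Q_W(\xi)$ for all $\xi \in W_c^{1,\infty}(\Omega)$ (for some such $K$); applying this to $\xi_n$ and using $\xi_n \to \phi$ in $L^2(K)$ forces $\int_K \phi^2 = 0$, a contradiction. If $-\Delta + W$ were $L^1$-subcritical, then $\int_K |\xi| \leq C_K \sqrt{Q_W(\xi)}$ on $W_c^{1,\infty}(\Omega)$ for every compact $K$; the same limit produces $\int_K \phi = 0$, again impossible. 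I expect the delicate step to be the bound $\limsup I_V(\xi_n) \leq \lambda_1$: because $V^-$ is only in $L^1_{loc}$, the map $\xi \mapsto \int V^- \xi^2$ is not automatically continuous along $H_0^1$-convergent sequences, and the squeeze closes only because the minimizing character of $\phi$ supplies both a matching lower bound (from the defining inequality of $\lambda_1$) and the limit value $I_V(\phi) = \lambda_1$ needed to pin the $V^-$ term down via Fatou.
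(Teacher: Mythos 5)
Your proof is correct and follows essentially the same route as the paper's: take the nonnegative minimizer, derive its Euler--Lagrange equation and a.e.\ positivity via Theorem~\ref{thm:StrongMax}, approximate it in $H^1_0(\Omega)$ by test functions, control the $V^+$ term using $V^+\in L^{\frac{N}{2}}(\Omega)$ and Sobolev embedding, handle the $V^-$ term by Fatou's lemma, and conclude that $\int_K\Phi^2$ (resp.\ $\int_K|\Phi|$) would vanish, contradicting positivity. The only difference is organizational: you first squeeze out $Q_{V-\lambda_1}(\xi_n)\to 0$ and then feed this null sequence into the assumed embeddings, whereas the paper inserts $\xi_n$ directly into the assumed coercivity inequality and passes to the limit there.
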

\begin{proof} Let $\Phi \in H(V,\Omega)$ attain the infimum for $\lambda_1$. Considering $|\Phi|$ instead of $\Phi$, we may assume $\Phi $ is nonnegative. By the assumptions, we obtain that 
\begin{equation}\label{chad}
 I_V(u) -\lambda_1 \int_{\Omega} u^2 \geq 0 \; \; \forall u \in H(V,\Omega) \hbox{ and }  I_V(\Phi) -\lambda_1 \int_{\Omega} \Phi^2 = 0.
 \end{equation}
Therefore, $\Phi$ solves
$$
-\Delta \Phi +V \Phi = \lambda_1 \Phi \hbox{ in } {\cal D^\prime}(\Omega).
$$
From theorem \ref{thm:StrongMax} we obtain that $\Phi >0$ a.e. in $\Omega$. Suppose there exists a 
compact set $K \subset \Omega$ with positive measure and $c_K>0$ such that
$$
\int_{\Omega} \Big\{|\nabla \xi|^2+(V-\lambda_1)\xi^2\Big\} \geq c_K \int_{K} \xi^2, \; \forall \xi \in C_c^1(\Omega).
$$
We now choose a sequence $\{\xi_n\} \subset C_c^1(\Omega)$ such that $ \xi_n \to \Phi$ in $H^1_0(\Omega)$. Then, from the above inequality, using Sobolev imbedding and Fatou's lemma we get
$$
\int_{\Omega} \Big\{|\nabla \Phi|^2+(V^+-\lambda_1)\Phi^2\Big\} \geq \int_{\Omega} V^-\Phi^2 + c_K\int_{K}  \Phi^2.
$$
From \eqref{chad} we obtain that $\Phi \equiv 0$ in $K$ a contradiction to its positivity a.e.  This shows that $- \Delta +V - \lambda_1$ is feebly $L^2-$critical in $\Omega$. A similar argument shows that the operator  is  $L^1-$critical in $\Omega$ as well. 
\end{proof}

\medskip

\begin{lemma}\label{dumb}
The operator $- \Delta$ is critical in $\mathbb R^N$ when $N=1,2$.
\end{lemma}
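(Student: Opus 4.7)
The plan is to show that $-\Delta$ fails to be subcritical on $\mathbb{R}^N$ for $N=1,2$; since $Q_0\succeq 0$ holds trivially, this will yield criticality by definition. Throughout I argue by contradiction, assuming subcriticality. Since $V\equiv 0$ is (trivially) balanced in $\mathbb R^N$, Theorem~\ref{prop:Sona}(i) produces an a.e.\ positive $u\in L^1_{loc}(\mathbb R^N)$ and $f\in L^1_{loc}(\mathbb R^N)$ with $\inf_B f>0$ on every ball $B\Subset \mathbb R^N$, such that $-\Delta u=f$ in $\mathcal D'(\mathbb R^N)$. Proposition~\ref{prop:RanRan} further guarantees $u\in W^{1,q}_{loc}(\mathbb R^N)$ for every $q\in[1,N/(N-1))$. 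The idea is to derive a growth obstruction: the mass forced by $f$ on large balls pushes $u$ towards $-\infty$ at infinity, contradicting $u\geq 0$.

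For $N=1$: the regularity $u\in W^{1,1}_{loc}(\mathbb R)$ together with $-u''\geq 0$ as a Radon measure yields, after possibly modifying $u$ on a null set, an absolutely continuous concave representative. I then invoke the elementary fact that any nonnegative concave function on $\mathbb R$ must be constant: if $u(a)<u(b)$ for some $a<b$, writing $a=\tfrac{1}{1+s}(a-s(b-a))+\tfrac{s}{1+s}b$ and applying concavity gives $u(a-s(b-a))\leq u(a)-s\bigl(u(b)-u(a)\bigr)\to-\infty$ as $s\to\infty$, contradicting $u\geq 0$. Hence $u$ is constant and $-u''=0$ distributionally, contradicting $\inf_B f>0$ on any ball $B$.

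For $N=2$: I use the spherical average
\[
\bar u(r):=\frac{1}{2\pi}\int_{0}^{2\pi}u(r\cos\theta,r\sin\theta)\,d\theta,
\]
and similarly $\bar f$. Testing $-\Delta u=f$ against radial smooth approximations of $\chi_{B_r}$ and passing to the limit (using $u\in W^{1,q}_{loc}$ so that the normal trace of $\nabla u$ on spheres is well-defined for a.e.\ $r$) yields the divergence-theorem identity
\[
-2\pi r\,\bar u'(r)=\int_{B_r}f\,dx\qquad\text{for a.e. }r>0.
\]
Since $c:=\inf_{B_1} f>0$, the right-hand side is $\geq \pi c$ for every $r\geq 1$, hence $\bar u'(r)\leq -c/(2r)$ for a.e. $r\geq 1$; integrating from $1$ to $R$ gives $\bar u(R)\leq \bar u(1)-(c/2)\log R\to-\infty$ as $R\to\infty$, contradicting $\bar u\geq 0$ (a consequence of $u\geq 0$).

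The only substantive technical point will be the rigorous justification of the radial divergence identity $-2\pi r\,\bar u'(r)=\int_{B_r}f\,dx$ for a merely $L^1_{loc}$ distributional solution: this proceeds by testing against radial functions $\varphi_{r,\varepsilon}(x)=\eta_\varepsilon(|x|)$ with $\eta_\varepsilon$ a smooth monotone approximation of $\chi_{[0,r]}$, and then using Proposition~\ref{prop:RanRan} together with Fubini to identify the limit of $-\int u\,\Delta \varphi_{r,\varepsilon}$ with $2\pi r\,\bar u'(r)$ for a.e.\ $r>0$; all other steps are essentially elementary ODE/growth arguments.
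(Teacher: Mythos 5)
Your proof is correct and follows essentially the same route as the paper: both reduce criticality to the non-existence of a nonnegative distributional solution of $-\Delta u=f$ with $f$ bounded away from zero on a set of positive measure, via the paper's AAP machinery (you invoke Theorem~\ref{prop:Sona}(i), the paper invokes Theorem~\ref{thm:AAP-sub}, which is built on it), and then exploit the rigidity of nonnegative superharmonic/concave functions in dimensions $1$ and $2$. The only difference is that for $N=2$ you prove the needed Liouville-type statement by hand through the logarithmic decay of spherical means (a step that could be streamlined by mollifying $u$ first), whereas the paper simply cites the Liouville property; the $N=1$ concavity argument is the same.
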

\begin{proof}
Let $u \in L^1_{loc}(\Omega)$ be a non-negative distribution solution of $-\Delta u = f$ for some nonnegative locally integrable function $f$. If $N=2$, by the Louville property of nonnegative super harmonic functions we get $u$ is identically constant and hence $f \equiv 0$. If $N=1$, we note that $u$ becomes a nonnegative concave function in $\R$ and hence must be constant. Therefore, again $f \equiv 0$. Criticality follows from theorem \ref{thm:AAP-sub}.
\end{proof}

\subsection{Supercritical operators} \label{s11.4}

The following results provide a way of constructing supercritical operators using a wide class of singular potentials. 
To state the result in full generality, given a continous function $\phi: [0,\infty) \to [0,\infty)$ with $\phi(0)=0$, we introduce the Orlicz class
$$
    L_{\phi}(\Omega)  :=  \left\{ u:\Omega \to \R \, \hbox{ measurable} \, :  \, \int_{\Omega} \phi(|u|) < \infty \right\}  .
$$

We denote by $2^*$ the critical Sobolev exponent $\frac{2N}{N-2}$.

\begin{proposition}\label{non-existence-distr}
Let $\Omega$ be a bounded domain in $\R^N$, $N \geq 3$.
Let $\alpha: [0,\infty)\to [0, \infty)$ be a continuous function such that
\begin{enumerate}
 \item[{\rm \bf (i)}] 
 $\beta(t) := t^{-2}\alpha(t)$ is a continuous function in $[0,\infty)$ with $\beta(0)=0$
 \item[{\rm \bf (ii)}] 
 $H^1_0(\Omega) \setminus L_{\alpha}(\Omega) \neq \emptyset$ and $L^{2^*}(\Omega) \subseteqq L_{\beta}(\Omega)$.
 \end{enumerate}
For any $w \in H^1_0(\Omega) \setminus L_{\alpha}(\Omega)$ define the potential $V_w := -\beta(w)$.
Then, the operator $- \Delta+ V_w$ is supercritical.
\end{proposition}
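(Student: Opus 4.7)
The plan is to exhibit a test function $\xi \in W^{1,\infty}_c(\Omega)$ on which $Q_{V_w}$ is negative. The natural candidate to witness supercriticality is (an approximation of) $w$ itself, since the potential $V_w = -\beta(w)$ has been cooked up so that $V_w w^2 = -\alpha(w)$, which gives infinite mass against $w^2$ precisely because $w \not\in L_\alpha(\Omega)$. To make this rigorous, I would replace $w$ by $|w| \in H^1_0(\Omega)$ (noting $|w| \not\in L_\alpha(\Omega)$ as well, since $\alpha$ only sees the absolute value), so without loss of generality I assume $w \ge 0$.

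Before testing the form, I must check that $V_w \in L^1_{loc}(\Omega)$ so that $Q_{V_w}$ is even defined in the sense of definition~\ref{try}. This is where hypothesis (ii) enters in its second half: by Sobolev embedding, $w \in H^1_0(\Omega) \hookrightarrow L^{2^*}(\Omega) \subseteq L_\beta(\Omega)$, so $\int_\Omega \beta(w) < \infty$, which gives $V_w \in L^1(\Omega)$. Next, pick a sequence $\{\xi_n\} \subset C^\infty_c(\Omega)$ with $\xi_n \to w$ in $H^1_0(\Omega)$, and pass to a subsequence so that $\xi_n \to w$ pointwise a.e. in $\Omega$. Since $\xi_n \in W^{1,\infty}_c(\Omega)$ and $V_w \in L^1(\Omega)$, the quantity $Q_{V_w}(\xi_n) = \int_\Omega |\nabla \xi_n|^2 - \int_\Omega \beta(w) \xi_n^2$ is well-defined and finite for each $n$.

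The conclusion then drops out of Fatou's lemma: the Dirichlet integrals $\int_\Omega |\nabla \xi_n|^2$ converge to $\int_\Omega |\nabla w|^2 < \infty$, while $\beta(w)\xi_n^2 \ge 0$ converges a.e. to $\beta(w) w^2 = \alpha(w)$, so
\begin{equation*}
\liminf_{n\to\infty} \int_\Omega \beta(w) \xi_n^2 \;\ge\; \int_\Omega \alpha(w) \;=\; +\infty,
\end{equation*}
using the first part of hypothesis (ii). Hence $Q_{V_w}(\xi_n) \to -\infty$, and for $n$ large enough $Q_{V_w}(\xi_n) < 0$, showing by definition~\ref{nongu} that $-\Delta + V_w$ is supercritical.

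The essentially routine character of the argument means the only real point of care is the Fatou step: one must ensure that $\beta(w)\xi_n^2$ is genuinely nonnegative (which follows from $\beta \ge 0$ on $[0,\infty)$ together with our reduction to $w\ge 0$) and that the a.e.\ convergence of $\xi_n$ to $w$ really does pass into the integrand $\beta(w)\xi_n^2 \to \alpha(w)$; the continuity of $\beta$ at $0$ together with $\beta(0)=0$ handles the values where $w$ vanishes and keeps everything consistent. No compactness or regularity beyond the standard density of $C_c^\infty(\Omega)$ in $H^1_0(\Omega)$ is needed.
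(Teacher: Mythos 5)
Your proof is correct and follows essentially the same route as the paper's: verify $V_w\in L^1(\Omega)$ via the Sobolev embedding and $L^{2^*}(\Omega)\subseteq L_\beta(\Omega)$, approximate $w$ by smooth compactly supported functions converging in $H^1_0(\Omega)$, and apply Fatou's lemma to force $Q_{V_w}$ to $-\infty$ along the sequence. Your extra remarks (reducing to $w\ge 0$, extracting the a.e.\ convergent subsequence) just make explicit details the paper leaves implicit.
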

\proof
Let $w \in H^1_0(\Omega) \setminus L_{\alpha}(\Omega)$. 
Since $w \in L^{2^*}(\Omega)$ and by assumption $L^{2^*}(\Omega) \subseteq L_{\beta}(\Omega)$, 
we obtain that $ V_w:= -\beta (w) \in L^1(\Omega)$. 
Therefore $w \in H^1_0(\Omega)$, but 
$$ \int_{\Omega} V_w w^2=
- \int_{\Omega} \alpha(w) = -\infty.$$  
Let now $\{\phi_n\} \subset C_c^{\infty}(\Omega) $ be a
sequence converging in $H^1_0(\Omega)$ to $w$.  From Fatou's lemma we obtain
that $\displaystyle \lim_{n \to \infty} Q_{V_w}(\phi_n) = -\infty$. 
\qed

\medskip

\begin{corollary}\label{carter}
Let $\Omega \subset \mathbb R^N, N \geq 3$ be a bounded domain and $p \in (2^{*}, 2^{*} + 2)$. If for each
$w \in H^1_0(\Omega)\setminus L^{p} (\Omega) $, we define the potential $V_w = - |w|^{p-2}$, then
 the operator $- \Delta+V_w$ is supercritical. Note that $V_w \not \in L^q(\Omega)$ for some $2(N-1)/N<q<N/2$.
\end{corollary}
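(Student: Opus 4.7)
The strategy is to reduce to Proposition~\ref{non-existence-distr} with the choices $\alpha(t):=t^{p}$ and $\beta(t):=t^{p-2}$, so that $V_w=-\beta(|w|)$ and $L_\alpha(\Omega)=L^{p}(\Omega)$. Then supercriticality of $-\Delta+V_w$ will follow immediately once the two hypotheses (i) and (ii) of that proposition are checked for $\alpha,\beta$ and for the specific $w$ fixed in the statement.

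Hypothesis (i) is immediate: since $p>2^{*}>2$ we have $p-2>0$, so $\beta(t)=t^{p-2}$ is continuous on $[0,\infty)$ with $\beta(0)=0$. For the second half of hypothesis (ii), note that $p<2^{*}+2$ gives $p-2<2^{*}$; since $\Omega$ is bounded, H\"older's inequality yields the continuous inclusion $L^{2^{*}}(\Omega)\subseteq L^{p-2}(\Omega)=L_\beta(\Omega)$. For the first half, I have to exhibit some $w\in H^{1}_{0}(\Omega)\setminus L^{p}(\Omega)$. This is the classical sharpness of the Sobolev embedding at the exponent $2^{*}$: choose coordinates so that $0\in\Omega$, pick $\phi\in C^{\infty}_c(\Omega)$ equal to $1$ near $0$, and consider $w(x):=|x|^{-\gamma}\phi(x)$ for an exponent $\gamma\in(0,(N-2)/2)$ to be chosen. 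An elementary calculation in polar coordinates shows that $w\in H^{1}_{0}(\Omega)$ iff $\gamma<(N-2)/2$ and $w\in L^{p}(\Omega)$ iff $\gamma<N/p$. Since $p>2^{*}=\tfrac{2N}{N-2}$, we have $N/p<(N-2)/2$, so the range $N/p\le\gamma<(N-2)/2$ is nonempty and any such $\gamma$ produces a $w\in H^{1}_{0}(\Omega)\setminus L^{p}(\Omega)$. Proposition~\ref{non-existence-distr} then gives the supercriticality of $-\Delta+V_w$.

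For the concluding remark about $V_w\not\in L^{q}$ for some $q\in(2(N-1)/N,\,N/2)$, I would sharpen the choice of $\gamma$ above: in addition to $\gamma<(N-2)/2$, impose $\gamma>2/(p-2)$. The inequality $2/(p-2)<(N-2)/2$ rearranges to $(p-2)(N-2)>4$, i.e.\ $p>2N/(N-2)=2^{*}$, which is our hypothesis, so this strengthened interval is still nonempty (and automatically gives $\gamma>N/p$, so $w\notin L^{p}$). Near $0$ we then have $|V_w(x)|\asymp|x|^{-(p-2)\gamma}$, so $V_w\in L^{q}(\Omega)$ precisely when $q(p-2)\gamma<N$. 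Taking $q$ slightly below $N/2$, the condition $q(p-2)\gamma\ge N$ reduces in the limit to $(p-2)\gamma\ge 2$, which holds strictly by construction; hence one can fix $q\in[N/((p-2)\gamma),\,N/2)$, and using $(N-2)^{2}>0$ one checks $2(N-1)/N<N/2$, so by pushing $q$ sufficiently close to $N/2$ we land in the required interval $(2(N-1)/N,\,N/2)$ while retaining $V_w\notin L^{q}(\Omega)$.

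The routine obstacles are bookkeeping on the exponent $\gamma$ and the elementary radial integrals; the only genuinely nontrivial step is verifying that the admissible range of $\gamma$ is nonempty, which reduces in each case to the single inequality $p>2^{*}$ that is built into the hypothesis. No step seems delicate beyond this.
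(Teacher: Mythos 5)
Your argument for the main claim is correct and is essentially the paper's own proof: take $\alpha(t)=t^{p}$, $\beta(t)=t^{p-2}$, observe that $p-2<2^{*}$ gives $L^{2^{*}}(\Omega)\subseteq L^{p-2}(\Omega)$ on the bounded domain, that $p>2^{*}$ gives $H^{1}_{0}(\Omega)\setminus L^{p}(\Omega)\neq\emptyset$, and apply Proposition~\ref{non-existence-distr}. The only difference is that you exhibit the power-law witness explicitly where the paper quotes the failure of the embedding $H^{1}_{0}\subset L^{p}$ as classical; note also that once a $w\in H^{1}_{0}(\Omega)\setminus L^{p}(\Omega)$ is handed to you by the statement, that $w$ itself already certifies the nonemptiness hypothesis, so the construction is not strictly needed for the supercriticality conclusion.

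In your added argument for the remark $V_w\notin L^{q}(\Omega)$ there is an implication running backwards: since $p>2^{*}$ one has $2/(p-2)<N/p$, so imposing $\gamma>2/(p-2)$ does \emph{not} force $\gamma>N/p$; a $\gamma$ strictly between the two values produces $w\in L^{p}(\Omega)$, which is inadmissible. The fix is immediate: keep $\gamma\in[N/p,(N-2)/2)$ as in your first paragraph, and then $(p-2)\gamma\ge (p-2)N/p>2$ holds automatically, precisely because $p>2^{*}$, which is what you need to push $q$ up toward $N/2$. Moreover, the remark as stated concerns every $w\in H^{1}_{0}(\Omega)\setminus L^{p}(\Omega)$, not only your special one, and a cleaner argument covers the general case: pick any $q$ with $\max\{p/(p-2),\,2(N-1)/N\}<q<N/2$ (a nonempty range, since $p>2^{*}$ is equivalent to $p/(p-2)<N/2$, and $(N-2)^{2}>0$ gives $2(N-1)/N<N/2$); if $V_w\in L^{q}(\Omega)$ then $w\in L^{(p-2)q}(\Omega)\subseteq L^{p}(\Omega)$ because $(p-2)q\ge p$ and $\Omega$ is bounded, contradicting $w\notin L^{p}(\Omega)$.
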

\proof
Consider $\alpha(t) := |t|^{p}$ and $\beta (t) := |t|^{p-2}$ which is continuous (note that $p > 2$). Since 
$H^1_0 (\Omega) \not \subset L^{p} (\Omega)$ and $L^{2^*} (\Omega) \subset L^{p-2} (\Omega)$,
the conclusion follows from the above proposition.
\qed

\medskip

\begin{corollary}\label{gates}
Let $V$ be any of the potentials considered in   proposition \ref{non-existence-distr}. Then there exists no non-trivial non-negative 
distributional supersolution to the operator $-\Delta + V$.
\end{corollary}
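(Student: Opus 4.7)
The plan is to obtain the corollary as an immediate consequence of the contrapositive of the easy direction of the distributional AAP Principle (theorem \ref{thm:chitra} (i)), combined with the supercriticality established in proposition \ref{non-existence-distr}.

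First, I would recall the content of proposition \ref{non-existence-distr}: for every admissible $w \in H^1_0(\Omega)\setminus L_{\alpha}(\Omega)$, the exhibited test sequence $\{\phi_n\} \subset C_c^{\infty}(\Omega)$ with $\phi_n \to w$ in $H^1_0(\Omega)$ satisfies $Q_{V_w}(\phi_n) \to -\infty$. In particular, one can pick $n$ large so that $Q_{V_w}(\phi_n) < 0$, which by definition \ref{nongu} says that the quadratic form $Q_{V_w}$ is supercritical in $\Omega$, i.e., $Q_{V_w} \not\succeq 0$.

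Next, I would invoke the contrapositive of theorem \ref{thm:chitra} (i): if there existed a non-trivial non-negative distributional supersolution $u$ to $-\Delta + V_w$ in $\Omega$, then $u \in {\cal C}_{V_w}(\Omega)$ would be non-empty, and the AAP implication would force $Q_{V_w} \succeq 0$ in $\Omega$, contradicting the previous paragraph. Hence ${\cal C}_{V_w}(\Omega) = \emptyset$, which is exactly the statement of the corollary.

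The argument is essentially a one-line contrapositive, so no genuine obstacle arises; I only need to ensure that the proof of proposition \ref{non-existence-distr} really produces compactly supported \emph{Lipschitz} (in fact $C_c^\infty$) test functions with negative energy, so that the supercriticality condition in the sense of definition \ref{nongu} applies verbatim, and that the AAP principle is invoked in the general ``$V \in L^1_{loc}$'' form stated as theorem \ref{thm:chitra} (i) rather than in any more restrictive subcritical version.
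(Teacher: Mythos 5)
Your argument is correct and is essentially the paper's own proof: the paper also deduces from proposition \ref{non-existence-distr} that $Q_{V}\not\succeq 0$ and then concludes via theorem \ref{AAP-distr} (of which theorem \ref{thm:chitra}\,(i) is just the packaged statement) that ${\cal C}_V(\Omega)$ must be empty. Your extra checks (that the negative-energy test functions lie in $C_c^\infty(\Omega)\subset W^{1,\infty}_c(\Omega)$ and that $V_w\in L^1(\Omega)$ so the general AAP implication applies) are exactly the points needed, so nothing is missing.
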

\begin{proof}
For $V$ as in proposition \ref{non-existence-distr}, we have that $Q_V \not \succeq 0$ in $\Omega$. Therefore, in view of theorem \ref{AAP-distr}, there cannot exist a non-trivial non-negative 
distributional supersolution to the operator $-\Delta + V$ in $\Omega$.
\end{proof}

\medskip

 \section{Appendix } 
\subsection{ Weak compactness in $L^1_{loc}$}

The space $L^1_{loc} (\Omega)$ is a locally convex topological space with a countable family of seminorms defined by 
$|\cdot|_n:=\int_{\Omega_n} |u|$ where  $\{\Omega_n\}$ is an exhaustion of $\Omega$ by bounded open sets. Its dual can be identified with $L^{\infty}_{c} (\Omega)$ (see \cite{Horvath}, Thm. 4, p.182). 
Weakly compact sets  in $L^{1}_{loc} (\Omega)$ are characterized as follows:
\begin{lemma}  \label{lem:L1Compact}
Let ${\cal K} \subset L^1_{loc} (\Omega)$ and define ${\cal K}_B := \{ f |_B \, \colon \, f \in {\cal K} \}$. 
Then the following statements are equivalent.
\begin{enumerate} 
\item[{\rm \bf (i)}]
${\cal K}$ is  compact in the weak topology $\sigma(L^{1}_{loc}(\Omega) , L^{\infty}_c(\Omega))$;
\item[{\rm \bf (ii)}]
for each ball $B \Subset \Omega$, ${\cal K}_B \subset L^1 (B)$ is  compact in the weak topology 
$\sigma(L^{1} (B) , L^{\infty} (B))$;
\item[{\rm \bf (iii)}]
${\cal K} $ is weakly closed and for each ball $B \Subset \Omega$, the set ${\cal K}_B$ is 
equi-integrable in $L^1 (B)$; 
\item[{\rm \bf (iv)}]
${\cal K}$ is   sequentially compact in the weak topology $\sigma(L^{1}_{loc}(\Omega) , L^{\infty}_c(\Omega))$;
\item[{\rm \bf (v)}]
for each ball $B \Subset \Omega$, ${\cal K}_B$ is  sequentially compact in the weak topology 
$\sigma(L^{1} (B) , L^{\infty} (B))$.
\end{enumerate}
\end{lemma}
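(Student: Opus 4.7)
The plan is to reduce everything to the two classical theorems governing weak compactness in $L^1$ on finite measure spaces, namely the Dunford--Pettis theorem (relative weak compactness in $L^1(\mu)$ equals equi-integrability) and the Eberlein--Smulian theorem (weak compactness equals weak sequential compactness in any Banach space). The setup is as follows: fix an exhaustion $\{B_n\}_{n\geq 1}$ of $\Omega$ by balls with $B_n \Subset B_{n+1} \Subset \Omega$. Since every $\varphi \in L^{\infty}_c(\Omega)$ has support in some $B_n$, the restriction map $R \colon L^1_{loc}(\Omega) \to \prod_n L^1(B_n)$ defined by $R(f) = (f|_{B_n})_n$ is a homeomorphism onto its image once the target is equipped with the product of the weak topologies $\sigma(L^1(B_n), L^\infty(B_n))$; moreover $R(L^1_{loc}(\Omega))$ coincides with the closed linear subspace of compatible families $\{(f_n) : f_m = f_n|_{B_m}\text{ for } m \leq n\}$.

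The cleanest path runs through (i)$\Leftrightarrow$(iii). From (i), since $L^1_{loc}(\Omega)$ with the weak topology is Hausdorff, $\mathcal K$ is weakly closed, and each $\mathcal K_B = r_B(\mathcal K)$ is weakly compact in $L^1(B)$ as the continuous image of a compact set under the weakly continuous restriction $r_B$; Dunford--Pettis then yields equi-integrability. Conversely, under (iii), Dunford--Pettis makes the weak closure $\overline{\mathcal K_{B_n}}$ weakly compact in $L^1(B_n)$ for every $n$. Given any net $\{f_\alpha\} \subset \mathcal K$, successive subnet extractions using weak compactness on each $L^1(B_n)$, organised into a diagonal subnet, produce a subnet whose restriction to every $B_n$ converges weakly in $L^1(B_n)$ to some $g_n$; the $g_n$'s are automatically compatible by uniqueness of weak limits, so they glue to a unique $g \in L^1_{loc}(\Omega)$, and the subnet converges weakly to $g$ in $L^1_{loc}(\Omega)$. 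Weak closedness of $\mathcal K$ forces $g \in \mathcal K$, proving (i).

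The remaining equivalences are by-products. For (ii): (i) immediately yields each $\mathcal K_B$ weakly compact; conversely, under (ii), each $\mathcal K_{B_n}$ is weakly closed (being compact in the Hausdorff space $L^1(B_n)$) and equi-integrable (Dunford--Pettis), and the key point is that $R(\mathcal K)$ is closed inside the compact set $\prod_n \mathcal K_{B_n}$ intersected with the compatibility subspace, so it is weakly compact, giving (i). For (iv)$\Leftrightarrow$(v), apply Eberlein--Smulian on each Banach space $L^1(B_n)$ to turn weak compactness into weak sequential compactness, and then the same diagonal extraction as above passes sequential compactness from each $L^1(B_n)$ up to $L^1_{loc}(\Omega)$; the equivalence (i)$\Leftrightarrow$(iv) then follows from (i)$\Leftrightarrow$(iii) together with the sequential half of Dunford--Pettis. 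The main obstacle throughout is the careful management of weak closedness through the embedding $R$: transfers from $\mathcal K$ to its local traces $\mathcal K_{B_n}$ and back rely essentially on the closedness of the compatibility subspace inside the product and on the uniqueness of weak limits in Hausdorff spaces, so that local compactness data assemble coherently into a genuinely global compact subset of $L^1_{loc}(\Omega)$.
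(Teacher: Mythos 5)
Your overall architecture is fine where it overlaps with the true content of the lemma: the equivalence (i)$\Leftrightarrow$(iii) is argued correctly (compactness in the Hausdorff weak topology gives closedness; weak--weak continuity of restriction plus Dunford--Pettis gives equi-integrability of the traces; conversely, relative weak compactness of the traces together with a compactness argument in the product and the weak closedness of ${\cal K}$ gives (i)). The only cosmetic issue there is the phrase ``diagonal subnet'', which does not literally make sense for infinitely many successive extractions; replace it by Tychonoff applied to $\prod_n \overline{{\cal K}_{B_n}}$, which is exactly what your map $R$ is set up for. This route is different from, and more detailed than, the paper's, which goes through (i)$\Leftrightarrow$(iv) (metrizability), (ii)$\Leftrightarrow$(iii) (Dunford--Pettis), (ii)$\Leftrightarrow$(v) (Eberlein--Smulian) and (iv)$\Leftrightarrow$(v) (testing against $L^\infty_c$).

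The genuine gap is in the two implications that pass from the local traces back to the global set. In (ii)$\Rightarrow$(i) you assert that ``$R({\cal K})$ is closed inside the compact set $\prod_n {\cal K}_{B_n}$ intersected with the compatibility subspace''; this is not proved, and it is false in general. Take ${\cal K}=\{\chi_{A_n}\}$ where the $A_n$ are disjoint sets of positive measure whose supports leave every compact subset of $\Omega$ (translated unit cubes escaping to infinity if $\Omega=\R^N$, bumps accumulating at $\partial\Omega$ otherwise): every trace ${\cal K}_B$ is a finite subset of $L^1(B)$, hence weakly compact and weakly sequentially compact, so (ii) and (v) hold; yet $0$ lies in the $\sigma(L^1_{loc},L^{\infty}_c)$-closure of ${\cal K}$ but not in ${\cal K}$, so ${\cal K}$ is not weakly closed and (i), (iii), (iv) fail. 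The compatible family $(0,0,\dots)$ belongs to $\prod_n{\cal K}_{B_n}$ and to the closure of $R({\cal K})$ but not to $R({\cal K})$ — exactly the closedness you needed. The same defect breaks (v)$\Rightarrow$(iv): your diagonal subsequence does converge weakly in $L^1_{loc}(\Omega)$ to the glued limit $g$, and $g|_{B_n}\in{\cal K}_{B_n}$ for every $n$, but nothing forces $g\in{\cal K}$. These steps cannot be repaired as written; they become correct only if weak closedness of ${\cal K}$ in $L^1_{loc}(\Omega)$ is added to (ii) and (v), or if all compactness assertions are read as relative compactness — which is the form in which the lemma is actually used in the proof of Lemma \ref{lem:BADAMILK} (weak convergence of $\{\xi_n^{\pm}\}$ in $L^1(B)$ yields equi-integrability). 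A secondary point: deducing (i)$\Leftrightarrow$(iv) from ``the sequential half of Dunford--Pettis'' also needs an argument (e.g.\ metrizability of the weak topology on weakly compact sets via a countable separating family of test functions), which you currently gloss over.
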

\proof
{\bf (i)} $\Longrightarrow$ {\bf (iv)} Since $L^1_{loc}(\Omega)$ with weak topology is a locally convex topological vector space with a countable family of semi-norms that separate points, its weak topology is metrisable. The equivalence follows immediately.
\medskip

{\bf (ii)} $\Longleftrightarrow$ {\bf (iii)}
This equivalence is the Dunford-Pettis Theorem.

\medskip

{\bf (ii)} $\Longleftrightarrow$ {\bf (v)}
This is the Eberlein-Smulian Theorem. 

\medskip

{\bf (iv)} $\Longleftrightarrow$ {\bf (v)}
Since the dual of $L^{1}_{loc} (\Omega)$ is $L^{\infty}_c (\Omega)$, the conclusion 
follows from the following equivalences:
\vspace{-2mm}
\begin{eqnarray*}
  f_n \to f \hbox{ weakly in } L^1_{loc}(\Omega)  &\Longleftrightarrow&
  \int_{\Omega} (f_n - f) \xi \to 0 \quad \forall \xi \in L^{\infty}_c (\Omega) \\ 
  &\Longleftrightarrow&
  \int_{\Omega} (f_n - f) (\chi_B \xi) \to 0 \quad \forall \xi \in L^{\infty} (\Omega) , 
  \quad \forall B \Subset \Omega \\
  &\Longleftrightarrow& 
  \int_{B} (f_n - f) \xi \to 0 \quad \forall \xi \in L^{\infty} (B) ,
  \quad \forall B \Subset \Omega \\
  &\Longleftrightarrow&
    f_n \to f \hbox{ weakly in }   L^1 (B) ,
   \quad \forall B \Subset \Omega.
\end{eqnarray*}
\qed

 \subsection{ Proof of proposition \ref{prop:shave}}
Fix $\varepsilon >0$. 
Let $\xi \in C_c^{\infty} (\Omega_*)$ and take
$\frac{\xi^2}{u + \epsilon }$ as test function in the weak formulation for $u$. We then get
$$
  \int_{\Omega_*}
 \nabla u \cdot \nabla \Big(\frac{\xi^2}{u + \varepsilon} \Big)
+ \int_{\Omega_*} W u \Big(\frac{\xi^2}{u + \epsilon } \Big)
  = \int_{\Omega_*} f \Big(\frac{\xi^2}{u + \epsilon }\Big) .
$$
So,
$$
 - \int_{\Omega_*}
  \left| \nabla \xi - \Big(\frac{\xi}{u + \varepsilon}\Big) \nabla u \right|^2
 + \int_{\Omega_*} \left\{
     |\nabla \xi|^2 +W u \Big(\frac{\xi^2}{u + \epsilon } \Big)
   \right\} 
  = \int_{\Omega_*} f \Big(\frac{\xi^2}{u + \epsilon } \Big).
$$
That is,
$$
\int_{\Omega_*} \left\{
     |\nabla \xi|^2 +W u \Big(\frac{\xi^2}{u + \epsilon } \Big)
   \right\} 
  =
\int_{\Omega_*} f \Big(\frac{\xi^2}{u + \epsilon } \Big)  +
\int_{\Omega_*}
  \left| \nabla \xi - \Big(\frac{\xi}{u + \varepsilon}\Big) \nabla u \right|^2.
$$
Since the set $\{ u = 0 \} $ has measure zero, Lebesgue theorem and Fatou's Lemma yield
$$
  \int_{\Omega_* } \left\{
     |\nabla \xi|^2 + W \xi^2 
   \right\} 
  \geq
\int_{\Omega_*} \Big( \frac{f}{u}\Big) \xi^2  + \int_{\Omega_*}
  \left| \nabla \xi - \Big(\frac{\xi}{u}\Big) \nabla u \right|^2.
$$

\subsection{Proof of Lemma~\ref{brb}}

Choose the subsequence $\{z_{n_k}\}$  such that 
$$
\| z_{n_{k+1}}- z_{n_k}\| \leq \frac{1}{2^k}, \;\; k=1,2, \cdot \cdot \cdot
$$
Let 
$$
\hat{z}_m := \Sigma_{k=1} ^m | z_{n_{k+1}} - z_{n_k} |.
$$
Then $\|\hat{z}_m \| \leq 1$   and  hence $\{\hat{z}_m\}$ converges weakly (up to a subsequence, again denoted by $\{\hat{z}_m\}$) to some $\hat{z} \in {\mathcal Z} $.  Since ${\mathcal Z} \hookrightarrow L^1(K)$, we have that  $\{\hat{z}_m\}$ is also bounded in $L^1(K)$. We also note that, by monotone convergence theorem, $\{\hat{z}_m\}$ converges in $L^1(K)$.  Hence necessarily $\hat{z}_m \leq \hat{z}$. By a telescoping series,
$$
|z_{n_{m+1}}| \leq \hat{z}_m  + |z_{n_1} | \leq \hat{z} + |z_{n_1}| := z^* \;\; \forall m.
$$
\qed

{\bf Acknowledgement:} 
The authors thank Prof.~V. Moroz for some helpful discussions. 
We also thank Prof.~Y. Pinchover for  for encouraging this work.



\begin{thebibliography}{99}
\bibitem{A}
{\sc S. Agmon}, {\em On positivity and decay of  solutions of  second order elliptic on Riemannian manifolds,}  Methods of functional analysis and theory of elliptic equations, Proc. of the International meeting dedicated to the memory of Prof. Carlo Miranda, Instituto di Matematica R. Caccioppoli, University di Napoli, 1982.
\bibitem{AS}
{\sc M. Aizenman, B. Simon}, {\em Brownian Motion and Harnack Inequality for 
Schr\"odinger Operators}, Comm. on Pure and Applied Math. {\bf 25} (1982), 209--273.
\bibitem{Allegretto:1974}
{\sc W. Allegretto},
{\it On the equivalence of two types of oscillation for elliptic operators}, 
Pacific J. Math. {\bf 55} (1974), 319--328.
\bibitem{Ancona}
{\sc A. Ancona},
 {\em Une propri\'et\'e d'invariance des ensembles
 absorbants par perturbation d'un op\'erateur elliptique},
 Comm. PDE {\bf 4} (1979), 321--337.
\bibitem{BMP}
{\sc H.~Brezis, M.~Marcus, A.~Ponce}, 
{\em Nonlinear elliptic equations with measures revisited}, Annals of Math. Studies (2007).
\bibitem{BrezisPonce}
{\sc H. Brezis, A. Ponce},
 {\em Remarks on the strong maximum principle}, Differential Integral Equations
 {\bf 16} (2003), 1--12.
\bibitem{BP}
{\sc H. Brezis,  A. Ponce},
 {\em Kato's inequality when $\Delta u$ is a measure},  
 C. R. Math. Acad. Sci. Paris, Ser. I {\bf 338} (2004), 599--604.
\bibitem{BPAnn}
 {\sc H.Brezis, M.Marcus, A. Ponce,} 
 {\it Nonlinear elliptic equations with measures revisited},  
 Mathematical aspects of nonlinear dispersive equations (Annals of Mathematics
 Studies, 163), 2007, 55--110.
\bibitem{Ter}
{\sc V. Felli, E.M. Marchini, S. Terracini}, 
{\it  On Schr\"odinger operators with multipolar inverse-square  potentials}, 
J. Funct. Anal. {\bf  250} (2007), 265--316.
\bibitem{FT}
{\sc V. Felli, S. Terracini}, {\it   Elliptic equations with multi-singular inverse-square potential and critical nonlinearity}, Comm. Partial Differential Equations {\bf 31} (2006), 469--495.
\bibitem{Fischer-ColbrieSchoen:1980}
{\sc D. Fischer-Colbrie, R. Schoen},
{\it The structure of complete stable minimal surfaces in $3$-manifolds of nonnegative scalar curvature}, 
Comm. Pure Appl. Math. {\bf 33} (1980), 199--211.
\bibitem{GeZhao}
{\sc F. Gesztesy, Z.  Zhao},
{\em On critical and subcritical Sturm-Liouville operators}, 
J. Funct. Anal. {\bf 98} (1991), 311--345. 
\bibitem{Grigoryan:1987}
{\sc A.A. Grigoryan},
{\em On the existence of positive fundamental solutions of the Laplace equation on Riemannian manifolds},
 Math. USSR Sbornik {\bf 56} (1987), 349--358.
\bibitem{Horvath}
{\sc J. Horvath},
{\it Espaces de fonctions localement int\'egrables et de fonctions int\'egrables \`a support compact}, 
Rev. Colombiana Mat. {\bf 21} (1987), 167--186. 
\bibitem{Jacobi}
{\sc C.G. Jacobi},
{\it Zur Theorie der Variations-Rechnung und der differential-Gleichungen},
Crelle's Journal f\"ur die Reine und angewandte Mathematik {\bf 17} (1837), 68--82.
\bibitem{JayeMazyaVerb}
{\sc B. Jaye, V.G. Maz'ya, I.E. Verbitsky}, 
{\it Existence and regularity of positive solutions of elliptic equations of Schr\"odinger type}, 
J. Anal. Math. {\bf 118} (2012), 577--621. 

 \bibitem{MP} {\sc B. Kawohl, M. Lucia, S. Prashanth}, 
 {\it Simplicity of the principal eigenvalue for indefinite quasilinear problems}, 
 Adv. Differential Equations {\bf 12} (2007), 407--434.
\bibitem{MPJDE}
{\sc M. Lucia, S. Prashanth,}
{\it Criticality theory for Schr\"odinger operators with singular potential,} J. Differential Equations {\bf 265 }(2018), 3400--3440.
\bibitem{MazBook} 
{\sc V. Mazya}, {\it Sobolev Spaces},
Grundlehren der mathematischen Wissenschaften 342, 2nd edition, 
Springer-Verlag,  2011.
\bibitem{Murata:1986}
{\sc M. Murata}, 
{\it Structure of positive solutions to $(-\Delta +V)u=0$ in $\mathbb R^n$}, 
Duke Math. J. {\bf 53} (1986), 869--943.
\bibitem{Pip}
{\sc J. Piepenbrink},
{\it Finiteness of the lower spectrum of Schr\"odinger operators},
Math. Z. {\bf 140} (1974), 29--40. 
\bibitem{Piepenbrink:1974}
{\sc J. Piepenbrink},
{\it Nonoscillatory elliptic equations},
J. Differential Equations {\bf 15} (1974), 541--550. 
\bibitem{Pin:1988}
{\sc Y. Pinchover}, 
{\it On positive solutions of second-order elliptic equations, stability results, and classification}, 
Duke Math. J. {\bf 57} (1988), 955--980.
\bibitem{PinchJDE}
{\sc Y. Pinchover},
{\it   On Criticality and Ground State of second order elliptic equations II}, 
J. Differential Equations {\bf 87} (1990), 353-364.
\bibitem{PP} {\sc Y. Pinchover, G. Psaradakis},  On positive solutions of the $(p,A)$-Laplacian with potential in Morrey
space. Anal. PDE 9 (2016), no. 6, 1317-1358.
\bibitem{PinTint}
{\sc Y. Pinchover, K. Tintarev}, 
{\it A ground state alternative for singular Schr\"odinger operators}, J. Funct. Anal. {\bf 230} (2006), 65--77.
\bibitem{PT}
{\sc Y. Pinchover, K. Tintarev}, 
{\it Ground state alternative for p-Laplacian with potential term},  Calc. Var. {\bf 28} (2007), 179--201.
\bibitem{PoCo}{\sc A.C.Ponce}, 
{\it Selected problems on elliptic equations involving measures}, 
http://arxiv.org/pdf/1204.0668, Winner of the Concours annuel 2012 in
Mathematics of the Acad\'emie Royale de Belgique, (2012). 
\bibitem{Simon:1981}
{\sc B.~Simon}, 
{\it Large time behavior of the $L^p$-norm of Schr\"odinger semigroups},
J. Funct. Anal. {\bf 40} (1981), 66--83. 
\bibitem{Simon}
{\sc B.~Simon},
{\it Schr\"odinger semigroups}, Bull. Amer. Math. Soc. (N.S.) {\bf 7} (1982), 447--526.
\bibitem{Stampacchia}
{\sc G.~Stampacchia}, 
{\it Le probl\`eme de Dirichlet pour les \'equations elliptiques du second ordre \`a coefficients discontinus},
 Ann. Inst. Fourier (Grenoble) {\bf 15} (1965), 189--258.
\bibitem{Stampacchia1} 
{\sc G.~Stampacchia}, 
{\it Equations elliptiques du second ordre \`a coefficients discontinus, 
Les Presses de l'Universit\'e de Montr\'eal, Montr\'eal}, 1966.
\bibitem{Zhao}
{\sc Z.~Zhao}, 
{\it Green Function for Schr\"odinger Operator and Conditioned Feynman-Kac Gauge},
J. of Math. Anal. and Applications {\bf 116}, 309--334, 1986.
\end{thebibliography}
\end{document}